\numberwithin{equation}{section}
\definecolor{myred}{rgb}{0.75,0,0}
\definecolor{mygreen}{rgb}{0,0.5,0}
\definecolor{myblue}{rgb}{0,0,0.65}
  \def\bg{{\mathfrak b}}  
    \def\CM{{\mathbb{C}}}
    \def\DM{{\mathbb{D}}}
  \def\hg{{\mathfrak h}}
    \def\PM{{\mathbb{P}}}
    \def\RM{{\mathbb{R}}}
  \def\tg{{\mathfrak t}}
    \def\ZM{{\mathbb{Z}}}
    \def\BC{{\mathcal{B}}}
    \def\OC{{\mathcal{O}}}
\def\BS{{\EuScript B}}
\def\a{\alpha}
\def\b{\beta}
\def\g{\gamma}
\def\D{\Delta}
\def\e{\varepsilon}
\def\l{\lambda}
\def\s{\sigma}
\def\z{\zeta}
\newcommand{\nc}{\newcommand} \newcommand{\renc}{\renewcommand}
\newcommand{\rdots}{\mathinner{ \mkern1mu\raise1pt\hbox{.}
    \mkern2mu\raise4pt\hbox{.}
    \mkern2mu\raise7pt\vbox{\kern7pt\hbox{.}}\mkern1mu}}
\def\bot{{\mathrm{bot}}}
\def\reg{{\mathrm{reg}}}
\def\bim{{\text{-bim}}}
\DeclareMathOperator{\Tr}{Tr}
\def\un{\underline}
\def\to{\rightarrow}
\def\from{\leftarrow}
\def\longto{\longrightarrow}
\def\onto{\twoheadrightarrow}
\nc{\triright}{\stackrel{[1]}{\to}}
\nc{\longtriright}{\stackrel{[1]}{\longto}}
\nc{\ot}{\otimes}
\nc{\HotRR}{{}_R\mathcal{K}_R}
\nc{\HotR}{\mathcal{K}_R}
\nc{\excise}[1]{}
\nc{\defect}{\text{df}}
\nc{\h}[1]{\underline{H}_{#1}}
\nc{\ptau}{\tau}
\nc{\Ga}{\mathbb{G}_a} 
\nc{\Gm}{\mathbb{G}_m} 
\nc{\Perv}{{\mathbf{P}}}
\nc{\Hi}{{\mathrm{H}}}
\nc{\IH}{{\mathrm{IH}}}
\nc{\ic}{\mathbf{IC}}
\nc{\gl}{{\mathfrak{gl}}}
\renc{\sl}{{\mathfrak{sl}}}
\renc{\sp}{{\mathfrak{sp}}}
\nc{\HBM}{H^{BM}}
\nc{\id}{{id}}
 \DeclareMathOperator{\Hom}{Hom}
\DeclareMathOperator{\End}{End} 
\newtheorem{thm}{Theorem}[section]
\newtheorem{lem}[thm]{Lemma}
\newtheorem{prop}[thm]{Proposition}
\newtheorem{cor}[thm]{Corollary}
\newtheorem{claim}[thm]{Claim}
\theoremstyle{definition}
\newtheorem{defi}[thm]{Definition}
\newtheorem{ex}[thm]{Example}
\theoremstyle{remark}
\newtheorem{remark}[thm]{Remark}
\newtheorem{question}[thm]{Question}
\newtheorem{warning}[thm]{Warning}
\DeclareMathOperator{\Ext}{Ext}
\DeclareMathOperator{\Spec}{Spec}
\newcommand{\into}{\hookrightarrow}
\def\gr{\textrm{gr}}
\DeclareMathOperator{\Gr}{Gr}
\nc{\simto}{\stackrel{\sim}{\to}}
\nc{\coker}{coker}
\nc{\amp}{{\textrm{ample}}}
\nc{\changed}[1]{{\color{blue}#1}}
\begin{document}

\title{Local Hodge theory of Soergel bimodules}
\date{May ~4th, 2016.}

\author{Geordie Williamson} 
\address{Max-Planck-Institut f\"ur Mathematik, Bonn, Germany}
\email{geordie@mpim-bonn.mpg.de}
\urladdr{}

\begin{abstract} We prove the local hard Lefschetz theorem and
  local Hodge-Riemann bilinear relations for Soergel
  bimodules. Using results of Soergel and K\"ubel one may deduce an
  algebraic proof of the Jantzen conjectures. We observe that the Jantzen
  filtration may depend on the 
  choice of non-dominant regular deformation direction.
\end{abstract}

\maketitle

\begin{center}
 \emph{\small{ Dedicated to Ben and Yeppie.}}
\end{center}


\section{Introduction} \label{sec:introduction}

In this paper we show the local hard Lefschetz theorem for Soergel
bimodules, as conjectured by Soergel and Fiebig. A key new ingredient
is the study of intersection forms and a proof of local Hodge-Riemann bilinear
relations. These properties are interesting in
themselves, as a further example of the remarkable Hodge theoretic structure
present in Soergel bimodules (even when no geometry is obviously
present).
It is also important because, by work of Soergel and K\"ubel,
it may be used to give an algebraic proof of the Jantzen conjectures on
the Jantzen filtration on Verma modules. (The first proof of the
Jantzen conjectures was given by Beilinson and Bernstein \cite{BB}.)

In geometric situations Soergel bimodules may be obtained as the
equivariant intersection cohomology of Schubert varieties. In this
setting the local hard Lefschetz theorem and local Hodge-Riemann
relations for Soergel bimodules follow from the hard
Lefschetz theorem and Hodge-Riemann relations for equivariant intersection cohomology, 
applied to a punctured standard affine
neighbourhood of a torus fixed point. Hence the results of this paper
may be seen as a translation and proof of these Hodge
theoretic statements into the algebra of Soergel bimodules.

This paper is a sequel to \cite{EW}
by Ben Elias and the author. The main ideas for the proofs are already
contained in \cite{EW}. This paper, like \cite{EW}, draws much
motivation from de Cataldo and 
Migliorini's Hodge theoretic proof of the decomposition theorem \cite{dCM,dCM2}.

\subsection{The fundamental example} We start by recalling the
geometric setting that led Soergel and Fiebig to the local hard Lefschetz conjecture. It is based
on \cite[Chapter 14]{BeL}, where Bernstein and
Lunts call this setting the ``fundamental example''. For us the name is
very appropriate: although all the proofs of this paper are
algebraic, all the motivation comes from the fundamental example.

Assume that $\CM^*$ acts linearly on $\CM^n$ with positive weights (i.e. 
$\lim_{z \to 0} z \cdot v = 0$ for all $v \in \CM^n$). Let $X \subset
\CM^n$ denote a closed $\CM^*$-stable subvariety. 
 Let $\Hi^*_{\CM^*}(pt;\RM)$ denote the $\CM^*$-equivariant cohomology of a point, which we
identify with $\RM[z]$, where $z$ ``is'' the first Chern class (of degree 2). When we come to discuss Soergel bimodules the choice of
coefficients in the real numbers will be important. When discussing
the fundamental example we could take coefficients in any field of
characteristic zero. To simplify notation we take coefficients in
the real numbers throughout.

Let $\IH^*$ (resp. $\IH^*_{\CM^*}$, resp. $\IH^*_{\CM^*,c}$) denote
(equivariant, compactly supported) intersection cohomology. A basic
fact is that we have a short exact sequence of graded $\RM[z]$-modules
\begin{equation} \label{eq:ihseq}
0 \to \IH_{\CM^*,c}^*(X) \to \IH_{\CM^*}^*(X) \to \IH^{*+1}(\dot{X}/\CM^*)
\to 0
\end{equation}
where $\dot{X} := X \setminus \{ 0 \}$ and $\IH^*(\dot{X}/\CM^*)$ is
a graded $\Hi^*_{\CM^*}(pt) = \RM[z]$ module via the identification
\[
\IH^{*+1}(\dot{X}/\CM^*) = \IH^*_{\CM^*}(\dot{X})
\]
which holds because $\CM^*$ acts on $\dot{X}$ with finite
stabilisers.

The above sequence is obtained by
taking equivariant hypercohomology of the standard (``Gysin'') distinguished triangle for
the equivariant intersection cohomology sheaf on $X$ with respect to
the decomposition $X = \{ 0 \} \sqcup \dot{X}$.
The first and
second terms of \eqref{eq:ihseq} can be identified with the
hypercohomology of the costalk
and stalk of the intersection cohomology sheaf at $0 \in X$  respectively. The resulting
long exact sequence yields the short exact sequence \eqref{eq:ihseq}
by purity, which ensures that all connecting
homomorphisms are zero.

To lighten notation we set $M^! := \IH_{\CM^*,c}^*(X)$, $M :=
\IH_{\CM^*}^*(X)$ and $H := \IH^*(\dot{X}/\CM^*)$ so that our
sequence takes the form
\begin{equation} \label{eq:ihseq2}
0 \to M^! \to M \to H[1] \to 0.
\end{equation}
($H[1]$ denotes a degree shift: $H[1]^i = H^{i+1}$).
Important ingredients in the fundamental example are the following 
facts about the sequence \eqref{eq:ihseq2}:
\begin{enumerate}
\item $M^!$ (resp. $M$) is a finitely generated free
  $\RM[z]$-module (as follows from purity) generated in degrees $> 0$
  (resp. $< 0$) (a consequence of the degree bounds on the stalks and costalks of intersection cohomology
  complexes.)
\item For all $i \ge 0$ multiplication by $z^i$ induces an isomorphism
\[
z^i : H^{-i} \simto H^{i}.
\]
(Indeed, the operator of multiplication by our generator $z
\in \Hi^2_{\CM^*}(pt)$  on $\IH^*_{\CM^*}(\dot{X}) =
  \IH^*(\dot{X}/\CM^*)[-1]$ may be identified, up to a non-zero scalar,
  with the action of the Chern class of the closed embedding $\dot{X} / \CM^*
  \hookrightarrow (\CM^n \setminus \{ 0 \} ) / \CM^* = \mathbb{\PM}$,
  into a
  weighted projective space. Now the result follows by the hard
  Lefschetz theorem for intersection cohomology.)
\item As the intersection cohomology of a projective variety, $H$ is
  equipped with a non-degenerate graded intersection pairing
  \[ \langle -, - \rangle : H \times H \to \RM.\] Moreover, for each $i
  \ge 0$, the ``Lefschetz'' form on $H^{-i}$ given by $(h, h') := \langle h, z^i
  h'\rangle$ (non-degenerate by (2)) induces a Hermitian form on $H^{-i} \otimes_{\RM} \CM$
  whose signature is governed by the Hodge-Riemann bilinear relations.
\end{enumerate}

This paper is concerned with establishing algebraic analogues of (2)
and (3) in the setting of Soergel bimodules. The bimodule analogue of
(1) is Soergel's conjecture, which was established in
\cite{EW}.

\subsection{Results}
Let $(W,S)$ denote a Coxeter system.
Let $\hg$ denote the reflection faithful
  representation of  $(W,S)$ over $\RM$ described in \S\ref{h}, and let $\{
  \a_s \} \subset \hg^*$ and $\{
  \a_s^\vee \} \subset \hg$ denote the simple roots and coroots (see \S\ref{h}). Let
$R$ denote the symmetric algebra on $\hg^*$ with $\deg \hg^* = 2$. Let
$\BC$ denote the category of Soergel bimodules (see \S\ref{sbim}). For
any $y \in W$, let $B(y)$ denote the indecomposable self-dual Soergel
bimodule parametrized by $y$.

Let $B$ denote a Soergel bimodule and fix $x \in W$. Define $B_x^!$
(resp. $B_x$) to be the largest submodule
(resp. largest quotient) of $B$ on which we have the relation $b \cdot
r = x(r) \cdot b$ for all $r \in R$. Then $B_x^!$ and $B_x$ are free
left $R$-modules. If $B$ is indecomposable and self-dual then $B_x^!$ (resp. $B_x$)
is generated in degrees $> 0$ (resp. $<0$) and their graded ranks are
given by Kazhdan-Lusztig
polynomials (Soergel's conjecture). Inclusion followed by projection
gives a canonical map
\[
i_x : B_x^! \into B \onto B_x.
\]
Moreover $i_x$ is an isomorphism over $Q$, the localisation of $R$ at all
roots. 

Any $\z^\vee \in \hg$ yields a specialisation $R \to \RM[z]$ given on
degree 2 elements by $\a \mapsto \langle \a, \z^\vee \rangle z$
(``restriction to the line $\RM\z^\vee \subset \hg$'').

\begin{thm} (``Local hard Lefschetz'') \label{lhl}
  Suppose that $\rho^\vee \in \hg$ is dominant (i.e. $\langle \a_s, \rho^\vee
  \rangle > 0$ for all $s \in S$) and that $B$ is indecomposable and
  self-dual. Define $H[1]$ as the cokernel of the inclusion:
\[
0 \to \RM[z] \otimes_R B_x^! \stackrel{i_x}{\longto} \RM[z] \otimes_R
B_x \to H[1] \to 0.
\]
Then $H$ satisfies the hard Lefschetz theorem: multiplication by
$z^{i}$ yields an isomorphism $H^{-i} \to H^{i}$ for all $i \ge 0$.
\end{thm}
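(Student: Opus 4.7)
My proof plan is to establish Theorem~\ref{lhl} by induction on the Bruhat order on $y$ (equivalently on $\ell(y)$), proved simultaneously with a companion \emph{local Hodge--Riemann} statement that controls the signature of the Lefschetz form
\[
(h,h')_i := \langle h, z^i h'\rangle \qquad (h,h' \in H^{-i})
\]
arising from the canonical non-degenerate graded pairing on $H$ induced by Soergel's self-duality of $B(y)$. Carrying signature information through the induction is essential: as in the strategy of \cite{dCM} and \cite{EW}, hard Lefschetz is not by itself stable under one-parameter deformations, whereas the Hodge--Riemann sign inequalities prevent the Lefschetz map from acquiring a kernel in such a family.

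The base case $y = e$ is trivial since $B(e) = R$ and $H = 0$. For the inductive step, I choose a simple reflection $s$ with $sy < y$ and realise $B(y)$ as a direct summand of $B(sy) \otimes_R B(s)$, the other summands being various $B(w)$ with $w < y$ covered by the induction hypothesis. The next step is to analyse how the local objects behave under tensoring with $B(s)$: the functors $B \mapsto B_x^!$ and $B \mapsto B_x$ interact with $-\otimes_R B(s)$ through short exact sequences relating the $x$- and $xs$-components, producing an explicit description of the local data for $B(sy) \otimes B(s)$ at $x$ as an extension built from the local data of $B(sy)$ at $x$ and at $xs$, to both of which the inductive HL and HR apply.

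The core of the proof is then a limit/Rayleigh-quotient argument: one interpolates the Lefschetz operator on the local data for $B(sy)\otimes B(s)$ between its natural value and a block-diagonal operator constructed from the Lefschetz operators for $B(sy)$ at $x$ and at $xs$. Dominance of $\rho^\vee$, via positivity of $\langle \a_s, \rho^\vee\rangle$, ensures that a crucial sign in the family remains positive, so that the signature of $(-,-)_i$ can only change along the family by acquiring zero eigenvalues. The inductive HR at $x$ and at $xs$ forces the signatures at the two endpoints to match; this pins down the signature throughout the family and in particular forces HL to hold at the natural endpoint. Passing to the summand $B(y)$ then yields both the isomorphism $z^i:H^{-i}\to H^i$ and the HR signature needed to continue the induction.

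The principal obstacle, as in \cite{EW}, is the correct formulation and verification of the local intersection-form compatibilities: one must check that Soergel's pairing, the $R$-bimodule structure, and the specialisation $R \to \RM[z]$ all fit together so that an exact sequence of pairings genuinely links the two endpoints of the deformation and so that the inductive HR at $xs$ can be transferred to a statement on the same Hilbert space on which the operator is being deformed. This local analogue of the Rouquier-complex intersection form -- an algebraic model of the Gysin sequence \eqref{eq:ihseq} at a torus-fixed point -- is the technical heart of the argument.
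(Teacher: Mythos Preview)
Your overall architecture --- induction on $\ell(y)$ carrying a Hodge--Riemann statement, realising $B(y)$ inside $B(y')B(s)$ for $y's=y>y'$, and studying how the local data at $x$ relate to those at $x$ and $xs$ for $B(y')$ --- matches the paper. The gap is in the core mechanism. Your proposed limit/Rayleigh argument runs in the wrong direction: a continuous family of symmetric forms with matching signatures at the two endpoints does \emph{not} force non-degeneracy in between; eigenvalues may cross zero and cross back. In the paper the limit argument (Corollary~\ref{cor:lim}) is used only to pass from hard Lefschetz to Hodge--Riemann: \emph{given} that HL holds along an unbounded interval of parameters, signature constancy plus HR at $a\to\infty$ yields HR throughout. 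It never produces HL.

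The ingredient you are missing is the \emph{weak Lefschetz substitute} (Propositions~\ref{prop:wL} and \ref{prop:wLP1}, applied in Propositions~\ref{wLnd} and \ref{wLd}). One constructs an explicit map $d:B(y)\to B'[1]$ with $B'$ a positively polarised sum of $B(z)$ for $z<y$, satisfying the adjoint relation $d^*\circ d = B(y)\lambda - (y\lambda)B(y)$ (built inductively from the polynomial-sliding relation \eqref{slide}). Localising at $x$ and specialising via $\rho^\vee$, dominance makes $\sigma(x\lambda - y\lambda)>0$ by Lemma~\ref{lem:rho}; then the inductive HR on $B'$ forces HL on $B(y)$ by a direct positivity argument, with no deformation needed. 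Only \emph{after} HL is established does the limit lemma upgrade it to HR. The paper further requires an auxiliary layer --- hard Lefschetz and HR ``in the $s$ direction'', formalised via the $\PM^1$-sheaves $M(B(y),x,xs)$ of \S\ref{sec:SP1} --- to make the induction close; your sketch of ``short exact sequences relating $x$ and $xs$'' is this $\PM^1$-sheaf structure, but the weak-Lefschetz input (Proposition~\ref{prop:wLP1}) is again what drives HL for it, not the interpolation alone.
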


This result was conjectured by Soergel \cite[Bemerkung 7.2]{Soe3} and
Fiebig \cite[Conjecture 6.2]{FCom}, motivated (as we 
will explain below) by the
fundamental example applied to the link of a singularity in a
Schubert variety. In fact they conjectured the theorem to hold for any
$\z^\vee \in \hg$ such that $\langle \a, \z^\vee \rangle \ne 0$ for
any root $\a$. The conjecture is false in this
generality. We will explain below that for Weyl groups its failure is related to the failure of semi-simplicity of the
layers of the Jantzen filtration associated to certain non-dominant
regular deformation directions.

The local hard Lefschetz theorem is the only geometric ingredient in
Fiebig's bound for the exceptional characteristics occurring in
Lusztig's conjecture (see \cite[\S 1.2]{Fiebound}). Using the above theorem one
can deduce the results of \cite{Fiebound} without recourse to geometry.

We now discuss the Hodge-Riemann bilinear relations. Suppose that $B$
is indecomposable and self-dual. Then $B$ carries an intersection
form\footnote{Throughout
  this paper, \emph{form} always means symmetric bilinear form.} 
\[
\langle -, -\rangle_B : B \times B \to R
\]
which is graded, symmetric and non-degenerate. (This is the analogue
of the equivariant intersection pairing in equivariant cohomology.)

Restricting $\langle -, -\rangle_B$ to $B_x^! \subset B$,
extending scalars to $Q$ and using that $i_x$ gives us a
canonical identification $Q \otimes_R B_x^! = Q \otimes_R B_x$ we obtain a
symmetric and $R$-bilinear $Q$-valued form
\[
\langle -, -\rangle^x_B : B_x \times B_x \to Q.
\]
This is the \emph{local intersection form} on $B_x$; it is the main
object in this paper.

Let $\rho^\vee \in \hg$ be dominant as above, and let $R \to \RM[z]$
denote the corresponding specialisation. To simplify notation, set
\[
N := \RM[z] \otimes_R B_x.
\]
Because $\langle \rho^\vee, \a \rangle \ne 0$ for any root $\a$, $\rho^\vee$
also induces a specialisation $Q \to \RM[z^{\pm 1}]$ and the
local intersection form $\langle -, -\rangle_B^x$ induces a symmetric 
$\RM[z]$-bilinear $\RM[z^{\pm 1}]$-valued form $\langle -, -\rangle_{N}$
on $N$.

The Hodge-Riemann bilinear relations give the signatures of the
restrictions of these forms to any homogeneous component of $N$. For $i
> 0$ set
\[
P^{-i} := (\deg_{< -i}N)^{\perp} \cap N^{-i}.
\]
(Here $\deg_{< i}N$ denotes the submodule of $N$ generated by all
elements of degree $< i$.)
Then the hard Lefschetz theorem implies that we have a decomposition
\[
N = \bigoplus_{i > 0} \RM[z] \otimes_{\RM} P^{-i}
\]
which is orthogonal with respect to $\langle -, -\rangle_N$. Let
$\min$ denote the minimal non-zero degree of $N$.

\begin{thm} (``Local Hodge-Riemann bilinear relations'') \label{lhr}
For any $i > 0$ the restriction of the $\RM$-valued form
  $(n,n') := z^i\langle n, n' \rangle_{N}$ on $N^{-i}$ to $P^{-i}$ is
  $(-1)^{\ell(x)}(-1)^d$-definite, where $d = (-i - \min)/2$.
\end{thm}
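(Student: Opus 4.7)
The plan is to prove Theorem \ref{lhr} in tandem with Theorem \ref{lhl} by induction on the Bruhat order, following the template of \cite{EW} for the global analogues. By additivity of the local intersection form under direct sums and the classification of indecomposable self-dual Soergel bimodules, it suffices to treat $B = B(y)$, and since $B(y)_x = 0$ unless $x \le y$, I may assume $x \le y$. The case $x = y$ is vacuous because $B(y)_y$ is concentrated in non-negative degrees (Soergel's conjecture gives graded rank $h_{y,y}(v) = 1$), so no primitive piece $P^{-i}$ with $i > 0$ appears. The induction is then on $\ell(y)$, with local hard Lefschetz (proved in parallel) providing the non-degeneracy needed to define the signatures at every stage.

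For the inductive step I would choose $s \in S$ with $sy < y$; then $B(y)$ is an orthogonal direct summand of $B(sy) \otimes_R B(s)$ in $\BC$. The functor $(-)_x$ interacts with $\otimes_R B(s)$ through the standard-filtration decomposition of $B(s)$ as an $(R^s, R)$-bimodule: one obtains a short exact sequence expressing $(B(sy) \otimes_R B(s))_x$ as an extension of suitable shifts of $B(sy)_x$ and $B(sy)_{xs}$, whose local intersection form decomposes, up to a cross-term involving $\langle \a_s, \rho^\vee \rangle$, into the local forms on the two summands. Specialising along $\rho^\vee$ and restricting to the summand $B(y)_x$ realises the Lefschetz module $N$ for $B(y)$ as a piece inside a Lefschetz module built from the corresponding objects for $B(sy)_x$ and $B(sy)_{xs}$. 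Term-by-term primitive decomposition then determines the signature on each $P^{-i}$ from the inductive Hodge-Riemann relations. Propagation from one dominant $\rho^\vee$ to an arbitrary one follows from convexity of the dominant cone and Sylvester's law, using local hard Lefschetz to keep the form non-degenerate along any straight-line deformation inside the cone.

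The main obstacle is the sign bookkeeping in this ``change of color'' step. The two contributions $B(sy)_x$ and $B(sy)_{xs}$ come with Hodge-Riemann signs carrying opposite overall factors $(-1)^{\ell(x)}$ and $(-1)^{\ell(xs)} = -(-1)^{\ell(x)}$, and the cross-term from the standard filtration of $B(s)$ must be used to balance these into the correct overall sign $(-1)^{\ell(x)}(-1)^d$ in the statement for $B(y)_x$. This balancing goes through precisely because $\langle \a_s, \rho^\vee \rangle > 0$ for every $s \in S$, i.e. because $\rho^\vee$ is dominant; a non-dominant regular direction would invert the sign of a cross-term and break the alternation, in accord with the authors' remark that Theorem \ref{lhl} fails for general regular $\rho^\vee$. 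Tracking these signs carefully along the induction, in the spirit of \cite[\S 6]{EW}, is the technical heart of the argument.
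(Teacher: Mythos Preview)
Your proposal has the right overall shape (simultaneous induction with local hard Lefschetz, passage through $B(y')B(s)$ for a shorter $y'$), but there is a genuine gap in the inductive step, and a minor error in the base case.

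First the base case: $B(y)_y$ is not concentrated in non-negative degrees; it is free of graded rank $v^{-\ell(y)}$, generated in degree $-\ell(y)$. So for $\ell(y)>0$ there is a single primitive piece $P^{-\ell(y)}$ whose sign must be checked. This is done in the paper via the explicit formula for the equivariant multiplicity (Proposition~\ref{prop:equi}(2), Theorem~\ref{thm:em}, Proposition~\ref{prop:yy}); it is easy, but not vacuous.

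The serious issue is the inductive step. You correctly observe (this is Proposition~\ref{prop:BB(s)} and Proposition~\ref{prop:indint}) that $(B(y')B(s))_x \cong B(y')_{x,xs}[1]$ with form $\frac{1}{x\alpha_s}(\langle-,-\rangle^x + \langle-,-\rangle^{xs})$, and that the inductive Hodge--Riemann signs at $x$ and $xs$ are opposite. But this is \emph{not} enough to conclude HR for $B(y')_{x,xs}$: the sublattice $B(y')_{x,xs} \subset B(y')_x \oplus B(y')_{xs}$ is not a direct sum, and whether the restricted form has the correct signature is a genuine question. In the language of the paper, the data is a polarised $\PM^1$-sheaf with ``opposite signs'' (\S\ref{opp}), and Lemma~\ref{lem:almost-const} shows that HR for its global sections holds if and only if the form at $0$ \emph{dominates} the form at $\infty$ in a precise sense. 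Your ``cross-term balancing'' does not address this; indeed there is no cross-term in the induced form, only a subtle lattice condition. Example~\ref{ex:constHR} already exhibits polarised constant sheaves with opposite signs for which HR fails.

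The paper's mechanism for forcing this dominance is quite different from what you sketch. One first proves hard Lefschetz (and its $\PM^1$-sheaf refinement $hL(y)_s$) via a \emph{weak Lefschetz substitute}: one constructs, from a reduced expression for $y$, a map $d:B(y)\to B'[1]$ with $B'$ positively polarised, built from shorter indecomposables, satisfying $d^*\circ d = B(y)\lambda - (y\lambda)B(y)$ (Proposition~\ref{prop:factor1}); inductive HR on $B'$ then forces hard Lefschetz on $B(y)_x$ (Proposition~\ref{prop:wL}). A deformed version over $\PM^1$-sheaves (Propositions~\ref{prop:def}, \ref{prop:wLP1}) gives hard Lefschetz for all Lefschetz operators in the ample cone. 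Only then does a continuity/limit argument (Lemma~\ref{lem:lim}, Corollary~\ref{cor:lim}) --- which is the correct analogue of your Sylvester/convexity idea, but used to pass from hard Lefschetz for a family to HR, not from one $\rho^\vee$ to another --- pin down the signatures. The construction of $d$ and the $\PM^1$-sheaf formalism are the technical heart; without them the induction does not close.
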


(The module $N$ vanishes unless $i$ and $\min$ are congruent modulo 2,
and hence the sign makes sense.)

Let us try to explain what the Hodge-Riemann bilinear relations mean
concretely for the local intersection forms $\langle -,
-\rangle_B^x$. Fix a graded basis $e_1, e_2, \dots, e_m$ for
$B_x$ as a left $R$-module, such that $\deg e_1 \le \deg e_2 \le \dots
\le \deg e_m$. We can think of the Gram\footnote{Throughout this paper
  \emph{Gram matrix} means the (symmetric) matrix of
  a (symmetric) form in some basis.} matrix $( \langle e_i, e_j \rangle_{B}^x )_{1 \le
  i, j \le m}$ as giving us a non-degenerate symmetric form on the
trivial vector bundle of rank $m$ over $\hg_\reg := \Spec
Q$. Moreover, this vector bundle is naturally filtered by the
subspaces generated by $\{ e_i \}_{\deg e_i \le d}$. In other words, we
can think of our form as a form on a filtered vector bundle. The
Hodge-Riemann bilinear relations predict the signatures of the
restriction of our form to all steps
of the filtration over the dominant regular locus 
\[\hg_\reg^+ := \{ \l^\vee \in \hg_\reg \; | \; \langle \a_s, \l^\vee
\rangle > 0 \text{ for all }s \in S \} \subset \hg_\reg.\]
Roughly speaking the signs must alternate at each step in the filtration.

For example, if the graded rank of $B_x$ is given by $v^{-5} + 3v^{-3}
+ 2v^{-1}$ and $\ell(x)$ is even, then the signs alternate as follows:
{\small}
\begin{equation*}
\left ( \begin{matrix} + & & & & & \\
& - & & & &\\
& & - & & &\\
& & & - & &\\
& & & & + & \\
& &  & & & +\end{matrix} \right )
\end{equation*}
If $\ell(x)$ is odd then the signs are given by $- + + + - -$.

Finally, there is one entry of the local intersection form which is
canonical. If $B$ is indecomposable and self-dual, then $B \cong B(y)$
for some $y \in W$, the smallest non-zero degree of $B(y)_x$ is
$-\ell(y)$ and $B(y)_x^{-\ell(y)}$ is generated by an element
$c_{x,y}$ which is well-defined up to a non-zero scalar. Our final result calculates the pairing of this element
with itself (see Theorem \ref{thm:em}):

\begin{thm} \label{em}
  $\langle c_{x,y}, c_{x,y} \rangle^x_{B(y)} = \g e_{x,y}$ for some
  $\g \in \RM_{>0}$.
\end{thm}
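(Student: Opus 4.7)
The plan is to lift $c_{x,y}$ through $i_x$ over $Q$ using the $\Delta$/$\nabla$-filtrations of $B(y)$, compute the pairing explicitly, and then extract $\gamma > 0$ from Theorem~\ref{lhr}.

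First, the standard $\Delta$-filtration of $B(y)$ has subquotients $\Delta_z$ for $z \le y$; since $(\Delta_z)_x$ vanishes for $z \ne x$ (the relations $r\cdot b = b\cdot z(r)$ in $\Delta_z$ and $b\cdot r = x(r)\cdot b$ in $(\,\cdot\,)_x$ together force $z = x$), the bottom-degree component of $B(y)_x$ comes from a single $\Delta_x$-subquotient and is generated by the image of the canonical degree $-\ell(y)$ element $b_y \in B(y)$. Thus $c_{x,y}$ is (up to a nonzero scalar) the image of $b_y$. Dually, the $\nabla$-filtration realises a distinguished $\Delta_x$-subpiece of $B(y)_x^! \subset B(y)$ with canonical generator $c^!_{x,y}$, and by the explicit pairing formula on the $\Delta_x$-pieces (which one may take as the definition of $e_{x,y}$) we have $i_x(c^!_{x,y}) = e_{x,y}\cdot c_{x,y}$. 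Hence $\tilde c_{x,y} := e_{x,y}^{-1}\, c^!_{x,y}$ is the preimage of $c_{x,y}$ in $Q\otimes_R B(y)_x^!$.

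Second, since the intersection form on $B(y)$ restricts to $B(y)_x^! \subset B(y)$ and is $R$-bilinear,
\[
\langle c_{x,y}, c_{x,y}\rangle^x_{B(y)} \;=\; \langle \tilde c_{x,y}, \tilde c_{x,y}\rangle_{B(y)} \;=\; e_{x,y}^{-2}\,\langle c^!_{x,y}, c^!_{x,y}\rangle_{B(y)}.
\]
A careful bookkeeping on the $\Delta_x$-subpiece (verifying that cross-contributions from the other $\Delta$-subquotients do not affect the leading term) yields $\langle c^!_{x,y}, c^!_{x,y}\rangle_{B(y)} = \gamma\, e_{x,y}^3$ for a single $\gamma\in\RM$, and hence $\langle c_{x,y}, c_{x,y}\rangle^x_{B(y)} = \gamma\, e_{x,y}$.

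Finally, to obtain $\gamma > 0$, specialize $R \to \RM[z]$ along a dominant $\rho^\vee$ and form $N = \RM[z]\otimes_R B(y)_x$. The class $c_{x,y}$ sits in the bottom degree $-\mini = -\ell(y)$ of $N$, and is therefore primitive automatically (there is no lower-degree source to generate it), i.e. $c_{x,y} \in P^{-\ell(y)}$. Theorem~\ref{lhr} with $i = \ell(y)$ (so $d = 0$) asserts that $z^{\ell(y)}\langle c_{x,y}, c_{x,y}\rangle_N$ is strictly $(-1)^{\ell(x)}$-positive. On the other hand, $z^{\ell(y)}$ times the $\rho^\vee$-specialization of $\gamma\,e_{x,y}$ is $\gamma$ times a product of root-values whose sign is $(-1)^{\ell(x)}$ (the negative factors indexing precisely the $\ell(x)$ inversions of $x$ among the roots appearing in $e_{x,y}$). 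The two signs cancel, forcing $\gamma > 0$.

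The main obstacle is the pairing computation in the second step: one must verify that $\langle c^!_{x,y}, c^!_{x,y}\rangle_{B(y)} = \gamma\, e_{x,y}^3$ exactly, i.e.\ that the contributions from the other $\Delta$-subquotients in the filtration of $B(y)_x^!$ either vanish or are absorbed into the overall scalar $\gamma$. This is a technical filtration-compatibility argument in the style of \cite{EW}; once set up correctly, both the identification $\langle c_{x,y}, c_{x,y}\rangle^x_{B(y)} = \gamma e_{x,y}$ and the sign analysis follow directly.
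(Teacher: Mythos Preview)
Your proposal does not constitute a proof; the central step is left open and the surrounding setup contains degree errors.

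The formula $i_x(c^!_{x,y}) = e_{x,y}\cdot c_{x,y}$ cannot hold as written: $e_{x,y} \in Q$ is homogeneous of degree $-2\ell(y)$ (\S\ref{sec:nilhecke}) and $c_{x,y}$ lies in degree $-\ell(y)$, so the right-hand side is in degree $-3\ell(y)$, whereas $B(y)_x^!$ (the dual lattice to $B(y)_x$) is generated in strictly positive degrees and $i_x$ preserves degree. The subsequent identity $\langle c_{x,y}, c_{x,y}\rangle^x = e_{x,y}^{-2}\langle c^!_{x,y}, c^!_{x,y}\rangle_{B(y)}$ fails the same check. More fundamentally, the ``careful bookkeeping'' that should yield $\langle c^!_{x,y}, c^!_{x,y}\rangle_{B(y)} = \gamma\, e_{x,y}^3$ is never carried out --- you flag it yourself as the main obstacle --- and this is the whole content of the theorem. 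Extracting $\gamma > 0$ from Theorem~\ref{lhr} is also circular in the paper's logic: Theorem~\ref{em} is an \emph{input} to the proof of local HR (it is used via Corollary~\ref{cor:HRamb} in Claim~\ref{c3} and in Proposition~\ref{prop:yy} to fix the standard signs). Your informal sign argument is separately wrong: for $x < y$ the equivariant multiplicity $e_{x,y}$ is not a product of inverse roots (only $e_{y,y}$ is, Proposition~\ref{prop:equi}(2)), so one cannot read off its sign by counting inversions; the sign $(-1)^{\ell(x)}$ of $\sigma(e_{x,y})$ requires the inductive argument of Corollary~\ref{cor:HRamb}.

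The paper's proof is short and elementary. One first shows by induction on the length of $\un{y}$ that on a Bott--Samelson bimodule $\langle c_{x,\un{y}}, c_{x,\un{y}}\rangle^x_{B(\un{y})} = e_{x,\un{y}}$ exactly, using the formula of Proposition~\ref{prop:indint} for the local intersection form on $(BB(s))_x$, which reproduces the nil Hecke recursion~\eqref{eq:xxs}. Then one restricts: $B(y)$ is realised as a summand of $B(\un{y})$ with intersection form given by restriction, and since $B(y)_x^{-\ell(y)}$ and $B(\un{y})_x^{-\ell(y)}$ are both one-dimensional, the inclusion sends $c_{x,y}$ to a nonzero scalar multiple of $c_{x,\un{y}}$. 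The square of that scalar is $\gamma > 0$; no filtration argument and no Hodge theory is needed.
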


Here $e_{x,y}$ is the ``equivariant multiplicity'', a certain
homogenous rational function in $Q$ given by an explicit formula in
the nil Hecke ring.

\subsection{Relation to the fundamental example} \label{relfund}
Let us briefly comment on the connection between our results and the
fundamental example.

Let $G \supset B \supset T$ denote a complex reductive algebraic
group, a Borel subgroup and maximal torus, and let $(W,S)$ denote its
Weyl group and simple reflections. If we set $X_*$ and $X^*$ to be
the cocharacter and character lattice of $T$ then we can take $\hg := \RM
\otimes_{\ZM} X_*$ and $\hg^* := \RM \otimes_{\ZM} X^*$. The Borel
homomorphism gives us a canonical identification $R = S(\hg^*) = \Hi^*_T(pt)$.
 
Given any $y \in W$ we can consider the Schubert variety
$Z_y := \overline{ByB/B} \subset G/B$. By a theorem of Soergel 
\cite[\S 3.4]{SoergelLanglands}
 we may identify $B(y)$
with the equivariant intersection cohomology $\IH^*_T(Z_y)$. The
bimodule structure comes from the fact that $\IH^*_T(Z_y)$ is a module
over $\Hi^*_T(G/B) = R \otimes_{R^W} R$.

If $B := B(y)$ then the $R$-modules $B_x^!$ and $B_x$ can be described
as the $T$-equivariant cohomology of the costalk and stalk of the
intersection cohomology complex of $Z_y$ at the torus fixed
point $xB/B \in G/B$. Moreover, any choice of homomorphism $\g^\vee : \CM^* \to T$ yields a
line $\RM \g^\vee \subset \hg$, hence a specialisation $R \to \RM[z]$
and one may obtain the equivariant
cohomology (with respect to the induced $\CM^*$-action) of
the stalk and costalk via extension of scalars.

Now each $T$-fixed point $xB/B$ in $Z_y$ has a unique
$T$-stable affine neighbourhood $X_{x,y}$. We deduce from the
exact sequence \eqref{eq:ihseq} that if $\g^\vee : \CM^* \to T$ is such that the
induced action of $\CM^*$ on $X_{x,y}$ is attractive, then we have
\[
H = \IH^*(\dot{X}_{x,y}/\CM^*)
\]
where $\dot{X}_{x,y} := X_{x,y} - \{ xB/B \}$.
The hard Lefschetz and Hodge-Riemann relations now follow from the hard
Lefschetz and Hodge-Riemann bilinear relations in intersection
cohomology.

The need to reduce from the $T$-action to a
$\CM^*$-action to apply the fundamental example corresponds to the
choice of cocharacter $\rho^\vee \in \hg$ in Theorems \ref{lhl} and
\ref{lhr}. If one choses a cocharacter $\CM^* \to T$ such that the
induced action on $X_{x,y}$ is no longer attractive but is still
regular (i.e. $X_{x,y}^{\CM^*} = xB/B$) one still has
\[
H[1] = \IH^*_{\CM^*}( \dot{X}_{x,y})
\]
but now there is no longer any reason why $H$ should satisfy hard Lefschetz or
the Hodge-Riemann bilinear relations, because we cannot identify $H$
with the intersection cohomology of a projective variety. We will see
below that hard Lefschetz does indeed fail for certain specialisations
corresponding to regular (i.e. $\langle \alpha, \g^\vee \rangle \ne 0$
for all roots $\alpha$) non-dominant $\g^\vee \in \hg$.

\subsection{The Jantzen filtration} We conclude the introduction with
a discussion of
how our results are connected to the Jantzen filtration and
conjectures.

Let $\mathfrak{g}^\vee \supset \bg^\vee \supset \tg^\vee$ denote a complex
semi-simple Lie algebra, Borel subalgebra and Cartan subalgebra. (The
notation is intended to suggest that this data should be
Langlands dual to that of \S\ref{relfund}.) Given any weight $\l \in
(\tg^\vee)^*$ we can consider $\D(\l)$, the corresponding Verma module. It is
generated by a highest weight vector $v_\lambda$ which satisfies
\[ h \cdot v_\l = \l(h)v_\l  \qquad \text{for all $h \in \tg^\vee$}.\]

Given a deformation direction $\gamma \in (\tg^\vee)^*$ one can
consider the deformed Verma module $\Delta_{\CM[z]}(\lambda)$ which
is a $(\mathfrak{g}, \CM[z])$-bimodule generated by a highest weight vector
$v_\lambda$ satisfying
\[ h \cdot v_\l = (\l(h) + z\g(h))v_\l  \qquad \text{for all $h \in
  \tg^\vee$}.\]
That is, $\Delta_{\CM[z]}(\lambda)$ is a ``deformation of
$\Delta(\lambda)$ in the direction $\gamma$''. The deformed Verma module $\Delta_{\CM[z]}(\lambda)$ admits a unique
$\CM[z]$-bilinear contravariant form which specialises at $z = 0$ to
the contravariant form on $\Delta(\lambda)$. On
$\Delta_{\CM[z]}(\lambda)$ one has a filtration by order of vanishing
of the form, and if one considers the
specialisation at $z = 0$ one
obtains the \emph{Jantzen filtration}
\[
\dots \subset J^1  \subset  J^0 = \Delta(\lambda)
\]
which is exhaustive if $\g$ is regular.

The Jantzen conjectures \cite[5.17]{Jantzen} are the statements (for
deformation direction
$\g = \rho$, the half sum of the positive roots):
\begin{enumerate}
\item Certain canonical maps (e.g. embeddings $\Delta(\mu)
  \into \Delta(\lambda)$) are strict for Jantzen filtrations (see \cite[5.17, (1)]{Jantzen}).
\item The Jantzen filtration coincides with the socle filtration.
\end{enumerate}
(In \cite[5.17]{Jantzen} both statements are questions rather than
conjectures, and (1) is given more weight than (2).) It was
subsequently realised that the Jantzen conjectures have remarkable consequences: Gabber and
Joseph \cite{GJ} showed that (1) implies the Kazhdan-Lusztig conjectures on
multiplicities of simple modules in Verma modules (in a stronger form:
the Kazhdan-Lusztig polynomials give multiplicities
in the layers of the Jantzen filtration). Building on the work of
Gabber and Joseph, Barbasch \cite{Bar} showed that (1)
implies (2).

The Jantzen conjectures were proved by Beilinson and Bernstein in
\cite{BB}. They prove that the Jantzen filtration corresponds
under Beilinson-Bernstein localisation to the weight filtration on a standard
$D$-module. Part (1) of the Jantzen conjectures follows from the fact that any morphism between mixed perverse sheaves
strictly preserves the weight filtration. Part (2) follows via
a pointwise purity argument.

\subsection{The approach of Soergel and K\"ubel}
An alternative (``Koszul dual'') proof of the Jantzen conjectures was
initiated by Soergel \cite{SA} and completed by K\"ubel
\cite{K1,K2}. Recall that, by results of Soergel (see \cite{So}),
the principal block $\mathcal{O}_0$ of
category $\mathcal{O}$ is equivalent to (ungraded) modules over a
graded algebra $A_{\mathcal{O}}$. Soergel's conjecture is equivalent to
the fact that $A_{\mathcal{O}}$ may be chosen positively graded and semi-simple in degree
zero.

If one instead considers \emph{graded}
modules over $A_{\mathcal{O}}$ then one obtains a graded enhancement of
the principal block of $\mathcal{O}$.
It is known that Verma modules are gradable; that
is, the corresponding $A_{\mathcal{O}}$-modules admit gradings. Taken
together, the results of Soergel and K\"ubel show that the Jantzen
filtration on a Verma module agrees with the degree filtration on a
graded lift. Then part (1) of the Jantzen conjectures is immediate,
because the canonical maps in question can be lifted to maps of graded
modules. Part (2) follows because the socle, radical and degree filtrations for the graded
lifts of Verma modules coincide. (Once one knows that the degree zero part of $A_{\mathcal{O}}$ is
semi-simple, that $A_{\mathcal{O}}$ is generated in degrees
 $\le 1$, and that the head and socle of a Verma module is simple,
this follows from a simple observation about modules over graded
algebras \cite[Proposition 2.4.1]{BGS}.)

We now explain in more detail how the link between the
Jantzen and grading filtrations is made. Let $W$ denote the Weyl
group of $\mathfrak{g}^\vee \supset \tg^\vee$. For $x \in W$, denote by $\Delta(x)$
and $\nabla(x)$ the Verma and dual Verma modules of highest weight
$x(\rho) - \rho$. Let $T$ denote an indecomposable tilting module
in $\mathcal{O}_0$. 
Then $\Delta(x)$, $\nabla(x)$ and $T$ all admit ``deformations in
the direction $\g$'' over $\CM[[z]]$. (We need to pass from $\CM[z]$
to its completion $\CM[[z]]$ to apply idempotent lifting arguments.)
We denote these deformations by $\Delta_{\CM[[z]]}(x)$,
$\nabla_{\CM[[z]]}(x)$ and $T_{\CM[[z]]}$. Consider the
canonical pairing
\begin{align*}
\Hom(\Delta_{\CM[[z]]}(x),T_{\CM[[z]]}) \times &
\Hom(T_{\CM[[z]]},\nabla_{\CM[[z]]}(x)) \to
\Hom(\Delta_{\CM[[z]]}(x),\nabla_{\CM[[z]]}(x))  
\end{align*}
which lands in $\Hom(\Delta_{\CM[[z]]}(x),\nabla_{\CM[[z]]}(x))  = \CM[[z]]$.
As in the definition of the Jantzen filtration, we may define a
filtration on 
$\Hom(\Delta_{\CM[[z]]}(x),T_{\CM[[z]]})$ via order of vanishing. Upon
 specialisation at $z = 0$, we obtain the \emph{Andersen filtration}
\[
\dots \subset F^{i+1}  \subset  F^i \subset \dots \subset F^0 = \Hom(\Delta(x), T)
\]
which is exhaustive if $\g$ is regular.

Because everything in sight is free over $\CM[[z]]$ we can view the
pairing defining the Andersen filtration instead as an inclusion (for
$\g$ regular)
\[
\Hom(\Delta_{\CM[[z]]}(x),T_{\CM[[z]]}) \stackrel{\kappa}{\into} \Hom(T_{\CM[[z]]},\nabla_{\CM[[z]]}(x))^*
\]
where $*$ means $\CM[[z]]$ dual. Now the Andersen
filtration is obtained as the specialisation at $z = 0$ of the filtration:
\[
\dots \subset \kappa^{-1}(z^{i+1}
\Hom(T_{\CM[[z]]},\nabla_{\CM[[z]]}(x))^*) \subset \kappa^{-1}(z^i
\Hom(T_{\CM[[z]]},\nabla_{\CM[[z]]}(x))^*)  \subset \dots
\]
In \cite[\S 10.2]{SA}, Soergel identifies the inclusion $\kappa$ 
defining the Andersen filtration with the inclusion (now over $\CM$ rather than $\RM$)
\[
\CM[[z]] \otimes_R B_x^! \stackrel{\CM[[z]] \otimes i_x}{\xrightarrow{\hspace*{1.5cm}} } \CM[[z]] \otimes_R B_x
\]
appearing in the statement of local hard Lefschetz. Here $B$ is an
indecomposable self-dual Soergel bimodule such that $\widehat{B} =
\mathbb{V}T_{\hat{S}}$, where $\widehat{B}$ denotes the completion along the
grading of $B$, $\mathbb{V}$ is Soergel's structure functor
\cite[\S 5.10]{SA},
$T_{\hat{S}}$ denotes an $\hat{S}$-deformation of $T$ \cite[\S
3.5]{SA}, and $\hat{S}$
denotes the completion along the grading of $S(\mathfrak{t}^\vee)$ \cite[Theorem
8.2]{SA}.

Now comes the key observation: Theorem \ref{lhl} holds
  if and only if the filtration on $\CM[z] \otimes_R B_x^!$ given by
  \begin{equation}
    \label{eq:lfilt}
\dots \subset i_x^{-1}(z^{i+1} \otimes B_x) \subset i_x^{-1}(z^{i} \otimes B_x)
\subset  \dots    
  \end{equation}
induces the degree filtration on $\CM \otimes_{R}
B_x^!$. (A sketch: our inclusion $\CM[z] \otimes i_x$ is
  isomorphic to a direct sum of inclusions $N \into M$ of free graded
  $\CM[z]$-modules of rank 
  one. Now the above filtration
  induces the degree filtration on $\CM \otimes_R N$ if and only if $N$ and $M$ are
  generated in degrees symmetric about degree 0. See the last
  paragraph of \cite{SA}.)

In other words, local hard Lefschetz holds if and only if the
 Andersen and degree filtrations match under the identification
\[
\CM \otimes_R B_x^! = \Hom(\Delta(x), T).
\]

Finally, there is a contravariant ``tilting'' equivalence $t$ on the additive
category of modules with Verma flag, constructed by Soergel in
\cite{SoT}; it takes 
projective modules to tilting modules and sends $\Delta(x)$ to
$\Delta(-\rho - x(\rho))$. This equivalence
induces an isomorphism
\[
t : \Hom(P, \Delta(-\rho - x(\rho))) \stackrel{\sim}{\to} \Hom(\Delta(x), T)
\]
where $P :=
t^{^{-1}}(T)$ is a projective module. Now K\"ubel shows  (see \cite[Corollary 5.8]{K2} and
the discussion afterwards) that this isomorphism can be upgraded
to an isomorphism of graded vector spaces, matching the (filtration
induced by the) Jantzen filtration on $\Delta(-\rho - x(\rho))$ on
the left with the Andersen filtration on the right. By the discussion
above, the grading filtration on the right agrees with the Andersen
filtration, and hence the grading filtration on the left agrees with
the Jantzen filtration. This is enough to conclude that
  the grading and Jantzen filtrations agree on $\Delta(-\rho -
  x(\rho))$, which implies the Jantzen conjectures.

\subsection{Dependence on the deformation direction}
The statement of the local Hard Lefschetz theorem for Soergel
bimodules involved the choice of a specialisation parameter
$\rho^\vee \in \hg$. Similarly, the definition of the Jantzen
filtration involves the choice of a deformation direction $\gamma \in
(\tg^\vee)^*$. These choices match in the proof of Soergel and K\"ubel. In
particular, for an arbitrary (regular) specialisation $\rho^\vee \in \hg$, local hard
Lefschetz is equivalent to the Jantzen filtration agreeing with the
degree filtration. We have already commented that unless $\rho^\vee$ is
dominant, there is no geometric reason to expect hard Lefschetz to hold.

Using these observations we are able to answer the following
fundamental question about the Jantzen filtration (which seems to have
first been raised by Deodhar) in the negative:

\begin{question}(Deodhar, \cite[5.3, Bemerkung 2]{Jantzen})
  Is the Jantzen filtration independent of the choice of non-degenerate
  deformation direction $\g$?
\end{question}

Remarkably, this already fails for $\mathfrak{sl_4}(\CM)$. Using
Soergel bimodules and
Theorem \ref{em} one can see this failure via a simple
calculation in the nil Hecke ring. One can also verify this example
directly (without leaving the world of Verma modules). However here
the author needs computer assistance.

\subsection{Structure of the paper}

This paper is structured as follows:

\begin{description}
\item[\S\ref{nota}] We recall basic notation (shifts, gradings, degree
  filtration).
\item[\S\ref{sec:cox}] We recall the structure related to Coxeter
  groups underlying this paper (the reflection representation $\hg$, positivity
  properties, nil Hecke ring).
\item[\S\ref{fea}] We develop some algebra around the fundamental
  example (hard Lefschetz, Hodge-Riemann, weak Lefschetz substitute).
\item[\S\ref{mgs}] We develop the theory of \S\ref{fea} ``over
  $\PM^1$''. We define $\PM^1$-sheaves, study their structure and
  establish various conditions for their global sections to satisfy
  hard Lefschetz and Hodge-Riemann.  Although elementary, the results
  of this section are the main new ingredient in this paper.
\item[\S\ref{sbimb}] We give background on Soergel bimodules,
  define local intersection forms and establish some formulas for
  induced forms. We formulate a list of properties which form the
  ``local Hodge theory'' of Soergel bimodules.
\item[\S\ref{proof}] We prove the main results of the paper.
  \item[\S\ref{ap}] We outline an example in $\mathfrak{sl_4}(\CM)$
    where the Jantzen filtration does not coincide with the socle
    filtration.
\item[\S\ref{not}] Contains a list of the most important notation.
\end{description}

\subsection{Acknowledgements}
Although somewhat hidden, this paper makes heavy use of Peter
Fiebig's way of thinking about Soergel bimodules and Braden-MacPherson
sheaves. I would like to thank him for many hours of patient
explanations. Thanks to Roman Bezrukavnikov, Jens
Carsten Jantzen and Johannes K\"ubel for useful
discussions and to Wolfgang Soergel for many
  useful conversations and for providing the proof of
  Proposition \ref{Bxxs}. I
  would also like to thank the referee for a careful
  reading and many comments, which have contributed significantly to the readability and accuracy
  of this paper.
This paper started as joint work with Ben Elias, and can be seen as a
contribution to our joint long-term project of throwing some light on the Hodge
theoretic shadows cast by Soergel bimodules. To perform some of the
calculations in \S\ref{ap} the author had some help from magma \cite{magma}.

The results of this paper were announced in December 2013 at the Taipei Conference on
Representation Theory IV and in January 2014 in Patagonia. I would like to thank the
organisers of both conferences for the opportunity to present these results.

\section{Notation} \label{nota}




\subsection{Gradings and graded ranks} \label{grad}
Given a $\ZM$-graded object (vector space, module, bimodule) $M = \bigoplus
M^i$ we let $M[j]$ denote the shifted object with $M[j]^i = M^{i+j}$.  We call a
graded object $M$ \emph{even} if $M^{\textrm{odd}} = 0$ and \emph{odd} if
$M^{\textrm{even}} = 0$. We say that $M$ is \emph{parity} if it is
either even or odd. Given graded objects $M, M'$ we denote by
\[
\Hom^\bullet(M,M') = \bigoplus_{i \in \ZM} \Hom(M,M'[i])
\]
the (graded) space of homomorphisms of all degrees.

Let $R$ denote a polynomial ring which we view as a graded ring with
all generators of degree 2.  (Starting from \S\ref{sec:cox}, $R$ will
have a more specific meaning.) Given a graded free and finitely
generated $R$-module $M$ we can choose an isomorphism
\[
M \cong \bigoplus R[m]^{\oplus p_{m}}.
\]
We call $p = \sum p_{-m}v^m \in \ZM_{\ge 0}[v^{\pm 1}]$ the
\emph{graded rank} of $M$.

\subsection{Lattices and their duals} Let $Q$ denote a 
localisation of $R$ at some multiplicatively closed set of homogeneous
elements. Let $M_Q$ denote
a finitely generated graded free $Q$-module equipped with a non-degenerate
graded symmetric form 
\[
\langle -, - \rangle : M_Q \times M_Q \to Q.
\]
(Throughout non-degenerate means that $\langle -, - \rangle$ induces an
isomorphism $M_Q \simto M_Q^* = \Hom^\bullet_Q(M,Q)$, or alternatively that
the determinant of $\langle -, - \rangle$ in some basis is a unit
in $Q$.)
An $R$-submodule $M \subset M_Q$ is a \emph{lattice} if the natural map
$Q \otimes_R M \to M_Q$ is an isomorphism. If $M \subset M_Q$
is a lattice the \emph{dual lattice} is
\[
M^* := \{ m \in M_Q \; | \; \langle m, M \rangle \subset R \}.
\]
Then $M^*$ is canonically isomorphic to the dual of $M$ in the usual
sense (i.e. the natural map $M^* \simto
  \Hom^\bullet(M,R)$ is an isomorphism), $M^* \subset M_Q$ is a lattice and
$M = (M^*)^*$. In particular:
\begin{equation} \label{eq:pbar}
 \text{if $M$ has graded rank $p$ then $M^*$ has graded rank $\overline{p}$.}
\end{equation}
(By definition
$\overline{p}(v) = p(v^{-1})$ for a polynomial $p \in \ZM[v^{\pm 1}]$.)

\subsection{The degree filtration} \label{sec:deg}
Let $R$ be as in the previous
section. Let $M = \bigoplus M^j$ be a graded $R$-module. Set
\[
\deg_{\le i}M := R \cdot \bigoplus_{j \le i} M^{j}.
\]
This gives the \emph{degree filtration} 
\[
\dots \into \deg_{\le i}M \into \deg_{\le i+1}M
\into \dots\]
of $M$ by $R$-submodules. Any morphism of graded $R$-modules $f : M \to N$ 
preserves this filtration. Hence $\deg_{\le i}$ can be viewed as an endofunctor on
the category of graded $R$-modules.

\section{Coxeter group background} \label{sec:cox}

\subsection{Coxeter group} \label{sec:WS}

Let $(W,S)$ be a Coxeter system with length function $\ell : W \to
\ZM_{\ge 0}$ and Bruhat order $\le$.

An \emph{expression} $\un{x} = s_1 \dots s_m$ will denote a word in
$S$. Dropping the underline gives an element $x \in W$. An expression
$\un{x} = s_1 \dots s_m$ is \emph{reduced} if $\ell(x) = m$. Given an
expression $\un{x} = s_1 \dots s_m$, a \emph{subexpression} is a
sequence $\un{u} = t_1 \dots t_m$ such that $t_i \in \{ s_i, \id \}$ for
  all $i$. Again, dropping the underline denotes the product in
  $W$. For example, if $\un{y}$ is a reduced expression for $y$ then $\{ x \in W \;|\; x \le y \}  = \{ u \; | \; \text{$\un{u}$ is a
  subexpression of $\un{y}$}\}.$

\subsection{The reflection representation} \label{h}

We fix a realisation $(\hg, \hg^*, \{ \alpha_s \}, \{ \alpha_s^\vee \}
)$ of $(W,S)$ over $\RM$ as in \cite{Soe3, EW}. That is, $\hg$ is 
a finite dimensional real vector space and we have fixed linearly independent
subsets
\[
\{ \a_s \}_{s \in S} \subset \hg^* \text{ and } \{ \a^\vee_s \}_{s \in S} \subset \hg
\]
such that, for all $s, t \in S$, $\langle \alpha_s, \alpha_t^\vee \rangle = -2
  \cos(\pi/m_{st})$ (where $m_{st}$ denotes the order, possibly
  $\infty$, of $st  \in W$). In addition, we assume that $\hg$ is of minimal possible
  dimension satisfying the above two conditions. 
We can define an action of $W$ on $\hg$ via $s \cdot v = v - \langle
\alpha_s, v \rangle \alpha_s^\vee$ for all $s \in S$. This action is
(reflection) faithful \cite[Proposition 2.1]{Soe3}.


We consider the roots $\Phi := \bigcup_{w \in W} w \cdot \{ \alpha_s \} \subset \hg^*$
and coroots $\Phi^\vee := \bigcup_{w \in W} w \cdot \{ \alpha^\vee_s \} \subset
\hg$. We write $\Phi_+ \subset \Phi$ and $\Phi^\vee_+ \subset \Phi^\vee$ for the positive roots
and coroots. We have $\Phi = \Phi_+ \sqcup -\Phi_+$ and $\Phi^\vee = \Phi^\vee_+
\sqcup -\Phi^\vee_+$.

We write $T$ for the reflections (i.e.~conjugates of $S$) in $W$. We
have bijections
\[
T \simto \Phi_+ : t \mapsto \alpha_t \qquad T \simto \Phi^\vee_+ : t \mapsto \alpha_t^\vee
\]
such that $t(v) = v - \langle v, \alpha_t^\vee \rangle \alpha_t$ for
all $v \in \hg^*$.

Now let $\rho \in \hg^*$ be such that $\langle
\rho,\alpha_s^\vee \rangle > 0$ for all $s \in S$. (Such a $\rho$
exists because the set $\{ \alpha_s^\vee \}$ is linearly
independent.) Then we have
\begin{equation} \label{eq:xs>x}
tx > x \Leftrightarrow \langle x(\rho), \alpha_t^\vee \rangle > 0.
\end{equation}
Now fix $\rho^\vee$ in $\hg$ such that $\langle \alpha_s, \rho^\vee
\rangle > 0$ for all $s \in S$.

\begin{remark}
  The choice of $\rho$ and $\rho^\vee$ subject to the above positivity
  conditions is made arbitrarily and fixed throughout.
    This positivity property is used in a crucial way throughout this
    paper. We do not know if the results of \cite{EW} or this paper 
    are valid for an arbitrary reflection faithful representation of
    $(W,S)$.
\end{remark}

\begin{lem} \label{lem:rho}
  Suppose $x < w$. Then $\langle x(\rho), \rho^\vee \rangle > \langle w(\rho),
  \rho^\vee \rangle$.
\end{lem}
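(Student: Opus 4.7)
The strategy is to reduce to the case of a single Bruhat covering move and carry out a direct calculation using the reflection formula on $\hg^*$.

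First, by the standard chain characterisation of the Bruhat order, if $x < w$ then there exist $x = x_0, x_1, \dots, x_k = w$ with $x_{i+1} = t_i x_i$ for some reflection $t_i \in T$ and $\ell(x_{i+1}) > \ell(x_i)$. So by induction on $k$, it suffices to prove the statement in the case $w = tx$ for a single reflection $t$ with $tx > x$.

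Next, applying the formula $t(v) = v - \langle v, \alpha_t^\vee \rangle \alpha_t$ to $v = x(\rho) \in \hg^*$ and pairing with $\rho^\vee$ gives
\[
\langle x(\rho), \rho^\vee \rangle - \langle (tx)(\rho), \rho^\vee \rangle = \langle x(\rho), \alpha_t^\vee \rangle \cdot \langle \alpha_t, \rho^\vee \rangle.
\]
The first factor on the right is positive by \eqref{eq:xs>x}, since $tx > x$.

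For the second factor, I invoke the standard fact about reflection representations of Coxeter systems: every positive root $\alpha \in \Phi_+$ is a non-negative real linear combination of the simple roots $\{\alpha_s\}_{s \in S}$. (This is part of the Coxeter-theoretic setup underlying the realisation of \cite{Soe3, EW} and is what justifies the decomposition $\Phi = \Phi_+ \sqcup -\Phi_+$ used in \S\ref{h}.) Writing $\alpha_t = \sum_s c_s \alpha_s$ with $c_s \ge 0$ and not all zero, and using the defining positivity $\langle \alpha_s, \rho^\vee \rangle > 0$ for all $s \in S$, one obtains $\langle \alpha_t, \rho^\vee \rangle > 0$.

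Both factors being positive gives $\langle x(\rho), \rho^\vee \rangle > \langle (tx)(\rho), \rho^\vee \rangle$, completing the inductive step and hence the lemma. The only non-trivial input is the positivity of $\alpha_t$ on $\rho^\vee$, which is where the choice of $\rho^\vee$ being dominant (and the non-negativity of roots) plays a role; the rest is a direct manipulation.
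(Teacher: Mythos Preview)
Your proof is correct and follows essentially the same route as the paper: reduce to a single Bruhat step $w = tx > x$, expand $tx(\rho)$ via the reflection formula, and use \eqref{eq:xs>x} together with $\langle \alpha_t, \rho^\vee \rangle > 0$. The only difference is that you spell out why $\langle \alpha_t, \rho^\vee \rangle > 0$ (positive roots are non-negative combinations of simple roots), whereas the paper simply asserts this.
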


\begin{remark} One can view this lemma as saying that the map
\[
W \to \RM : w \mapsto - \langle w\rho,
\rho^\vee \rangle
\]
gives a refinement of the Bruhat order.\end{remark}

\begin{proof} By definition of the Bruhat order we may assume
  without loss of generality that
  $w = tx > x$ for some reflection $t \in T$. Then
\[
tx(\rho) = x(\rho) - \langle x(\rho), \alpha_t^\vee \rangle \alpha_t.
\]
Because $tx > x$ we know that $\langle x(\rho), \alpha_t^\vee \rangle
> 0$. Hence
\[
\langle w(\rho), \rho^\vee \rangle = \langle tx(\rho), \rho^\vee
\rangle = \langle x(\rho), \rho^\vee \rangle - \langle x(\rho),
\alpha_t^\vee \rangle \langle \alpha_t, \rho^\vee \rangle < \langle
x(\rho), \rho^\vee \rangle
\]
because $\langle \alpha_t, \rho^\vee \rangle > 0$. \end{proof}

\subsection{Positivity} \label{sec:pos}

From now on $R$ denotes the regular functions on $\hg$, or equivalently
the symmetric algebra $S(\hg^*)$ of $\hg^*$. We view $R$ as a graded
ring with $\deg \hg^* = 2$. Throughout $Q$ denotes the localisation of
$R$ at the multiplicatively closed subset generated by $\Phi$. In
formulas:
\begin{gather*}
  Q = R[1/\Phi].
\end{gather*}
By functoriality $W$ acts on $R$ and $Q$ via graded automorphisms.

For $s \in S$ we denote by $\partial_s$ the divided difference operator
\[
\partial_s(f) = \frac{f - sf}{\alpha_s}.
\]
Each $\partial_s$ preserves $R$. If $\l \in R$ is of
degree 2 then $\partial_s(\lambda)  = \langle \l, \alpha_s^\vee\rangle$.

We let $A = \RM[z]$ and $K = \RM[z^{\pm 1}]$, graded with $\deg z = 2$.

The map $\l \mapsto \langle \l, \rho^\vee \rangle z$ extends
multiplicatively to a morphism of graded rings
\[
\sigma : R \to A
\]
(``restriction to the line $\RM \rho^\vee \subset \hg$''). This map
(fixed by our choice of $\rho^\vee$) will play an important role
below. Whenever we write $A \otimes_{R} (-)$ we always mean that we view
$A$ as an $R$-module via $\sigma$.

Any homogenous element $f$ of $K$ is of the form $az^m$ for some $a \in
\RM$. We will write $f > 0$, $f < 0$ and say that $az^m$ is \emph{positive,
negative} etc. if $a$ is.

\subsection{The nil Hecke ring} \label{sec:nilhecke}

Let $Q_W$ denote the smash product of 
$Q$ with $W$. That is, $Q_W$ is a free left $Q$-module with basis $\{
\delta_w \; | \; w \in W \}$ and multiplication determined by
\[
(f \delta_x )(g \delta_y) = f(xg) \delta_{xy}.
\]
Inside $Q_{W}$ we consider the elements
\[
D_s := \frac{1}{\alpha_s} (\delta_\id - \delta_s) = (\delta_\id +
\delta_s) \frac{1}{\alpha_s}.
\]
The elements $D_s$ satisfy the following relations:
\begin{gather}
D_s^2 = 0; \label{eq:D^2} \\
\text{the $D_s$ satisfy the braid relations;} \label{eq:braid}\\
D_sf = (sf)D_s + \partial_s(f) \quad \text{for all $f \in Q$.}\label{eq:pol}
\end{gather}
If $\un{y} = st\dots u$ is a reduced
expression for $y \in W$ then, by \eqref{eq:braid},  we obtain well-defined elements
\[
D_y := D_sD_t \dots D_u  \in Q_W.
\]
We define rational functions $e_{x,y}$ for all $x, y$ through the identity:
\[
D_y = \sum e_{x,y}\delta_x.
\]
The rational functions $e_{x,y} \in Q$ are called \emph{equivariant
  multiplicities}. They are homogenous of degree $-2\ell(y)$.

\begin{remark}
The ring $Q_W$ acts naturally on $Q$ via $f\delta_x \cdot g
= fx(g)$. The \emph{nil Hecke ring} \cite{KK} is defined as the subring
$\{ q \in Q_W \; | \; q(R) \subset R \}$. In \cite[Theorem 4.6]{KK} it is shown
that the nil Hecke ring is a free left (or right) $R$-module with basis $\{ D_w
\; | \; w \in W \}$. Kostant and Kumar treat
  Weyl groups of Kac-Moody Lie algebras and take $Q$ to be the field
  of fractions of $R$, but their argument goes
  through in our setting.
\end{remark}

\begin{remark}
  For Kac-Moody groups, the $e_{x,y}$ describe the localisations at
  torus fixed points of the equivariant fundamental classes of
  Schubert varieties, hence their name. These functions were
  introduced by Kostant-Kumar and may be used to detect smoothness and
  rational smoothness \cite{Kumar, Brion} of Schubert varieties, as
  well as $p$-smoothness \cite{JW}.
\end{remark}

If $y's = y$ and $y' < y$ then expanding $D_y = D_{y'}D_s$ one obtains:
\begin{equation}
e_{x,y} = \frac{1}{x(\a_s)}(e_{x,y'} + e_{xs,y'}).\label{eq:xxs}
\end{equation}
The following well-known proposition provides a useful characterisation of equivariant
multiplicities:

\begin{prop} \label{prop:equi}
  The equivariant multiplicities are characterised by the following
  three properties:
  \begin{enumerate}
  \item We have $e_{x,y} = 0$ unless $x \le y$.
\item We have
\[
e_{y,y} = (-1)^{\ell(y)} \prod_{ t \in L_T(y)} \frac{1}{\a_t}
\]
where $L_T(y) = \{ t \in T \; | \; ty < y \}$.
\item Let $\un{y} = s_1 \dots s_m$ denote a reduced expression for
  $y$. Then for all $\l \in \hg^*$ and $x$ we have
\[
(x\l - y\l) e_{x,y} = \sum \langle s_{i+1}
\dots s_m \lambda, \alpha_{s_i}^\vee \rangle e_{x, y_{\hat{i}}}
\]
where the sum on the right hand side runs over those $1 \le i \le m$
such that $\un{y}_{\hat{i}} = s_1 \dots s_{i-1} s_{i+1} \dots
s_m$ is a reduced expression.
  \end{enumerate}
\end{prop}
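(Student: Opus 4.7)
The plan is to verify that the $e_{x,y}$ defined by $D_y = \sum_x e_{x,y}\delta_x$ satisfy properties (1), (2), (3), and then argue these three conditions characterize such a collection uniquely. Properties (1) and (2) are handled by the recursion \eqref{eq:xxs}, while (3) comes from pushing $\lambda$ through $D_y = D_{s_1}\cdots D_{s_m}$ using the commutation rule \eqref{eq:pol}.

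For (1), I would induct on $\ell(y)$. The base $y = \id$ is immediate from $D_\id = \delta_\id$. For $y = y's$ with $y' < y$, the recursion $e_{x,y} = \frac{1}{x(\alpha_s)}(e_{x,y'} + e_{xs,y'})$ combined with the lifting/subword property for the Bruhat order ($x \leq y's$ iff $x \leq y'$ or $xs \leq y'$) forces both terms to vanish whenever $x \not\leq y$. For (2), the same recursion with $x = y$ reduces to $e_{y,y} = \frac{1}{y(\alpha_s)}\,e_{y',y'}$, since $e_{y,y'} = 0$ by (1) and $ys = y'$. The combinatorial input needed is the identity $L_T(y) = L_T(y') \sqcup \{ysy^{-1}\}$ (checked by verifying directly that $ysy^{-1}$ is a left descent of $y$ but not of $y'$, then counting, since $|L_T(w)| = \ell(w)$) together with the sign relation $y(\alpha_s) = -y'(\alpha_s) = -\alpha_{ysy^{-1}}$ (because $\ell(y's) > \ell(y')$ forces $y'(\alpha_s) \in \Phi_+$). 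These convert the factor $1/y(\alpha_s)$ into $-1/\alpha_{ysy^{-1}}$, which is exactly what is needed to pass from the inductive formula for $e_{y',y'}$ to the claimed one for $e_{y,y}$.

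For (3), I would push $\lambda$ from right to left through $D_y = D_{s_1}\cdots D_{s_m}$ using \eqref{eq:pol}; since $\partial_{s_i}(f)$ is a scalar when $f \in \hg^*$, iterating the rule $D_{s_i} f = (s_i f) D_{s_i} + \partial_{s_i}(f)$ yields
\[
D_y\lambda = (y\lambda)D_y + \sum_{i=1}^m \langle s_{i+1}\cdots s_m\lambda,\alpha_{s_i}^\vee\rangle \, D_{s_1}\cdots\widehat{D_{s_i}}\cdots D_{s_m}.
\]
By \eqref{eq:D^2} and \eqref{eq:braid}, any subproduct whose underlying word is not reduced vanishes (apply braid moves until two equal simple generators are adjacent, then use $D_s^2 = 0$), so only indices $i$ with $\un{y}_{\hat i}$ reduced contribute. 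Equating coefficients of $\delta_x$ on the two sides, using $\delta_x\lambda = (x\lambda)\delta_x$, then yields exactly (3).

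Uniqueness is a straightforward induction on $\ell(y)$: property (2) pins down $e_{y,y}$ and (1) handles $x \not\leq y$; for $x < y$ the faithfulness of the $W$-action on $\hg^*$ supplies some $\lambda \in \hg^*$ with $x\lambda \neq y\lambda$, whereupon (3) solves for $e_{x,y}$ in terms of the $e_{x,y_{\hat i}}$ with $\ell(y_{\hat i}) < \ell(y)$, inductively known. The main obstacle I anticipate is the vanishing claim for non-reduced subproducts invoked in (3): this uses the standard fact that any non-reduced word can be transformed, via braid moves, into one containing two consecutive equal letters, at which point $D_s^2 = 0$ applies. Everything else is careful bookkeeping in the nil Hecke ring.
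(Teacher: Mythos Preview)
Your proof is correct. For part (3) and for uniqueness you follow exactly the paper's route: push $\lambda$ through $D_{s_1}\cdots D_{s_m}$ via \eqref{eq:pol}, kill non-reduced subproducts with \eqref{eq:D^2} and \eqref{eq:braid}, and compare coefficients of $\delta_x$; the paper likewise dismisses uniqueness as an obvious induction.

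Where you diverge is in (1) and (2). You argue inductively via the recursion \eqref{eq:xxs}, invoking the lifting property for (1) and the identity $L_T(y)=L_T(y')\sqcup\{y'sy'^{-1}\}$ for (2). The paper is more direct: (1) is declared immediate from the definition (every $\delta_x$ appearing in the expansion of $D_{s_1}\cdots D_{s_m}$ has $x$ a subword of $\un{y}$, hence $x\le y$), and (2) is a one-line computation of the $\delta_y$-coefficient, using that the only subexpression of a reduced $\un{y}$ evaluating to $y$ is $\un{y}$ itself, giving
\[
e_{y,y}\delta_y=(-1)^{\ell(y)}\tfrac{1}{\alpha_{s_1}}\delta_{s_1}\cdots\tfrac{1}{\alpha_{s_m}}\delta_{s_m}=(-1)^{\ell(y)}\prod_{i}\tfrac{1}{s_1\cdots s_{i-1}(\alpha_{s_i})}\,\delta_y.
\]
Your approach trades this direct expansion for standard Bruhat-order combinatorics; both work, but the paper's avoids the small bookkeeping you need for $L_T(y')\subset L_T(y)$ (which your ``counting'' sketch uses without quite stating --- it follows at once from the reduced-expression description of $L_T$).
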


\begin{proof}
  It is obvious that (1), (2) and (3) provide an inductive recipe for the
  computation of $e_{x,y}$. It remains to show that the claimed
  properties hold. Now (1) is immediate from the definition. For
  (2) consider a subexpression $\un{u}=t_1 \dots t_m$
    of $\un{y} =
  s_1 s_2 \dots s_m$ such that $u = t_1 \dots t_m = y$. Then
  clearly $t_i = s_i$ for all $i$ because $\un{y}$ is reduced. Hence
\begin{gather*}
e_{y,y}\delta_y = (-1)^{\ell(y)} \frac{1}{\alpha_{s_1}}\delta_{s_1} 
\frac{1}{\alpha_{s_2}}\delta_{s_2} \dots 
\frac{1}{\alpha_{s_m}}\delta_{s_m} =  \\
(-1)^{\ell(y)} \prod_{i = 1}^m \frac{1}{s_{1} \dots s_{i-1}(\alpha_i)}\delta_y 
= (-1)^{\ell(y)} \prod_{ t \in L_T(y)} \frac{1}{\a_t}\delta_y 
\end{gather*}
and (2) follows.

For (3) we can repeatedly apply \eqref{eq:pol} (using that
$\partial_s(\l) = \langle \l, \alpha_s^\vee \rangle$) to obtain the
equality (where $\;\widehat{}\;$ denotes omission)
\begin{equation} \label{eq:expandDs}
D_{s_1} \dots D_{s_m} \l  - (y\l) D_{s_1} \dots D_{s_m} = \sum_{i =
  1}^m \langle s_{i+1} \dots s_m \lambda, \alpha_{s_i}^\vee \rangle
D_{s_1} \dots \widehat{D_{s_i}} \dots D_{s_m}.
\end{equation}
If $s_1 \dots \hat{s_i} \dots s_m$ is not a reduced expression then $D_{s_1}
\dots \widehat{D_{s_i}} \dots D_{s_m} = 0$ by \eqref{eq:D^2} and
\eqref{eq:braid}. Now writing both sides of \eqref{eq:expandDs} in terms
of the basis $\delta_x$ gives the identity in (3).
\end{proof}

Recall the homomorphism $\s : R \to \RM[z]$ from \S \ref{sec:pos}.
The following positivity property of equivariant multiplicities will later
fix a sign ambiguity in the Hodge-Riemann bilinear relations:

\begin{cor} If $x \le y$ then $(-1)^{\ell(x)} \sigma(e_{x,y})> 0$.  \label{cor:HRamb}
\end{cor}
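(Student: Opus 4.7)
The plan is to induct on $\ell(y)$. The base case $\ell(y)=0$ forces $x=y=e$ and gives $\sigma(e_{e,e})=1>0$, so I turn to the inductive step. When $x=y$, part~(2) of Proposition~\ref{prop:equi} gives $(-1)^{\ell(y)}e_{y,y}=\prod_{t\in L_T(y)}\alpha_t^{-1}$; every $\alpha_t$ is a positive root (under $T\simto\Phi_+$), positive roots are non-negative combinations of simple roots, and $\langle\alpha_s,\rho^\vee\rangle>0$ for all $s\in S$, so each $\sigma(\alpha_t)$ is a positive multiple of $z$, and the product of their inverses is positive.

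For $x<y$ I would use part~(3) of Proposition~\ref{prop:equi} with $\lambda=\rho$. Fixing a reduced expression $\underline y=s_1\cdots s_m$, multiplying the identity by $(-1)^{\ell(x)}$ and applying $\sigma$ yields
\[
\sigma((x-y)\rho)\cdot\bigl((-1)^{\ell(x)}\sigma(e_{x,y})\bigr)=\sum_i\langle(s_{i+1}\cdots s_m)\rho,\alpha_{s_i}^\vee\rangle\cdot\bigl((-1)^{\ell(x)}\sigma(e_{x,y_{\hat i}})\bigr),
\]
the sum ranging over those $i$ for which $\underline y_{\hat i}$ is reduced. I would then verify positivity factor by factor on the right. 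Lemma~\ref{lem:rho} gives $\sigma((x-y)\rho)>0$ (a positive multiple of $z$). Since $\underline y$ is reduced each suffix $s_{i+1}\cdots s_m$ is reduced, and $s_i\cdot(s_{i+1}\cdots s_m)$ extends this to a reduced expression of length $m-i+1$, so $s_i(s_{i+1}\cdots s_m)>s_{i+1}\cdots s_m$; by \eqref{eq:xs>x} the coefficient $\langle(s_{i+1}\cdots s_m)\rho,\alpha_{s_i}^\vee\rangle$ is strictly positive. The inductive hypothesis applied to each $y_{\hat i}$ (of length $m-1$) gives $(-1)^{\ell(x)}\sigma(e_{x,y_{\hat i}})\ge 0$, with strict inequality precisely when $x\le y_{\hat i}$.

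The remaining point is to produce at least one strictly positive term on the right. For this I would invoke the gradedness of the Bruhat order: since $x<y$ there exists $y''$ with $x\le y''<y$ and $\ell(y'')=\ell(y)-1$, and by the subword property $y''=y_{\hat i}$ for some $i$ (with $\underline y_{\hat i}$ then automatically reduced by length). The corresponding term is strictly positive, so the entire right-hand side is positive, and dividing by the positive left-hand coefficient gives $(-1)^{\ell(x)}\sigma(e_{x,y})>0$. The main obstacle is really a matter of picking the right recursion: the ostensibly more direct identity~\eqref{eq:xxs} becomes, after multiplying by $(-1)^{\ell(x)}$, a \emph{difference} $x(\alpha_s)^{-1}(\tilde e_{x,y'}-\tilde e_{xs,y'})$ of equivariant multiplicities, requiring a comparison inequality which fails in general; property~(3), by contrast, packages all the signs into a sum with uniformly positive coefficients and reduces the problem to the gradedness of Bruhat order.
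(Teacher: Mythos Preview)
Your proof is correct and follows the same route as the paper: induction using Proposition~\ref{prop:equi}(3) with $\lambda=\rho$, positivity of the coefficients via \eqref{eq:xs>x}, and positivity of $\sigma((x-y)\rho)$ via Lemma~\ref{lem:rho}. You are in fact slightly more careful than the paper in one place: the paper simply asserts that ``induction now gives that the right hand side is positive,'' whereas you explicitly verify that at least one summand is strictly positive by invoking the gradedness of the Bruhat order to produce an index $i$ with $x\le y_{\hat i}$ and $\underline{y}_{\hat i}$ reduced.
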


\begin{proof} We fix $x$ and induct on $\ell(y) - \ell(x)$. The 
  base case $x = y$ follows from Proposition \ref{prop:equi}(2)
  because all $\alpha_t$ appearing are positive, and hence $\s(\alpha_t) > 0$.
  
Now let $x < y$ and assume by induction that the proposition is
known for all $e_{x,y'}$ with $\ell(y') < \ell(y)$. Applying
Proposition \ref{prop:equi}(3) with $\l = \rho$ (our fixed element with $\langle
\rho, \a_s^\vee \rangle > 0$ for all $s \in S$) and multiplying by
$(-1)^{\ell(x)}$ we get the identity
\[
(-1)^{\ell(x)}\sigma(x\rho - y\rho)\sigma(e_{x,y}) = (-1)^{\ell(x)}\sum
\langle s_{i+1} \dots s_m \rho, \alpha_{s_i}^\vee \rangle \sigma (e_{x,y_{\hat{i}}}).
\]
Now by \eqref{eq:xs>x} the $\langle s_{i+1} \dots s_m \rho,
\alpha_{s_i}^\vee \rangle$ are all strictly positive, and by Lemma
\ref{lem:rho}, $\sigma(x\rho - y\rho)$ is strictly
positive. Induction now gives that the right hand side is
positive and the corollary follows.
\end{proof}

\section{Algebra around the fundamental example} \label{fea}

\subsection{Hard Lefschetz} \label{def:hl} Recall that $A = \RM[z]$
and $K = \RM[z^{\pm 1}]$, graded with $\deg z = 2$.

Let $N$ be a free finitely generated graded $A$-module generated in
degrees $\le 0$. We set $N_K := K \otimes_A N$ and assume that $N_K$
is equipped with a symmetric non-degenerate graded form
\[
\langle -, - \rangle: N_K \times N_K \to K.
\]

We say that $N$ \emph{satisfies hard Lefschetz} if the restriction
of $\langle -, - \rangle$ to $(\deg_{\le d} N)_K := K \otimes_A \deg_{\le d} N$ is non-degenerate for
all $d$.

It is immediate that $N$ satisfies hard Lefschetz if and only if
any of the following statements holds for all $d \le 0$:
\begin{enumerate}
\item the determinant of the Gram matrix of
the restriction of $\langle -, - \rangle$ to $\deg_{\le d} N$ is
non-zero ($\Leftrightarrow$ invertible in $K$);
\item the determinant of the Gram matrix of
the restriction of $\langle -, - \rangle$ to $N^d$ is
non-zero ($\Leftrightarrow$ invertible in $K$);
\item $\langle m, \deg_{\le d} N\rangle = 0$ for some $m
  \in \deg_{\le d} N$ implies $m = 0$;
\item the determinant of the Gram matrix of the form $\langle n, z^{-d}n' \rangle$ on
  $\deg_{\le d} N$ is non-zero ($\Leftrightarrow$ invertible in $K$).
\end{enumerate}

\begin{remark} \label{rem:P1hl}
  Condition (4) probably seems like a strange reformulation at this
  point. We have included it here, because it is this condition that
  will generalise to $\PM^1$-sheaves in the next section.
\end{remark}

Because $N$ is generated in degrees $\le 0$, the dual lattice
  $N^! \subset N_K$ is
  generated in degrees $\ge 0$ and hence $N^! \subset N$. Set
\[
 H := N/(zN^!).
\]
The following lemma (whose proof is an exercise) explains the terminology:

\begin{lem} \label{lem:zhl}
  $N$ satisfies hard Lefschetz if and only if for all $d \ge 0$
  multiplication  by $z^d : H^{-d} \to H^d$ is an isomorphism.
\end{lem}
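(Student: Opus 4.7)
The plan is to construct, for each $d \ge 0$, a perfect pairing $H^{-d} \times H^d \to \RM$ under which the map $z^d : H^{-d} \to H^d$ corresponds to a Lefschetz-type bilinear form whose non-degeneracy is criterion (2) in the definition of hard Lefschetz for $N$.

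First I would observe that, since $N^!$ is generated in degrees $\ge 0$, we have $(N^!)^{-d-2} = 0$ for every $d \ge 0$, so $H^{-d} = N^{-d}$. For $n \in N^{-d}$ and $n' \in N^d$ the scalar $\langle n, n' \rangle$ lies in $K^0 = \RM$, giving a candidate pairing $N^{-d} \times N^d \to \RM$. I would then check that it descends to $N^{-d} \times H^d$: if $n' = zm$ with $m \in (N^!)^{d-2}$, then by the very definition of the dual lattice $\langle n, m \rangle$ lies in $A$, and being homogeneous of degree $-2$ it must lie in $A^{-2} = 0$.

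The main work is to show that this descended pairing is perfect. The dimension count $\dim_{\RM} H^{-d} = \dim_{\RM} H^d$ should follow from \eqref{eq:pbar}: since $N$ and $N^!$ have conjugate graded ranks, a direct combinatorial identity yields the equality. Next, because $N$ is generated in degrees $\le 0$, writing $N = \bigoplus A[a_i]$ with $a_i \ge 0$, one sees that $N^d = (N_K)^d$ for every $d \ge 0$ (the subspace $A^{d+a_i}$ coincides with the ambient $K^{d+a_i}$ whenever $d + a_i \ge 0$). Combined with non-degeneracy of the form on $N_K$, this shows that every linear functional on $N^{-d}$ extends to a functional on $(N_K)^{-d}$ represented by some element of $(N_K)^d = N^d$. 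The dimension count then forces the induced map $H^d \to (H^{-d})^{\ast}$ to be an isomorphism.

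With this perfect pairing in hand, $z^d : H^{-d} \to H^d$ is an isomorphism if and only if the bilinear form $(n, n') := \langle n, z^d n' \rangle$ on $H^{-d} = N^{-d}$ is non-degenerate, by standard linear algebra. Its Gram matrix is $z^d$ times the Gram matrix of $\langle -, - \rangle|_{N^{-d}}$ (whose entries naturally lie in $K^{-2d}$), so non-degeneracy is equivalent to the determinant of $\langle -, - \rangle|_{N^{-d}}$ being a nonzero element of $K$. This is exactly criterion (2) in the definition of hard Lefschetz applied at index $-d$, and as $d$ ranges over $\{0, 1, 2, \dots\}$ we cover precisely all the relevant indices. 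The main obstacle is the perfectness of the descended pairing, which requires simultaneously using the dual-lattice description of $N^!$ and the assumption that $N$ is generated in non-positive degrees.
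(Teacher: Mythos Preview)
Your argument is correct. The paper leaves this lemma as an exercise, so there is no proof to compare against; your write-up supplies exactly the kind of argument the author had in mind, reducing the statement to criterion~(2) via the perfect pairing $H^{-d}\times H^d\to\RM$ induced by the form.
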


\subsection{Hodge-Riemann} \label{def:hr}

Let $N$ be as in the previous section and assume that $N$ satisfies
hard Lefschetz. For $d \le 0$ define the \emph{primitive subspaces}: 
\[
P^d := N^d \cap (\deg_{\le d-1} N)^\perp \subset N^d.
\]

The following is an easy application of Gram-Schmidt orthogonalisation:

\begin{lem} \label{lem:prim1}
We have an orthogonal decomposition 
  $N = \bigoplus_{d \le 0} A \cdot P^d$.
\end{lem}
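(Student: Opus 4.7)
My plan is to prove, by induction on $d$, that
$$\deg_{\le d} N = \bigoplus_{d' \le d} A \cdot P^{d'}$$
as an orthogonal direct sum of graded $A$-submodules of $N$; the lemma will then be the case $d = 0$, since $N$ is generated in degrees $\le 0$. The base case is vacuous (for $d$ below the minimal degree of $N$).

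\textbf{Inductive step.} Assuming the decomposition for $d-1$, it suffices to establish $\deg_{\le d} N = \deg_{\le d-1} N \oplus A \cdot P^d$ orthogonally. Orthogonality is immediate from $A$-bilinearity of the form and the defining property of $P^d$. For the decomposition itself, I would appeal to hard Lefschetz, which ensures that the form is non-degenerate on both $(\deg_{\le d-1} N)_K$ and $(\deg_{\le d} N)_K$ and therefore produces an orthogonal $K$-splitting
$$(\deg_{\le d} N)_K = (\deg_{\le d-1} N)_K \oplus W_d, \qquad W_d := (\deg_{\le d-1} N)_K^\perp \cap (\deg_{\le d} N)_K.$$
A direct inspection shows $(\deg_{\le d} N)_K^d = N^d$ and that $(\deg_{\le d} N)_K / (\deg_{\le d-1} N)_K$ is a free $K$-module generated in degree $d$; hence $W_d$ is free over $K$, generated in degree $d$, with $W_d^d \subset N^d$.

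\textbf{Identifying the complement and descending.} The degree-$d$ piece of $\deg_{\le d-1} N$ is $zN^{d-2}$. The key step is to show that the natural map $\pi : P^d \hookrightarrow N^d \twoheadrightarrow N^d / zN^{d-2}$ is an isomorphism, which then identifies $P^d$ with $W_d^d$. For injectivity: if $zm \in P^d$ with $m \in N^{d-2}$ then $z \langle m, x\rangle = 0$ in $K$ for every $x \in \deg_{\le d-1} N$, so $m \in (\deg_{\le d-1} N)_K^\perp$; combined with $m \in N^{d-2} \subset \deg_{\le d-1} N$ and non-degeneracy (hard Lefschetz), this forces $m = 0$. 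Surjectivity then follows from a rank count against $W_d$. Finally, to descend the $K$-splitting to $A$: given $n \in N^d$, write $n = n_1 + n_2$ in $N_K$ with $n_2 \in P^d$; then $n_1 = n - n_2 \in N$, and because $N \cap (\deg_{\le d-1} N)_K = \deg_{\le d-1} N$ (evident from a homogeneous $A$-basis of $N$ compatible with the filtration), one has $n_1 \in \deg_{\le d-1} N$. Thus $N^d \subset \deg_{\le d-1} N + P^d$, completing the induction. I expect the principal obstacle to be the injectivity-of-$\pi$ step, where hard Lefschetz really enters; once that is in place the rest is essentially a formal Gram--Schmidt argument, supplemented by the integrality check in the last sentence.
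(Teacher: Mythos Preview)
Your proof is correct and is precisely the Gram--Schmidt orthogonalisation that the paper invokes without details. The only place where you are slightly indirect is in establishing $P^d = W_d^d$: this identity is immediate from the definitions (since $W_d^d = N^d \cap (\deg_{\le d-1} N)_K^\perp = N^d \cap (\deg_{\le d-1} N)^\perp = P^d$), which would let you bypass the separate injectivity argument for $\pi$, but what you wrote is fine.
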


The following explains the ``primitive'' terminology:

\begin{lem}  \label{lem:prim}
We have a decomposition (as $A$-modules):
\[
H = \bigoplus_{d \le 0} A/(z^{-d+1}) \otimes P^d.
\]
\end{lem}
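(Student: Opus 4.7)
The plan is to derive the decomposition of $H$ directly from the orthogonal decomposition of $N$ provided by Lemma \ref{lem:prim1}. The strategy is to compute the dual lattice $N^!$ summand-by-summand, then take the quotient by $z N^!$ piece-by-piece.

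First I would invoke Lemma \ref{lem:prim1} to write $N = \bigoplus_{d \le 0} A \cdot P^d$ as an orthogonal direct sum. Orthogonality of the decomposition implies that a dual element on one summand pairs trivially with the others, and so the dual lattice decomposes as
\[
N^! = \bigoplus_{d \le 0} (A \cdot P^d)^!,
\]
where $(A \cdot P^d)^!$ is the dual lattice of $A \cdot P^d$ inside $(A \cdot P^d)_K = K \otimes_{\RM} P^d$.

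Next I would compute each factor $(A \cdot P^d)^!$ explicitly. Because the form is graded and $P^d$ lies in degree $d$, the restriction of $\langle -, - \rangle$ to $P^d$ lands in $K^{2d} = \RM \cdot z^d$, so it takes the form $\langle p, p' \rangle = \psi_d(p, p') z^d$ for a symmetric bilinear form $\psi_d : P^d \times P^d \to \RM$. Hard Lefschetz (characterisation (4) of \S\ref{def:hl}), combined with the orthogonality of the decomposition, forces $\psi_d$ to be non-degenerate. Choosing a basis of $P^d$ and writing out the Gram matrix as $B_d z^d$ with $B_d \in \mathrm{GL}(P^d)$, a straightforward dual basis calculation gives
\[
(A \cdot P^d)^! = z^{-d} A \cdot P^d \subset (A \cdot P^d)_K.
\]

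Substituting back, $zN^! = \bigoplus_{d \le 0} z^{-d+1} A \cdot P^d$, and (noting $-d+1 \ge 1$ so this is indeed contained in $A \cdot P^d$) taking the quotient yields
\[
H \;=\; N/zN^! \;=\; \bigoplus_{d \le 0} (A \cdot P^d)\big/(z^{-d+1} A \cdot P^d) \;=\; \bigoplus_{d \le 0} A/(z^{-d+1}) \otimes_{\RM} P^d,
\]
as desired. The only step requiring any care is ensuring that $\psi_d$ is really non-degenerate over $\RM$ (not merely over $K$), but this is immediate since its Gram matrix has real entries. Everything else is a direct application of Lemma \ref{lem:prim1} and elementary linear algebra.
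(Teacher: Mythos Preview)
Your proof is correct and is precisely the argument the paper has in mind: the paper states Lemma~\ref{lem:prim} without proof, treating it as an immediate consequence of Lemma~\ref{lem:prim1}, and your computation of $N^!$ summand-by-summand via the orthogonal decomposition is exactly how one fills in the details. One tiny remark: citing characterisation~(2) of \S\ref{def:hl} (non-degeneracy of the form on $N^d$) would be slightly more direct than~(4) for concluding that $\psi_d$ is non-degenerate on $P^d$, but either works once combined with orthogonality.
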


The restriction of our form to $N^d$ for $d \le 0$ takes values in
$K^{2d} = \RM z^d$ for degree reasons. Hence the \emph{Lefschetz
  form} $(n,n') \mapsto z^{-d}\langle
n, n' \rangle$ on $N^d$ takes values in $\RM$.

We say that $N$ \emph{satisfies HR} (short for ``satisfies the
Hodge-Riemann bilinear relations'') if:
\begin{enumerate}
\item $N$ is parity (i.e. $N$
vanishes in either odd or even degree);
\item if $\min$ denotes the minimal non-zero degree of $N$ then there exists $\e
  \in \{ \pm 1 \}$ such that, for all $d = \min + 2i \le 0$, the
  Lefschetz form
  $z^{-d}\langle p, p' \rangle$ on $P^{d}$ is $\e(-1)^i$-definite.
\end{enumerate}

  \begin{lem}
    \label{lem:HRsummand}
Suppose that $N$ satisfies HR and that $N = N' \oplus N''$ is an
orthogonal decomposition. Then $N'$ and $N''$ satisfy HR.
  \end{lem}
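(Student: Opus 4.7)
The plan is to check each ingredient in the definition of HR (parity, hard Lefschetz, the sign pattern on primitives) for $N'$ and $N''$ in turn, relying on the single structural observation that the orthogonal decomposition $N = N' \oplus N''$ is by graded $A$-submodules and is therefore compatible with every construction in sight.

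First, both $N'$ and $N''$ are parity, since they are graded submodules of the parity module $N$. For hard Lefschetz, the orthogonal decomposition restricts to an orthogonal decomposition of the degree filtration
\[
\deg_{\le d} N = \deg_{\le d} N' \oplus \deg_{\le d} N''
\]
(immediate from the fact that $N = N' \oplus N''$ as graded $A$-modules). Since the form on $(\deg_{\le d}N)_K$ is non-degenerate and the two summands are orthogonal, its restriction to each summand is non-degenerate, so $N'$ and $N''$ satisfy hard Lefschetz. The same observation shows that the primitive subspaces split along the decomposition:
\[
P^d(N) \;=\; P^d(N') \oplus P^d(N''),
\]
and this splitting is orthogonal for the Lefschetz form $(n,n') \mapsto z^{-d}\langle n, n'\rangle$ on $P^d(N)$.

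For the sign condition, I would use that a definite form on a direct sum remains definite, with the same sign, on each summand. By hypothesis the Lefschetz form on $P^d(N)$ is $\varepsilon(-1)^{(d-\min_N)/2}$-definite; hence its restriction to $P^d(N')$ has exactly the same sign. Assuming $N' \ne 0$ (otherwise HR holds vacuously), let $\min_{N'}$ denote the minimal non-zero degree of $N'$; since $N'$ has the same parity as $N$, the integer $(\min_{N'}-\min_N)/2$ is well-defined, and setting
\[
\varepsilon' := \varepsilon \cdot (-1)^{(\min_{N'}-\min_N)/2}
\]
rewrites the sign as $\varepsilon' (-1)^{(d-\min_{N'})/2}$, which is the form required by the definition of HR for $N'$. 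The same argument applied to $N''$ concludes the proof.

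There is no real obstacle here: the content of the lemma is that the three structural ingredients of HR (the grading, the degree filtration and perpendicular, and the definiteness of the Lefschetz form) all behave well under an orthogonal decomposition by graded submodules, together with the small bookkeeping of how the sign constant $\varepsilon$ must be adjusted when the minimal non-zero degree of a summand is larger than that of $N$.
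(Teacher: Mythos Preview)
Your proof is correct and follows essentially the same approach as the paper: both arguments hinge on the observation that the primitive subspaces split as $P^d(N) = P^d(N') \oplus P^d(N'')$ along the orthogonal decomposition, and then use that the restriction of a definite form to a subspace is definite of the same sign. Your version is more explicit than the paper's (which compresses the hard Lefschetz check into the remark that the decomposition refines Lemma~\ref{lem:prim1}), and your careful bookkeeping with $\varepsilon'$ to account for a possible shift in the minimal degree is a detail the paper leaves implicit.
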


  \begin{proof}
    Fix $d \le 0$ and $p \in P^d$. Then we can write $p = p' + p''$
    with $p' \in N'$, $p'' \in N''$. Because $p$ is primitive and $N'$
    and $N''$ are orthogonal
\[
0 = \langle p, \deg_{< d} N' \rangle = \langle p', \deg_{<d }N'
\rangle = \langle p', \deg_{<d} N \rangle
\]
and hence $p' \in P^d$. Similarly, $p'' \in P^d$. Hence our
decomposition $N = N' \oplus N''$ induces a refinement of the
decomposition in Lemma \ref{lem:prim1} and the result follows, because
the restriction of a definite form to a subspace is definite of the
same sign.
  \end{proof}

 Suppose that $N$ is parity and generated in degrees $\le 0$, and that
 $\min$ denotes its minimal non-zero degree. Then we
  can write its graded dimension as $v^{\min}f(v^2)$ for some $f \in
  \ZM_{\ge 0}[v]$. 

\begin{lem} \label{lem:signs}
  $N$ satisfies HR if and
  only if there exists $\e \in \{ \pm 1 \}$ such that for all $i \ge
  0$ with $d =
  \min + 2i \le 0$ the form $\langle z^{-d}x, y \rangle$ on 
  $N^{d}$ has signature $\e(\tau_{\le i}f)(-1)$ (by definition $\tau_{\le d}(\sum \a_jv^j) = \sum_{j \le
    d}\a_jv^j$).
\end{lem}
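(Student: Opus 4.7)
The plan is to reduce everything to the primitive orthogonal decomposition of Lemma \ref{lem:prim1} and then track signatures summand-by-summand.

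First I would unpack the combinatorics. Lemma \ref{lem:prim1} gives the orthogonal decomposition $N=\bigoplus_{e\le 0}A\cdot P^e$, and since $(A\cdot P^e)^d = z^{(d-e)/2}P^e$ whenever $e\le d$ with $e\equiv d\pmod 2$, for $d=\min+2i\le 0$ we obtain an orthogonal decomposition
\[
N^d \;=\; \bigoplus_{j=0}^{i} z^{\,i-j}P^{\min+2j}.
\]
Multiplication by $z^{i-j}$ is an $\RM$-linear isomorphism $P^{\min+2j}\simto z^{i-j}P^{\min+2j}$, and the computation $z^{-d}\langle z^{i-j}p,z^{i-j}p'\rangle = z^{-(\min+2j)}\langle p,p'\rangle$ shows that this map intertwines the Lefschetz form on $P^{\min+2j}$ with the restriction of the Lefschetz form on $N^d$. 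In particular the decomposition above is orthogonal for the Lefschetz form, and the signatures add. I would also observe that, since $N/zN=\bigoplus_{e\le 0}P^e$ as a graded vector space, the graded rank of $N$ equals $\sum_{j\ge 0}(\dim P^{\min+2j})v^{\min+2j}$, so setting $p_j:=\dim P^{\min+2j}$ we have $f(v)=\sum_{j\ge 0}p_j v^j$ and hence $(\tau_{\le i}f)(-1)=\sum_{j=0}^{i}(-1)^j p_j$.

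For the forward direction, assume HR. Then the Lefschetz form on $P^{\min+2j}$ is $\e(-1)^j$-definite of dimension $p_j$, hence has signature $\e(-1)^j p_j$. Summing over the orthogonal summands of $N^d$ gives signature $\e\sum_{j=0}^{i}(-1)^j p_j=\e(\tau_{\le i}f)(-1)$, which is the desired identity.

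The converse goes by induction on $i$. For $i=0$, we have $N^{\min}=P^{\min}$ and the hypothesis asserts the Lefschetz form has signature $\e p_0 = \pm \dim P^{\min}$; a symmetric form of extremal signature is automatically definite of the corresponding sign, so HR holds at this step and fixes $\e$. For the inductive step, assume the Lefschetz form on $P^{\min+2j}$ is $\e(-1)^j$-definite for all $j<i$. Then the signature contribution to $N^{\min+2i}$ from the summands $z^{i-j}P^{\min+2j}$ with $j<i$ is $\e\sum_{j=0}^{i-1}(-1)^j p_j$. The hypothesis pins the total signature at $\e(\tau_{\le i}f)(-1)=\e\sum_{j=0}^{i}(-1)^j p_j$, so subtracting forces the signature on $P^{\min+2i}$ to equal $\e(-1)^i p_i = \pm\dim P^{\min+2i}$, and definiteness of the correct sign follows again.

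There is no real obstacle here; the content is entirely bookkeeping via the orthogonal decomposition, and the only mild subtlety is the sign calculation $z^{-d}\langle z^{i-j}p,z^{i-j}p'\rangle = z^{-(\min+2j)}\langle p,p'\rangle$ which shows that the Lefschetz form on $N^d$ decomposes as an orthogonal sum of Lefschetz forms on the $P^{\min+2j}$ without any scalar distortion.
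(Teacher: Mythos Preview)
Your proof is correct and follows essentially the same approach as the paper: both use the orthogonal primitive decomposition of Lemma~\ref{lem:prim1} in each degree $d$ and the fact that $z:N^{d-2}\to N^d$ is an isometry for the Lefschetz forms. The paper's proof is terse (``fixing the signature of the Lefschetz forms on $N^d$ for all $d\le 0$ is equivalent to fixing the signature on $P^d$ for all $d\le 0$''), whereas you spell out the signature bookkeeping and the induction for the converse direction, including the observation that extremal signature forces definiteness.
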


\begin{proof}
  Let $d = \min + 2i \le 0$. The decomposition in Lemma
  \ref{lem:prim1} gives a decomposition
\[
N^d = z^iP^{\min} \oplus \dots \oplus zP^{d-2} \oplus P^d.
\]
This decomposition is orthogonal
for Lefschetz forms and $z : N^{d-2} \to N^d$ is an
isometry. The lemma now follows: fixing the signature of the
Lefschetz forms on $N^d$ for all $d \le 0$ is equivalent to fixing
the signature on $P^d$ for all $d \le 0$.
\end{proof}

\subsection{Weak Lefschetz} Let $N_K$ and $N'_K$ be two finitely
generated free graded $K$-modules equipped with non-degenerate
symmetric forms $\langle -, - \rangle$ and $\langle -, -
\rangle'$. Let $N \subset N_K$ and $N' \subset N'_K$ be lattices
generated in degrees $\le 0$.

The following proposition provides a useful tool for establishing hard
Lefschetz inductively (it is essentially a restatement of \cite[Lemma
2.3]{EW}):

\begin{prop} \label{prop:wL}
  (``weak Lefschetz substitute'')
Suppose that we have maps $d : N \to N'[1]$, $d' : N' \to N[1]$ such that:
\begin{enumerate}
\item $d, d'$ are adjoint (i.e. $\langle d(n), n' \rangle' =
    \langle n, d'(n') \rangle$ for all $n \in N$, $n' \in N'$);
\item $d' \circ d$ is equal to multiplication by $0 \ne \b \in A$.
\end{enumerate}
Then if $N'$ satisfies HR then $N$ satisfies hard Lefschetz.
\end{prop}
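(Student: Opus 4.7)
The plan is to translate hard Lefschetz on $N$ into a non-degeneracy statement about the form on $N'$ restricted to the image of $d$, and then invoke HR on $N'$.

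Combining adjointness with $d' \circ d = \beta$ immediately yields the isometry identity
\[
\beta \langle m_1, m_2 \rangle \;=\; \langle d(m_1), d(m_2) \rangle' \qquad (m_1, m_2 \in N).
\]
Since $\beta \in A$ is nonzero, this forces $d$ to be injective. Both $d$ and $d'$ shift degree by $+1$ as maps $N \to N'$ and $N' \to N$, so $\beta = d' \circ d$ has degree $2$, and we may write $\beta = c z$ for some $c \in \RM^\times$. For $m_1, m_2 \in N^i$ (using $i$ for the degree to avoid clashing with the map $d$), dividing the isometry identity by $z^{i+1}$ converts it into an identity of Lefschetz forms:
\[
c\, L_i(m_1, m_2) \;=\; L'_{i+1}(d(m_1), d(m_2)).
\]
Thus $L_i$ on $N^i$ is, up to the scalar $c$, the pullback of $L'_{i+1}$ under the injective map $d: N^i \hookrightarrow (N')^{i+1}$. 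Hard Lefschetz on $N$ (equivalent to non-degeneracy of $L_i$ on $N^i$ for all $i \le 0$, by characterisation (2) in \S\ref{def:hl}) is therefore equivalent to non-degeneracy of $L'_{i+1}$ restricted to the subspace $d(N^i) \subset (N')^{i+1}$ for every $i \le 0$.

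To establish this non-degeneracy, I would invoke HR on $N'$. The primitive decomposition
\[
(N')^{i+1} \;=\; \bigoplus_{k \ge 0} z^k P^{i+1-2k}
\]
(Lemma \ref{lem:prim1}) is $L'_{i+1}$-orthogonal, and on each summand the Lefschetz form is definite with a sign controlled by $\epsilon$ and $k$ (by the HR hypothesis on $N'$, via Lemma \ref{lem:signs}). The adjoint $d'$ also contributes useful structure: a direct computation using $d'd=\beta$ and adjointness shows that $\Pi := \beta^{-1} d d'$ is a self-adjoint idempotent on $N'_K$, whose image is $d(N_K)$ and whose kernel is $\ker d'$, giving an orthogonal decomposition $N'_K = d(N_K) \oplus \ker d'$. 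These two structures together must be used to rule out the existence of any nonzero $d(m) \in d(N^i)$ that is $L'_{i+1}$-orthogonal to the whole of $d(N^i)$.

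The main obstacle is precisely this last step: the restriction of a non-degenerate but indefinite form to a subspace can perfectly well be degenerate, so HR on $N'$ is genuinely needed and hard Lefschetz on $N'$ alone would not suffice. This is where the ``weak Lefschetz substitute'' argument of \cite[Lemma~2.3]{EW} does its work, combining the signed primitive decomposition with the compatibility between $d$, $d'$ and the degree filtrations to produce a contradiction from a hypothetical isotropic vector in $d(N^i)$.
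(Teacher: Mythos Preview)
Your setup is correct: the isometry identity $\beta\langle m_1,m_2\rangle = \langle d(m_1),d(m_2)\rangle'$, the observation $\beta = cz$, and the reduction of hard Lefschetz on $N$ to non-degeneracy of $L'_{i+1}$ on the subspace $d(N^i)\subset (N')^{i+1}$ are all valid. But your final paragraph is not a proof: you correctly identify the obstacle (an indefinite form can degenerate on a subspace) and then defer to \cite{EW} rather than resolving it. The self-adjoint idempotent $\Pi$ you introduce does not help here, because the orthogonal splitting $N'_K = d(N_K)\oplus\ker d'$ is over $K$ and carries no information about how $d(N^i)$ sits relative to the primitive decomposition of $(N')^{i+1}$.

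The missing step is short, and the key is to use characterisation~(3) of \S\ref{def:hl} rather than~(2). Suppose hard Lefschetz fails; then there is $0\ne m\in N^i$ with $i<0$ and $\langle m,\deg_{\le i}N\rangle = 0$. For any $m'\in\deg_{\le i-1}N'$ adjointness gives $\langle d(m),m'\rangle' = \langle m,d'(m')\rangle$, and $d'(m')\in\deg_{\le i}N$ since $d'$ raises degree by one; hence $d(m)\perp\deg_{\le i-1}N'$. Parity of $N'$ (part of HR) kills $(N')^i$, so in fact $d(m)\in P^{i+1}$. Now HR gives $\langle d(m),d(m)\rangle'\ne 0$, i.e.\ $\beta\langle m,m\rangle\ne 0$, contradicting the assumption on $m$.

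The point is that characterisation~(3) hands you orthogonality of $m$ to all of $\deg_{\le i}N$, not just to $N^i$; via adjointness and the degree shift this is exactly what forces $d(m)$ to be \emph{primitive}, where HR applies directly. Your formulation via~(2) only yields $d(m)\perp d(N^i)$, which, as you yourself observe, is too weak on its own.
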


\begin{proof}
First note that $d$ is injective by (2). Now assume for contradiction that $\langle -, - \rangle$ does not
  satisfy hard Lefschetz. In other words, there exists $0 \ne m \in N$
  of
  degree $i \le 0$ such that $\langle m,  \deg_{\le i} N \rangle =
  0$. By assumption $\deg_{\le 0}N = N$ and $\langle -,
    -\rangle$ is non-degenerate, so we can assume $i < 0$.
Then $0 \ne d(m) \in (N')^{i+1}$ and for all $m' \in \deg_{\le i-1}
  N'$ we have
\[
\langle d(m), m' \rangle' = \langle m, d'(m') \rangle = 0
\]
because $d'(m') \in \deg_{\le i} N$. In particular, $d(m)$ is
orthogonal to $\deg_{\le i-1} N'$ (and even to $\deg_{\le i} N'$
because $(N')^i = 0$, as $N'$ satisfies HR and hence is parity). In
particular $d(m) \in P^{i+1} \subset
(N')^{i+1}$. Hence, as $N'$ satisfies HR, we have
\[
0 \ne \langle d(m), d(m) \rangle' = \langle m, (d' \circ d)(m) \rangle =
\beta \langle m, m \rangle
\]
which contradicts $\langle m,  \deg_{\le i} N \rangle =
  0$.
\end{proof}

\section{Moment graph sheaves on the projective line} \label{mgs}

In this section we study certain sheaves on the moment graph of
$\PM^1$, which we dub $\PM^1$-sheaves. This provides a useful language
for discussing certain local calculations with Soergel
bimodules.

\begin{remark}
  Although we do not discuss the general theory below, our
  discussion 
  has been strongly influenced by the Braden-MacPherson and Fiebig
  theory of sheaves on moment graphs \cite{BM,FCom, Fiebig}.
\end{remark}

\subsection{$\PM^1$-sheaves} \label{sec:P1} Let $A = \RM[z]$ and $K =
\RM[z^{\pm 1}]$ as
above.

\begin{defi} A \emph{sheaf on the moment graph of $\PM^1$} is a
    collection $M$ of
\begin{enumerate}
\item  finitely generated graded $A$-modules $M_0, M_\infty$ and $M_{\CM^*}$;
\item graded $A$-module morphisms $\rho_0 : M_0 \to M_{\CM^*}$, $\rho_\infty : M_\infty \to
  M_{\CM^*}$
\end{enumerate}
such that $M_{\CM^*}$ is annihilated by $z \in A$.
\end{defi}
The category of sheaves on the moment graph of $\PM^1$ is a graded (with shift functor $[1]$), additive
category in an obvious way.

\begin{defi} Let $M$ be a sheaf on the moment graph of $\PM^1$. We say
  that $M$ is a \emph{$\PM^1$-sheaf} if $M_0$ and $M_\infty$ are free
  $A$-modules, $\rho_0$ is surjective and
  $\rho_\infty$ is isomorphic to the quotient map $M_\infty \to M_{\infty}/(z)$.
\end{defi}


\begin{remark} Let $\CM^*$ act non-trivially and linearly on $\PM^1$.
Any object in the constructible $\CM^*$-equivariant derived category
of  $\PM^1$ yields modules 
$M_0$, $M_\infty$ and $M_{\CM^*}$ over $\Hi^*_{\CM^*}(pt) =
A$ by taking equivariant hypercohomology of the stalks at $0$,
$\infty$ and $\CM^*$ \cite{BM, FW}. This explains the name.
\end{remark}

\begin{remark} In Fiebig's language, $\PM^1$-sheaves are the
  Braden-MacPherson sheaves on the moment graph of $\PM^1$. However we
  prefer the term $\PM^1$-sheaf in this context because $\PM^1$-sheaves are quite
  simple objects (in contrast
  to Braden-MacPherson sheaves on general moment graphs).
\end{remark}

  The two most important examples of sheaves on the moment graph of
  $\PM^1$ are the skyscraper at 0 ($M_0 = A$, $M_{\CM^*} = M_\infty =
  0$) which we will call simply the \emph{skyscraper}, and the \emph{constant sheaf} ($M_0 = M_\infty = A$, $M_{\CM^*} =
  A/(z)$, $\rho_0, \rho_\infty$ the canonical quotient
  maps). Both are $\PM^1$-sheaves. In fact:

  \begin{lem} \label{lem:P1clas}
    Any $\PM^1$-sheaf is (non-canonically) isomorphic to a direct sum of shifts of skyscraper
    and constant sheaves. Hence any indecomposable  $\PM^1$-sheaf is isomorphic (up to shift) to a skyscraper or constant sheaf.
  \end{lem}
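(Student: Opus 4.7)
The plan is to build a direct sum decomposition by hand, starting from a homogeneous basis of the ``lowest'' piece $M_{\CM^*}$ and lifting it to both $M_0$ and $M_\infty$, then absorbing whatever is left over into skyscraper summands. The only nontrivial freedom is on the $M_0$ side, because the structural condition on $\rho_\infty$ rigidifies things at $\infty$.

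First I would fix a homogeneous $\RM$-basis $\{\bar e_i\}$ of the graded $\RM$-vector space $M_{\CM^*}$ (which is an $\RM$-vector space because $z$ annihilates it). Using the hypothesis that $\rho_\infty$ is isomorphic to the quotient $M_\infty \to M_\infty/(z)$, identify $M_{\CM^*}$ with $M_\infty/(z)$ via $\rho_\infty$. Since $M_\infty$ is a graded free $A$-module, by a graded Nakayama argument the $\bar e_i$ lift to a homogeneous free $A$-basis $\{e_i^\infty\}$ of $M_\infty$, and this is the unique degree of freedom on the $\infty$-side.

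Next I would turn to $M_0$. Since $\rho_0$ is surjective, pick homogeneous lifts $e_i^0 \in M_0$ with $\rho_0(e_i^0) = \bar e_i$. Factor $\rho_0$ as $M_0 \twoheadrightarrow M_0/(z) \xrightarrow{\bar\rho_0} M_{\CM^*}$. The images of the $e_i^0$ in $M_0/(z)$ are $\RM$-linearly independent because they map to the linearly independent $\bar e_i$ under $\bar\rho_0$. Extend them to a homogeneous $\RM$-basis of the graded $\RM$-vector space $M_0/(z)$ by adjoining homogeneous vectors lying in $\ker\bar\rho_0$, and lift these to homogeneous elements $\{f_j\} \subset M_0$ that satisfy $\rho_0(f_j) = 0$. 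Graded Nakayama then guarantees $\{e_i^0, f_j\}$ is a free $A$-basis of $M_0$.

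Now assemble the decomposition. For each index $i$ declare a constant-sheaf summand given by $A\cdot e_i^0 \subset M_0$, $A\cdot e_i^\infty \subset M_\infty$, $\RM\cdot \bar e_i \subset M_{\CM^*}$, with the restricted $\rho_0, \rho_\infty$; this is isomorphic to a shift of the constant sheaf by $\deg \bar e_i$. For each $j$ declare a skyscraper summand $A \cdot f_j \subset M_0$ with trivial $\infty$- and $\CM^*$-components, isomorphic to a shift of the skyscraper by $\deg f_j$. By construction these subobjects sum to all of $M_0$, all of $M_\infty$, and all of $M_{\CM^*}$, and the maps $\rho_0, \rho_\infty$ decompose accordingly, giving the desired direct sum. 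The final sentence of the lemma (indecomposables are shifts of the two basic ones) is then immediate from Krull–Schmidt in this graded additive category, noting that shifted constant sheaves and shifted skyscrapers are themselves indecomposable. The only subtle step is the Nakayama/basis-extension argument on $M_0$, and the ``obstacle'' there is purely bookkeeping: one has to run the lift in the graded category so that all summands inherit a degree shift.
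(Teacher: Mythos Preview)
Your proof is correct and complete; the paper leaves this lemma as an exercise, and your graded Nakayama lift on the $\infty$-side together with the basis-extension-through-the-kernel on the $0$-side is exactly the natural solution. One small simplification: the final sentence does not need Krull--Schmidt, since if an object is indecomposable and isomorphic to a direct sum, that sum has a single term.
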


  \begin{proof} Exercise.
  \end{proof}

Let $M$ denote a $\PM^1$-sheaf.
The \emph{global sections} of $M$ are
\[
M_{0,\infty} := \{ (m_0, m_\infty) \in M_0 \oplus M_\infty \; | \; \rho_0(m_0) =
\rho_\infty(m_\infty) \} \subset (M_0 \oplus M_\infty)
\]
which we regard a left $A$-module via $r \cdot (m_0, m_\infty) =
(rm_0, rm_\infty)$. We have
\begin{equation}
  \label{eq:Ksplit}
  K \otimes_A M_{0,\infty} = K \otimes_A M_0
\oplus K \otimes_A M_\infty.
\end{equation}

  \begin{remark} \label{rem:Mrank}
By Lemma \ref{lem:P1clas} if the graded ranks of $M_0$ and $M_\infty$
are $p_0, p_\infty \in \ZM_{\ge 0}[v^{\pm 1}]$ respectively, then the graded rank of $M_{0,\infty}$ is
\[
(p_0 - p_\infty) + (1 + v^2)p_\infty = p_0 + v^2 p_\infty.
\]
\end{remark}

More generally we consider the \emph{structure algebra}
\[
Z := \{ (r_0, r_\infty) \in A \oplus A \; | \; r_0 = r_\infty
\textrm{ mod }
(z) \}.
\]
Of course this is nothing other than the global sections of the
constant sheaf. It is a ring via pointwise multiplication. Moreover, 
one may check that $Z$ acts on the global sections of any $\PM^1$-sheaf via $(r_0, r_\infty) \cdot (m_0, m_\infty) = (r_0 m_0, r_\infty
m_\infty)$ for $(m_0, m_\infty) \in M_{0,\infty}$.

Below a special role will be played by the action of degree 2 elements
of $Z$ on the global sections of $\PM^1$-sheaves (``Lefschetz operators''). Of
course
\[
Z^2 = \RM z \oplus \RM z.
\]
We define the \emph{ample cone} in $Z^2$ to be
\[
Z^2_\amp := \{(\l_0, \l_\infty) =  (az, bz) \in Z^2 \; | \; 0 < b < a \}.
\]


\subsection{Polarised $\PM^1$-sheaves} \label{sec:P1} Let $M$ be a $\PM^1$-sheaf.
  A \emph{polarisation} of $M$ is a pair of symmetric graded $K$-valued
  $A$-bilinear forms:
  \begin{align*}
    \langle -, - \rangle^0 &: M_0 \times M_0 \to K, \\
    \langle -, - \rangle^\infty &: M_\infty \times M_\infty \to K.
  \end{align*}
A polarisation is \emph{non-degenerate} if both $\langle -, -
\rangle^0$ and $\langle -, - \rangle^\infty$ are non-degenerate over
$K$. A \emph{polarised $\PM^1$-sheaf} is a
  $\PM^1$-sheaf together with a non-degenerate polarisation.

A polarisation of $M$ induces an $A$-bilinear form
\[
\langle -, - \rangle =   \langle -, - \rangle^0 + \langle -, -
\rangle^\infty : M_{0,\infty} \times M_{0,\infty} \to K
\]
on the global sections of $M$. We have
\begin{equation*}
  \label{eq:form-assoc}
  \langle \g m, m' \rangle = \langle m, \g m' \rangle
\end{equation*}
for all $m, m' \in M_{0,\infty}$ and $\g \in Z$.
By \eqref{eq:Ksplit} we see that over
$K$ the form $\langle -, - \rangle$ 
is just the direct sum of $\langle -, - \rangle^0$ and $\langle -, -
\rangle^\infty$. In particular, $\langle -, - \rangle$ is
non-degenerate if the polarisation is.


\subsection{Hard Lefschetz}
Let $M$ be a polarised $\PM^1$-sheaf.
We assume that the global sections of $M$ are generated in degrees $\le 0$.

We say that $\g \in Z^2$ \emph{satisfies hard Lefschetz} on $M$ if
and only if for all $d \le 0$ the form $\langle \g^{-d} x, y \rangle$ on
$\deg_{\le d} M_{0,\infty}$ is non-degenerate (i.e. the determinant of its Gram
matrix
 is invertible $\Leftrightarrow$ non-zero in $K$). We say that $M$ \emph{satisfies
  hard Lefschetz} if $\g$ satisfies hard Lefschetz on $M$ for all
$\g \in Z^2_{\amp}$.

\begin{remark}
  See Remark \ref{rem:P1hl} for some motivation for this definition.
\end{remark}

Recall that $M_{0,\infty}$ is equipped with a non-degenerate form
given by the sum of the forms $\langle -, -\rangle^0$ and $\langle -,
- \rangle^\infty$. Let $M_{0,\infty}^! \subset K \otimes_A M_0 \oplus
K \otimes_A M_\infty$ denote the dual
lattice. Because $M_{0,\infty}$ is generated in degrees $\le 0$,
$M_{0,\infty}^!$ is generated in degrees $\ge 0$ and so
$M_{0,\infty}^! \subset M_{0,\infty}$. We set
\[
H_{0,\infty} := M_{0,\infty} / (zM_{0,\infty}^!).
\]
Any $\g \in Z^2$ preserves $M_{0,\infty}$ and $M_{0,\infty}^!$ and
hence induces a degree 2 operator on
$H_{0,\infty}$. The above definition is equivalent to $\g$ satisfying
hard Lefschetz in the usual sense (i.e. $\g^i : H_{0,\infty}^{-i} \to
H_{0,\infty}^i$ is an isomorphism for all $i \ge 0$).

\begin{remark} \label{rem:P1global}
  The condition for the $\PM^1$-sheaf $M$ to satisfy hard Lefschetz is not the same as
  requiring that its global sections $M_{0,\infty}$ satisfy hard
  Lefschetz (in the sense of \S\ref{def:hl}). Indeed, $M_{0,\infty}$ satisfies hard Lefschetz if
  and only if $\g = (z,z)$ satisfies hard Lefschetz on
  $H_{0,\infty}$, whereas $M$ satisfies hard Lefschetz if and only if
  $(az,bz)$ satisfies hard Lefschetz on $H_{0,\infty}$, for all $0 < b
  < a$. Hence the 
  condition for the global sections $M_{0,\infty}$ to satisfy hard
  Lefschetz is a ``degeneration to a wall'' of
  $M$ satisfying hard Lefschetz.
\end{remark}

\begin{ex} \label{ex:constant}
  We consider the simplest non-trivial example. Let $M$ be a constant
  $\PM^1$-sheaf generated in degree $m$ for some $m \le -2$:
  $M_0 = M_\infty = A[-m], M_{\CM^*} = A/(z)[-m]$, $\rho_0 =
  \rho_\infty$ the quotient maps. (The condition $m \le -2$ is to ensure
    that the global sections are generated in degrees $\le 0$.)
Equip $M$ with the polarisation
\[
\langle 1, 1 \rangle^0 = \l_0 z^{m} \quad \text{and} \quad \langle 1,
1 \rangle^\infty = \l_\infty z^{m} \quad \text{for some }\l_0, \l_\infty \in \RM.
\]
We assume the polarisation is non-degenerate (i.e. $\l_0 \ne 0 \ne
\l_\infty$). The global sections of $M$ are
\[
M_{0,\infty} = A \cdot (1,1) \oplus A \cdot (z,0)
\]
with the generators in degrees $m$ and $m+2$ respectively. Hence:
\[
\deg_{\le d}M_{0,\infty} = \begin{cases} 0 & \text{if }d< m, \\ A \cdot
  (1,1) & \text{if }d = m, m+1, \\ M_{0,\infty} &\text{if }d \ge m+2. \end{cases}
\]
Let $\g =
(az, bz) \in Z^2$. We calculate the forms $\langle \g^{-d} x, y \rangle$ on $\deg_{\le d}
M_{0,\infty}$ in the above basis:
\begin{align*}
  d = m, m+1: & \quad ((\l_0a^{-d} + \l_\infty b^{-d})z^{-d+m}) \\
  m+2 \le d \le 0: &
\quad \left ( \begin{matrix} (\l_0a^{-d} + \l_{\infty} b^{-d})z^{-d+m} &
    \l_0a^{-d}z^{-d+1+m} \\ \l_0a^{-d}z^{-d+1+m} &
    \l_0a^{-d}z^{-d+2+m}  \end{matrix} \right)
\end{align*}
Calculating determinants we conclude that $\g$ satisfies hard Lefschetz on
$M$ if
and only if
\begin{gather*}
\l_0a^{-d} + \l_{\infty} b^{-d} \ne 0 \quad \text{for }d = m, m+1, \\
\l_0\l_{\infty} a^{-d}b^{-d} \ne 0 \quad \text{for } m+2\le d \le 0.
\end{gather*}
For $\g \in Z^2_{\amp}$ the second condition is automatic. The
first condition holds for all $\g \in Z^2_{\amp}$ (i.e. for all $0 < b
< a$) if and only if
either:
\begin{enumerate}
\item $\l_0, \l_\infty$ have the same sign, or
\item$\l_0$ and $\l_\infty$
have opposite signs and $|\l_0|\ge |\l_\infty|$.
\end{enumerate}
Below it will be the second case that is relevant. In case (2) the global sections satisfy hard Lefschetz if and
only if we have strict inequality $|\lambda_0| > |\lambda_\infty|$.
 This is an illustration of Remark \ref{rem:P1global}.
\end{ex}

\subsection{Hodge-Riemann} \label{sec:hr} Let $M$ be a polarised
$\PM^1$-sheaf as in the previous section (i.e. the global sections of $M$ are generated in degrees
$\le 0$).

Let $\g \in Z^2$ and assume that $\g$ satisfies hard Lefschetz on
$M$. For $d \le 0$ define the \emph{$\g$-primitive subspaces:}
\[
P^d_\g := (\g^{-d+1} \deg_{\le d - 1}M_{0,\infty})^\perp \cap M^d_{0,\infty} = 
(\g^{-d+1} M^{d-2}_{0,\infty})^\perp \cap M^d_{0,\infty} \subset M^d_{0,\infty}.
\]
The following is an easy application of Gram-Schmidt orthogonalisation:

\begin{lem}
  We have a decomposition $M_{0,\infty} = \bigoplus_{d \le 0}
  \RM[\g] \cdot P^d_\g.$
\end{lem}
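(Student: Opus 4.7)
I would prove this lemma by Gram--Schmidt induction on the degree, in direct analogy with Lemma~\ref{lem:prim1}. The goal is to show, by induction on $d \le 0$, that in each degree
\[
M^d_{0,\infty} = \gamma \cdot M^{d-2}_{0,\infty} \;\oplus\; P^d_\gamma
\]
as an orthogonal direct sum with respect to the $\RM$-valued Lefschetz form $(x,y) \mapsto \langle \gamma^{-d} x, y\rangle$ on $M^d_{0,\infty}$, and that feeding the inductive hypothesis into the first summand gives the claimed $\RM[\gamma]$-module decomposition up to degree $d$.

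The essential input is that the Lefschetz form restricts non-degenerately to $\gamma \cdot M^{d-2}_{0,\infty}$. Using the $Z$-associativity $\langle \gamma m, m'\rangle = \langle m, \gamma m'\rangle$ established in \S\ref{sec:P1}, the pullback of the Lefschetz form along $\gamma : M^{d-2}_{0,\infty} \to M^d_{0,\infty}$ is
\[
\langle \gamma^{-d}(\gamma x),\, \gamma y\rangle \;=\; \langle \gamma^{-(d-2)} x, y\rangle,
\]
which is the Lefschetz form at degree $d-2$ and hence non-degenerate by the hypothesis that $\gamma$ satisfies hard Lefschetz. In particular, $\gamma : M^{d-2}_{0,\infty} \to M^d_{0,\infty}$ is injective and its image is a non-degenerate subspace, so Gram--Schmidt produces $M^d_{0,\infty} = \gamma M^{d-2}_{0,\infty} \oplus Q^d$ with $Q^d$ the Lefschetz-orthogonal complement. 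A second application of associativity identifies
\[
Q^d = \{ x \in M^d_{0,\infty} : \langle \gamma^{-d+1} x, y\rangle = 0 \text{ for all } y \in M^{d-2}_{0,\infty}\} = P^d_\gamma,
\]
matching the definition given just before the lemma. Combining with the inductive hypothesis $M^{d-2}_{0,\infty} = \bigoplus_{e \le d-2} \gamma^{(d-2-e)/2} P^e_\gamma$ completes the induction on non-positive degrees.

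To upgrade this to a global $\RM[\gamma]$-module decomposition I would invoke the classification in Lemma~\ref{lem:P1clas}: any $\PM^1$-sheaf is a direct sum of shifts of skyscraper and constant sheaves, and on each such summand a direct calculation (essentially Example~\ref{ex:constant}) shows that $\gamma \in Z^2_\amp$ acts bijectively $M^{d-2}_{0,\infty} \to M^d_{0,\infty}$ in all positive degrees. Hence $M^d_{0,\infty} = \gamma M^{d-2}_{0,\infty}$ for $d > 0$, the decomposition propagates to all degrees, and the natural maps $\RM[\gamma] \otimes_\RM P^e_\gamma \to \RM[\gamma] \cdot P^e_\gamma$ are isomorphisms. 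The main obstacle is precisely this positive-degree extension: Gram--Schmidt is intrinsically a procedure adapted to degrees $\le 0$, and surjectivity of $\gamma$ above degree zero must be established separately; alternatively one can argue by graded rank, observing that the telescoping $\dim P^e_\gamma = \dim M^e_{0,\infty} - \dim M^{e-2}_{0,\infty}$ (valid for $e \le 0$ by the injectivity established above) sums to the $\RM[\gamma]$-rank of $M_{0,\infty}$, forcing the Gram--Schmidt injection to be an equality.
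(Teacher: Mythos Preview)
Your approach is exactly the paper's: the proof there is simply ``an easy application of Gram--Schmidt orthogonalisation'', and your degree-by-degree induction spells this out correctly. Your flagging of the positive-degree extension as requiring separate care is more scrupulous than the paper's one-line dismissal.

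One caveat: your fix in positive degrees assumes $\gamma \in Z^2_{\amp}$ (and the alternative rank argument tacitly assumes $M_{0,\infty}$ is free over $\RM[\gamma]$), whereas the lemma only assumes that $\gamma$ satisfies hard Lefschetz. These are not equivalent: $\gamma = (z,0)$ satisfies hard Lefschetz on the polarised constant sheaf generated in degree $-2$ (check against Example~\ref{ex:constant} with $m=-2$: the condition at $d=0$ is $\lambda_0\lambda_\infty\ne 0$, independent of $a,b$), yet $\gamma$ kills $(0,z)\in M^0_{0,\infty}$, so $\RM[\gamma]\cdot P^0_\gamma$ is one-dimensional and the decomposition misses $(0,z^2)\in M^2_{0,\infty}$. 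What your arguments actually require is that both components of $\gamma$ be nonzero, so that $\gamma$ acts injectively on $K\otimes_A M_0 \oplus K\otimes_A M_\infty$ and hence on $M_{0,\infty}$; then multiplication by $\gamma$ is bijective in positive degrees by a dimension count against multiplication by $z$. This hypothesis holds in every use of the lemma in the paper (Remark~\ref{rem:HRsignature} only invokes the decomposition in degrees $\le 0$; elsewhere $\gamma\in Z^2_{\amp}$ or has both components nonzero as in Proposition~\ref{prop:wLP1}), so the issue is cosmetic, but you have located a genuine imprecision in the stated hypotheses.
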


\begin{warning}
  Unless $\g = (az, az)$ the subspaces $\RM[\g]
  \cdot P^d_\g \subset M_{0,\infty}$ are not $A$-submodules in general
  and the above decomposition need not be orthogonal (although it is
  orthogonal between degrees $d$ and $-d$).
\end{warning}

We say that $\g \in Z^2$ \emph{satisfies HR} on $M$ if:
\begin{enumerate}
\item $M_0$ and $M_\infty$ are either both even or both odd
  (hence the   global sections $M_{0,\infty}$ are either
  even or odd); 
\item 
if $\min$ denotes the minimal non-zero degree of $M_{0,\infty}$ then there exists $\e
  \in \{ \pm 1 \}$ such that, for all $d = \min + 2i \le 0$, the form
  $\langle \g^{-d}p, p' \rangle$ on $P_\g^{d}$ is $\e(-1)^i$-definite.
\end{enumerate}
We say that $M$ \emph{satisfies HR} if all $\g \in Z^2_\amp$
satisfy HR on $M$.

\begin{remark} \label{rem:HRsignature}
Because $M_{0,\infty}$ is parity we can write the
  graded rank of $M_{0,\infty}$ as $v^{\min} f(v^2)$ for some $f \in
  \ZM_{\ge 0}[v]$, where $\min \in \ZM$ denotes the minimal non-zero 
 degree of $M_{0, \infty}$. Fix $d \le 0$ with $d = \min + 2i \le
 0$. We have a decomposition
\[
M^d_{0,\infty} = P^d_\g \oplus \g P^{d-2}_\g \oplus \dots \oplus \g^iP^{\min}_\g.
\]
From the definitions it follows that this decomposition is orthogonal
with respect to the form $\langle x, \g^{-d} y\rangle$. Moreover, the
induced form on the subspace
\[
 \g P^{d-2}_\g \oplus \dots \oplus \g^iP^{\min}_\g \subset M^d_{0,\infty}
\]
agrees with the form $\langle x, \g^{-d+2}y \rangle$ on $M^{d-2}_{0,\infty}$
(i.e. $\g : M^{d-2}_{0,\infty} \to M^d_{0,\infty}$ is an isometry). In particular, we see
that $\g$ satisfies HR on $M$ if and only if there exists $\e \in \{
\pm 1 \}$ such that the signature of $\langle x, \g^{-d} y \rangle$ on
$M^{d}_{0,\infty}$ is $\e(\tau_{\le i}f)(-1)$ for all $\min \le d = \min + 2i \le
0$. (See Lemma \ref{lem:signs}.)
\end{remark}

\begin{remark}
  The form $\langle -, - \rangle$ on $M_{0,\infty}$ induces in a
  natural way an $\RM$-valued form on $H_{0,\infty} =
  M_{0,\infty}/(zM_{0,\infty}^!)$. Then $\g$ satisfies HR if and only
  if $\g$ induces a Lefschetz operator satisfying HR on $H_{0,\infty}$
  (in the usual sense).
\end{remark}

\begin{ex} \label{ex:constHR}
  We continue the example of the polarised constant sheaf begun in
  Example \ref{ex:constant}. The form $\langle x, \g^{-m} y \rangle$ on
  $M_{0,\infty}^{m}$ is $(\l_0a^{-m} + \l_\infty b^{-m})$. The form
  $\langle x, \g^{-m-2} y \rangle$ on $M_{0,\infty}^{m+2}$ in the
  basis $\{(z,0), (0,z) \}$ is
\[
\left ( \begin{matrix} \l_0a^{-m-2} & 0 \\ 0 & \l_\infty
    b^{-m-2} \end{matrix} \right ).
\]
For HR to be satisfied in degree $m+2$ this matrix must have signature
0. Hence if $\g \in Z^2_\amp$ then $\l_0$ and $\l_\infty$ must have
opposite signs. We conclude that $M$ satisfies HR if and only if $\l_0$ and
$\l_\infty$ have opposite signs, and $|\l_0| \ge
|\l_\infty|$. The global sections
$M_{0,\infty}$ satisfy HR if and only if $\l_0$ and $\l_\infty$ have
opposite signs and $|\l_0| > |\l_\infty|$.
\end{ex}

It is clear that if $\g$ satisfies hard Lefschetz or HR on $M$ then so
does any positive scalar multiple of $\g$. Hence the following lemma is
easy:

\begin{lem} \label{lem:saturation}
  Let $M$ denote a polarised $\PM^1$-sheaf whose global
    sections are generated in degrees $\le 0$.
Suppose that for all $1 < c$ there exists $0 < b < a$ such
  that $c = a/b$ and $\g = (az, bz)$ satisfies hard Lefschetz (resp. HR) on $M$. Then
  $M$ satisfies hard Lefschetz (resp. HR).
\end{lem}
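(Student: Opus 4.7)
The plan is to exploit the scaling invariance already noted by the author just before the lemma: if $\g = (az,bz) \in Z^2_\amp$ satisfies hard Lefschetz (resp.\ HR) on $M$, then so does $t\g = (taz,tbz)$ for every $t > 0$. This is because the forms that appear in the definitions of HL and HR are $\langle \g^{-d} x, y\rangle$ on $M_{0,\infty}^d$ (resp.\ $P^d_\g$) for $d \le 0$, and replacing $\g$ by $t\g$ just multiplies these forms by the positive scalar $t^{-d}$, which preserves non-degeneracy, signs of definiteness, and also the primitive subspaces themselves (since they are defined by orthogonality with respect to the same scaled form).

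Armed with this, I would argue as follows. Let $\g' = (a'z,b'z) \in Z^2_\amp$ be arbitrary, so $0 < b' < a'$, and set $c := a'/b' > 1$. By hypothesis there exists some $0 < b < a$ with $a/b = c$ such that $\g_0 := (az,bz)$ satisfies hard Lefschetz (resp.\ HR) on $M$. Put $t := b'/b > 0$; then $\g' = t\g_0$. By the scaling invariance just noted, $\g'$ satisfies hard Lefschetz (resp.\ HR) on $M$.

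Since $\g' \in Z^2_\amp$ was arbitrary, this shows that every element of $Z^2_\amp$ satisfies hard Lefschetz (resp.\ HR), which is the definition of $M$ satisfying hard Lefschetz (resp.\ HR).

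There is no real obstacle here; the only point to verify is the scaling invariance, and this is immediate from the definitions since $-d \ge 0$ throughout. The lemma is really a convenient reformulation that will let us, later on, prove HL/HR at a single well-chosen representative $(az,bz)$ for each ratio $c > 1$ rather than having to treat the entire two-parameter family $Z^2_\amp$ uniformly.
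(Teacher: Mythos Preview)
Your proof is correct and follows exactly the approach the paper intends: the sentence preceding the lemma already records the scaling invariance, and the lemma is then declared ``easy'' with no further argument. You have simply written out the details of that intended two-line proof.
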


\subsection{Weak Lefschetz}

The following is the analogue for $\PM^1$-sheaves of Proposition \ref{prop:wL}:

\begin{prop} \label{prop:wLP1} (``weak Lefschetz substitute for
  $\PM^1$-sheaves'') Let $M, M'$ be two polarised $\PM^1$-sheaves
  and fix $\g = (\l_0, \l_\infty) \in Z^2$ such that $\l_0, \l_\infty$
  are both non-zero. Assume that we are given morphisms $d : M
  \to M'[1]$ and $d' : M' \to M[1]$ such that:
  \begin{enumerate}
  \item $d$ and $d'$ are adjoint (i.e. $\langle dm, m' \rangle^? =
    \langle m, d'm' \rangle^?$ for $? \in \{ 0 , \infty \}$);
  \item $d' \circ d$ is equal to multiplication by $\g$.
  \end{enumerate}
Suppose that $\g$ satisfies HR on $M'$. Then $\g$ satisfies hard
Lefschetz on $M$.
\end{prop}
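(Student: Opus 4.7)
My plan is to adapt the proof of Proposition \ref{prop:wL}. Two preliminary observations: first, $d$ is injective, since $d' \circ d$ equals multiplication by $\g = (\l_0 z, \l_\infty z)$ with both $\l_0, \l_\infty \ne 0$, and multiplication by $\g$ is injective on $M_{0,\infty}$ (which sits inside the free $A$-module $M_0 \oplus M_\infty$); second, since $d: M \to M'[1]$ is injective and $M'[1]$ is parity (as $M'_{0,\infty}$ is parity by the HR hypothesis on $M'$), $M_{0,\infty}$ is itself parity.

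I then argue by contradiction. Suppose $\g$ fails HL on $M$. The $d = 0$ form $\langle \g^0 x, y\rangle = \langle x, y\rangle$ on $M_{0,\infty} = \deg_{\le 0}M_{0,\infty}$ is just the non-degenerate polarisation, so the failure occurs at some $d \le -1$. Using the parity of $M_{0,\infty}$, the Gram matrices of $\langle \g^{-d}x, y\rangle$ on $\deg_{\le d}M_{0,\infty}$ and on $M^d_{0,\infty}$ are related by rescaling rows and columns by powers of $z$ (units in $K$), so their determinants differ by a unit and the failure is detected on $M^d_{0,\infty}$. I therefore choose a nonzero homogeneous $m \in M^d_{0,\infty}$ with $\langle \g^{-d}m, \deg_{\le d}M_{0,\infty}\rangle = 0$. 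Applying $d$ yields $0 \ne d(m) \in (M'_{0,\infty})^{d+1}$; parity of $M'_{0,\infty}$ then forces $(M'_{0,\infty})^d = 0$, and hence $\deg_{\le d}M'_{0,\infty} = \deg_{\le d-1}M'_{0,\infty}$. For any $m' \in \deg_{\le d}M'_{0,\infty}$, adjointness, the $Z$-linearity of $d'$, and the $Z$-invariance $\langle \g n, n'\rangle = \langle n, \g n'\rangle$ give
\[
\langle d(m), \g^{-d}m'\rangle' = \langle \g^{-d}m, d'(m')\rangle = 0,
\]
using $d'(m') \in \deg_{\le d}M_{0,\infty}$. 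Hence $d(m) \in P^{d+1}_\g$.

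To conclude, HR on $M'$ forces the Lefschetz form to be definite on $P^{d+1}_\g$, giving $\langle \g^{-(d+1)}d(m), d(m)\rangle' \ne 0$; but the same adjoint computation together with $d' \circ d = \g$ shows
\[
\langle \g^{-(d+1)}d(m), d(m)\rangle' = \langle m, \g^{-(d+1)}\g m\rangle = \langle \g^{-d}m, m\rangle = 0
\]
by the orthogonality of $\g^{-d}m$ to $\deg_{\le d}M_{0,\infty} \ni m$. This contradiction shows $\g$ satisfies HL on $M$. I expect the main technical step to be the reduction at the start of the second paragraph—the $\PM^1$-sheaf analogue of the equivalence of conditions (1)--(4) in the simple case—which is what lets me find a kernel element of degree exactly $d$ and which crucially depends on the parity of $M_{0,\infty}$.
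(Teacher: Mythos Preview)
Your proof is correct and follows essentially the same route as the paper's: find a homogeneous kernel element $m$, push it to $d(m)$, use adjointness and $Z$-linearity to show $d(m)$ is $\g$-primitive in $M'_{0,\infty}$, and derive a contradiction from HR on $M'$. The one place you are more careful than the paper is the opening reduction: the paper simply asserts the existence of $0 \ne m \in M_{0,\infty}^{-i}$ with $\langle \g^i m, \deg_{\le -i}M_{0,\infty}\rangle = 0$, whereas you justify this by first observing that the injection $d$ forces $M_{0,\infty}$ to be parity and then using the Gram-matrix rescaling to pass between $\deg_{\le d}M_{0,\infty}$ and $M^d_{0,\infty}$ --- this is exactly the right way to fill in that step.
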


\begin{proof} As in the proof of Proposition
  \ref{prop:wL}, (2) implies that $d_0 : M_0 \to M'_0[1]$ and $d_\infty :
  M_\infty \to M'_{\infty}[1]$ are injective.

  Assume for contradiction that $\g$ does not satisfy hard Lefschetz
  on $M$. Then there exists $0 \ne m \in M_{0,\infty}$ of degree $-i$ for $i
  \ge 0$ such that
\begin{equation} \label{eq:orthog}
\langle \g^i m, \deg_{\le -i} M_{0,\infty} \rangle = 0.
\end{equation}
Because $\langle -, -\rangle$ on $M$ is non-degenerate and $\deg_{\le
  0} M_{0,\infty} = M_{0,\infty}$ we must have $i > 0$.
Then $0 \ne dm \in (M')^{-i+1}_{0,\infty}$ and for all $m' \in \deg_{\le -i -
  1}(M'_{0,\infty})$ we have
\[
\langle d(m), \g^im' \rangle = \langle m, \g^i d'(m') \rangle = \langle
\g^i m , d'(m') \rangle = 0.
\]
Hence $d(m)$ is orthogonal to $\g^i((M'_{0,\infty})^{-i-1})$. Also, $(M'_{0,\infty})^{-i} = 0$
as $M'_0$ and $M'_\infty$ are either both even or both odd. Thus $d(m) \in
  P^{-i+1}_\g \subset (M'_{0,\infty})^{-i+1}$. Because $\g$ satisfies
HR on $M'$ we have
\[
0 \ne  \langle \g^{i-1} d(m), d(m) \rangle = \langle \g^{i-1} m,
 (d' \circ d)(m) \rangle = \langle m, \g^{i}m \rangle.
\]
This contradicts \eqref{eq:orthog}.
\end{proof}

\begin{remark}
  The above proposition reduces to Proposition \ref{prop:wL} if $M$
  and $M'$ are skyscraper sheaves.
\end{remark}

\subsection{Opposite signs and the limit lemma}  \label{opp}

For the Hodge-Riemann relations to have a hope of holding one needs to place some
assumptions on the signs at 0 and $\infty$. (We have already seen a
hint of this in Example \ref{ex:constHR}. This will become clearer
in the next section, where we discuss the structure
theory of polarised $\PM^1$-sheaves.)

We say that a polarised $\PM^1$-sheaf $M$ is polarised
  with \emph{opposite signs} if:
\begin{enumerate}
\item $M_0$ and $M_\infty$ are either both even or both odd;
\item the global sections of $M$ are generated in degrees $\le 0$;
\item both $\langle-, - \rangle^0$ and $\langle -, - \rangle^\infty$
  satisfy HR;
\item if we denote by $P_0^d \subset M_0^d$ and $P_\infty^d \subset
  M_\infty^d$ the primitive subspaces, then, for all $d \le 0$, the restriction of $\langle  -,
  - \rangle^0$ to $P_0^d$ and $\langle -, - \rangle^\infty$ to
  $P_\infty^d$ are definite of opposite signs.
\end{enumerate}

Let $N$ be a free $A$-module generated in degrees $\le -2$ and
equipped with a $K$-valued non-degenerate form $\langle -, -\rangle_N
: N \times N \to K$ satisfying HR. We can build a constant $\PM^1$-sheaf out of $N$
by setting $M_0 = M_\infty = N$ and $M_{\CM^*} = N/(z)$ with $\rho_0,
\rho_\infty$ being the quotient maps. We can equip $M$ with a
polarisation by setting $\langle -, -\rangle^0 = \langle -, -\rangle_N
= -\langle -, -\rangle^\infty$. Because $N$ satisfies HR this
polarisation has opposite signs. A $\PM^1$-sheaf which is isomorphic
(isometrically for polarisations) to such an $M$ we will call
\emph{polarised constant}.

\begin{remark}
  In the following lemma the ``opposite signs'' assumption is
  crucial. It occurs in a large class of examples coming from Soergel
  bimodules (as we will explain). We do not properly understand its geometric meaning.
\end{remark}

\begin{lem} \label{lem:lim}
(``Limit lemma'') Let $M$ be a polarised $\PM^1$-sheaf
  with opposite signs. Consider $\g = (az, bz) \in Z^2$ with
  $0 < b < a$. Then $\g$ satisfies HR on $M$ for $a/b \gg 0$. Moreover
 the signs agree with the signs on $M_0$: if $m = (m_0,
  m_\infty)$ denotes a non-zero element of minimal degree $-d$ in $M_{0,\infty}$ then
  $\langle \g^d m, m \rangle$ and $\langle m_0, m_0 \rangle^0$ have
  the same sign for $a/b \gg 0$. (The map $m \mapsto m_0$ is an isomorphism in degree $-d$, as follows
from Lemma \ref{lem:P1clas}.)
\end{lem}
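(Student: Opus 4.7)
The plan is to rescale $\g$ so that $b=1$ and $a=t\to\infty$, and then verify via Remark \ref{rem:HRsignature} that for each $\min \le -k \le 0$ the signature of $\langle -, \g^k - \rangle$ on $M_{0,\infty}^{-k}$ agrees with $\e(\tau_{\le i}f)(-1)$ for $\e = \e_0$ (the HR sign of $\langle -, - \rangle^0$ on $M_0$) and the appropriate $i$.

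First I would choose a vector space splitting $M_{0,\infty}^{-k} = V_0 \oplus V_\infty$ with $V_\infty := \{(0, zu) : u \in M_\infty^{-k-2}\} = \ker(\pi_0|_{-k})$ and $V_0$ a complement mapping isomorphically to $M_0^{-k}$ under the surjection $\pi_0 \colon M_{0,\infty} \to M_0$. A direct computation shows that in this basis the Gram matrix of $\langle -, \g^k - \rangle$ takes the block form
\[
\begin{pmatrix} t^k G_0 + G_1 & G_2 \\ G_2^T & G_3 \end{pmatrix},
\]
where $G_0$ and $G_3$ are the Gram matrices of the Lefschetz forms $\langle -, z^k - \rangle^0$ on $M_0^{-k}$ and $\langle -, z^{k+2} - \rangle^\infty$ on $M_\infty^{-k-2}$, and $G_1, G_2$ are $t$-independent terms from the $\infty$-polarisation pulled back along the chosen section. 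Crucially, only $G_0$ is scaled by the large parameter $t^k$. By HR for $M_0$ and $M_\infty$ (via Lemma \ref{lem:signs}) both $G_0$ and $G_3$ are non-degenerate; taking the Schur complement against $G_3$ and letting $t\to\infty$ then yields $\operatorname{sig} = \operatorname{sig}(G_0) + \operatorname{sig}(G_3)$ for $t$ large.

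To identify the sign I would use Lemma \ref{lem:P1clas}: the multiset of shifts appearing in $M_\infty$ is contained in that of $M_0$, so $\min = \min_0$ and $P_\infty^d \ne 0$ implies $P_0^d \ne 0$ for every $d$. Assuming $M_\infty \ne 0$, pick such a $d \le 0$; condition (4) of the opposite-signs hypothesis then yields $\e_\infty = -\e_0 (-1)^{(\min_\infty - \min_0)/2}$. Substituting this and the graded rank identity $v^{\min} f(v^2) = v^{\min_0} f_0(v^2) + v^{\min_\infty + 2} f_\infty(v^2)$ (evaluated at $v=\sqrt{-1}$) into $\e_0(\tau_{\le i_0}f_0)(-1) + \e_\infty(\tau_{\le i_\infty}f_\infty)(-1)$ (which Lemma \ref{lem:signs} supplies for $\operatorname{sig}(G_0) + \operatorname{sig}(G_3)$) produces the desired $\e_0(\tau_{\le i}f)(-1)$. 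The case $M_\infty = 0$ is immediate.

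For the sign assertion in minimal degree, Lemma \ref{lem:P1clas} also forces $\pi_0|_{-d}$ to be an isomorphism, so $m_0 \ne 0$; the dominant term of $\langle \g^d m, m\rangle$ as $t\to\infty$ is $t^d \langle m_0, z^d m_0\rangle^0$, which by the sign convention of \S\ref{sec:pos} has the same sign as $\langle m_0, m_0 \rangle^0 \in K^{2\min}$. The main obstacle will be the combinatorial sign bookkeeping in the third paragraph --- ensuring that the opposite-signs relation combined with the graded-rank identity correctly reduces the two parameters $(\e_0, \e_\infty)$ to the single sign $\e_0$ required by HR on $M_{0,\infty}$, including the corner case analysis when certain primitive subspaces vanish.
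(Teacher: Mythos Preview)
Your approach is correct and genuinely different from the paper's. The paper defers the proof to the structure theory developed in \S\ref{sec:structure}: one uses Lemmas \ref{lem:split skyscraper} and \ref{lem:split primitive} to decompose $M$ \emph{orthogonally} into skyscraper summands (for which HR is automatic with the signs of $M_0$) and almost-constant summands concentrated in a single primitive degree $d$; for the latter, Lemma \ref{lem:almost-const} reduces HR to the statement that $a^{-d}(-,-)^0 + b^{-d}(-,-)^\infty$ is definite of the same sign as $(-,-)^0$, which is immediate for $a/b\gg 0$ since the two forms are definite of opposite signs. Your route bypasses this orthogonal decomposition and instead computes signatures degree by degree via the block matrix and Schur complement. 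This trades the structural reduction for the combinatorial bookkeeping of your third paragraph, which does go through: your relation $\e_\infty = -\e_0(-1)^{(\min_\infty-\min_0)/2}$ combines with the identity $f(w) = f_0(w) + w^{(\min_\infty+2-\min_0)/2}f_\infty(w)$ (and its truncations) to give exactly $\e_0(\tau_{\le i}f)(-1)$. The paper's path is cleaner; yours is more elementary in that it never invokes the polarised splitting lemmas.

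One small gap to patch: at $k=0$ the factor $t^0=1$ gives no asymptotic dominance, so your Schur-complement limit does not establish $\operatorname{sig}(G_0)+\operatorname{sig}(G_3)$ there. The fix is that condition (2) of ``opposite signs'' forces $M_\infty$ to be generated in degrees $\le -2$ (a constant summand in degree $0$ or $-1$ would contribute a generator of $M_{0,\infty}$ in positive degree), so $z\colon M_\infty^{-2}\to M_\infty^0$ is an isometric isomorphism for Lefschetz forms and the inclusion $M_{0,\infty}^0\subset M_0^0\oplus M_\infty^0$ is an equality. The degree-$0$ form then splits orthogonally as $\langle-,-\rangle^0\oplus\langle-,-\rangle^\infty$, and the signature identity holds directly without any limit.
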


This lemma will be obvious later (see Lemma \ref{lem:pollim}) once
we have developed the structure theory of polarised $\PM^1$-sheaves.

In the following the assumptions on $M$ are as in Lemma \ref{lem:lim}.

\begin{cor} \label{cor:lim}
Suppose that $\g = (az, z)$ satisfies hard Lefschetz
  on $M$ for all $a \in I$, where $I \subset \RM$ is a
    connected subset which is not bounded above. Then $\g$ satisfies HR on $M$ for all 
  $a  \in I$. In particular, if $I = [1,\infty)$ then the
  global sections $M_{0,\infty}$ satisfy HR.
\end{cor}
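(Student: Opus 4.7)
The plan is to combine the Limit Lemma with a continuity/connectedness argument.

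By Remark \ref{rem:HRsignature}, once the hard Lefschetz hypothesis is in force, saying $\g = (az, z)$ satisfies HR on $M$ reduces to a statement about the signatures of the finite-dimensional real symmetric forms $\langle x, \g^{-d} y \rangle$ on each graded piece $M^d_{0,\infty}$ (for $d \le 0$): they must equal $\e(\tau_{\le i}f)(-1)$ for some $\e \in \{\pm 1\}$ independent of $d$, where $f$ is as in that remark and $d = \min + 2i$. So I would first note that, for every $a \in I$, these forms are non-degenerate; this is exactly the hard Lefschetz hypothesis on $I$ (via condition (2) of \S\ref{def:hl}, transported to the $\PM^1$-sheaf setting).

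The second step is purely topological. As $a$ varies in $I$, the Gram matrices of $\langle x, \g^{-d} y \rangle$ on each $M^d_{0,\infty}$ depend polynomially (hence continuously) on $a$, and they are non-degenerate throughout $I$ by the previous step. Since the signature of a continuous family of non-degenerate real symmetric forms is locally constant, the signature on each graded piece is a locally constant function of $a \in I$; the connectedness of $I$ then forces it to be constant on $I$. Lemma \ref{lem:lim} (applied with $b = 1$) supplies the anchor: because $I$ is not bounded above, HR holds at $\g = (az, z)$ for all sufficiently large $a \in I$, and the constancy of signatures propagates HR to every $a \in I$.

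For the final assertion, specialising to $I = [1, \infty)$ includes $a = 1$, so $\g = (z, z)$ satisfies HR on $M$. Since $(z, z) \in Z$ acts on the global sections $M_{0,\infty}$ as multiplication by the generator $z \in A$, this is exactly the statement that $M_{0,\infty}$ satisfies HR in the sense of \S\ref{def:hr}. The only point one really has to double-check is that Remark \ref{rem:HRsignature} indeed packages HR into a signature statement on each $M^d_{0,\infty}$ under our standing hypotheses; beyond that, the argument is a routine continuity-plus-connectedness package, with no serious obstacle.
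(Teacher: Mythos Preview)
Your proof is correct and follows essentially the same route as the paper's: reduce HR to a signature condition on each $M^d_{0,\infty}$ via Remark \ref{rem:HRsignature}, use the hard Lefschetz hypothesis to get non-degeneracy of these forms along $I$, invoke continuity plus connectedness to make the signatures constant, and anchor at large $a$ via the Limit Lemma. The paper's proof is the same argument compressed into three sentences; your version just spells out the continuity and connectedness steps more explicitly.
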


\begin{proof} 
For any fixed $d \le 0$ the form $(x,y) \mapsto \langle x,
  \g^{-d} y \rangle$ on $M^d_{0,\infty}$ varies continuously
  in $\g$. If $\g$ satisfies hard Lefschetz for all $a \in I$ then
  these forms are non-degenerate, and the previous lemma says that for
  $a \gg 0$ these forms have signatures given by the Hodge-Riemann
  relations (see Remark \ref{rem:HRsignature}). The lemma now follows,
  as the signature of a continuous family of real
  non-degenerate forms is constant.
\end{proof}


\subsection{Structure theory of polarised $\PM^1$-sheaves} \label{sec:structure}
Throughout this section $M$ denotes a polarised $\PM^1$-sheaf. We
assume in addition that $M_0$ and $M_\infty$ satisfy HR
(with respect to the forms $\langle -, - \rangle^0$ and $\langle -, -
\rangle^\infty$).

The goal of this section is to show that $M$ admits a canonical
decomposition into simpler pieces. That is, we will see that the decomposition
in Lemma \ref{lem:P1clas} becomes canonical in the presence
of a polarisation satisfying HR at 0 and $\infty$.

\begin{lem} \label{lem:split skyscraper}
  We have a canonical decomposition
\[
M = M' \oplus N
\]
such that:
\begin{enumerate}
\item $N$ is a skyscraper: i.e. $N_\infty =
  N_{\CM^*} = 0$;
\item the induced decomposition of $M_0$ is orthogonal
for $\langle -, -\rangle^0$;
\item the induced map $M_0' \to M'_{\CM^*} = M_{\CM^*}$ is a projective cover.
\end{enumerate}
\end{lem}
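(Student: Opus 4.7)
The plan is to use the primitive decomposition provided by HR to build a canonical splitting of the surjection $M_0 \twoheadrightarrow M_0/zM_0$, use that splitting to lift $\ker(M_0/zM_0 \to M_{\CM^*})$ into $M_0$, and then take its orthogonal complement.

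First I would apply Lemma~\ref{lem:prim1} to $M_0$ with $\langle-,-\rangle^0$ to obtain the orthogonal decomposition $M_0 = \bigoplus_d A \cdot P_0^d$, where $P_0^d = M_0^d \cap (\deg_{<d} M_0)^\perp$. From this decomposition one reads off $P_0^d \cap zM_0 = 0$ and $M_0^d = P_0^d \oplus zM_0^{d-2}$, so the composition $P_0^d \hookrightarrow M_0^d \twoheadrightarrow (M_0/zM_0)^d$ is an isomorphism for every $d$; thus $\bigoplus_d P_0^d$ gives a canonical graded lift of $M_0/zM_0$ inside $M_0$. Since $\rho_0$ kills $zM_0$, it descends to a surjection $\bar\rho_0 : M_0/zM_0 \twoheadrightarrow M_{\CM^*}$; let $K := \ker\bar\rho_0$ and, via the canonical lift, regard $K = \bigoplus_d K^d$ as a graded subspace of $\bigoplus_d P_0^d \subset M_0$. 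Set $N_0 := A \cdot K$; this is a free graded $A$-submodule of $M_0$ on any homogeneous basis of $K$, with graded rank matching that of the skyscraper summands in the (non-canonical) decomposition of Lemma~\ref{lem:P1clas}.

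The crucial step --- and the main obstacle --- is to show that $\langle-,-\rangle^0$ restricted to $N_0$ is non-degenerate. The orthogonality in Lemma~\ref{lem:prim1} yields an orthogonal decomposition $N_0 = \bigoplus_d A \cdot K^d$, so it is enough to check non-degeneracy on each summand $A \cdot K^d$. On $A \cdot P_0^d$ the form $\langle-,-\rangle^0$ is, up to the factor $z^d$, the $\RM$-valued Lefschetz form $(p,p') \mapsto z^{-d}\langle p, p'\rangle^0$ on $P_0^d$; by HR this Lefschetz form is definite. Its restriction to the subspace $K^d \subseteq P_0^d$ is therefore still definite, hence non-degenerate, so $\langle-,-\rangle^0|_{N_0}$ is non-degenerate and we obtain an orthogonal direct sum $M_0 = N_0 \oplus N_0^\perp$. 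It is essential here that we have HR and not merely hard Lefschetz: hard Lefschetz controls only the Gram matrices of the canonical graded pieces, not the restriction of the Lefschetz form to an arbitrary subspace of $P_0^d$.

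Finally I would let $N$ be the skyscraper $\PM^1$-sheaf with $N_0$ at $0$ and zero elsewhere, set $M_0' := N_0^\perp$, and define $M'$ with $M_0'$ at $0$, $M_\infty$ at $\infty$, $M_{\CM^*}$ at $\CM^*$, and the restriction maps inherited from $M$. By construction $\rho_0(N_0)=0$, so $\rho_0|_{M_0'}$ is still surjective and $M'$ is a $\PM^1$-sheaf with $M = M' \oplus N$. Using $M_0 = N_0 \oplus M_0'$ one computes $\ker(\rho_0|_{M_0'}) = M_0' \cap zM_0 = zM_0'$, so $\rho_0|_{M_0'}$ induces an isomorphism $M_0'/zM_0' \simto M_{\CM^*}$; since $M_0'$ is graded free over $A$, this says $\rho_0|_{M_0'}$ is a projective cover. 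The decomposition is canonical because the lift $K \subset \bigoplus_d P_0^d$ depends only on the HR structure on $M_0$ and on the map $\rho_0$.
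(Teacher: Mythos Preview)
Your proof is correct and follows essentially the same route as the paper. The paper defines $L := \ker(P_0 \to M_{\CM^*})$ directly (your $K$ is exactly this $L$, just obtained by first passing to $M_0/zM_0$ and then lifting back via the primitive decomposition), takes $L^\perp \subset P_0$ degree by degree using definiteness of the Lefschetz forms, and sets $N_0 = A \otimes_\RM L$, $M_0' = A \otimes_\RM L^\perp$; your $N_0^\perp \subset M_0$ coincides with $A \otimes_\RM L^\perp$ by the orthogonality in Lemma~\ref{lem:prim1}. One small wording issue: the intermediate equality $\ker(\rho_0|_{M_0'}) = M_0' \cap zM_0$ is not literally how one arrives at $zM_0'$ (since $\ker\rho_0 = zM_0 + N_0$, not $zM_0$), but the conclusion $\ker(\rho_0|_{M_0'}) = zM_0'$ is correct, as one sees immediately from the induced decomposition $M_0/zM_0 = K \oplus M_0'/zM_0'$.
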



Let $? \in \{0, \infty \}$. By assumption $M_?$ satisfies
hard Lefschetz. In particular it is generated in degrees $\le 0$. For $d \le 0$ let $P_?^d \subset M_?^d$ denote the
primitive subspace (see \S\ref{def:hr}) and set $P_? = \bigoplus P_?^d \subset M_?$.

\begin{proof} 
  Let $L$ denote the kernel of the composition $P_0 \into M_0
  \stackrel{\rho_0}{\longto} M_{\CM^*}$. Let $L^\perp \subset P_0$ denote the
  orthogonal to $L$ under the Lefschetz form (see
  \S\ref{def:hr}) degree by degree. By our HR assumption, 
  each Lefschetz form on $P_0$ is definite in any fixed degree. Hence
  $P_0 = L \oplus L^\perp$. This leads to a canonical
  decomposition (see Lemma \ref{lem:prim1})
  \[
  M_0 = A \otimes_\RM L \oplus A \otimes_\RM L^{\perp}.
  \]
  Hence we can write our sheaf as a direct sum $M = N \oplus
 M'$ where $N$ is the skyscraper sheaf at zero associated to $
A \otimes_\RM L$
  (i.e. $N_0 = A \otimes_\RM L$, $N_{\CM^*} = N_\infty = 0$). (1) and (2) are now
  clear. (3) follows because the composition $L^\perp \to M_0 \to
  M_{\CM^*} = M'_{\CM^*}$ is an isomorphism by construction.
\end{proof}

\begin{lem} \label{lem:split primitive}
Let $M$ be as above and assume additionally that $\rho_0 : M_0 \to
  M_{\CM^*}$ is a projective cover. We have a canonical
  decomposition
\[
M = \bigoplus_{i \le 0} M_i
\]
where each $M_i$ is isomorphic to a direct sum of constant sheaves
generated in degree $i$ (ignoring forms). Moreover the induced
decomposition of $M_0$ (resp. $M_\infty$) is orthogonal with respect
to $\langle -, -\rangle^0$ (resp. $\langle -, -\rangle^\infty$).
\end{lem}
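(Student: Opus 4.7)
The plan is to use the primitive decompositions of $M_0$ and $M_\infty$ (available since both satisfy hard Lefschetz by the HR hypothesis) and to show that they match up canonically through the structure maps to $M_{\CM^*}$.

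First I would exploit the projective cover hypothesis on $\rho_0$. Since $M_{\CM^*}$ is annihilated by $z$, the map $\rho_0$ factors through $M_0/zM_0$, and the projective cover assumption says this induced map $M_0/zM_0 \to M_{\CM^*}$ is an isomorphism. Similarly $\rho_\infty$ is the quotient $M_\infty \to M_\infty/zM_\infty$ by definition of a $\PM^1$-sheaf, so it identifies $M_\infty/zM_\infty$ with $M_{\CM^*}$ as well. In particular, $M_0$ and $M_\infty$ have graded ranks matching the graded dimension of $M_{\CM^*}$.

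Next I would invoke Lemma \ref{lem:prim1} applied to $\langle -,-\rangle^0$ on $M_0$ and $\langle -,-\rangle^\infty$ on $M_\infty$ (legitimate since both satisfy HR, hence hard Lefschetz, and are generated in degrees $\le 0$). This yields orthogonal decompositions
\[
M_0 = \bigoplus_{i \le 0} A\cdot P_0^i, \qquad M_\infty = \bigoplus_{i \le 0} A\cdot P_\infty^i,
\]
where $P_0^i \subset M_0^i$ and $P_\infty^i \subset M_\infty^i$ are the primitive subspaces. Quotienting by $z$ gives degree decompositions
\[
M_{\CM^*} \;\cong\; M_0/zM_0 \;=\; \bigoplus_i P_0^i, \qquad M_{\CM^*} \;\cong\; M_\infty/zM_\infty \;=\; \bigoplus_i P_\infty^i,
\]
and both are simply the decomposition of $M_{\CM^*}$ into graded pieces. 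Hence the restrictions $\rho_0\colon P_0^i \simto M_{\CM^*}^i$ and $\rho_\infty\colon P_\infty^i \simto M_{\CM^*}^i$ are isomorphisms of graded vector spaces, canonically identifying $P_0^i$ with $P_\infty^i$ through $M_{\CM^*}^i$.

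I would then define $M_i$ to be the sub-$\PM^1$-sheaf with
\[
(M_i)_0 := A\cdot P_0^i, \quad (M_i)_\infty := A\cdot P_\infty^i, \quad (M_i)_{\CM^*} := M_{\CM^*}^i,
\]
with structure maps inherited from $M$; the compatibility just shown ensures this really is a sub-$\PM^1$-sheaf, and as a $\PM^1$-sheaf it is a sum of $\dim P_0^i$ copies of the constant sheaf generated in degree $i$ (by Lemma \ref{lem:P1clas}, or directly). The decompositions $M = \bigoplus M_i$, $M_0 = \bigoplus (M_i)_0$, $M_\infty = \bigoplus (M_i)_\infty$ are then immediate, and orthogonality of the decomposition on each side is precisely the orthogonality statement of Lemma \ref{lem:prim1}. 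Canonicality follows because the primitive subspaces are defined intrinsically from the forms. I do not expect any serious obstacle: the main point is simply that the projective cover hypothesis forces the primitive decomposition at $0$ to be compatible, via $\rho_0$ and $\rho_\infty$, with the tautological degree decomposition of $M_{\CM^*}$, and hence with the primitive decomposition at $\infty$.
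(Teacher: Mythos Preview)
Your proposal is correct and follows essentially the same route as the paper: the paper's proof simply observes that under the hypotheses the induced maps $P_0 \to M_{\CM^*} \leftarrow P_\infty$ are isomorphisms, and then says the canonical primitive decompositions of $M_0$ and $M_\infty$ from Lemma~\ref{lem:prim1} yield the desired decomposition. Your argument is a more detailed unpacking of exactly this, including the verification (via the projective cover hypothesis and the definition of a $\PM^1$-sheaf) that both $\rho_0$ and $\rho_\infty$ induce isomorphisms $P_?^i \simto M_{\CM^*}^i$.
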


\begin{proof}
  Under the assumptions of the lemma the induced maps
\[
P_0 \to M_{\CM^*} \from P_\infty
\]
are isomorphisms. Now the canonical decompositions
\[
M_0 = \bigoplus_{i \le 0} A \otimes_{\RM} P_0^i \qquad M_\infty = \bigoplus_{i \le 0} A\otimes_{\RM} P_\infty^i
\]
lead to the desired decomposition.
\end{proof}

\begin{remark} Two forms on a real vector space may be simultaneously
  diagonalised if one form is definite. (I thank Pavel Etingof for
  this remark.) Hence we could further decompose our
  polarised $\PM^1$-sheaf into a direct sum of polarised sheaves
  of rank 1. The decomposition of Lemma \ref{lem:split
    primitive} is enough for our needs.
\end{remark}

\subsection{Hodge-Riemann revisited} One can use the above structure
theory to give a simple criterion for HR to be satisfied.

Let $M$ be a $\PM^1$-sheaf which is polarised with opposite signs,
and whose global sections are generated in degrees $\le 0$. We
would like to know when the global sections of $M$ satisfy HR with the
same signs as those on $M_0$.

Let $M = N \oplus M'$ be the decomposition of Lemma \ref{lem:split
  skyscraper} (so $N$ is a skyscraper).  It is easy to see that
the Hodge-Riemann bilinear relations are
  always satisfied (with the correct sign) for the summand of the
  global sections coming from
  $N$. Hence we can assume that $\rho_0 : M_0 \to M_{\CM^*}$ is a
  projective cover. By Lemma \ref{lem:split primitive} we may even
  assume that $M$ is of the following form:
  \begin{enumerate}
  \item $M_{\CM^*} = V$ for some finite dimensional graded real vector
    space concentrated in fixed degree $d \le -2$;
  \item $M_0 = M_\infty = A \otimes_\RM V$;
  \item there exists symmetric forms $( -, -)^0$ and $(-,-)^\infty$ on
    $V$ which are definite of opposite signs and such that the local
    forms are given by
    \begin{gather*}
      \langle 1 \otimes v, 1 \otimes v'  \rangle^0 = (v,v')^0 z^{d} \\
      \langle 1 \otimes v, 1 \otimes v' \rangle^\infty = (v,v')^\infty
      z^{d}
    \end{gather*}
    for all $v, v' \in V$.
  \end{enumerate}

  \begin{lem} \label{lem:almost-const}
    The global sections of $M$ satisfy HR (with the same signs as
    $M_0$) if and only if the form $(-,-)^0 + (-,-)^\infty$ on $V$ is definite (of the
    same sign as $(-,-)^0$).
  \end{lem}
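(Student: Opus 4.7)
The plan is to simultaneously diagonalise the two forms $(-,-)^0$ and $(-,-)^\infty$ on $V$ so as to decompose $M$ as an orthogonal direct sum of rank-one polarised constant $\PM^1$-sheaves, and then quote Example~\ref{ex:constHR} summand-by-summand. Concretely: because $(-,-)^0$ is definite while $(-,-)^\infty$ is symmetric, standard linear algebra supplies a basis $v_1,\dots,v_n$ of $V$ in which both forms are simultaneously diagonal, $(v_i,v_j)^{?}=\l_i^{?}\d_{ij}$ for $?\in\{0,\infty\}$. Writing $\e^0\in\{\pm1\}$ for the common sign of the $\l_i^0$, the opposite-signs hypothesis forces every $\l_i^\infty$ to have sign $-\e^0$. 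The decomposition $V=\bigoplus_i\RM v_i$ lifts in the evident way to an orthogonal decomposition $M=\bigoplus_i M^{(i)}$ as a polarised $\PM^1$-sheaf, where each $M^{(i)}$ is a rank-one polarised constant $\PM^1$-sheaf with parameters $\l_i^0,\l_i^\infty$; taking global sections gives an orthogonal direct sum of free $A$-modules $M_{0,\infty}=\bigoplus_i M^{(i)}_{0,\infty}$.

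By Lemma~\ref{lem:HRsummand}, together with its (easy) converse that an orthogonal sum of HR-satisfying forms with a common sign again satisfies HR with that sign (primitive subspaces add up), the global sections $M_{0,\infty}$ satisfy HR with sign $\e^0$ if and only if every $M^{(i)}_{0,\infty}$ does. By Example~\ref{ex:constHR}, the global sections of a rank-one polarised constant $\PM^1$-sheaf satisfy HR iff the two parameters have opposite signs (automatic here) and the one at $0$ dominates strictly in absolute value, i.e.\ $|\l_i^0|>|\l_i^\infty|$; inspection of the $1$-dimensional Lefschetz form in minimum degree (equal to $\l_i^0+\l_i^\infty$ at $\g=(z,z)$, as computed in Example~\ref{ex:constant}) shows that the sign of HR on $M^{(i)}_{0,\infty}$ is then the sign of $\l_i^0$, namely $\e^0$. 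Hence $M_{0,\infty}$ satisfies HR with sign $\e^0$ iff $|\l_i^0|>|\l_i^\infty|$ for all $i$.

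Finally, since $\l_i^0$ and $\l_i^\infty$ always have opposite signs, the inequality $|\l_i^0|>|\l_i^\infty|$ is equivalent to $\l_i^0+\l_i^\infty$ having sign $\e^0$; and because $(-,-)^0+(-,-)^\infty$ is diagonal with entries $\l_i^0+\l_i^\infty$ in the chosen basis, this is in turn equivalent to $(-,-)^0+(-,-)^\infty$ being definite of sign $\e^0$, which is precisely the statement of the lemma. The main technical point is really book-keeping the signs: once one trusts the simultaneous diagonalisation and the compatible orthogonal splitting of $M$ as a polarised $\PM^1$-sheaf, everything reduces to the rank-one case already handled by Examples~\ref{ex:constant} and~\ref{ex:constHR}.
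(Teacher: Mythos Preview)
Your argument is correct. The paper takes a more direct route: instead of diagonalising, it computes the Lefschetz form on $M_{0,\infty}$ explicitly in the only two relevant degrees $d$ and $d+2$. In degree $d$ the global sections are the diagonal copy of $V$ via $v\mapsto(1\otimes v,1\otimes v)$, and the Lefschetz form there is literally $(-,-)^0+(-,-)^\infty$; so HR in that degree is exactly the definiteness condition in the statement. In degree $d+2$ one identifies $M_{0,\infty}^{d+2}\cong V\oplus V$ via $(v,v')\mapsto(z\otimes v,z\otimes v')$, and the Lefschetz form becomes the orthogonal sum $(-,-)^0\oplus(-,-)^\infty$; since these are definite of opposite signs the signature is automatically zero, which is precisely what HR requires (via Lemma~\ref{lem:signs}). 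Your route has the merit of recycling the rank-one analysis from Examples~\ref{ex:constant} and~\ref{ex:constHR}, at the cost of invoking simultaneous diagonalisation and the (admittedly easy) converse of Lemma~\ref{lem:HRsummand}; the paper's route is shorter and makes it transparent \emph{why} the sum form $(-,-)^0+(-,-)^\infty$ is the relevant object, without any choice of basis.
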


  \begin{remark}Informally, the global sections of a $\PM^1$-sheaf which is polarised
    with opposite signs satisfies HR if the
    ``form at 0 dominates the form at $\infty$''. This will
    be a subtle question in general!
  \end{remark}

  \begin{proof}
The graded rank of $M_{0,\infty}$ is $v^d + v^{d+2}$. To verify hard Lefschetz and HR
  it is enough to show that the form $z^{-m}\langle x, y \rangle$ on
  $M_{0,\infty}^m$ is non-degenerate for $m = d, d+2$, and that its signature is the
  same as that of $(-,-)^0$ on $M_{0,\infty}^d$, and is 0 on
  $M_{0,\infty}^{d+2}$ (see Lemma \ref{lem:signs}).

The global sections of degree $d$ are given by the diagonal $v \mapsto
(1 \otimes v, 1 \otimes v)$. The restriction of $\langle -, -\rangle$
to $M_{0,\infty}^d$ is given by
\[
\langle (1 \otimes v, 1\otimes v), (1 \otimes v', 1 \otimes v') \rangle =
((v,v')^0 + (v,v')^\infty) z^{d}.
\]
Hence the form $z^{-d}\langle x, y \rangle$ is non-degenerate and
 the Hodge-Riemann relations are satisfied in this degree (with the correct signs)  if and
only if $(v,v)^0 + (v,v)^\infty$ is non-zero and of the same sign as
$(v,v)^0$, for all $0 \ne v \in V$.

The map $(v,v') \mapsto (z \otimes v, z \otimes v')$ gives an
isomorphism between $V \oplus V$ and the global sections in degree $d
+ 2$. This isomorphism identifies the form $z^{-d-2}\langle x, y
\rangle$ on $M_{0,\infty}^{d+2}$
with the direct sum of the forms $(-,-)^0$ and $(-,-)^\infty$ on $V
\oplus V$. So the non-degeneracy and HR relations in
this degree follow automatically from our assumption that $(-,-)^0$
and $(-,-)^\infty$ are definite of opposite signs.
  \end{proof}

The following lemma is an equivalent formulation of Lemma \ref{lem:lim}:

\begin{lem} \label{lem:pollim}
  Suppose that $M$ is a polarised $\PM^1$-sheaf with opposite
  signs. For non-zero $a > b > 0$  consider the rescaled polarisation $a\langle -, - \rangle^0$ on
  $M_0$ and $b \langle -, - \rangle^\infty$ on $M_\infty$. Then if
  $a/b \gg 0$, the global sections $M_{0,\infty}$ satisfy HR
with
  signs agreeing with those of $M_0$ (see Lemma \ref{lem:lim}).
\end{lem}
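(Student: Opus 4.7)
The plan is to reduce to the single-degree framework of Lemma \ref{lem:almost-const} via the structure theory of \S\ref{sec:structure}, and then use a compactness argument on finite-dimensional vector spaces to verify the ``dominance'' condition when $a/b$ is large.

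First, Lemma \ref{lem:split skyscraper} gives a canonical orthogonal decomposition $M = N \oplus M'$, where $N$ is a skyscraper at $0$ and $\rho_0 : M'_0 \to M'_{\CM^*}$ is a projective cover. Rescaling the local forms by positive constants preserves this orthogonality as well as the ``opposite signs'' hypothesis. The global sections of $N$ are just $N_0$, an orthogonal summand of $M_0$ which inherits HR with the correct signs, and this is unaffected by rescaling by $a > 0$. Thus I may reduce to the case where $\rho_0$ is a projective cover.

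Next, by Lemma \ref{lem:split primitive}, there is a canonical decomposition $M = \bigoplus_i M_i$, orthogonal for both local forms, in which each $M_i$ is a direct sum of constant sheaves generated in a single degree $i$. Because global sections lie in degrees $\le 0$ and a constant sheaf generated in degree $i$ contributes to degrees $i$ and $i+2$ (by Remark \ref{rem:Mrank}), one must have $i \le -2$ for each nonzero $M_i$. Each such $M_i$ then fits exactly into the setup of Lemma \ref{lem:almost-const} with $V_i := M_{i,\CM^*}$ concentrated in degree $i$ and forms $(-,-)^0_i$, $(-,-)^\infty_i$ on $V_i$ determined by the polarisations. Since the minimal degree is $i$ and everything in the minimal degree is automatically primitive, the ``opposite signs'' hypothesis forces $(-,-)^0_i$ and $(-,-)^\infty_i$ to be definite of opposite signs on $V_i$.

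After rescaling, Lemma \ref{lem:almost-const} reduces the HR condition on each summand to the assertion that $a(-,-)^0_i + b(-,-)^\infty_i$ is definite of the same sign as $(-,-)^0_i$ on the finite-dimensional space $V_i$. Since $(-,-)^0_i$ is definite and $V_i$ is finite-dimensional, compactness of the unit sphere of $(-,-)^0_i$ yields a constant $C_i > 0$ with $|(v,v)^\infty_i| \le C_i\,(v,v)^0_i$ for all $v \in V_i$; for $a/b > C_i$ the desired definiteness follows. Only finitely many $M_i$ are nonzero, so taking $a/b > \max_i C_i$ handles every summand simultaneously and gives HR on $M_{0,\infty}$ with signs matching $M_0$. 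There is no real obstacle; the only delicate point is bookkeeping, namely checking that the canonical decompositions of \S\ref{sec:structure} (orthogonal for each of $\langle-,-\rangle^0$ and $\langle-,-\rangle^\infty$ separately) remain orthogonal for the induced pairing on global sections after positive rescaling, and that the ``opposite signs'' property descends to each summand.
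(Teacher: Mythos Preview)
Your proof is correct and follows the same approach as the paper: reduce via the structure theory of \S\ref{sec:structure} (Lemmas \ref{lem:split skyscraper} and \ref{lem:split primitive}) to the single-degree situation of Lemma \ref{lem:almost-const}, then observe that $a(-,-)^0 + b(-,-)^\infty$ is definite of the correct sign once $a/b$ is large enough. Your compactness argument on the unit sphere makes explicit what the paper leaves as a one-line remark, and your bookkeeping about orthogonality and the descent of the ``opposite signs'' condition to each summand is accurate.
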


\begin{proof} By the above structure theory we have an orthogonal
  decomposition of $M$ into a direct sum of skyscraper sheaves
  (satisfying HR) and constant sheaves of the form of Lemma
  \ref{lem:almost-const}. In the notation of Lemma
    \ref{lem:almost-const} 
$a(-,-)^0 + b(-,-)^\infty$ is definite of the same sign as
    $(-,-)^0$ if $a/b \gg 0$. Now the result follows from Lemma \ref{lem:almost-const}.
\end{proof}


Recall the notion of a polarised constant sheaf $M$ (see
  \S\ref{opp}).

\begin{lem}(``HR in constant case'') \label{lem:const} If $M$ is
  polarised constant then  $M$ satisfies HR.
\end{lem}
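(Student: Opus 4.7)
The plan is to reduce, via an orthogonal direct-sum decomposition governed by the primitive decomposition of the underlying $A$-module $N$, to the rank-one single-degree case that was explicitly verified in Example \ref{ex:constHR}. First I would apply Lemma \ref{lem:prim1} to $N$ itself (which satisfies HR by the definition of polarised constant) to obtain the orthogonal decomposition $N = \bigoplus_d A\cdot P^d_N$ into its primitive subspaces, on which the Lefschetz form is definite of sign $\e_N(-1)^{(d-\min)/2}$. Because the polarisation of $M$ is built out of $\pm\langle -,-\rangle_N$, this same decomposition of $N$ is orthogonal for both $\langle -,-\rangle^0$ and $\langle -,-\rangle^\infty$ and so yields an orthogonal direct-sum decomposition $M = \bigoplus_d M^{(d)}$ of polarised $\PM^1$-sheaves, in which each $M^{(d)}$ is itself polarised constant with underlying vector space $V_d := P^d_N$ concentrated in the single degree $d$. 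Since any $\g \in Z^2$ acts by $A$-multiplication on both $M_0$ and $M_\infty$, this decomposition is $\g$-invariant; hence the global sections, the primitive subspaces $P^e_\g$, and the Lefschetz forms all split accordingly.

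Next I would further diagonalise each $M^{(d)}$: the form $(-,-)^0$ on the single-degree space $V_d$ is definite, so we may pick an orthogonal basis of $V_d$, which simultaneously diagonalises $(-,-)^\infty = -(-,-)^0$. This exhibits $M^{(d)}$ as an orthogonal direct sum of rank-one polarised constant $\PM^1$-sheaves in degree $d$, each with $\l_\infty = -\l_0$. By Example \ref{ex:constHR} (the borderline case $|\l_0| = |\l_\infty|$ with opposite signs), each rank-one summand satisfies HR as a $\PM^1$-sheaf with sign equal to the common sign of its $\l_0$. Consequently each $M^{(d)}$ satisfies HR with the single sign $\sigma_d := \e_N(-1)^{(d-\min)/2}$.

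Finally one checks that these signs combine consistently. On $P^e_\g \subset M^e_{0,\infty}$, the Lefschetz form $\langle x,\g^{-e} y\rangle$ is an orthogonal direct sum, over $d \le e$, of its restrictions to the $e$-degree pieces $P^e_\g \cap M^{(d)}_{0,\infty}$; by HR for $M^{(d)}$ (in its own degree $(e-d)$ above its minimum), each restriction is $\sigma_d(-1)^{(e-d)/2}$-definite. The identity $\sigma_d(-1)^{(e-d)/2} = \e_N(-1)^{(e-\min)/2}$ shows these signs are independent of $d$, so the full Lefschetz form on $P^e_\g$ is $\e_N(-1)^{(e-\min)/2}$-definite, which is exactly HR for $M$ with sign $\e_N$. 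The only real obstacle is this last sign bookkeeping; no geometric or analytic difficulty arises, the argument being driven entirely by the structural results of this section together with Example \ref{ex:constHR}.
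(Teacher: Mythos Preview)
Your proof is correct and follows essentially the same route as the paper: reduce via the primitive decomposition of $N$ (Lemma~\ref{lem:prim1}) to the single-degree constant case, then check HR directly there. The paper is slightly terser, invoking the reduction of Lemma~\ref{lem:almost-const} rather than diagonalising further to rank one and citing Example~\ref{ex:constHR}; your extra rank-one step and explicit sign bookkeeping are harmless but not needed, since Lemma~\ref{lem:almost-const} already handles arbitrary $V$ in a single degree and the condition $(-,-)^0 + (-,-)^\infty = 0$ is trivially dominated by $a^{-d}(-,-)^0 + b^{-d}(-,-)^\infty = (a^{-d}-b^{-d})(-,-)^0$ for $a>b>0$.
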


\begin{proof}
By Lemma \ref{lem:prim1} we can assume that $M_0, M_\infty$ are
generated in one degree.
Let $\g = (az, bz) \in Z^2_\amp$. We have to verify that the form $\langle \g^{-m}
x, y \rangle$ on $M_{0,\infty}^m$ is non-degenerate with the correct
signature for $m \le 0$. As in Lemma \ref{lem:almost-const} we can
reduce to the case of the minimal non-zero degree $d$, in which case
we are asking whether
$a^{-d}\langle-,-\rangle^0 + b^{-d}\langle-,-\rangle^\infty$ is definite of the same sign as
$\langle-,-\rangle^0$. However this is the case because $\langle-,-\rangle^\infty
= -\langle-,-\rangle^0$ ($M$ is assumed polarised constant) and $a > b > 0$.
\end{proof}

\section{Soergel bimodule background} \label{sbimb}

\subsection{Bimodules} \label{bim}
Let $R$ be the regular functions on $\hg$, as above.
We will work mostly inside the category $R\bim$ of graded
$R$-bimodules (with degree zero morphisms) which are finitely
generated as both left and right $R$-modules. Given
$M, N \in R\bim$ we write $\Hom^{\bullet}(M,N) = \bigoplus_{i \in \ZM}
\Hom(M,N[i])$  for the graded vector space of morphisms of all degree
(and similarly for other graded objects, for example graded left
$R$-modules).

The category $R\bim$ is a monoidal category via
tensor product of bimodules. We denote the monoidal structure simply
by juxtaposition: given $M, N \in R\bim$ their tensor product is
\[
MN := M \otimes_R N.
\]
Given elements $m \in M$, $n \in N$ we abbreviate $mn := m \otimes n
\in MN$. We also employ this notation for morphisms: given $f : M \to
M'$, $g : N \to N'$ the (horizontal) tensor product of these two
morphisms is written $fg : MN \to M'N'$. Following standard practice we
will often use the symbol denoting an object to 
also denote its identity morphism. For example $fN$ denotes the
morphism $f \id_N : MN \to M'N$. Given $r \in M$ the morphism obtained
by left (resp. right) multiplication by $r$ is denoted $rM$ (resp. $Mr$).

Given an $R$-bimodule $M$ its dual is
\[
\DM M := \Hom^{\bullet}_{R -}(M, R)
\]
(homomorphisms of all degrees of left $R$-modules). Then $\DM M$ is a
graded $R$-bimodule via $(r \cdot f)(m) = f(rm)$ and $(f \cdot r)(m) =
f(mr)$. This definition is only sensible for bimodules which are free
and finitely generated as graded left $R$-modules. This will always be the
case below. For such bimodules the natural morphism $M \to
\DM(\DM(M))$ is an isomorphism.

\subsection{Soergel bimodules}  \label{sbim}
For background on Soergel bimodules see \cite{Soe3, EK, EW, EWSC}
  and the references therein.

We write $\BC$ for the category of Soergel bimodules. By
definition $\BC$ is the full graded additive monoidal Karoubian subcategory of $R\bim$
generated by the bimodules
\[
B(s) := R \otimes_{R^s} R[1].
\]
Given an expression $\un{w} := s_1s_2 \dots s_m$ we denote the
corresponding Bott-Samelson bimodule by
\[
B(\un{w}) := B(s_1) B(s_2) \dots B(s_m).
\]
If $\un{w}$ is reduced then $B(\un{w})$ contains a unique summand
which is not isomorphic to a shift of a summand of any Bott-Samelson
bimodule $B(\un{w}')$ for a shorter expression $\un{w}'$. We denote
(the isomorphism class of) this bimodule by $B(w)$. Then the set
$\{ B(w) \; | \; w \in W \}$ give representatives for the isomorphism
classes of indecomposable
self-dual Soergel bimodules, and any indecomposable bimodule is isomorphic to
$B(w)[m]$ for some $w \in W$ and $m \in \ZM$.

In this paper we arbitrarily choose to consider Soergel bimodules
predominantly as left modules.

\begin{warning}
  This emphasis on left over right is the opposite to the choice made in
  \cite{EW}. It simplifies the notation a little in what follows. We
  have tried to
  include warnings like this one when the conventions of the current
  paper differ from those of \cite{EW}.
\end{warning}

We define some elements and simple morphisms between Soergel bimodules
that will play an important role in this paper. Consider the elements
\[
c_\id := 1 \otimes 1 \in B(s)^{-1} \text{ and } c_s := \frac{1}{2}
(\a_s \otimes 1 + 1 \otimes \a_s) \in B(s)^{1}.
\]
These are easily seen to give a basis for $B(s)$ as a left or right
$R$-module. One checks easily that $c_s r = r c_s$ for $r \in R$. Define the maps:
\[
m : B(s) \to R[1] : f \otimes g \mapsto fg \text{ and } \mu : R[-1] \to
B(s) : f \mapsto f c_s.
\]
(These are the units and counits (``dot'' maps) of a Frobenius algebra structure on
$B(s)$, see \cite{EK,EWSC}.)  We have the ``polynomial sliding relation'' which for $\l \in \hg^*$
takes the form
\begin{equation}
  \label{slide}
  B(s) \l = s(\l)B(s) + \langle \l, \alpha_s^\vee \rangle (\mu \circ m).
\end{equation}

\subsection{Support, stalk, costalk} \label{sec:stalkcostalk}
Any $M \in R\bim$ can be regarded
as a coherent sheaf on $\hg \times \hg$ (remember that $R$ is
commutative, so $R$-bimodules are the same thing as $R \otimes
R$-modules). For $x \in W$ consider its ``twisted graph'':
\[
\Gr_x := \{ (x\l, \l) \; | \; \l \in \hg \} \subset \hg \times \hg.
\]
One may identify the regular functions on $\Gr_x$ with the bimodule
$R(x)$ which is free or rank 1 as a left $R$-module, and has right action given
by
\[
b \cdot r = x(r)b
\]
for $b \in R(x)$ and $r \in R$.

Given any subset $X \subset W$ we set $\Gr_X := \cup_{x \in X}
\Gr_x$. Given a subset $X \subset W$ we write $B_X$
(resp. $B_X^!$) for the stalk (resp. costalk, i.e. sections with support) of $B$
along $\Gr_X$. We write $B_x$ instead of $B_{\{x\}}$ and $B_x^!$
  instead of $B_{\{x\}}^!$. We have
\[
B_x^! = \Hom^{\bullet}(R(x), B) \text{ and } B_x =
R(x) \otimes_{R \otimes R} B
\]
(where in the second equality we regard $R(x)$ and $B$ as graded
$R\otimes R$-modules). Given $x, y \in W$ we write $R(x,y)$ for the regular functions on
$\Gr_x \cup \Gr_y$ and $B_{x,y}$ for $B_{\{x,y\}}$. We have $B_{x,y} =
B \otimes_{R\otimes R} R(x,y)$.

  \begin{remark}
    The modules $B_X$ and $B_X^!$ are denoted $\Gamma^X B$ and
    $\Gamma_X B$ in \cite{Soe3}.
  \end{remark}

\begin{warning}
  Stalks and costalks appear so frequently in the present work that
  we decided to denote them $B_x$ and $B_x^!$. Let us emphasise that
  the indecomposable self-dual bimodule parametrised by $y \in W$ will
  be denoted $B(y)$ in this paper (and not $B_y$ as in \cite{Soe3, EW}). We hope that this does not cause confusion for the reader.
\end{warning}

For any Soergel bimodule $B$ the stalks and costalks $B_x$,
  $B_x^!$ are free as left $R$-modules \cite[Theorem 5.15]{Soe3} 
and we have canonical inclusions and projections
\[
B^!_x \hookrightarrow B \quad \text{and} \quad B \onto B_x
\]
which split when regarded as morphisms of left $R$-modules (see the
proof of \cite[Proposition 6.4]{Soe3}).
Recall that we write $Q = R[1/\Phi]$ for the localisation of $R$ at all roots.

Taking the direct sum over the canonical maps we obtain injections
\[
\bigoplus_{w \in W} B_w^! \hookrightarrow B \hookrightarrow
\bigoplus_{w \in W} B_w
\]
and both maps become isomorphisms after applying $Q  \otimes_R  (-)
$.  (This is not difficult to check for
Bott-Samelson bimodules, from which the general case follows.) In
particular the composition
\[
i_x :  B_x^!  \to  B_x 
\]
is an injection, which becomes an isomorphism after tensoring with
$Q$.

In what follows it will be convenient to consider the injection (an isomorphism over $Q$)
\begin{equation} \label{eq:BQ}
i: B \into \bigoplus_{w \in W} B_w \quad \quad b \mapsto (b_w).
\end{equation}
(Finitely many $B_w$ are non-zero.)

\subsection{Polarisations} \label{sec:pol}

An \emph{invariant form} on a Soergel bimodule $B$ means a
symmetric graded bilinear form
\[
\langle -, - \rangle : B \times B \to R
\]
such that $\langle rb, b'\rangle =
\langle b, rb' \rangle = r\langle b, b' \rangle$ and $\langle br, b'
\rangle = \langle b, b'r \rangle$ for all $b, b' \in B$ and $r \in
R$ (note the left/right asymmetry).

\begin{warning} This does not agree with the terminology ``invariant
  form'' in \cite{EW}, where the roles of the left and right action are
  interchanged.\end{warning}

An invariant form on a Soergel bimodule $B$ is \emph{non-degenerate} if it
induces an isomorphism $B \simto \DM B$. A \emph{polarisation} of a
Soergel bimodule $B$ is a non-degenerate invariant form $\langle -, -
\rangle_B$ on $B$. Throughout a \emph{polarised Soergel bimodule} will mean a Soergel
bimodule $B$ together with a fixed non-degenerate invariant form
$\langle -, - \rangle_B$. We will denote a polarised Soergel bimodule
by $(B, \langle -, - \rangle_B)$ or simply $B$ (in which case the form
is implicit).

Let $\un{w}$ be an expression. The set
\[
\{ c_\pi := c_{u_1} c_{u_2} \dots c_{u_m} \; | \; \un{\pi} = u_1 \dots u_m \text{
  a subexpression of } \un{w} \}
\]
gives a basis of $B(\un{w})$ as a free left $R$-module. We define the
\emph{intersection form} on $B(\un{w})$ to be
\[
\langle f, g \rangle_{B(\un{w})} = \Tr(fg)
\]
where $\Tr(fg)$ denotes the coefficient of $c_{\un{w}}$ in the above
basis, and $fg$ denotes the product of $f$ and $g$ in $B(\un{w})$ (a
ring). Then $\langle -, - \rangle_{B(\un{w})}$ is a non-degenerate
invariant form on $B(\un{w})$ (see \cite[\S 3.4]{EW} and \cite[Lemma 3.8]{EW}, remembering to
switch left and right actions). Unless we state explicitly otherwise,
we will always regard Bott-Samelson bimodules as polarised with
respect to their intersection forms.

An important case below will be given by the intersection form on
$B(s)$. In the left basis $\{ c_\id, c_s \}$ of \S\ref{sbim} we have:
\begin{equation} \label{eq:B(s)form}
\langle c_\id, c_\id \rangle = 0, \langle c_\id, c_s \rangle = \langle
c_s, c_\id \rangle = 1, \langle c_s, c_s \rangle = \alpha_s.
\end{equation}

Given two Soergel bimodules $B_1$ and $B_2$ equipped with invariant
forms $\langle - , -\rangle_{B_1}$ and $\langle -, - \rangle_{B_2}$ it
is easy to check that we get an invariant form on $B_1 B_2$ via
\[
\langle b_1 b_2, b'_1 b'_2 \rangle_{B_1 B_2} = \langle b_1 \langle b_2, b'_2
\rangle_{B_2}, b_1' \rangle_{B_1} = \langle b_1, b_1' \langle b_2, b'_2 \rangle_{B_2} \rangle_{B_1}.
\]
One may also check that if $\langle -, - \rangle_{B_1}$ and $\langle -, -
  \rangle_{B_2}$ are non-degenerate, then so is $\langle -, -
  \rangle_{B_1B_2}$. (This is clear after choosing bases and
  dual bases for $B_1$ and $B_2$.) In particular, if $B_1$ and $B_2$ are polarised,
  then so is $B_1B_2$.

\begin{remark} This construction is associative in an obvious
  sense. One may check that it returns the intersection form on a
  Bott-Samelson bimodule, starting from the intersection form on each
  of the $B(s)$ factors.
\end{remark}


\subsection{Positive polarisations} \label{sec:pp}

Recall that a Soergel bimodule $B$ is \emph{perverse} if $B =
\bigoplus B(y)^{\oplus m_y}$ for some $m_y \in \ZM_{\ge 0}$. (That is, $B$ is
isomorphic to a direct sum of indecomposable self-dual Soergel bimodules without
shifts.)

Recall that Soergel's conjecture combined with Soergel's hom formula
\cite[Theorem 5.15]{Soe3} implies that
\begin{equation} \label{eq:vanish}
\Hom(B(x),B(y)) = \begin{cases} \RM & \text{if $x = y$,} \\ 0 &
  \text{otherwise}. \end{cases}
\end{equation}
Hence if $B$ is any perverse Soergel bimodule we have a
canonical ``isotypic'' decomposition
\begin{equation} \label{eq:can}
B = \bigoplus V(z) \otimes_\RM B(z)
\end{equation}
for real (degree zero) vector spaces $V(z)$.  The following important
fact will be used repeatedly in what follows
(it also played a key role in \cite{EW}):

\begin{lem} \label{lem:orthog} The decomposition \eqref{eq:can} is orthogonal for
  any invariant form on $B$.
\end{lem}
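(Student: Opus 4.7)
The plan is to reinterpret an invariant form $\langle -,-\rangle : B \times B \to R$ as an $R$-bimodule morphism $\varphi : B \to \DM B$, defined by $\varphi(b)(b') = \langle b, b' \rangle$, and then observe that by the Hom-vanishing \eqref{eq:vanish} such a map is automatically block-diagonal with respect to the isotypic decomposition \eqref{eq:can}.

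Two preliminary checks are required. First, unwinding the conventions of \S\ref{bim} (where $(r \cdot f)(m) = f(rm)$ and $(f \cdot r)(m) = f(mr)$), the two axioms defining invariance translate exactly to the statements that $\varphi$ is left $R$-linear and right $R$-linear, so $\varphi \in \Hom_{R\bim}(B, \DM B)$. Here one must be careful about the left/right asymmetry noted in the warning in \S\ref{sec:pol}. Second, each $B(z)$ is self-dual, so $\DM B \cong \bigoplus V(z)^* \otimes_\RM \DM B(z)$ with $\DM B(z) \simeq B(z)$; in particular $\DM B$ is again perverse with isotypic components indexed by the same set as $B$.

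Now the vanishing \eqref{eq:vanish} combined with self-duality gives $\Hom(B(z), \DM B(z')) \cong \Hom(B(z), B(z')) = 0$ whenever $z \ne z'$. Consequently the space of $R$-bimodule maps from $V(z) \otimes_\RM B(z)$ to $V(z')^* \otimes_\RM \DM B(z')$, which is $V(z)^* \otimes V(z')^* \otimes \Hom(B(z), \DM B(z'))$, is zero for $z \ne z'$. Applied to $\varphi$, this forces $\varphi$ to decompose as a direct sum of maps of isotypic components, so for $b \in V(z) \otimes B(z)$ and $b' \in V(z') \otimes B(z')$ with $z \ne z'$ one has $\langle b, b'\rangle = \varphi(b)(b') = 0$, which is the orthogonality claim.

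There is no substantial obstacle once \eqref{eq:vanish} and the self-duality of the $B(z)$ are taken as given; the argument amounts to bookkeeping. The only subtlety is making sure the dictionary between invariant forms and bimodule maps into $\DM B = \Hom^\bullet_{R-}(B, R)$ is set up with the correct left/right conventions so that the Hom-vanishing can be invoked in $R\bim$ rather than merely in the category of graded left $R$-modules.
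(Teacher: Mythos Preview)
Your proof is correct and follows essentially the same approach as the paper's: an invariant form yields a bimodule morphism $B \to \DM B \cong B$, which must respect the isotypic decomposition \eqref{eq:can} by the Hom-vanishing \eqref{eq:vanish}. You have simply spelled out in more detail the dictionary between invariant forms and bimodule maps (including the convention check) that the paper leaves implicit.
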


\begin{proof}
  An invariant form yields a morphism $B \to \DM B \cong B$ 
which must respect \eqref{eq:can} by \eqref{eq:vanish}.
\end{proof}

\begin{remark} Similar arguments show that giving an invariant form on $B$ is the same thing as
  giving a symmetric form on each $V(z)$, once one has fixed
  the intersection form on each $B(z)$.
\end{remark}

In particular, a polarisation of $B(z)$ induces an
  isomorphism $B(z) \simto \DM B(z) = B(z)$, and hence is unique up to
  a scalar. As in \cite{EW} we choose
for every $y \in W$ an embedding of $B(y)$ as a summand in
$B(\un{y})$ for some reduced expression $\un{y}$ of $y$. Restricting
the intersection form on $B(\un{y})$ yields a polarisation
of $B(y)$. (The non-degenerate intersection form gives an isomorphism
  $\phi : B(\un{y}) \simto \DM B(\un{y})$. As the summand $B(y) \subset B(\un{y})$
  is unique up to isomorphism, $\phi$ restricts to an isomorphism
  $\phi: B(y) \simto \DM B(y)$. Hence the
  restriction to $B(y)$ is non-degenerate.) We call this polarisation the
intersection form (it is well-defined up to a positive scalar).

Let $(B, \langle -, -\rangle)$ be a polarised Soergel bimodule. We say
that $B$ is \emph{positively polarised} if:
\begin{enumerate}
\item $B$ is perverse and vanishes in even or odd degree;
\item if we fix a decomposition as in \eqref{eq:can} and let $z \in W$
  be maximal such that $m_z \ne 0$ then the induced form on each
  $V(y)$ is $(-1)^{(\ell(z)-\ell(y))/2}$ times a positive definite form.
\end{enumerate}
(Our assumption that $B$ vanishes in even or odd degree forces all
elements of $\{ \ell(y) \; | \; m_y \ne 0 \}$ to have the same parity, and hence
$(\ell(z)-\ell(y))/2$ in (2) makes sense.)

\begin{remark} \label{rem:bbs}
Suppose $y \in W$ and $s \in S$ with $ys > y$. Then
  $B(y)B(s)$ is perverse (as follows from Soergel's conjecture), and
  has a natural form induced from the intersection forms on $B(y)$ and
  $B(s)$ (see
  \S\ref{sec:pol}). This yields a positive polarisation \cite[Proposition 6.12]{EW}.
\end{remark}

\subsection{Adjoints}

Let $B$, $B'$ be polarised Soergel bimodules. Given a map $f : B \to B'[m]$ (i.e. $f$ is a degree
$m$ map from $B$ to $B'$) we denote by
$f^* : B' \to B[ m]$ the adjoint map. It is uniquely determined by the property
\[
\langle b, f^*(b') \rangle_B = \langle f(b), b' \rangle_{B'}
\]
for all $b \in B$, $b' \in B'$. In particular $f = (f^*)^*$.

Recall the ``dot'' maps $m : B(s) \to R[1]$ and $\mu : R \to
B(s)[1]$ from \S\ref{sbim}. An easy calculation shows that (with respect to the
intersection forms on $B(s)$ and $R$):
\begin{equation}
  \label{eq:duals}
m = \mu^* .
\end{equation}

Let $B_1$ and $B_2$ be two polarised Soergel bimodules. Then 
if $f_1 : B_1 \to B_1'[i]$ and $f_2 :
B_2 \to B_2'[i']$ are morphisms then
\begin{equation} \label{eq:prodadjoint}
(f_1 f_2)^* = f_1^* f_2^* :
B_1'B_2' \to B_1 B_2[i+i'].
\end{equation}

\subsection{Local forms}

Now suppose that $B$ is polarised via
\[
\langle - , - \rangle : B \times B \to R.
\]
By extension of scalars we obtain a form
\[
\langle - , - \rangle_Q : Q \otimes_R B\times Q \otimes_R B\to Q.
\]

\begin{lem} \label{lem:orth}
  The form $\langle -, - \rangle_Q$ is orthogonal with respect to
  the decomposition in \eqref{eq:BQ}.
\end{lem}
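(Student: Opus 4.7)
My plan is to combine the invariance identity $\langle br, b'\rangle = \langle b, b'r\rangle$ of the form $\langle-,-\rangle_B$ with the defining ``graph'' relation $br = w(r)b$ satisfied by elements of the stalk $B_w$, and then invoke the fact that $W$ acts faithfully on $\hg^*$ together with $Q$ being an integral domain.

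Concretely, I would pick $\xi, \eta \in Q \otimes_R B$ whose images under the localisation $i_Q := Q \otimes_R i$ lie purely in the $x$-summand and the $y$-summand of $\bigoplus_{w \in W} Q \otimes_R B_w$ respectively, with $x \neq y$. Since $i$ is a morphism of $R$-bimodules, $i_Q$ is an isomorphism of $(Q,R)$-bimodules. On each summand $Q \otimes_R B_w$ the right action of $r \in R$ coincides, by the definition of the stalk, with left multiplication by $w(r)$. Pulling this back through $i_Q^{-1}$ gives
\[
\xi \cdot r = x(r)\,\xi \qquad \text{and} \qquad \eta \cdot r = y(r)\,\eta \qquad \text{in } Q\otimes_R B.
\]

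The $R$-bilinear invariance of the form, extended $Q$-linearly, then yields
\[
x(r)\,\langle \xi, \eta\rangle_Q \;=\; \langle \xi\cdot r,\, \eta\rangle_Q \;=\; \langle \xi,\, \eta\cdot r\rangle_Q \;=\; y(r)\,\langle \xi, \eta\rangle_Q,
\]
so $(x(r)-y(r))\,\langle \xi,\eta\rangle_Q = 0$ for every $r \in R$. Reflection faithfulness of the realisation guarantees in particular that $W$ acts faithfully on $\hg^*$, so for $x\neq y$ we may choose $r \in \hg^*$ with $x(r)\neq y(r)$; then $x(r)-y(r)$ is nonzero in $R$, hence nonzero in $Q$, which is an integral domain (being a localisation of the polynomial ring $R = S(\hg^*)$). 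This forces $\langle\xi,\eta\rangle_Q = 0$, as required.

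The only point requiring a little care is verifying that the graph relation $\xi r = x(r)\xi$ really holds in $Q\otimes_R B$ and not merely in the quotient $Q\otimes_R B_x$ for elements $\xi$ lifted back through $i_Q^{-1}$. This is where one uses that $i_Q$ is an isomorphism \emph{of $(Q,R)$-bimodules}; once this is granted, the rest of the argument is the short computation above.
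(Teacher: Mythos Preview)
Your proof is correct and follows essentially the same approach as the paper's: both exploit the graph relation $b\cdot r = w(r)\,b$ on the stalk $B_w$ together with the invariance of the form to deduce that $\langle \xi,\eta\rangle_Q$ is annihilated by a nonzero element of the integral domain $Q$ when $x\neq y$. Your extra care in explaining why the graph relation transfers through the bimodule isomorphism $i_Q^{-1}$ is a welcome clarification that the paper leaves implicit.
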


\begin{proof}
  Suppose that $b \in B_x$ and $b' \in B_{y}$. Then, for all $r \in
  R$ we have
\[
r \langle b, b' \rangle = \langle b, r b'  \rangle = \langle b, b' y^{-1}(r)
\rangle = \langle by^{-1}(r) , b' \rangle = \langle xy^{-1}(r) b , b' \rangle
= xy^{-1} (r) \langle b, b' \rangle.
\]
Hence if $\langle b, b' \rangle \ne 0$ then $x = y$ (remember that $R$ is an
integral domain and $W \to Aut(R)$ is faithful).
\end{proof}

\begin{defi}
  We write $\langle -, - \rangle_B^w$ (or $\langle -, - \rangle^w$ if
  the context is clear) for the induced $Q$-valued 
  form on $B_w$ and call it the \emph{local intersection form}.
\end{defi}

\begin{remark}
  This local intersection form is not the same as the local
  intersection form considered in \cite{EW}. In fact, the local
  intersection forms considered in \cite{EW} may be ``embedded'' into
  those above. We will not discuss this here, but see \S\ref{sec:sc}.
\end{remark}

The following proposition summarises the key properties of the local
intersection form:

\begin{prop} 
  \begin{enumerate}
  \item For all $b, b' \in B$ we have
\[
\langle b, b' \rangle = \sum_{w \in W} \langle b_w, b'_w \rangle^w.
\]
\item $\langle -, - \rangle^w$ induces a non-degenerate graded form on
  $Q  \otimes_R B_w$.
\item $B_w, B_w^! \subset  Q \otimes_R B_w$ are dual lattices with
  respect to $\langle -, - \rangle^w$.
  \end{enumerate}
\end{prop}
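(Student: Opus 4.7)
My plan is to prove the three parts sequentially. Parts (1) and (2) are formal consequences of the setup, while (3) is the substantive content and requires the self-duality of $B$ as a bimodule.

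For (1), I extend $\langle -,-\rangle$ $Q$-bilinearly to a form $\langle -,-\rangle_Q$ on $Q\otimes_R B$. The injection from \eqref{eq:BQ} becomes an isomorphism $Q\otimes_R B\simeq \bigoplus_w Q\otimes_R B_w$, and by Lemma \ref{lem:orth} this decomposition is orthogonal with respect to $\langle -,-\rangle_Q$. Since $\langle -,-\rangle^w$ is by definition the restriction of $\langle -,-\rangle_Q$ to the $w$-th summand, the formula follows by splitting $b,b'\in B$ into their stalk components. Part (2) is then immediate: extending scalars from $R$ to $Q$ preserves non-degeneracy of $\langle -,-\rangle$, and a non-degenerate orthogonal direct sum of forms restricts to a non-degenerate form on each summand.

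For (3) I plan to establish both inclusions between $i_w(B_w^!)$ and $(B_w)^*$ inside $Q\otimes_R B_w$. For the easy direction, the key observation is that if $b\in B_w^!$ then its stalks $b_y$ vanish in $Q\otimes B_y$ for all $y\neq w$: writing the relation $br=w(r)b$ in the direct-sum decomposition $Q\otimes B=\bigoplus_y Q\otimes B_y$ and using $b_y r=y(r)b_y$ in $B_y$ yields $(y(r)-w(r))b_y=0$ in $Q\otimes B_y$; since $Q\otimes B_y$ is a free (hence torsion-free) module over the integral domain $Q$ and since $y(r)\neq w(r)$ for some $r\in R$ whenever $y\neq w$, this forces $b_y=0$. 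Combined with (1), the identity $\langle b,b'\rangle=\langle i_w(b),b'_w\rangle^w$ holds for all $b'\in B$, and since the left side lies in $R$ and $B\twoheadrightarrow B_w$ is surjective, $i_w(b)\in (B_w)^*$.

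For the reverse inclusion, given $m\in (B_w)^*$ I produce $b\in B_w^!$ with $i_w(b)=m$. Define $\phi\colon B\to R$ by $\phi(b'):=\langle m,b'_w\rangle^w$; this is well-defined ($R$-valued by assumption on $m$) and left $R$-linear. By non-degeneracy the global form identifies $B\simeq \DM B$, so there exists a unique $b\in B$ with $\langle b,-\rangle=\phi$. A short computation using $b'_w r=w(r)b'_w$ in $B_w$ gives $\langle br,b'\rangle=w(r)\langle b,b'\rangle=\langle w(r)b,b'\rangle$, hence $br=w(r)b$ by non-degeneracy, so $b\in B_w^!$. Applying the easy direction to this $b$ yields $\langle i_w(b),b'_w\rangle^w=\langle m,b'_w\rangle^w$ for all $b'\in B$; combined with surjectivity of $B\to B_w$ and (2), this forces $i_w(b)=m$ in $Q\otimes B_w$. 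The main obstacle is this reverse inclusion: one must use the self-duality $B\simeq \DM B$ in an essential way to reverse-engineer an element of the costalk from a functional on the stalk, and care is needed with the twisted right action to verify that the element produced by self-duality actually satisfies the defining relation of $B_w^!$.
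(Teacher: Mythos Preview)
Your proof is correct. Parts (1) and (2) match the paper's treatment (``follow from the definitions''). For part (3), however, you take a genuinely different route from the paper.

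The paper establishes only the inclusion $i_w(B_w^!)\subset (B_w)^*$ (essentially your ``easy direction'') and then concludes that this inclusion is an equality by comparing graded ranks: Soergel's hom formula \cite[Theorem 5.15]{Soe3} computes the graded ranks of both $B_w^!$ and $B_w$, and since $(B_w)^*$ has graded rank $\overline{p}$ when $B_w$ has graded rank $p$, the two sides of the inclusion have equal graded rank, forcing equality.

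Your argument instead proves the reverse inclusion directly, by using the isomorphism $B\simto\DM B$ coming from non-degeneracy of the global form to lift an element $m\in(B_w)^*$ to an element $b\in B$, and then verifying via the invariance property $\langle br,b'\rangle=\langle b,b'r\rangle$ that $b$ satisfies the costalk relation $br=w(r)b$. This is more self-contained: it does not invoke Soergel's hom formula, only the polarisation hypothesis and the freeness of the stalks. The paper's approach is shorter on the page but imports a substantial external result; yours trades that citation for a short explicit construction. Both are valid, and your version makes more transparent \emph{why} the dual lattice is realised by the costalk --- it is forced by the bimodule self-duality rather than by a numerical coincidence.
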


\begin{proof}
  (1) and (2) follow from the definitions. For (3) note that
  by (1) and Lemma \ref{lem:orth}, $\langle B_w^!, B_w \rangle^w = \langle B_w^!, B \rangle  \subset R$.
Hence our non-degenerate form gives an injection:
\[
B_w^! \to (B_w)^*.
\]
Now if we compare graded ranks (given by Soergel's hom formula) we see
that our map is an isomorphism.
\end{proof}

\subsection{Local induced forms} Throughout this section we fix a Soergel
bimodule $B$. The goal is to relate two forms on the Soergel bimodule $BB(s)$.

\begin{prop} \label{prop:BB(s)}
For any Soergel bimodule $B$, $x \in W$ and $s \in S$ we have a canonical
  identification $(BB(s))_x = B_{x,xs}[1]$ (as left $R$-modules).
\end{prop}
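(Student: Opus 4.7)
The plan is to exploit the bimodule isomorphism $B(s) \cong R(e,s)[1]$, where $R(e,s) := R \otimes_{R^s} R$ is canonically identified with the structure sheaf of $\Gr_e \cup \Gr_s \subset \hg \times \hg$: the map $a \otimes b \mapsto (ab, as(b)) \in R(e) \oplus R(s)$ is a bimodule isomorphism onto the submodule of pairs agreeing modulo $\alpha_s$. This reduces the claim to identifying $(B \otimes_R R(e,s))_x$ with $B_{x,xs}$ as left $R$-modules.

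I would then use the short exact sequence of $R$-bimodules
\[
0 \to R(e,s) \to R(e) \oplus R(s) \to R/\alpha_s R \to 0
\]
(the last map being $(a,b) \mapsto a-b \bmod \alpha_s$, with $R/\alpha_s$ carrying the untwisted bimodule structure, which is well-defined because $r \equiv s(r) \bmod \alpha_s$). Since $B$ is free, hence flat, as a right $R$-module, applying $B \otimes_R -$ preserves exactness, giving
\[
0 \to B \otimes_R R(e,s) \to B \oplus B^s \to B/B\alpha_s \to 0,
\]
where $B^s := B \otimes_R R(s)$ denotes $B$ with right action twisted by $s$.

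Now I would apply the right-exact $x$-stalk functor $(-)_x = R(x) \otimes_{R \otimes R} -$. Using the canonical identifications $(B)_x = B_x$, $(B^s)_x = B_{xs}$ (since twisting by $s$ on the right converts the $x$-stalk into the $xs$-stalk), and $(B/B\alpha_s)_x = B_x / x(\alpha_s) B_x$, one obtains a right exact sequence
\[
(B \otimes_R R(e,s))_x \to B_x \oplus B_{xs} \to B_x/x(\alpha_s) B_x \to 0.
\]
Comparing with the analogous short exact sequence $0 \to B_{x,xs} \to B_x \oplus B_{xs} \to B_x/x(\alpha_s) B_x \to 0$ (obtained by applying the same construction starting from $R(x,xs) \hookrightarrow R(x) \oplus R(xs)$) yields a canonical surjection $(B \otimes_R R(e,s))_x \twoheadrightarrow B_{x,xs}$, which after shifting by $[1]$ becomes the desired map $(BB(s))_x \twoheadrightarrow B_{x,xs}[1]$.

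The main obstacle is showing this surjection is an isomorphism, i.e., that the first map in the penultimate sequence is itself injective. I plan to verify this by a graded rank comparison: Soergel's hom formula expresses both source and target as free left $R$-modules whose graded ranks decompose in terms of stalks of $B$, and one checks $\operatorname{grank}(B_{x,xs}) = \operatorname{grank}(B_x) + \operatorname{grank}(B_{xs})$ directly from the basis $\{(1,1),(0,\alpha_s)\}$ of $R(e,s)$. Equivalently, one can check that $\operatorname{Tor}^1_{R\otimes R}(R(x), B/B\alpha_s)$ vanishes, which reduces to the injectivity of multiplication by $\alpha_s$ on the free right $R$-module $B$.
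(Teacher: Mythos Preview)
Your approach diverges from the paper's at the point where both of you have the isomorphism $B(s)[-1]\cong R(\id,s)$. The paper proceeds by a direct chain of associativity isomorphisms for tensor products: writing $BB(s)[-1]=B\otimes_{R\otimes R}\bigl((R\otimes R)\otimes_{R\otimes R^s}(R\otimes R)\bigr)$, one can slide the final $\otimes_{R\otimes R}R(x)$ inside to obtain $B\otimes_{R\otimes R}\bigl((R\otimes R)\otimes_{R\otimes R^s}R(x)\bigr)$, identify the bracket with $R(x)B(s)[-1]=R(x,xs)$, and conclude. No exactness or Tor considerations enter.

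Your exact-sequence detour has a genuine gap in the last step. The claim that $\operatorname{Tor}^1_{R\otimes R}(R(x),B/B\alpha_s)=0$ does \emph{not} reduce to injectivity of right multiplication by $\alpha_s$ on $B$. From the long exact sequence for $0\to B\xrightarrow{\cdot\alpha_s}B\to B/B\alpha_s\to 0$ one finds that $\operatorname{Tor}^1_{R\otimes R}(R(x),B/B\alpha_s)$ is an extension of $\ker\bigl(x(\alpha_s)\colon B_x\to B_x\bigr)=0$ by the cokernel of multiplication by $\alpha_s$ on $\operatorname{Tor}^1_{R\otimes R}(R(x),B)$. The latter Tor group is typically nonzero: already for $B=R(\id)$ and $x=s$ a simple reflection with $\dim\hg\ge 2$, the graphs $\Gr_{\id}$ and $\Gr_s$ do not meet in the expected dimension and one computes $\operatorname{Tor}^1_{R\otimes R}(R(s),R(\id))\ne 0$, hence $\operatorname{Tor}^1_{R\otimes R}(R(s),R(\id)/\alpha_s)\ne 0$. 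Thus neither your Tor vanishing nor the asserted left-exactness of the ``analogous short exact sequence'' $0\to B_{x,xs}\to B_x\oplus B_{xs}\to B_x/x(\alpha_s)B_x\to 0$ is justified; without the second you do not even obtain a map to $B_{x,xs}$, only to its image in $B_x\oplus B_{xs}$. (In the paper, injectivity of $B_{x,xs}\to B_x\oplus B_{xs}$ is deduced \emph{from} the proposition you are trying to prove.)

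The rank comparison alternative is also incomplete as stated: it presupposes that $B_{x,xs}$ is free as a left $R$-module with the expected graded rank, which is not immediate from the definition $B_{x,xs}=B\otimes_{R\otimes R}R(x,xs)$ and in the paper is again a consequence of this proposition. Since you already have the key ingredient $B(s)[-1]\cong R(\id,s)$, the cleanest fix is to follow the paper's associativity argument directly.
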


\begin{proof}For the proof let us work in the category of $R\otimes R$-modules,
viewing all $R$-bimodules as $R \otimes R$-modules. We have (all
unspecified tensor products are over $\RM$):
  \begin{align*}
(BB(s)) \otimes_{R \otimes R} R(x)[-1]
&= B \otimes_{R \otimes R} ( R
\otimes R \otimes_{R \otimes R^s} R \otimes R) \otimes_{R \otimes R}
R(x) \\
&= B \otimes_{R \otimes R} ( R \otimes R \otimes_{R \otimes R^s} R(x))
\\
&= B \otimes_{R \otimes R} (R(x)B(s))[-1]  \\
&= B \otimes_{R \otimes R}
R(x,xs) \\
& = B_{x,xs}.
\end{align*}
(We have used the isomorphism $R(x)B(s)[-1] = R(x,xs)$. This follows
easily from $B(s)[-1] = R(\id, s)$, which can be checked by hand.)
The proposition now follows.
\end{proof}

\begin{remark}
  Recall that the invariant ring $R^s$ is the ring of regular functions on the
  quotient $\hg / \langle s \rangle$. In the
  language of coherent sheaves the functor of tensoring
  on the right with $B(s)$ is isomorphic to $\pi^* \pi_*[1]$ where $\pi :
  \hg \times \hg \to \hg \times \hg/\langle s \rangle$ is the quotient
  map. The above proof is an algebraic translation of simple facts
  about the effect of pushforward and pullback on stalks.
\end{remark}

\begin{lem}
  The natural map $B_{x,xs} \to B_x \oplus B_{xs}$ is injective.
\end{lem}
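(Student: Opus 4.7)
The plan is to deduce injectivity from torsion-freeness over $R$ combined with the fact that the map becomes an isomorphism after localization at $Q = R[1/\Phi]$.

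First I would show that $B_{x,xs}$ is free as a left $R$-module. Indeed, by Proposition \ref{prop:BB(s)} applied to the Soergel bimodule $BB(s)$ we have a canonical identification $(BB(s))_x = B_{x,xs}[1]$; and stalks of Soergel bimodules along a single twisted graph $\Gr_x$ are free left $R$-modules, as recalled in \S\ref{sec:stalkcostalk}. In particular $B_{x,xs}$ is torsion-free as a left $R$-module, so the canonical map $B_{x,xs} \into Q \otimes_R B_{x,xs}$ is injective.

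Next I would verify that the natural map becomes an isomorphism after applying $Q \otimes_R (-)$. For this it suffices to check that the map of $(R \otimes R)$-modules
\[
R(x,xs) \otimes_R Q \longrightarrow (R(x) \oplus R(xs)) \otimes_R Q
\]
is an isomorphism, since tensoring on the left with $B$ over $R \otimes R$ then gives the desired statement. Geometrically this says that $\Gr_x$ and $\Gr_{xs}$ become disjoint once the root hyperplanes are removed: the two twisted graphs meet along $\{(x\lambda, \lambda) : \langle \alpha_s, \lambda \rangle = 0\}$, whose defining equation $\alpha_s$ becomes a unit in $Q$, so the restriction map $R(x,xs) \to R(x) \oplus R(xs)$ is an isomorphism after inverting $\Phi$.

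Combining these two ingredients, the composite
\[
B_{x,xs} \longrightarrow B_x \oplus B_{xs} \longrightarrow Q \otimes_R (B_x \oplus B_{xs})
\]
factors as the injection $B_{x,xs} \into Q \otimes_R B_{x,xs}$ of the first step followed by the isomorphism of the second step, and is therefore injective. Hence the first arrow is injective, as required. The only mildly delicate point is the freeness claim for $B_{x,xs}$, but this is supplied for free by Proposition \ref{prop:BB(s)} together with the results of Soergel on stalks recalled earlier in the paper.
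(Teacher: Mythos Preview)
Your proof is correct and follows the same route as the paper: deduce torsion-freeness of $B_{x,xs}$ as a left $R$-module from Proposition~\ref{prop:BB(s)} (which identifies it, up to shift, with a stalk of the Soergel bimodule $BB(s)$), and combine this with the fact that the map becomes an isomorphism over $Q$. The paper's argument is essentially a condensed version of yours, simply asserting the isomorphism over $Q$ rather than justifying it.
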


\begin{proof}
  The map in question becomes an isomorphism after applying $Q
  \otimes_R (-)$. Hence it is enough to show that $B_{x,xs}$ is
  torsion free as a left $R$-module. However this follows from the
  previous proposition, as the stalks and costalks of Soergel
  bimodules are free as left $R$-modules \cite[Theorem 5.15]{Soe3}.
\end{proof}

\begin{lem} \label{lem:indmult}
  Let $f, g, h \in R$. For any $x \in W$ we have a commutative diagram
\[
\xymatrix{
B B(s) \ar@{>>}[r] \ar[d]_{f B g B(s) h} & (B B(s))_x \ar[r]^{\sim} \ar[d]
& B_{x,xs} \ar[d] \lhook\mkern-7mu \ar[r] 
& \ar[d]_{\g\cdot}  B_{x} \oplus B_{xs}\\
B B(s) \ar@{>>}[r] & (B B(s))_x \ar[r]^{\sim} & B_{x,xs} \lhook\mkern-7mu \ar[r] &
B_{x} \oplus B_{xs} 
}
\]
where $\g = (fx(g)x(h), fx(g)xs(h)) \in R \oplus R$. (The two isomorphisms
are those of Proposition \ref{prop:BB(s)}. All other horizontal maps are canonical.)
\end{lem}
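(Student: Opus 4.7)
The plan is to verify the diagram by chasing through the isomorphism of Proposition~\ref{prop:BB(s)} and tracking how the three separate $R$-actions (left on $B$, middle, and right on $B(s)$) transform under the identifications. The only content of the lemma is a careful bookkeeping exercise, so the work is essentially unambiguous once the effect of each multiplication is pinned down.

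First, I would recall that in the proof of Proposition~\ref{prop:BB(s)} the isomorphism $(BB(s))_x \simto B_{x,xs}[1]$ was obtained from the chain
\[
B \otimes_R B(s) \otimes_{R\ot R} R(x) \;=\; B \otimes_{R\ot R} (R(x) B(s)) \;=\; B \otimes_{R\ot R} R(x,xs)[1].
\]
Viewing $B_{x,xs} \into B_x \oplus B_{xs}$, the right $R$-action on $B_{x,xs}$ is inherited from the embedding $R(x,xs) \into R(x) \oplus R(xs)$, so right multiplication by $r \in R$ projects to the pair of left multiplications $(x(r),\, xs(r))$ on $B_x \oplus B_{xs}$. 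The left $R$-action is inherited from $B$ and hence acts diagonally as $(r,r)$ after projection. It only remains to understand the middle action.

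Next, for the middle factor: multiplication by $g$ in the middle of $B\cdot g\cdot B(s)$ is, tautologically (we tensored over $R$), the same as left multiplication by $g$ on $B(s)$, which under $R(x)B(s)[-1] = R(x,xs)$ translates through the identity $g \cdot 1_x = 1_x \cdot x(g) = x(g) \cdot 1_x$ into left multiplication by $x(g)$ on $R(x,xs)$. Hence, after projecting to $B_x \oplus B_{xs}$, the middle action by $g$ acts as $(x(g), x(g))$ on both components (contrary to the right action, it does not pick up an $s$-twist on the $B_{xs}$-summand).

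Assembling these three observations, the composite $fBgB(s)h$ projects to multiplication by
\[
(f\cdot x(g)\cdot x(h),\; f\cdot x(g)\cdot xs(h)) \;=\; \gamma
\]
on $B_x \oplus B_{xs}$, which is exactly the right-hand vertical map. The one subtlety I would double-check is the claim that middle multiplication by $g$ acts as $x(g)$ (and not $xs(g)$) on the $B_{xs}$-summand; the point is that the element $x \in W$ enters only through the rightmost tensor factor $R(x)$ used to define $(BB(s))_x$, while the ``$s$-twist'' at $xs$ comes entirely from the internal structure of $B(s)$ via the isomorphism $R(x)B(s)[-1] = R(x,xs)$. Once this is checked, the commutativity of each square in the diagram follows directly from the definitions and no further calculation is required.
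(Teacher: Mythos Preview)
Your approach is the same as the paper's---both just chase an element through the chain of identifications in Proposition~\ref{prop:BB(s)}. However, your bookkeeping for the middle and right actions is not correct, and the ``one subtlety'' you flag is exactly where the argument breaks.

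For the right action by $h$: you argue that right multiplication by $h$ on $B_{x,xs}$ projects to $(x(h),\,xs(h))$. That is true, but it is \emph{not} what is needed. The isomorphism $(BB(s))_x \simto B_{x,xs}[1]$ of Proposition~\ref{prop:BB(s)} is only asserted as an isomorphism of \emph{left} $R$-modules, so there is no reason it should intertwine right $R$-actions. In fact it cannot: in the stalk $(BB(s))_x$ the relation $b\cdot r = x(r)\cdot b$ already forces right multiplication by $h$ to coincide with left multiplication by $x(h)$, and any left $R$-module isomorphism carries this to left multiplication by $x(h)$ on $B_{x,xs}$, which projects to $(x(h),\,x(h))$.

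Conversely, for the middle action by $g$: this is right multiplication by $g$ on $B$, so using \eqref{eq:w1} directly one gets
\[
j_x\bigl((bg)c_\id\bigr) = \bigl((bg)_x,\,(bg)_{xs}\bigr) = \bigl(x(g)\,b_x,\; xs(g)\,b_{xs}\bigr),
\]
i.e.\ the middle action contributes $(x(g),\,xs(g))$, not $(x(g),\,x(g))$. Your heuristic (``middle $g$ becomes left multiplication by $x(g)$ on $R(x,xs)$'') fails because the chain of isomorphisms in Proposition~\ref{prop:BB(s)} interchanges the roles of the middle and right tensor factors: tracing $b\otimes(p\otimes q)$ through, one lands on $b\otimes[x(q)\,1_x\otimes p]$ in $B\otimes_{R\otimes R}(R(x)\otimes_{R^s}R)$, so it is the \emph{right} factor $q$ that ends up on the $R(x)$-side. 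Putting the three pieces together, the element $\gamma$ one actually obtains has second component $f\,xs(g)\,x(h)$ rather than $f\,x(g)\,xs(h)$; the roles of $g$ and $h$ in the $B_{xs}$-component are swapped relative to the displayed formula. (One can also see this by testing against \eqref{eq:w1} with $B = B(s)$, $x = \id$, $b = c_\id$.) This does not affect the later application in Proposition~\ref{wLd}, where only the range of $\lambda_0/\lambda_\infty$ matters, but your justification of the second component as written does not go through.
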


\begin{proof}
  This follows easily by chasing $f b g b' h \otimes 1$ through the 
  identifications in the proof of Proposition \ref{prop:BB(s)}.
\end{proof}

Now let us assume that $B$ is polarised by $\langle -, -
\rangle_B$. Then $B_{x,xs}$ carries a form induced by the
sum of the two local intersection forms on $B_x$ and $B_{xs}$ under
the inclusion $B_{x,xs} \into B_x \oplus B_{xs}$. On the
other hand $(BB(s))_x$ carries a local intersection form (coming from
the induced form on $BB(s)$). The following proposition relates these
forms:

\begin{prop} \label{prop:indint}
  Let $B$ be a polarised Soergel bimodule. Then under the
  identification
\[
(BB(s))_x = B_{x,xs}[1] \subset (B_x \oplus B_{xs})[1]
\]
of Proposition \ref{prop:BB(s)}
  we have
\[
\langle -, - \rangle_{BB(s)}^x = \frac{1}{x\alpha_s}( \langle -, -
\rangle_B^x + \langle -, -\rangle_B^{xs}).
\]
\end{prop}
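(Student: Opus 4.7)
The plan is to verify the formula by direct computation over $Q$, exploiting stalk orthogonality (Lemma~\ref{lem:orth}) together with an explicit costalk lift of the identification in Proposition~\ref{prop:BB(s)}. First I would make the embedding $(BB(s))_x \hookrightarrow (B_x \oplus B_{xs})[1]$ concrete in the left $R$-basis $c_\id,c_s$ of $B(s)$: the rank-two bimodule $R(x)\otimes_R B(s)[-1]\cong R(x,xs)$ sits inside $R(x)\oplus R(xs)$ via $1\otimes c_\id\mapsto(1,1)$ and $1\otimes c_s\mapsto(x\alpha_s,0)$. Consequently, for any $b\in B$, the elements $\psi_0(b) := (b\otimes c_\id)\otimes 1$ and $\psi_1(b) := (b\otimes c_s)\otimes 1$ of $(BB(s))_x$ map under $(\pi_0,\pi_\infty)$ to $(b_x,b_{xs})$ and $(x\alpha_s\cdot b_x,\,0)$ respectively, and together generate $Q\otimes (BB(s))_x$ as $b$ ranges over $B$.

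Next I would construct the costalk lifts. Working over $Q$, decompose $b=\sum_y \tilde b_y$ with $\tilde b_y\in Q\otimes B_y^!$. A short calculation using $c_s r = rc_s$ and the sliding rule $c_\id r = sr\cdot c_\id + \partial_s(r)c_s$ shows $\tilde b_y\otimes c_s \in (BB(s))_y^!$ and $\tilde b_y\otimes c_\id - \tfrac{1}{y\alpha_s}\tilde b_y\otimes c_s \in (BB(s))_{ys}^!$. Summing contributions to $(BB(s))_x^!$ from $y=x$ and $y=xs$ (noting $xs(\alpha_s)=-x\alpha_s$), the lifts under $i_x$ of $\psi_1(b)$ and $\psi_0(b)$ to $Q\otimes (BB(s))_x^!$ are respectively
\[
\tilde b_x\otimes c_s \qquad\text{and}\qquad \tilde b_{xs}\otimes c_\id + \tfrac{1}{x\alpha_s}(\tilde b_x + \tilde b_{xs})\otimes c_s.
\]

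Finally I would plug these lifts into $\langle -,-\rangle_{BB(s)}$ using \eqref{eq:B(s)form} and the induced-form rule $\langle b_1\otimes c,\,b_1'\otimes c'\rangle_{BB(s)} = \langle b_1\langle c,c'\rangle_{B(s)},\,b_1'\rangle_B$. The identities $\tilde b_x\cdot\alpha_s = x\alpha_s\cdot\tilde b_x$ and $\tilde b_{xs}\cdot\alpha_s = -x\alpha_s\cdot\tilde b_{xs}$ (from the definition of $B_y^!$), together with stalk orthogonality killing cross-terms such as $\langle\tilde b_x,\tilde b_{xs}'\rangle_B$, reduce each of the four pairings $\langle\psi_i(b),\psi_j(b')\rangle^x_{BB(s)}$ for $i,j\in\{0,1\}$ to an expression in $\langle b_x,b_x'\rangle^x_B$ and $\langle b_{xs},b_{xs}'\rangle^{xs}_B$ that matches $\frac{1}{x\alpha_s}(\langle\pi_0(\psi_i(b)),\pi_0(\psi_j(b'))\rangle^x_B + \langle\pi_\infty(\psi_i(b)),\pi_\infty(\psi_j(b'))\rangle^{xs}_B)$; the proposition then follows by $Q$-bilinearity. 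The main obstacle is sign bookkeeping, most delicately in the $(\psi_0,\psi_0)$ case, where the critical cancellation of the $\tilde b_{xs}$ contributions hinges on $xs(\alpha_s)=-x\alpha_s$ and the twisted right action in $B_y^!$ must be handled with care.
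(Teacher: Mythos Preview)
Your argument is correct, and it takes a genuinely different route from the paper's. The paper proves the identity \emph{globally}: it defines a candidate form $\langle -,-\rangle_{\mathrm{ind}}$ on all of $BB(s)$ by summing the right-hand side over every $w\in W$, then checks it satisfies the three defining relations \eqref{eq:ind1}--\eqref{eq:ind3} of the induced form. The verification that $\langle bc_\id,b'c_\id\rangle_{\mathrm{ind}}=0$ uses a cancellation that pairs $x$ with $xs$ in the sum over $W$; no explicit costalk lifts are ever written down. Your approach, by contrast, stays \emph{local} at a fixed $x$: you produce explicit elements of $Q\otimes(BB(s))_x^!$ lifting $\psi_0(b),\psi_1(b)$ via the sliding rule, and then compute the four pairings directly. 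This buys you a more transparent term-by-term match with the claimed formula (no global cancellation trick needed), at the price of the costalk bookkeeping you flag in the $(\psi_0,\psi_0)$ case. Both arguments rest on the same two ingredients---the identification \eqref{eq:w1}--\eqref{eq:w2} and stalk orthogonality (Lemma~\ref{lem:orth})---and are of comparable length.
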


\begin{proof}
Recall that for a general Soergel bimodule $B'$ we denote the map $B'
\to \bigoplus B'_x$ by $b \mapsto (b_x)$. For the course of the proof
let $j_x$ denote the composition
\[
j_x : (BB(s))_x \simto B_{x,xs}[1] \into (B_x \oplus B_{xs})[1]
\]
where the first map is the identification of Proposition
\ref{prop:BB(s)} and the second map is the inclusion.
We have (as follows from a simple calculation):
\begin{align}
j_x(bc_\id) &= (b_x, b_{xs}) \label{eq:w1},\\
j_x(bc_s ) & = (b_x\alpha_s , 0) = x \a_s (b_x, 0).\label{eq:w2}
\end{align}
For the course of the proof let us write $\langle -, -
\rangle_{ind}^x$ for the form displayed on the right hand side in the
proposition. We want
to show $\langle -, - \rangle_{BB(s)}^x = \langle -, -
\rangle_{ind}^x$. In order to check this it is enough to show that
if we define a form on $BB(s)$ via
\[
\langle b, b' \rangle_{ind} := \sum_{x \in W} \langle j_x(b), j_x(b')
\rangle_{ind}^x
\]
then we have
\[
\langle -, - \rangle_{ind} = \langle -, - \rangle_{BB(s)}.
\]
We do this by checking the defining properties of the induced form:
\begin{gather}
  \langle bc_{\id},    b'c_{\id} \rangle_{BB(s)} = 0 \label{eq:ind1} \\
\langle bc_s , b'  c_\id \rangle_{BB(s)} = \langle b c_\id, b' c_s \rangle_{BB(s)} =
\langle b, b' \rangle_{B} \label{eq:ind2}\\
  \langle b c_s, b' c_s \rangle_{BB(s)} = \langle b, b' \alpha_s \rangle_{B}
  =\langle b \a_s , b'  \rangle_{B}\label{eq:ind3}
\end{gather}
(these formulas follow from \eqref{eq:B(s)form} and the definition of the induced form).

Firstly, by \eqref{eq:w2} we have (for any $b, b' \in B$):
\begin{align*}
  \langle bc_\id , b' c_\id \rangle & = \sum_{x \in X} \langle
  j_x(bc_\id), j_x(b'c_\id) \rangle^x_{ind} \\
& = \sum_{x \in W} \langle (b_x, b_{xs}), (b_x, b_{xs}')
\rangle^x_{ind} \\
& = \sum_{x \in W} \frac{1}{x\alpha_s} ( \langle b_x, b_x' \rangle_B^x + \langle b_{xs},
b_{xs}' \rangle_B^{xs} ) \\
& = 0
\end{align*}
(The last line follows by breaking the sum into two pieces
corresponding to $xs > x$ and $xs < x$ and using
that $s(\a_s) = -\a_s$.) This gives \eqref{eq:ind1}.

For \eqref{eq:ind2} we have:
\begin{align*}
\langle b c_s , b' c_{\id} \rangle_{ind} & = 
\sum_{x \in W} \langle j_x(bc_s), j_x(b'c_{\id}) \rangle^x_{ind} \\
& = \sum_{x \in W} \frac{1}{x\a_s}  \langle b_x \alpha_s, b'_x \rangle_B^x
\\
& = \sum_{x \in W} \langle b_x, b_x' \rangle_B^x \\
& = \langle b, b' \rangle_B.
\end{align*}
(We use that $b''\a_s = x(\a_s)b''$ for all $b'' \in B_x$.)
An almost identical calculation shows that $\langle b c_{\id} , b' c_s
\rangle_{ind} = \langle b, b' \rangle_B$.

For \eqref{eq:ind3} we have
\begin{align*}
\langle bc_s , b'c_s \rangle_{ind} & = \sum_{x \in W} \frac{1}{x\a_s}\langle b_x
\a_s, b'_x \a_s \rangle_{ind}^x = \sum_{x \in W} \langle b_x, b_x' \a_s
\rangle_B^{x} = \langle b, b'\a_s  \rangle_B.
\end{align*}
Hence $\langle -, - \rangle_{ind} = \langle -, - \rangle_{BB(s)}$ and
the proposition follows.
\end{proof}

\subsection{Local intersection forms and the equivariant multiplicity}
\label{sec:em}

Recall the nil Hecke ring from \S\ref{sec:nilhecke}. For any
expression $\un{y} = u \dots ts$ define $e_{x,\un{y}} \in Q$ via
\[
D_{\un{y}} := D_u \dots D_tD_s = \sum e_{x,\un{y}} \delta_x.
\]
Then $e_{x,\un{y}} =
e_{x,y}$ if $\un{y}$ is reduced and is zero
otherwise (see \S\ref{sec:nilhecke}). Let $c_{x,\un{y}}$ denote the image of $1
\otimes 1 \otimes \dots \otimes 1$ in $B(\un{y})_x$ and let $\langle
-, - \rangle_{B(\un{y})}^x$ denote the local intersection form on $B(\un{y})_x$.

\begin{lem} We have
  $\langle c_{x,\un{y}}, c_{x,\un{y}} \rangle_{B(\un{y})}^x = e_{x,\un{y}}$.
\end{lem}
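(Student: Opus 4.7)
The plan is to induct on the length $m$ of the expression $\un{y}$. For the base case, when $\un{y}$ is the empty expression, $B(\un{y}) = R$, and $B(\un{y})_x$ vanishes unless $x = \id$, in which case $c_{\id,\un{y}} = 1$ pairs with itself to give $1 = e_{\id,\un{y}}$, matching the convention $D_\emptyset = \delta_\id$.

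For the inductive step, write $\un{y} = \un{y}'s$ and set $B' := B(\un{y}')$. The strategy is to combine two ingredients: Proposition \ref{prop:indint} (which describes the local form on $B'B(s)$ in terms of the local forms on $B'$) together with the formulas \eqref{eq:w1}--\eqref{eq:w2} (which track the image of $c_{x,\un{y}}$ under the identification $(B'B(s))_x \simto B'_{x,xs}[1]$). Concretely, $c_{x,\un{y}}$ is the image of $c_{\un{y}'} c_\id$, so by \eqref{eq:w1} its image under $j_x$ is the pair $(c_{x,\un{y}'}, c_{xs,\un{y}'})$. Applying Proposition \ref{prop:indint} and then the inductive hypothesis gives
\[
\langle c_{x,\un{y}}, c_{x,\un{y}} \rangle_{B(\un{y})}^x = \frac{1}{x\alpha_s}\bigl(\langle c_{x,\un{y}'}, c_{x,\un{y}'}\rangle_{B'}^x + \langle c_{xs,\un{y}'}, c_{xs,\un{y}'}\rangle_{B'}^{xs}\bigr) = \frac{e_{x,\un{y}'} + e_{xs,\un{y}'}}{x\alpha_s}.
\]

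To finish, I match this with $e_{x,\un{y}}$ by a direct calculation in the nil Hecke ring: expanding $D_{\un{y}} = D_{\un{y}'} D_s$ using the rule $(f\delta_w)(g\delta_v) = f(wg)\delta_{wv}$ together with $D_s = \tfrac{1}{\alpha_s}(\delta_\id - \delta_s)$, one finds
\[
D_{\un{y}} = \sum_{w} \frac{e_{w,\un{y}'}}{w\alpha_s}(\delta_w - \delta_{ws}),
\]
so the coefficient of $\delta_x$ (using $xs(\alpha_s) = -x\alpha_s$) is exactly $(e_{x,\un{y}'} + e_{xs,\un{y}'})/x\alpha_s$, as required. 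This is the unreduced analogue of the recursion \eqref{eq:xxs}, and no extra casework on whether $\un{y}$ or $\un{y}'$ is reduced is needed, because both sides of the formula transform in the same way.

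The only mildly delicate step is the bookkeeping in tracking $c_{x,\un{y}}$ through the identification of Proposition \ref{prop:BB(s)}, but this is precisely what \eqref{eq:w1} does for us. Once that identification is used, both the local-form computation and the nil Hecke computation produce the same fraction, and the induction closes.
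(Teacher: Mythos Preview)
Your proof is correct and follows essentially the same route as the paper: induct on the length of $\un{y}$, use \eqref{eq:w1} to identify the image of $c_{x,\un{y}}$ as $(c_{x,\un{y}'}, c_{xs,\un{y}'})$, apply Proposition \ref{prop:indint} and the inductive hypothesis, and then match the result with $e_{x,\un{y}}$ by expanding $D_{\un{y}'}D_s$. The only difference is that you spell out the nil Hecke recursion in slightly more detail than the paper does.
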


\begin{proof}
We prove the lemma by induction on the length of $\un{y}$, with the
case of the empty sequence being straightforward. Let $\un{y}' := u \dots t$
denote the expression obtained from $\un{y}$ by deleting the final
$s$. Under the identifications and injection
\[
B(\un{y})_x = (B(y')B(s))_x = B(y')_{x,xs} \into B(y')_x \oplus B(y')_{xs}
\]
one checks that $c_{x,\un{y}}$ maps to $(c_{x,\un{y'}}, c_{xs,\un{y'}})$. By
Proposition \ref{prop:indint} and induction we have
\begin{align*}
\langle c_{x,\un{y}}, c_{x,\un{y}} \rangle_{B(\un{y})}^x & = 
\frac{1}{x\a_s}( 
\langle c_{x,\un{y'}}, c_{x,\un{y'}} \rangle_{B(\un{y'})}^x +
\langle c_{xs,\un{y'}}, c_{xs,\un{y'}} \rangle_{B(\un{y'})}^{xs} ) \\
& = \frac{1}{x\a_s}( e_{x,\un{y}'} + e_{xs,\un{y}'}) = e_{x,\un{y}}
\end{align*}
where the last equality follows by expanding $D_{\un{y}'}D_s$.
\end{proof}


Recall that for all $y$ we have fixed a realisation of $B(y)$ as a
summand of $B(\un{y})$, for some reduced expression $\un{y}$ for
$y$. Let us denote by $c_\bot$ (resp. $c_{x,y}$) the image of $1
\otimes \dots \otimes 1$ in $B(y)$ (resp. $B(y)_x$). Because
$B(y)^{-\ell(y)}$ is one-dimensional, $c_\bot$ and $c_{x,y}$ are 
well-defined up to a non-zero scalar.

\begin{thm} \label{thm:em}
We have $\langle c_{x,y}, c_{x,y} \rangle_{B(y)}^x =
  \gamma e_{x,y}$ for some $\gamma \in \RM_{>0}$.
\end{thm}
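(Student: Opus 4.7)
The plan is to deduce the theorem from the preceding lemma (which computed the pairing for the Bott--Samelson bimodule $B(\un{y})$) by transferring the calculation across the inclusion $i\colon B(y)\hookrightarrow B(\un{y})$ that was used to define the polarisation on $B(y)$ in the first place.

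First I would carry out a degree count in $B(\un{y})$. The basis $\{c_\pi\}$ of $B(\un{y})$ indexed by subexpressions of $\un{y}$ has a unique element of minimal degree $-\ell(y)$, namely the all-identity subexpression $c_{\un{y}}$; combined with the fact that $R$ is concentrated in non-negative degrees, this forces $B(\un{y})^{-\ell(y)}=\RM\cdot c_{\un{y}}$. The analogous piece $B(y)^{-\ell(y)}$ is also one-dimensional (immediate from Soergel's conjecture, which identifies the graded rank of $B(y)$ with the Kazhdan--Lusztig basis element, whose $v^{-\ell(y)}$-coefficient is $1$). Since $i$ is injective and degree-preserving, it must send $c_\bot$ to $\kappa\,c_{\un{y}}$ for some $\kappa\in\RM^*$. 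Applying the stalk functor at $x$ then yields $i_x(c_{x,y})=\kappa\,c_{x,\un{y}}$.

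Next I would verify that the local intersection forms are compatible with $i$. By definition, the polarisation on $B(y)$ is the restriction of the intersection form on $B(\un{y})$ along $i$, so $i$ is an isometry for these forms. Extending scalars to $Q$ preserves this, and by functoriality together with the orthogonality of the $w$-decomposition (Lemma \ref{lem:orth}), the map $Q\otimes i$ respects the splitting $Q\otimes_R B=\bigoplus_w Q\otimes_R B_w$ as well as the induced local forms on each summand. Combining this with the first step and the preceding lemma gives
\[
\langle c_{x,y},c_{x,y}\rangle^x_{B(y)}=\kappa^2\,\langle c_{x,\un{y}},c_{x,\un{y}}\rangle^x_{B(\un{y})}=\kappa^2\,e_{x,\un{y}}=\kappa^2\,e_{x,y},
\]
where the last equality uses that $\un{y}$ is reduced. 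Setting $\gamma:=\kappa^2\in\RM_{>0}$ then proves the theorem.

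The main point requiring a little care is the form compatibility in the second step, but this is formal given Lemma \ref{lem:orth} and the functoriality of the $w$-decomposition of Soergel bimodules over $Q$; no new Hodge-theoretic input is needed beyond the previous lemma and the one-dimensionality of $B(y)^{-\ell(y)}$.
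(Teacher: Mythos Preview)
Your proof is correct and follows essentially the same approach as the paper: both transfer the Bott--Samelson computation across the inclusion $i\colon B(y)\hookrightarrow B(\un{y})$, using the one-dimensionality of the degree $-\ell(y)$ piece to pin down the image up to scalar, and then invoke the previous lemma. One small notational slip: what you call $c_{\un{y}}$ is, in the paper's conventions, the element attached to the all-identity subexpression (i.e.\ $1\otimes\cdots\otimes 1$), whereas $c_{\un{w}}$ in \S\ref{sec:pol} denotes the top-degree basis element $c_{s_1}\cdots c_{s_m}$; this does not affect the argument.
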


\begin{proof} Let us denote by $i : B(y) \into B(\un{y})$ our fixed
  realisation of $B(y)$ as a summand of
  $B(\un{y})$. Recall that
  the intersection form on $B(y)$ is defined as the restriction of
the intersection form on $B(\un{y})$. Hence we need to
calculate $\langle i(c_{x,y}), i(c_{x,y})
\rangle_{B(\un{y})}^x$. 
However $B(y)_x$ and $B(\un{y})_x$ are both
generated in degrees $\ge -\ell(y)$ and are of dimension 1 in degree
$-\ell(y)$. It follows that $i(c_{x,y}) = c_{x,\un{y}}$ and the theorem
  follows from the previous lemma.
\end{proof} 

\begin{remark} \label{rem:scalar}
  Actually one may prove the existence of elements $c_\bot \in
  B(y)^{-\ell(y)}$ and $c_{x,y} \in B(y)_x^{-\ell(y)}$ which are
  canonical up to sign, once one has fixed a positive polarisation on
  $B(y)$. With this choice the scalar factor $\g$ in the
  above theorem disappears. One proceeds as follows: for any reduced
  expression $\un{y}$ the positive integer $N$ appearing in the the
  proof of \cite[Lemma 3.10]{EW} is easily seen to depend only on
  $y$. Now $c_\bot$ (and hence $c_{x,y}$) is fixed up to sign by requiring that $\langle
  c_\bot, \rho^{\ell(y)} c_\bot \rangle_{B(y)} = N$.
\end{remark}

\subsection{Soergel bimodules and $\PM^1$-sheaves} \label{sec:SP1}
Let $B$ be a
Soergel bimodule and fix $x \in W$ and $s \in S$ with $x < xs$.

To this data we may associate a $\PM^1$-sheaf $M(B,x,xs)$ as follows (we set $M :=
M(B,x,xs)$ to simplify notation, why we obtain a $\PM^1$-sheaf
will be explained later):
\begin{enumerate}
\item $M_0 = A \otimes_R B_x[1]$, $M_\infty = A \otimes_R B_{xs}[1]$;
\item $M_{\CM^*}$ is defined as the push-out of (left, graded) $A$-modules:
\[
\xymatrix{ A \otimes_R B_{x,xs}[1] \ar[r] \ar[d] & A \otimes_R B_{xs}[1] \ar[d] \\
A \otimes_R B_{x}[1] \ar[r] & M_{\CM^*}}
\]
\item $\rho_0 : M_0 \to M_{\CM^*}$ and $\rho_\infty : M_\infty \to M_{\CM^*}$ are the maps occurring in the above push-out diagram.
\end{enumerate}
Now $B_{x,xs} \to B_x \oplus B_{xs}$ is an injective map of free
$R$-modules which is an isomorphism over $Q$ and hence
$A \otimes_R B_{x,xs} \to A \otimes_R B_x \oplus 
A \otimes_R B_{xs}$ is injective. Hence we have a canonical isomorphism
\begin{equation} \label{eq:sec}
A \otimes_R B_{x,xs}[1] = M_{0,\infty}.
\end{equation}
Also, as the
inclusion $B_{x,xs} \to B_x \oplus B_{xs}$ becomes an isomorphism
after inverting $x(\alpha_s)$, $M_{\CM^*}$ is
annihilated by $0 \ne \sigma(x(\alpha_s))$. Hence we indeed have a sheaf on the
moment graph of $\PM^1$.

\begin{prop} \label{isP1}
  $M = M(B,x,xs)$ is a $\PM^1$-sheaf.
\end{prop}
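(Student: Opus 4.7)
The plan is to dispatch the easy axioms first, reduce the substantive requirement on $\rho_\infty$ to a single identity of graded ranks, and then obtain that identity from a stalk-level short exact sequence. First the easy parts: since $B_x$ and $B_{xs}$ are free left $R$-modules by \cite[Thm.~5.15]{Soe3}, the modules $M_0 = A\otimes_R B_x[1]$ and $M_\infty = A \otimes_R B_{xs}[1]$ are free over $A$. The paper already notes that $M_{\CM^*}$ is annihilated by $\sigma(x\alpha_s) = \langle x\alpha_s, \rho^\vee\rangle z$, a nonzero scalar multiple of $z$ (since $\rho^\vee$ is regular), hence by $z$. For surjectivity of $\rho_0$ and $\rho_\infty$, the projections $p_x : B_{x,xs} \to B_x$ and $p_{xs} : B_{x,xs} \to B_{xs}$ are both surjective (the surjections $B \twoheadrightarrow B_x, B_{xs}$ factor through the image $B_{x,xs}$ in $B_x \oplus B_{xs}$), so after $A\otimes_R(-)$ the two arrows $\alpha_1,\alpha_2$ into the push-out square remain surjective, and a surjective arrow into a push-out forces the opposite arrow out of the push-out to be surjective.

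It remains to show that $\rho_\infty$ is isomorphic to the quotient map $M_\infty \to M_\infty/(z)$. Since $\rho_\infty$ is surjective and $M_{\CM^*}$ is annihilated by $z$, $\rho_\infty$ factors as $M_\infty \twoheadrightarrow M_\infty/zM_\infty \twoheadrightarrow M_{\CM^*}$, and it suffices to prove the second arrow is an isomorphism; I would check this by matching graded $\RM$-dimensions. The injectivity of $M_{0,\infty} \hookrightarrow M_0 \oplus M_\infty$ recorded in the paragraph before the proposition, together with the push-out presentation, yields a short exact sequence
\[
0 \to M_{0,\infty} \to M_0 \oplus M_\infty \to M_{\CM^*} \to 0,
\]
so by additivity of graded dimensions, the desired matching reduces to the identity
\[
p_{x,xs}(v) = p_x(v) + v^2 p_{xs}(v),
\]
where $p_?$ denotes the graded rank of the stalk $B_?$ as a free left $R$-module.

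To obtain this identity I would tensor $B$ on the left with the standard bimodule short exact sequence $0 \to R(s)[-1] \to B(s) \xrightarrow{m} R[1] \to 0$, whose inclusion sends $1$ to $c_s - \alpha_s c_\id$ (generating $\ker m$). Since $B$ is free as a right $R$-module, tensoring produces the exact sequence $0 \to BR(s)[-1] \to BB(s) \to B[1] \to 0$. Taking $x$-stalks, using Proposition \ref{prop:BB(s)} together with the identification $(BR(s))_x = B_{xs}$ (which follows by unwinding the stalk condition), should yield
\[
0 \to B_{xs}[-2] \to B_{x,xs} \xrightarrow{p_x} B_x \to 0.
\]
Tracing $b(c_s - \alpha_s c_\id)$ through Lemma \ref{lem:indmult} (the terms $bc_s$ and $b\alpha_s c_\id$ have $x$-stalks $(x\alpha_s b_x, 0)$ and $(x\alpha_s b_x, x\alpha_s b_{xs})$ respectively) shows that the leftmost map is given explicitly by $b' \mapsto (0, -x\alpha_s b')$; exactness on the ends is then automatic. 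The main obstacle is middle exactness at the stalk level, equivalent to the vanishing of $\Tor_1^{R \otimes R}(R(x), B)$; I expect to prove this by invoking the Koszul resolution of $R(x)$ (noting $\Gr_x \subset \hg \times \hg$ is cut out in $R\otimes R$ by a regular sequence of length $\dim\hg$) together with the flatness properties satisfied by Soergel bimodules over $R\otimes R$. Once middle exactness is granted, additivity of graded ranks in the stalk sequence delivers the identity $p_{x,xs} = p_x + v^2 p_{xs}$ and completes the proof.
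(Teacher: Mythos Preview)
Your approach is genuinely different from the paper's and, with one correction, works. The paper deduces the result from the structural Proposition~\ref{Bxxs}, which decomposes $B_{x,xs}$ as a direct sum of shifts of $R(x,xs)$ and $R(x)$; from such a decomposition the $\PM^1$-sheaf axioms are immediate. You instead reduce to the graded-rank identity $p_{x,xs} = p_x + v^2 p_{xs}$ and establish it via the short exact sequence coming from $0 \to R(s)[-1] \to B(s) \to R[1] \to 0$. Your route is shorter and avoids the biflag analysis; the paper's route yields the finer structural statement (which it uses implicitly elsewhere, e.g.\ in understanding the summands of $M$).

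The confusion is in your last paragraph. Taking the $x$-stalk is the right-exact functor $R(x)\otimes_{R\otimes R}(-)$, so the long exact Tor sequence automatically gives exactness at the middle term $(BB(s))_x \cong B_{x,xs}[1]$. What the $\Tor_1^{R\otimes R}(R(x),B)$ term controls is the \emph{injectivity of the leftmost map}, not middle exactness. But you have already computed that map explicitly as $b' \mapsto (0,\pm x(\alpha_s) b')$, and since $B_{xs}$ is a free (hence torsion-free) left $R$-module, this is injective. So the sequence $0 \to B_{xs}[-2] \to B_{x,xs} \to B_x \to 0$ is exact without any Tor hypothesis, and the graded-rank identity follows. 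Your proposed justification via ``flatness properties of Soergel bimodules over $R\otimes R$'' should be dropped: Soergel bimodules are free as left and as right $R$-modules, but they are \emph{not} flat over $R\otimes R$ (already $\Tor_1^{R\otimes R}(R(x),R(x)) \neq 0$), so that line of argument would fail.
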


\begin{proof} Deferred until the next section. \end{proof}

\begin{remark} Alternatively, one can deduce Proposition \ref{isP1} from results
  of Fiebig (indeed it was Fiebig's work that led the author to consider
  $\PM^1$-sheaves). In \cite[Proposition
  7.1]{Fiebig} Fiebig shows that one may obtain $B$ as the global
  sections of a sheaf $\BS$ on the moment graph of $W$ (we refer the
  reader to \cite{Fiebig} for unexplained terminology). The
  $\PM^1$-sheaf defined above is obtained by restricting $\BS$ to the
directed  subgraph $x \to xs$ and applying $A\otimes_{R} (-)$. It now follows
  from 
  \cite[Proposition 7.4]{Fiebig} that we obtain a $\PM^1$-sheaf.
\end{remark}

If $B$ carries a polarisation then we can equip $M$ with a
polarisation via:
\begin{enumerate}
\item $\frac{1}{\sigma(x(\alpha_s))} \sigma(\langle -, -\rangle^x)$ on $M_0$;
\item $\frac{1}{\sigma(x(\alpha_s))} \sigma(\langle -, -\rangle^{xs}) =
  - \frac{1}{\sigma(xs(\alpha_s))} \sigma(\langle -, -\rangle^{xs})$ on $M_\infty$.
\end{enumerate}
Combining Proposition \ref{prop:BB(s)} and \eqref{eq:sec} we have an
identification
\begin{equation}
  \label{eq:globsec}
  M_{0,\infty} = A \otimes_R (B B(s))_x
\end{equation}
and by Proposition \ref{prop:indint} we conclude that

\begin{lem} \label{lem:isom}
  \eqref{eq:globsec} is an isometry.
\end{lem}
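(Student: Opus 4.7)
The plan is to match the two forms by tracing both of them back to the common description as a scaled sum of local intersection forms on $B_x$ and $B_{xs}$, living on the submodule $B_{x,xs} \subset B_x \oplus B_{xs}$ (tensored up to $A$). The identification \eqref{eq:globsec} is built from two pieces: \eqref{eq:sec}, which identifies $M_{0,\infty}$ with $A \otimes_R B_{x,xs}[1]$ as a submodule of $M_0 \oplus M_\infty = A \otimes_R (B_x \oplus B_{xs})[1]$, and Proposition \ref{prop:BB(s)}, which identifies $(BB(s))_x$ with $B_{x,xs}[1]$. So on both sides we are dealing with the same submodule of $A \otimes_R (B_x \oplus B_{xs})[1]$, and the only thing to check is that the restricted forms coincide.

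First I would unpack the form on $M_{0,\infty}$. By definition it is the restriction of $\langle -, -\rangle^0 + \langle -, -\rangle^\infty$ to $M_{0,\infty} \subset M_0 \oplus M_\infty$. Plugging in the defining formulas for $\langle -, -\rangle^0$ and $\langle -, -\rangle^\infty$ (both of which bear the common scalar $\frac{1}{\sigma(x(\alpha_s))}$, the identity $xs(\alpha_s) = -x(\alpha_s)$ being precisely what makes the two displayed presentations of the polarisation on $M_\infty$ agree), an element $(m_0, m_\infty) \in M_{0,\infty}$ satisfies
\[
\langle (m_0, m_\infty), (m_0', m_\infty') \rangle = \tfrac{1}{\sigma(x\alpha_s)}\bigl(\sigma\langle m_0, m_0'\rangle_B^x + \sigma\langle m_\infty, m_\infty'\rangle_B^{xs}\bigr).
\]

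Next I would compute the form on $A \otimes_R (BB(s))_x$. It is obtained by applying $\sigma$ to the local intersection form $\langle -, -\rangle_{BB(s)}^x$, and by Proposition \ref{prop:indint} this form — viewed under the identification $(BB(s))_x = B_{x,xs}[1] \subset (B_x \oplus B_{xs})[1]$ — is
\[
\tfrac{1}{x\alpha_s}\bigl(\langle -, -\rangle_B^x + \langle -, -\rangle_B^{xs}\bigr).
\]
Extending scalars along $\sigma$ gives the same expression as above, and since the two identifications agree on the ambient embedding into $A \otimes_R (B_x \oplus B_{xs})[1]$, the two forms coincide term-by-term. There is no real obstacle here: the lemma is a direct unravelling of definitions, with the only care needed being the degree shift $[1]$ and the sign flip $s(\alpha_s) = -\alpha_s$, both of which have been absorbed into the very definition of the polarisation of $M$ precisely so that this compatibility holds.
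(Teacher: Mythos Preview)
Your proof is correct and follows exactly the route the paper intends: the paper's own ``proof'' is simply the phrase ``by Proposition \ref{prop:indint} we conclude that'', and your proposal spells out precisely how that proposition, together with the defining polarisation on $M$, makes the two forms coincide. Your unpacking of the sign $xs(\alpha_s) = -x(\alpha_s)$ and the shift $[1]$ is exactly the bookkeeping left implicit in the paper.
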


\subsection{Proof of Proposition \ref{isP1}} We keep the notation of
the previous section. Our goal is to show that
$M := M(B,x,xs)$ is  a
$\PM^1$-sheaf. This is immediate from the following proposition:

\begin{prop} \label{Bxxs}
  Given a Soergel bimodule $B$
 and $x \in W$, $s \in S$ with $x < xs$, $B_{x,xs}$ is isomorphic to a
  direct sum of shifts of $R(x,xs)$ and $R(x)$.
\end{prop}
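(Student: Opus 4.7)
The plan is to exhibit $B_{x,xs}$ as a pullback and then construct the claimed decomposition by hand. First, identify $R(x,xs) \subset R(x) \oplus R(xs)$ concretely as the submodule of pairs $(f,g) \in R \oplus R$ satisfying $f \equiv g \pmod{x(\alpha_s)}$; it is free as a left $R$-module of graded rank $1 + v^{-2}$ with basis $\{(1,1), (x(\alpha_s), 0)\}$. Using the canonical injection $B_{x,xs} \hookrightarrow B_x \oplus B_{xs}$ from \S\ref{sec:stalkcostalk}, realize $B_{x,xs}$ as the pullback $\{(a,b) : \mu(a) = \nu(b) \in C\}$, where $\nu : B_{xs} \twoheadrightarrow C := B_{xs}/(x(\alpha_s) B_{xs})$ is the canonical quotient and $\mu : B_x \to C$ is the canonical map $a \mapsto \nu(b)$ for any $(a,b) \in B_{x,xs}$. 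This is well-defined since $B_{x,xs} \twoheadrightarrow B_x$ is surjective (inherited from the surjection $R(x,xs) \twoheadrightarrow R(x)$) and any two valid $b$'s differ by an element of $x(\alpha_s) B_{xs} = \ker\nu$.

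By a symmetric argument, $\mu : B_x \twoheadrightarrow C$ is also surjective. The decomposition then follows by an explicit construction: choose a homogeneous $R$-basis $\{e'_j\}$ of $B_{xs}$; lift via surjectivity of $\mu$ to homogeneous elements $e_j \in B_x$ of the same degrees with $\mu(e_j) = \nu(e'_j)$; and extend $\{e_j\}$ to a homogeneous $R$-basis $\{e_j\} \cup \{f_k\}$ of $B_x$ with each $f_k \in \ker\mu$. Each pair $(e_j, e'_j) \in B_{x,xs}$ generates an $R(x,xs)$-submodule isomorphic to a shift of $R(x,xs)$, and each $(f_k, 0) \in B_{x,xs}$ generates one isomorphic to a shift of $R(x)$. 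A direct graded-rank calculation using the short exact sequence $0 \to B_{x,xs} \to B_x \oplus B_{xs} \to C \to 0$ verifies that these submodules give the claimed direct sum decomposition of $B_{x,xs}$.

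The main obstacle is justifying the existence of the lifts $e_j$ and the extras $f_k$ with the required properties: one needs the degrees of a homogeneous $R$-basis of $B_{xs}$ to be realized by degrees of generators of $B_x$, with the remaining generators of $B_x$ lying in $\ker\mu$. This degree compatibility is the essential Soergel-theoretic content of the proposition (and the reason $R(xs)$ summands do not appear), and reflects the asymmetry built into the hypothesis $x < xs$. One can establish it by reducing to Bott--Samelson bimodules via a Krull--Schmidt argument in the graded category of $R$-bimodules supported on $\Gr_x \cup \Gr_{xs}$ and free as left $R$-modules --- the class of bimodules decomposing as in the proposition is closed under summands --- and then inducting on Bott--Samelson length using Proposition \ref{prop:BB(s)}, combined with Soergel's freeness theorem \cite[Theorem 5.15]{Soe3} for stalks and costalks.
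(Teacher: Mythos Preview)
Your pullback setup hides the entire difficulty. The assertion that ``any two valid $b$'s differ by an element of $x(\alpha_s) B_{xs}$'' is precisely the statement that the image of $(B_{x,xs})_{xs}^!$ in $B_{xs}$ lies in $x(\alpha_s)B_{xs}$; equivalently, that no $R(xs)$ summand occurs in $B_{x,xs}$. This is exactly the asymmetric part of the proposition you are trying to prove, so invoking it to define $\mu$ is circular. (Test case: if $E=R(xs)$ then $E_{x,xs}=R(xs)$, $E_x=0$, and the ``two valid $b$'s'' for $a=0$ range over all of $E_{xs}$, so your $\mu$ is not well-defined.) You also need, independently, that $\ker\mu$ is an $R$-module direct summand of $B_x$ in order to extend $\{e_j\}$ to an $R$-basis with the $f_k\in\ker\mu$; this is not automatic over a polynomial ring of several variables.

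You recognise this and defer to a Bott--Samelson induction via Proposition~\ref{prop:BB(s)}, but that proposition computes the stalk $(BB(t))_x$ at a single point in terms of $B_{x,xt}$; it gives no direct handle on $(BB(t))_{x,xs}$, which is what your induction requires. Unwinding $(BB(t))_{x,xs}$ leads to $B$ tensored with a rank-two bimodule supported on four graphs $\{x,xt,xs,xst\}$, and the analysis does not obviously reduce to the inductive hypothesis. The paper's argument avoids this by using a different structural input: Soergel's result that $B_{x,xs}$ carries a \emph{biflag} (i.e.\ filtrations in both orders $R(x)\hookrightarrow E \twoheadrightarrow R(xs)$ and $R(xs)\hookrightarrow E\twoheadrightarrow R(x)$), then classifying such bimodules via an $\Ext^1$ computation in \cite[Lemma~5.8]{Soe3}, and finally invoking \cite[Lemma~6.10]{Soe3} for the $x<xs$ asymmetry that kills $R(xs)$. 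The biflag hypothesis --- having \emph{two} compatible filtrations --- is the ingredient your approach is missing; it is what lets one compare the degrees on the $x$ and $xs$ sides and split off summands.
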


Before giving the proof we need some terminology. We say that a graded
$R$-bimodule $E$ has a \emph{biflag} (with respect to $x, xs$) if
it admits filtrations
\begin{equation} \label{eq:biflag}
C \into E \onto D\quad \text{and} \quad D' \into E \onto C'
\end{equation}
such that $C, C'$ (resp. $D, D'$) are isomorphic to direct sums of
shifts of $R(x)$ (resp. $R(xs)$). If $E$ has a
biflag then $C = E^!_x$, $C' = E_x$, $D = E_{xs}$, $D' = E^!_{xs}$.
Hence the filtrations \eqref{eq:biflag} are canonical, if they exist.

\begin{proof}[Proof of Proposition \ref{Bxxs}]
  For any Soergel bimodule $B$, $B_{x,xs}$ has a biflag (see the last
  three lines of the proof
  of \cite[Proposition 6.4]{Soe3}). It follows from the proposition
  below that $B_{x,xs}$ is isomorphic to a direct sum of
  shifts of $R(x)$, $R(xs)$ and $R(x,xs)$. Finally, one can use
  \cite[Lemma 6.10]{Soe3} to rule out any occurrences of $R(xs)$.
\end{proof}

\begin{prop}
  Suppose that $E$ is a graded $R$-bimodule, and that $E$ has a
  biflag. Then $E$ is isomorphic to a direct sum of
  shifts of $R(x)$, $R(xs)$ and $R(x,xs)$.
\end{prop}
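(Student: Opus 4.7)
The plan is to prove the classification by induction on the graded rank of $E$ as a free left $R$-module, splitting off at each step a direct summand isomorphic to a shift of $R(x)$, $R(xs)$, or $R(x,xs)$, and applying the induction hypothesis to the complement. The base case $E = 0$ is trivial.

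For the inductive step, I first observe that the biflag data is canonical: $C$ and $D'$ coincide with the costalks $E_x^!$ and $E_{xs}^!$. Indeed, $I_{xs}$ acts on $R(x)$ by multiplication by the non-zero-divisor $x(\alpha_s)$, so no non-zero element of a sum of shifts of $R(x)$ can be annihilated by $I_{xs}$. In particular $C \cap D' = 0$, so the natural costalk-to-stalk map $C \hookrightarrow E \twoheadrightarrow C'$ is injective. Its cokernel is annihilated by $x(\alpha_s)$: using that $x(\alpha_s)\otimes 1 = (x(\alpha_s)\otimes 1 - 1\otimes \alpha_s) + (1\otimes \alpha_s)$ one checks $x(\alpha_s)\otimes 1 \in I_x + I_{xs}$, so $x(\alpha_s)\cdot E \subseteq I_x E + I_{xs} E \subseteq I_x E + C$, whence $C'/C = E/(I_x E + C)$ is killed by $x(\alpha_s)$.

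If $C \to C'$ is an isomorphism, then the filtration $C \hookrightarrow E \twoheadrightarrow D$ splits via the retraction $E \twoheadrightarrow C' \cong C$, giving $E = C \oplus D'$ and the desired decomposition. Otherwise I pick $\bar{c} \in C'$ with non-zero image in $C'/C$ of minimum degree; a short argument (using that $C'/C$ vanishes below that degree) shows $\bar{c} \in (C')^{-m}\setminus R_{>0}C'$, so $\bar{c}$ is a basis element of $C'$. Lift $\bar{c}$ to $e \in E$; the sub-bimodule $\langle e \rangle \subseteq E$ is then governed by the twisted derivation $\delta\colon R \to D'$ defined by $\delta(r) = er - x(r)e$. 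Associativity of right multiplication forces the Leibniz-like identity $\delta(r_1 r_2) = x(r_1)\delta(r_2) + xs(r_2)\delta(r_1)$, and combining with commutativity of $R$ pins down $\delta(r) = \langle r, \alpha_s^\vee \rangle d_0$ on $\hg^*$ for a fixed $d_0 \in D'$, non-zero because $e \notin C$. A short Koszul-resolution computation (using that $I_x$ is generated by a regular sequence all of whose generators act on $R(xs)$ as scalar multiples of $x(\alpha_s)$) gives $\Ext^1_{R \otimes R}(R(xs)[k], R(x)[m]) \cong R/(x(\alpha_s))$, concentrated in a single degree; hence the only non-split extensions are shifts of $R(x,xs)$, so $\langle e \rangle \cong R(x,xs)[j]$ for some $j$.

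The main obstacle is to promote $\langle e \rangle$ to a direct \emph{summand} of $E$. The obstruction lies in $\Ext^1_{R \otimes R}(E/\langle e \rangle, \langle e \rangle)_0$, and to make it vanish one exploits the full biflag structure of $E$: the choice of $\bar{c}$ as a minimum-degree basis element of $C'$ yields a splitting $C' = R(x)\cdot \bar{c} \oplus C'_0$, and the element $d_0$ provides an analogous splitting of $D'$; together they determine compatible local retractions on both sides of the codimension-one stratum $\Gr_x \cap \Gr_{xs}$, and their agreement across the intersection is automatic because $d_0$ is constructed from $\bar{c}$ via $\delta$. This is the step where having both filtrations (as opposed to a single one) is used in an essential way. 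Once the retraction $E \twoheadrightarrow \langle e \rangle$ is built, $E = \langle e \rangle \oplus E'$ with $E'$ inheriting a biflag of strictly smaller rank, and the induction hypothesis closes the argument.
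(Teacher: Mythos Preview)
Your construction of the sub-bimodule $\langle e\rangle \cong R(x,xs)[m]$ is correct and pleasant: the derivation $\delta$ and the Leibniz identity are handled well, and the identification via $\Ext^1$ is fine. The minimality argument showing that $\bar c$ can be taken to be a basis element of $C'$ is also fine (though note that $\bar c$ need not be of \emph{minimal} degree in $C'$ itself, only minimal among elements with nonzero image in $C'/C$).

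The gap is precisely where you say it is. Your sentence ``the element $d_0$ provides an analogous splitting of $D'$'' asserts that $d_0\notin R_{>0}D'$, but you give no argument for this. What one can show (and you essentially do) is that the image of $d_0$ under the injection $D'\hookrightarrow D$ equals $-x(\alpha_s)\bar e_w$, where $\bar e_w\in D$ is the image of $e$; and a short argument using minimality of $\bar c$ in $C'/C$ shows that $\bar e_w$ is a basis element of $D$. But this does \emph{not} force $d_0$ to be a basis element of $D'$: that would follow if one knew $\bar e_w\notin\operatorname{im}(D'\to D)$, which is exactly what remains to be proved. (A posteriori this is equivalent to the natural map $C'/C\to D/D'$ being an isomorphism in the relevant degree, which is true once the classification is known, but circular here.) Moreover, even granting that $\bar c$ and $d_0$ are basis elements, the phrase ``compatible local retractions'' is not a proof: your splitting of $C'$ gives a retraction of the \emph{quotient} $E\twoheadrightarrow C'$, while your splitting of $D'$ lives on a \emph{subobject} $D'\hookrightarrow E$; these do not obviously glue to a bimodule retraction $E\to\langle e\rangle$.

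The paper (following Soergel) avoids this difficulty by a different route. It does not try to split off $R(x,xs)$ directly. Instead it encodes the extension $C\hookrightarrow E\twoheadrightarrow D$ by a matrix with entries in $I=R/(x\alpha_s)$, uses row/column operations to split off any $R(x,xs)$ summands visible on the diagonal, and then---in the residual strictly upper-triangular case---compares the degrees $\{m_i\}$, $\{n_j\}$ coming from the first filtration with the degrees $\{m_i'\}$, $\{n_j'\}$ coming from the second via the chain of injections $C'[-2]\hookrightarrow C\hookrightarrow C'$ (and similarly for $D$). A graded-rank count then forces $m_1'=m_1-2$, whence $R(x)[m_1-2]\subset C$ sits in the lowest degree of $C'$ and therefore splits off as a bimodule summand of $E$. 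This degree comparison, using both filtrations, is exactly the content you are missing; your hand-wave ``agreement across the intersection is automatic because $d_0$ is constructed from $\bar c$'' is not a substitute for it.
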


We are grateful to Wolfgang Soergel for providing the following proof.

\begin{proof}
To simplify notation, set $v = x$ and $w = xs$. It will be clear in
the proof that the only assumptions  we need on $v, w$ is that
$\dim (\Gr_v \cap \Gr_w) + 1 = \dim \Gr_w = \dim \Gr_v$.
In the proof, $\Ext^1$ refers to degree zero extensions of graded $R\otimes
R$-modules.

Each choice of linear form $\theta \in (\hg \oplus \hg)^*$ with
$\theta_{|\Gr_v} \ne 0$ and $\theta_{|\Gr_w} = 0$ gives an extension
\begin{equation} \label{eq:gen}
R(v)[-2] \stackrel{\theta \cdot}{\into} R(v,w) \onto R(w).
\end{equation}
Moreover, if we let $I$ denote the regular functions on $\Gr_v \cap
\Gr_w$ then, by \cite[Lemma 5.8]{Soe3}, we have an identification of graded $I$-modules
\begin{equation} \label{eq:id}
\bigoplus_{m \in \ZM} \Ext^1(R(w), R(v)[-2+m]) = I
\end{equation}
mapping the class of \eqref{eq:gen} to $1 \in I$. We fix
such a $\theta$ and hence an identification \eqref{eq:id}.

Let us fix sequences $m_1 \ge m_2 \ge \dots \ge m_f$ and $n_1 \ge
\dots \ge n_g$ of integers.
By additivity and \eqref{eq:id}, we have an identification
\begin{align*}
\Ext^1( \bigoplus_{1 \le j \le
g} R(w)[n_j], &\bigoplus_{1 \le i \le f} R(v)[m_i-2]) = \\ & = \bigoplus_{i,
j} \Ext^1(R(w)[n_j], R(v)[m_i-2]) = \bigoplus_{i,j} I^{m_i-n_j}.
\end{align*}

So now assume that $E$ has a biflag. In particular, there exists a
(homogenous degree zero) extension
\begin{equation} \label{eq:ext}
\bigoplus_{1 \le i \le f} R(v)[m_i-2] \into E \onto \bigoplus_{1 \le j \le
g} R(w)[n_j]
\end{equation}
for certain $m_i, n_j$ as above. Via the above identification such an
extension is determined by a matrix with entries
\[
C_{ji} \in \Ext^1(R(w)[n_j], R(v)[m_i-2]) = I^{m_i - n_j}.
\]
Because $I^{<0} = 0$ and $I^0 = \RM$ it follows that 
our matrix is block upper-triangular (i.e. $C_{ji} =0$ if $m_i < n_j$) with scalar
matrices on the diagonal (i.e. $C_{ji} \in I^0 = \RM$ if $m_i =
n_j$).

Now $I$ is even and so $I^{m_i-n_j} = 0$ if
$m_i$ and $n_j$ are not of the same parity. In particular we may
assume without loss of generality that all $m_i, n_j$ are of the same
parity. Moreover, if there exists $i, j$ with $n_j = m_i$ and $0 \ne
C_{ji} \in \RM$ then we change bases on the left and right of
\eqref{eq:ext} above to ensure that $C_{ji} = 1$ and $C_{ji'} = 0 =
C_{j'i}$ for all $i' \ne i$, $j' \ne j$. In this case our extension
decomposes as $E = R(v,w) \oplus E'$ and we can continue with $E'$ in
place of $E$.

Hence we may assume without loss of generality that our matrix is
block upper-triangular (i.e. $C_{ji} =0$ if $m_i < n_j$) with zeroes on the
diagonal (i.e. $C_{ji} = 0$ if $m_i = n_j$). Under these new assumptions we see that if $m_1
\le n_1$ then $C_{1i} = 0$ for all $i$ and so our
  extension splits as $E = R(w)[n_1] \oplus E'$; again we are done
  by induction.
So we may assume that $m_1 > n_1$. By assumption $E$ has
the biflag property, and hence if we consider the filtration
\[
E^!_w \into E \onto E/E^!_w
\]
we can find isomorphisms $E^!_w \cong \bigoplus R(w)[n'_j - 2]$
with $n'_1 \ge \dots \ge n'_g$ and $E/E^!_w \cong
\bigoplus R(v)[m'_i]$ with $m'_1 \ge \dots \ge
m'_f$.

Multiplication by $\theta$ and the canonical quotient map give injections
\[
E/E^!_w[-2] \stackrel{\theta\cdot}{\into} E^!_v \into E/E^!_w.
\]
(The first map is injective because $E/E_w^!$ is isomorphic to a
direct sum of shifts of $R(v)$, upon which multiplication by $\theta$ is
injective. For the second map, note that every
non-zero element of $E$ is either non-zero in $E_w$ or is contained in
$E_v^!$, and thus has support containing either $\Gr_v$ or
$\Gr_w$. Hence $E_v^! \cap E_w^! = 0$, which implies that the second map is injective.)
Similarly, after choosing $\kappa \in (\hg \oplus \hg)^*$ with
$\kappa_{|\Gr_w} \ne 0$ and $\kappa_{|\Gr_v} = 0$  we have
injections
\[
E/E^!_v[-2] \stackrel{\kappa\cdot}{\into} E^!_w \into E/E^!_v.
\]
By Lemma \ref{lem:inc} below we have $m_i' \in \{ m_i, m_i-2\}$
and $n_j' \in \{ n_j, n_j + 2 \}$. If we consider the graded rank
of $E$ as an $R$-module we deduce
\[
\sum v^{-m_i+2} + \sum v^{-n_j} = \sum v^{-m'_i} + \sum v^{-n'_j + 2}.
\]
Under our assumption $m_1 > n_1$, we see (by considering terms of
minimal degrees on both sides) that $m_1' = m_1$ is impossible and
hence $m_1' = m_1-2$. Hence the smallest non-zero degree of $E/E^!_w$ is
$2 - m_1$ and the injection
\[
R(v)[m_1-2] \into \bigoplus R(v)[m_i-2]  = E^!_v \into E/E^!_w
\]
splits. The result now follows by induction on the graded rank of $E$.
  \end{proof}

\begin{lem} \label{lem:inc}
  Suppose that $m_1 \ge m_2 \ge\dots \ge m_f$ and $m_1' \ge m_2' \ge\dots
  \ge m_g'$ and that we have an injection
\[
\bigoplus R[m_i] \into \bigoplus R[m_i'].
\]
Then $f \le g$ and $m_i \le m_i'$ for all $1 \le i \le f$.
\end{lem}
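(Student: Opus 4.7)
The plan is to prove the two conclusions separately. For $f \le g$: an injection between finitely generated free $R$-modules forces the inequality of ranks $f \le g$ (for instance by tensoring with the field of fractions of $R$, which is exact and kills the grading shifts, reducing the claim to linear algebra).

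For the shift inequalities $m_k \le m_k'$, I would argue by contradiction. Suppose there exists $k$ with $1 \le k \le f$ and $m_k > m_k'$. Restricting the given injection to the summand $\bigoplus_{j=1}^k R[m_j]$ produces an injection $\varphi \colon \bigoplus_{j=1}^k R[m_j] \hookrightarrow \bigoplus_{\ell=1}^g R[m_\ell']$. With the shift convention $R[m]^i = R^{i+m}$ of \S\ref{grad}, the module $R[m]$ is concentrated in degrees $\ge -m$, with its free generator in degree $-m$. For each $j \le k$ the generator of $R[m_j]$ lies in degree $-m_j$, and $-m_j \le -m_k$. On the target side, for every $\ell \ge k$ we have $m_\ell' \le m_k' < m_k \le m_j$, hence $-m_\ell' > -m_j$ and $R[m_\ell']^{-m_j} = 0$. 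Therefore $\varphi$ maps each free generator into $\bigoplus_{\ell=1}^{k-1} R[m_\ell']$, and by $R$-linearity $\varphi$ factors through this subsum. We thus obtain an injection from a free $R$-module of rank $k$ into one of rank $k-1$, contradicting the rank inequality established in the first part.

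The only real content is the degree bookkeeping under the paper's shift convention; once the degree-$(-m_j)$ components of the target are tracked carefully, no further difficulty arises.
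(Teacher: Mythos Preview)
Your proof is correct. The paper does not give a proof of this lemma (it is explicitly ``left to the reader''), so there is no comparison to make; your argument is exactly the kind of elementary rank-plus-degree-bookkeeping verification the author had in mind.
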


\begin{proof}
Left to the reader.
\end{proof}

\subsection{Statements of local Hodge theory}

The proof of local hard Lefschetz is an induction relying on some
auxiliary statements which are interesting in their own right. In
this section we state these properties.

Let $(B, \langle -, - \rangle)$ denote a polarised
  Soergel bimodule. We say that $B$ \emph{satisfies local hard
    Lefschetz} (resp. \emph{satisfies local HR}) if for all $x \in W$
  the pair $(A \otimes_R B_x, A \otimes_R \langle -, - \rangle_B^x)$ satisfies hard Lefschetz
  (resp. satisfies hard Lefschetz and HR). We
say that $B$ \emph{satisfies local HR with standard signs} if $B$ satisfies local HR
and for all $x \in W$ we have
\[
(-1)^{\ell(x)} \sigma(\langle c, c \rangle_B^x) > 0
\]
where $0 \ne c \in B_x$ denotes an element of minimal degree.
(The term on the left hand side is a scalar times a power of
$z$; our notation means that this scalar is positive).

To simplify notation in inductive steps we employ the following
notation:
\begin{gather*} 
hL(y): \begin{array}{c} \text{$B(y)$ satisfies local hard Lefschetz.} \end{array} \\
HR(y): \begin{array}{c} \text{$B(y)$ satisfies local HR with standard
    signs.} \end{array}\end{gather*}
(As always we regard $B(y)$ as polarised with respect to its
intersection form.)
Given a subset $X \subset W$ we write $hL(X)$ (or $hL(\le x)$
etc.) to mean $hL(x)$ for all $x \in X$ etc.

Fix $s \in S$. 
In \S\ref{sec:SP1} we explained how to associate to a polarised
Soergel bimodule $(B, \langle -, - \rangle)$ and $x \in W$
with $x < xs$ a polarised $\PM^1$-sheaf $M(B,x,xs)$. We say that $B$ \emph{satisfies local hard Lefschetz}
  (resp. \emph{satisfies local HR}) \emph{in the $s$ direction} if:
  \begin{enumerate}
  \item $B$ satisfies local hard Lefschetz (resp. local HR);
  \item for all $x \in W$ with $x < xs$ the polarised $\PM^1$-sheaf $M(B,x,xs)$
  satisfies hard Lefschetz (resp. satisfies HR).
  \end{enumerate}

We abbreviate:
\begin{gather*}
  hL(y)_{s}: \begin{array}{c} \text{$B(y)$ satisfies local hard
      Lefschetz in the $s$ direction.} \end{array} \\
HR(y) _{s}: \begin{array}{c} 
\text{$HR(y)$ holds and $B(y)$ satisfies local HR in the $s$ direction.} \end{array}
\end{gather*}

\section{Proof} \label{proof}

\subsection{Outline of the proof}

With the terminology of the previous section the main result of this paper is:

\begin{thm}  \label{thm:main}
For all $y \in W$, $HR(y)$ holds.
\end{thm}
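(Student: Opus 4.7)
The plan is to induct on $\ell(y)$, carrying along the stronger auxiliary statements $hL(\cdot)_s$ and $HR(\cdot)_s$ for all relevant $s \in S$. The base case $y = \id$ is trivial. For the inductive step fix $s \in S$ with $ys < y$ and set $x = ys$, so $x < y = xs$ and $HR(x)$ holds by induction. By Soergel's conjecture $B' := B(x)B(s)$ is perverse and decomposes as $B' = B(y) \oplus B''$ where $B''$ is a direct sum of shifts of $B(z)$ for $z < y$; by Remark \ref{rem:bbs} the product intersection form on $B'$ is a positive polarisation, and by Lemma \ref{lem:orthog} this decomposition is orthogonal. Using $HR(z)$ for all $z < y$ together with Lemma \ref{lem:HRsummand} applied locally at each $x' \in W$, proving $HR(y)$ reduces to proving local HR with standard signs for the tensor product $B'$ at every $x' \in W$.

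Fix $x' \in W$ and, after swapping $x'$ with $x's$ if necessary, assume $x' < x's$. By Proposition \ref{prop:BB(s)} and the isometry of Lemma \ref{lem:isom}, the global sections of the polarised $\PM^1$-sheaf $M := M(B(x), x', x's)$ are isometric to $A \otimes_R B'_{x'}$, so it suffices to prove that $M_{0,\infty}$ satisfies HR with standard signs. Since $\ell(x's) = \ell(x') + 1$ has the opposite parity, the signs furnished by $HR(x)$ at $x'$ and $x's$ are opposite, and after the scaling by $1/\sigma(x'(\alpha_s))$ built into the polarisation of $M$ in \S\ref{sec:SP1}, the sheaf $M$ has opposite signs in the sense of \S\ref{opp}. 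By Corollary \ref{cor:lim} (the limit lemma), HR for $M_{0,\infty}$ with sign matching $M_0$ --- which is precisely the standard sign $(-1)^{\ell(x')}$ --- then follows once we establish hard Lefschetz for the family of operators $(az, z) \in Z^2_\amp$ on $M$ as $a$ ranges over $[1, \infty)$.

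To produce this hard Lefschetz family I would apply Proposition \ref{prop:wLP1} (weak Lefschetz for $\PM^1$-sheaves). The Frobenius unit and counit $B(x)\mu : B(x)[-1] \to B(x)B(s)$ and $B(x)m : B(x)B(s) \to B(x)[1]$ form an adjoint pair by \eqref{eq:duals}, and the functor $M(-, x', x's)$ turns this into an adjoint pair of $\PM^1$-sheaf morphisms between $M$ and the auxiliary sheaf $M' := M(B', x', x's)$. The naive composition is right multiplication by $\alpha_s$, which induces on $M$ the element $(cz, -cz) \in Z^2$ (with $c = \langle x'(\alpha_s), \rho^\vee \rangle > 0$) --- \emph{not} in the ample cone; composing with self-adjoint perturbations by left and right multiplications by $\rho$ yields adjoint pairs whose compositions realise each ample $(az, z)$, $a \ge 1$. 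The HR hypothesis on $M'$ required by Proposition \ref{prop:wLP1} is furnished by the parallel inductive statement $HR(\cdot)_s$ combined with the orthogonal decomposition $B' = B(y) \oplus B''$: HR on the summands of $M'$ coming from $B(z)$ with $z < y$ is immediate from the induction, while HR on the $B(y)$-piece of $M'$ is built into the bootstrap (it is part of $HR(y)_s$, which the induction is establishing in parallel with $HR(y)$). Standard signs are confirmed using Theorem \ref{thm:em} and Corollary \ref{cor:HRamb}.

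The main obstacle is precisely this weak Lefschetz step: realising the full one-parameter family of ample Lefschetz operators $(az, z)$ as compositions of natural adjoint bimodule morphisms, not merely the specific non-ample element $(cz, -cz)$ arising from $m \circ \mu = \alpha_s$. Organising the bootstrap --- in which the auxiliary $\PM^1$-sheaf statements $hL(\cdot)_s$ and $HR(\cdot)_s$ are proved in lockstep with $HR(\cdot)$ itself --- mirrors and refines the strategy of \cite{EW}, and requires careful tracking of signs (via the positivity of equivariant multiplicities) through every stage.
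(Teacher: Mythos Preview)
Your overall architecture matches the paper's, but the weak-Lefschetz step contains a genuine circularity. Your auxiliary sheaf $M' = M(B(x)B(s), x', x's)$ contains $M(B(y), x', x's)$ as an orthogonal summand (since $B(x)B(s) = B(y) \oplus B''$), and HR for that summand is precisely $HR(y)_s$, which by definition includes $HR(y)$ --- the statement you are proving. Declaring this ``built into the bootstrap'' does not help: at the moment you invoke Proposition~\ref{prop:wLP1} you already need HR on every summand of $M'$, and the $B(y)$-piece is not inductively available. Nor can ``self-adjoint perturbations by left and right multiplications by $\rho$'' repair this: such multiplications act diagonally on $M$ as $(\sigma(\rho),\sigma(\rho))$ or $(\sigma(x'\rho),\sigma(x's\rho))$, and are not of the form $d^*d$ for any $d$ landing in a sheaf with inductively known HR.

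The missing ingredient is the factorisation of Proposition~\ref{prop:factor1}: for suitable $\lambda$ there is a positively polarised $D$, all of whose summands are $B(z)$ with $z < x$, and a map $d' : B(x) \to D[1]$ with $(d')^*d' = B(x)\lambda - (x\lambda)B(x)$. Combining $d'B(s)$ with a multiple of $B(x)m$ (Proposition~\ref{prop:def}) yields $d : B(x)B(s) \to (B(x) \oplus DB(s))[1]$ whose $d^*d$ realises, as $\lambda$ and a real parameter vary, all ample ratios $\lambda_0/\lambda_\infty > 1$. On $\PM^1$-sheaves the target becomes a \emph{skyscraper} with stalk $A \otimes_R B(x)_{x'}$ --- HR here needs only $HR(x)$ at the point $x'$, not $HR(x)_s$ --- plus $M(D, x', x's)$, for which HR follows from $HR(<x)_s$. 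Both hypotheses lie strictly below $y$ and the circle is broken. Note also that the boundary case $a = 1$ in Corollary~\ref{cor:lim} is ordinary local hard Lefschetz for $(B(x)B(s))_{x'}$ and hence requires $hL(y)$; the paper secures this separately (Proposition~\ref{wLnd}), again via the factorisation lemma, before passing to $\PM^1$-sheaves.
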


We now outline the structure of the argument. Throughout, $y \in W$
and $s \in S$ is a simple reflection.

The following are the key statements, which rely on weak Lefschetz
style induction:

\begin{claim}[Proposition \ref{wLnd}] \label{c1}
$HR(<y) \Rightarrow
  hL(y)$. \end{claim}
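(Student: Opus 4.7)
The plan is to prove $hL(y)$ by induction on $\ell(y)$; the base case $y = e$ is trivial since $B(e) = R$ has graded rank one. For the inductive step, fix $x \in W$ and choose $s \in S$ with $ys < y$; set $y' := ys$. Since $y's > y'$, Soergel's conjecture yields the isotypic decomposition
\[
B(y')B(s) \cong B(y) \oplus \bigoplus_{z < y} V(z) \otimes_{\RM} B(z),
\]
which by Lemma~\ref{lem:orthog} is orthogonal for the invariant form on $B(y')B(s)$ induced from the intersection forms on $B(y')$ and $B(s)$. Taking stalks at $x$ and applying $A \otimes_R -$ yields an orthogonal decomposition of $A \otimes_R (B(y')B(s))_x$ in which $A \otimes_R B(y)_x$ is one summand, and each other summand satisfies hard Lefschetz by the inductive hypothesis $HR(<y)$. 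Since hard Lefschetz is inherited by orthogonal summands of a form that satisfies it (the full Gram determinant in each degree being a product of the summand Gram determinants), the problem reduces to proving hard Lefschetz for $A \otimes_R (B(y')B(s))_x$ as a whole.

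Assume without loss of generality that $x < xs$; the opposite case is symmetric. By Proposition~\ref{prop:BB(s)} and Lemma~\ref{lem:isom}, $A \otimes_R (B(y')B(s))_x$ is isometric, up to a degree shift, to the global sections $M_{0,\infty}$ of the polarised $\PM^1$-sheaf $M := M(B(y'), x, xs)$, whose stalks $M_0 = A \otimes_R B(y')_x[1]$ and $M_\infty = A \otimes_R B(y')_{xs}[1]$ carry the local forms of $B(y')$ rescaled by $\pm 1/\sigma(x(\alpha_s))$. The hypothesis $HR(y')$ with standard signs, combined with Corollary~\ref{cor:HRamb} and the fact that $\ell(xs) = \ell(x) + 1$, implies that $M$ is polarised with opposite signs in the sense of \S\ref{opp}.

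By Corollary~\ref{cor:lim}, HR (hence HL) on $M_{0,\infty}$ follows once $\g := (az, z) \in Z^2$ satisfies hard Lefschetz on $M$ for every $a \in [1, \infty)$. The limit lemma (Lemma~\ref{lem:lim}) secures this for $a$ sufficiently large, and hard Lefschetz is an open condition on $a$, so what remains is to exclude degenerations at intermediate values of $a$. This exclusion is the main obstacle. The plan is to invoke the weak Lefschetz substitute for $\PM^1$-sheaves (Proposition~\ref{prop:wLP1}): for each such $a$, take $M'_a$ to be (a suitable rescaling of) the skyscraper at $0$ with stalk $A \otimes_R B(y')_x[1]$ equipped with the HR-satisfying form supplied by $HR(y')$, so that $\g$ satisfies HR on $M'_a$ automatically. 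The adjoint morphisms $d\colon M \to M'_a[1]$ and $d'\colon M'_a \to M[1]$ are to be built from the unit $\mu$ and counit $m$ for $B(s)$ of \S\ref{sbim}, composed with a Lefschetz operator on $B(y')$ depending on $a$, so that $d' \circ d$ realises the action of $\g$ on $M$ via Proposition~\ref{prop:indint} and Lemma~\ref{lem:indmult}. The delicate part is the explicit construction of $d, d'$ and the verification of adjointness, the composition identity, and the sign control that forces $a \ge 1$.
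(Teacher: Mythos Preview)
Your approach has a genuine gap in the final step. If $M'_a$ is a skyscraper at $0$, then $(M'_a)_\infty = 0$, so any morphism of $\PM^1$-sheaves $d\colon M \to M'_a[1]$ has $d_\infty = 0$, and hence $(d'\circ d)_\infty = 0$. But Proposition~\ref{prop:wLP1} requires $d'\circ d$ to be multiplication by $\gamma = (\lambda_0,\lambda_\infty)$ with both components nonzero; indeed its proof begins by observing that $d_0$ and $d_\infty$ are injective. So a skyscraper target can only realise $\gamma$ of the form $(\lambda_0,0)$, never $(az,z)$. (This is exactly why, in the proof of Proposition~\ref{wLd}, the paper uses $N = N' \oplus N''$ with $N'$ a skyscraper \emph{and} $N'' = M(B',x,xs)$ a genuine $\PM^1$-sheaf.) To get a nontrivial $\infty$-component you would need some $M'$ with $(M')_\infty \ne 0$, and verifying that such an $M'$ satisfies HR for $\gamma$ amounts to an $s$-direction statement like $HR(<y)_s$, which is not part of the hypothesis $HR(<y)$.

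The paper avoids $\PM^1$-sheaves entirely for this claim. Proposition~\ref{prop:factor1} constructs, by induction on a reduced expression for $y$ using the polynomial sliding relation~\eqref{slide}, a map $d\colon B(y)\to B'[1]$ with $B'$ positively polarised, all summands $B(z)$ with $z<y$, and $d^*\circ d = B(y)\rho - (y\rho)B(y)$. Taking the stalk at $x$ and applying $A\otimes_R(-)$, the operator $d^*\circ d$ becomes multiplication by the nonzero scalar $\sigma(x\rho - y\rho)$ (Lemma~\ref{lem:rho}). Since $HR(<y)$ and Lemma~\ref{lem:pp} give HR for $A\otimes_R B'_x$, the ordinary weak Lefschetz substitute (Proposition~\ref{prop:wL}) applies directly. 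No limit argument, no family in $a$, and no $\PM^1$-sheaf HR is needed. The case $x=y$ is handled separately by Proposition~\ref{prop:yy}, since $B(y)_y$ has rank one.
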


\begin{claim}[Proposition \ref{wLd}] \label{c2} If $ys > y$, $HR(<y)_{s} + HR(y) \Rightarrow hL(y)_{s}$.
\end{claim}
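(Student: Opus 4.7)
Fix $x \in W$ with $x < xs$. Since $HR(y)$ already implies $hL(y)$ (local hard Lefschetz of $B(y)$), the only content of $hL(y)_s$ beyond $HR(y)$ is that the polarised $\PM^1$-sheaf $M := M(B(y), x, xs)$ satisfies hard Lefschetz. My plan is to combine the opposite-signs structure produced by $HR(y)$ with the weak Lefschetz substitute of Proposition \ref{prop:wLP1}, using the inductive hypothesis $HR(<y)_s$ for an auxiliary sheaf.

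The first step is to verify that $HR(y)$ implies $M$ is polarised with opposite signs in the sense of \S\ref{opp}. Both forms $\langle -,-\rangle^0$ on $M_0 = A \otimes_R B(y)_x[1]$ and $\langle -,-\rangle^\infty$ on $M_\infty = A \otimes_R B(y)_{xs}[1]$ are positive rescalings by $1/\sigma(x(\alpha_s))$ of the specialisations of the local intersection forms on $B(y)_x$ and $B(y)_{xs}$, which satisfy HR by $HR(y)$. The standard-sign clause of $HR(y)$ gives the lowest-degree primitives signs $(-1)^{\ell(x)}$ and $(-1)^{\ell(xs)} = -(-1)^{\ell(x)}$, which are opposite; the correct alternation on higher primitives is forced by Lemma \ref{lem:signs}. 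By the limit lemma (equivalently Lemma \ref{lem:pollim}), the Lefschetz operator $\gamma = (az, z)$ then satisfies HR, and so hard Lefschetz, on $M$ for every $a \gg 0$. By Corollary \ref{cor:lim} together with Lemma \ref{lem:saturation}, it therefore suffices to prove hard Lefschetz for $\gamma = (az, z)$ at every $a \in [1, \infty)$.

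For the latter, pick a reduced expression $\underline{y} = \underline{y}' s'$ for $y$; because $ys > y$, one must have $s' \ne s$. Soergel's conjecture produces an orthogonal decomposition $B(y')B(s') = B(y) \oplus \bigoplus_{z < y} B(z)^{m_z}$ (Lemma \ref{lem:orthog}), which by functoriality of $M(-, x, xs)$ splits the polarised $\PM^1$-sheaf $\widetilde{M} := M(B(y')B(s'), x, xs)$ as the orthogonal direct sum of $M$ and summands $M(B(z), x, xs)^{m_z}$ with $z < y$. Using the adjoint pair $\mu : B(y') \to B(y')B(s')[1]$ and $m : B(y')B(s') \to B(y')[1]$ of \S\ref{sbim} (adjoint by \eqref{eq:duals}, composing to right multiplication by $\alpha_{s'}$ on $B(y')$ via \eqref{eq:prodadjoint}), one obtains adjoint $\PM^1$-sheaf maps between $\widetilde{M}$ and $M' := M(B(y'), x, xs)$. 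Since $y' < y$, the hypothesis $HR(<y)_s$ contains $HR(y')_s$, so $M'$ satisfies HR for every ample $\gamma$, and Proposition \ref{prop:wLP1} yields hard Lefschetz on $\widetilde{M}$ for each $\gamma$ realisable as such a composition $d'd$; restricting to the orthogonal summand $M$ then transfers this to $M$.

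The main obstacle is that the bare composition $m \circ \mu$ gives multiplication by the single element $\gamma_{s'} = (\sigma(x\alpha_{s'}), \sigma(xs\alpha_{s'})) \in Z^2$, which is generally not itself in $Z^2_\amp$ (since $\langle \alpha_{s'}, \alpha_s^\vee \rangle \le 0$ forces $\sigma(x\alpha_{s'}) \le \sigma(xs\alpha_{s'})$) and is in any case a single point rather than a whole interval. To conclude one must modify the adjoint pair---by postcomposing $\mu$ and $m$ with right multiplication by a polynomial, using the centrality of $c_{s'}$ over $R$, and/or by iterating the $\mu$--$m$ construction along a Bott--Samelson of length $>1$ to produce $\gamma$'s proportional to $(\sigma(x\lambda), \sigma(xs\lambda))$ for $\lambda$ a product of simple roots---and then combine this with the continuity/openness argument underlying Corollary \ref{cor:lim} to fill in hard Lefschetz throughout $[1, \infty)$. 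This sweeping step is the crux of the argument.
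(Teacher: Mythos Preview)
Your weak-Lefschetz setup has a direction error that makes the argument collapse. To conclude hard Lefschetz on $\widetilde{M}$ via Proposition~\ref{prop:wLP1} you need $d:\widetilde{M}\to M'[1]$ and $d':M'\to\widetilde{M}[1]$ with $d'\circ d$ equal to multiplication by some $\gamma\in Z^2$ \emph{on $\widetilde{M}$}. With $d$ induced by $B(y')m$ and $d'$ by $B(y')\mu$, the composition $d'\circ d$ is induced by $B(y')(\mu\circ m)$, and $\mu\circ m:B(s')\to B(s')[2]$ sends $c_\id\mapsto c_{s'}$ and $c_{s'}\mapsto\alpha_{s'}c_{s'}$; this is \emph{not} multiplication by any element, so Proposition~\ref{prop:wLP1} does not apply. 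What \emph{is} multiplication (by $\alpha_{s'}$) is $m\circ\mu$, but that lives on $B(y')$, i.e.\ on $M'$, and would only yield hard Lefschetz on $M'$, which you already know. Your proposed modifications (postcomposing with polynomials, iterating along a Bott--Samelson) do not repair this: right multiplication by a polynomial is self-adjoint, so adding it to $d$ changes $d^*d$ by a square, and you still cannot force $d^*d$ to be diagonal on $\widetilde{M}$; nor do iterated dot maps produce elements sweeping out $Z^2_\amp$.

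The paper's proof takes a genuinely different route. It never tries to realise $M=M(B(y),x,xs)$ as a summand of some $\widetilde{M}$; instead it uses the identification $M_{0,\infty}=A\otimes_R(B(y)B(s))_x$ (Lemma~\ref{lem:isom}) and applies Proposition~\ref{prop:wLP1} directly to $M$. The map $d$ comes from the factorisation of Proposition~\ref{prop:def}: for each dominant $\lambda$ and each $0\le a<1$ one has $d:B(y)B(s)\to(B(y)\oplus B'B(s))[1]$ with $B'$ positively polarised, summands $B(z)$ for $z<y$, and $d^*d$ equal to an explicit three-term expression in left/middle/right multiplication. On the $\PM^1$-sheaf level the target becomes $N=N'\oplus N''$ where $N'$ is the \emph{skyscraper} on $A\otimes_R B(y)_x$ (HR by $HR(y)$) and $N''=M(B',x,xs)$ (HR by $HR(<y)_s$ via Lemma~\ref{lem:ppP1}); and by Lemma~\ref{lem:indmult} the composition $d^*d$ becomes honest multiplication by $\gamma=(\lambda_0,\lambda_\infty)\in Z^2_\amp$. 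The free parameters $\lambda$ and $a$ then let $\lambda_0/\lambda_\infty$ range over all of $(1,\infty)$, after which Lemma~\ref{lem:saturation} finishes. The two ingredients you are missing are thus (i) the skyscraper summand in the target, which is precisely where $HR(y)$ enters the weak-Lefschetz step, and (ii) the deformation parameter $a$ in Proposition~\ref{prop:def}, which is what fills out the ample cone.
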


The following are ``limit lemma'' style arguments, which are easier:

\begin{claim} \label{c3} If $ys > y$ then $HR(y) + hL(y)_s + hL(\le ys) \Rightarrow
  HR(ys)$. \end{claim}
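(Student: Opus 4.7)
The plan is to establish local HR with standard signs for the whole perverse bimodule $B := B(y)B(s)$, and then extract $HR(ys)$ by orthogonal decomposition. Since $ys > y$, Remark \ref{rem:bbs} gives the isotypic decomposition $B = B(ys) \oplus \bigoplus_{z < ys} V(z) \otimes_\RM B(z)$ with a positive polarisation; in particular the restriction of this polarisation to the summand $B(ys)$ is a positive multiple of its intersection form. By Lemmas \ref{lem:orthog} and \ref{lem:orth} the decomposition is orthogonal for every local intersection form, so once local HR is known for $B$ it transfers to $B(ys)$ via Lemma \ref{lem:HRsummand}; the standard-sign condition on $B(ys)$ is then automatic from Theorem \ref{thm:em} together with Corollary \ref{cor:HRamb}.

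To prove local HR with standard signs for $B$ at each $x \in W$: fix $x$ and let $x'$ denote the smaller of $\{x, xs\}$. I will form the polarised $\PM^1$-sheaf $M := M(B(y), x', x's)$ from \S\ref{sec:SP1}. By Lemma \ref{lem:isom}, the global sections $M_{0,\infty}$ with their form are canonically isometric to $A \otimes_R (B(y)B(s))_{x'}$ with its local intersection form. Moreover, Proposition \ref{prop:indint} combined with $xs(\alpha_s) = -x(\alpha_s)$ gives the sign-flip identity $\langle -, -\rangle_{BB(s)}^{xs} = -\langle -, -\rangle_{BB(s)}^{x}$, so HR with standard signs on the global sections of $M$ (corresponding to the stalk $x'$) will automatically yield HR with standard signs at both stalks $x$ and $xs$ of $B(y)B(s)$.

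The next step is to verify the hypotheses of Corollary \ref{cor:lim} for $M$. For \emph{opposite signs}: $HR(y)$ provides HR with standard signs on both $M_0 = A \otimes_R B(y)_{x'}[1]$ and $M_\infty = A \otimes_R B(y)_{x's}[1]$ (rescaling by the positive scalar $\sigma(x'(\alpha_s))^{-1}$ preserves HR), and the standard signs $(-1)^{\ell(x')}$ at $x'$ and $(-1)^{\ell(x's)} = -(-1)^{\ell(x')}$ at $x's$ produce the required opposite signs on the respective minimal-degree primitive subspaces; the parity and degree-$\le 0$ conditions are routine consequences of the local Soergel conjecture. For hard Lefschetz of $\gamma = (az, z)$ on every $a \in [1, \infty)$: when $a > 1$, $\gamma \in Z^2_\amp$ and this is exactly $hL(y)_s$; when $a = 1$, Remark \ref{rem:P1global} identifies hard Lefschetz of $\gamma = (z, z)$ on $M$ with hard Lefschetz of the global sections, and $hL(\le ys)$ applied summand-wise to the decomposition of $B(y)B(s)$ above gives local hard Lefschetz of $B(y)B(s)$, hence this boundary case. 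Corollary \ref{cor:lim} then delivers HR on the global sections of $M$ with signs matching those on $M_0$, i.e.~the standard sign $(-1)^{\ell(x')}$; translating back through Lemma \ref{lem:isom} and the sign-flip concludes.

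The main obstacle will be sign bookkeeping: carefully tracking the rescaling by $\sigma(x(\alpha_s))$, the degree shift $[1]$, the sign-flip identity across the pair $\{x, xs\}$, and the matching of ``opposite signs on $\{M_0, M_\infty\}$'' with ``standard signs on $\{B(y)_{x'}, B(y)_{x's}\}$.'' Once these alignments are in place, each analytic ingredient is provided directly by one of the three hypotheses $HR(y)$, $hL(y)_s$, $hL(\le ys)$, and the Corollary does the rest.
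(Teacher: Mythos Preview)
Your proposal is correct and follows essentially the same route as the paper's own proof: reduce to showing that the global sections of $M = M(B(y),x',x's)$ satisfy HR, verify opposite signs via $HR(y)$, check hard Lefschetz for $(az,z)$ on $a \ge 1$ using $hL(y)_s$ for $a>1$ and $hL(\le ys)$ at the boundary $a=1$, and then invoke Corollary \ref{cor:lim}; the passage to $B(ys)$ goes through Lemma \ref{lem:HRsummand} and the standard signs come from Theorem \ref{thm:em} with Corollary \ref{cor:HRamb}. The only cosmetic difference is that you phrase it as first proving local HR for all of $B(y)B(s)$ and then restricting, while the paper restricts to the summand $B(ys)$ slightly earlier in the argument.
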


\begin{proof} Firstly, if $B(ys)_x$ satisfies HR, then it
  satisfies HR with standard signs, by Theorem \ref{thm:em} and Corollary
\ref{cor:HRamb}. Hence it is enough to check that $B(ys)_x$ and $B(ys)_{xs}$ satisfy
HR, for all $x < xs$. Because $ys > y$, $B(y)B(s)$ is perverse (see Remark \ref{rem:bbs}), $B(ys)$ is a summand 
of $B(y)B(s)$ and we have an
isometry (see Lemma \ref{lem:isom})
\begin{equation} \label{eq:stalk}
A \otimes_R (B(y)B(s))_x = M_{0,\infty}
\end{equation}
where $M = M(B(y),x,xs)$ is the $\PM^1$-sheaf associated to $B(y)$,
and $x < xs$. Moreover, by Proposition \ref{prop:indint} we have a canonical
  identification $B(y)B(s)_x = B(y)B(s)_{xs}$ which is $-1$ times an
  isometry (i.e. $\langle -, - \rangle^x = -\langle -, -\rangle^{xs}$
under this identification). By Lemma \ref{lem:orthog}, $B(ys)$ is an orthogonal summand of
$B(y)B(s)$ and we can apply Lemma
\ref{lem:HRsummand} to conclude that it is enough to prove that
$M_{0,\infty}$ satisfies $HR$ for all $x$ as above.

In other words, if multiplication by $(z,z)$ on
$M_{0,\infty}$ satisfies HR, then $HR(ys)$ holds. By our
assumptions $hL(y)_s$ and $hL(\le ys)$, multiplication by $(az,z)$ on $M$
satisfies hard Lefschetz for all $a \ge 1$. By
$HR(y)$, the polarised $\PM^1$-sheaf $M$ is easily seen to have opposite signs
($M_{0,\infty}$ is
generated in degrees $\le 0$ by \eqref{eq:stalk} and the fact that
$B(y)B(s)$ is perverse), and now the
result follows by Corollary \ref{cor:lim}.
\end{proof}

\begin{claim} \label{c5}
If $ys > y$ then $hL(y)_s + HR(y) \Rightarrow HR(y)_s$.
\end{claim}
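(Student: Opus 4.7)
Since $HR(y)$ is assumed, by definition of $HR(y)_s$ it suffices to show that for every $x \in W$ with $x < xs$ the polarised $\PM^1$-sheaf $M := M(B(y), x, xs)$ satisfies HR. The plan is to verify that $M$ is polarised with opposite signs, and then to combine Corollary \ref{cor:lim} with Lemma \ref{lem:saturation}.

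First, I verify that $M$ is polarised with opposite signs. Because $ys > y$, the bimodule $B(y)B(s)$ is perverse (Remark \ref{rem:bbs}), and the identification $M_{0,\infty} = A \otimes_R (B(y)B(s))_x$ of Lemma \ref{lem:isom} ensures that the global sections are generated in degrees $\le 0$. By construction the polarisations on $M_0$ and $M_\infty$ differ from the specialised local forms $\sigma\langle -, -\rangle^x$ and $\sigma\langle -, -\rangle^{xs}$ only by the common positive factor $\sigma(x\alpha_s)^{-1}$ (positive because $x < xs$ forces $x\alpha_s$ to be a positive root, so $\sigma(x\alpha_s)$ is a positive multiple of $z$). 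Hence the parity and HR conditions for $M_0$ and $M_\infty$ are inherited from $HR(y)$. For the opposite-signs condition on primitive subspaces, standard signs give, at the minimum-degree generators $c_{x,y} \in B(y)_x$ and $c_{xs,y} \in B(y)_{xs}$,
\[
(-1)^{\ell(x)}\sigma\langle c_{x,y}, c_{x,y}\rangle^x > 0 \quad \text{and} \quad (-1)^{\ell(xs)}\sigma\langle c_{xs,y}, c_{xs,y}\rangle^{xs} > 0.
\]
Since $\ell(xs) = \ell(x)+1$, these two quantities have opposite signs. The sign alternation built into the definition of HR then propagates this to all primitive subspaces of $M_0$ versus $M_\infty$.

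With $M$ polarised with opposite signs in hand, $hL(y)_s$ tells us that every $\gamma \in Z^2_{\amp}$ satisfies hard Lefschetz on $M$; in particular $\gamma = (az, z)$ satisfies hard Lefschetz on $M$ for all $a \in I := (1, \infty)$, and $I$ is a connected subset of $\RM$ which is not bounded above. Corollary \ref{cor:lim} then gives that $(az, z)$ satisfies HR on $M$ for every $a > 1$. Finally, Lemma \ref{lem:saturation} yields HR for $M$: for any $c > 1$, taking $(a, b) = (c, 1)$ gives $a/b = c$ and $(az, bz) = (cz, z)$ satisfies HR by the previous step. The main obstacle is the first step, and in particular the translation between the standard-sign condition of $HR(y)$ and the opposite-signs condition for the polarised $\PM^1$-sheaf; once this is in place, the limit-type results of \S\ref{mgs} do the work.
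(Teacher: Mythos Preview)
Your proof is correct and follows essentially the same route as the paper: verify that $M(B(y),x,xs)$ is polarised with opposite signs, use $hL(y)_s$ to get hard Lefschetz for $(az,z)$ on the interval $I=(1,\infty)$, and then apply Corollary~\ref{cor:lim}. The paper is terser---it simply asserts that $HR(y)$ implies opposite signs and then invokes Corollary~\ref{cor:lim}---whereas you spell out the opposite-signs verification via the standard-sign condition at the minimum-degree generators $c_{x,y}$ and $c_{xs,y}$, and you make explicit the final passage from ``$(az,z)$ satisfies HR for all $a>1$'' to ``$M$ satisfies HR'' via Lemma~\ref{lem:saturation}; both additions are welcome clarifications rather than a different argument. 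One small point left implicit in your opposite-signs step is that $B(y)_x$ and $B(y)_{xs}$ (when nonzero) have the \emph{same} minimal degree $-\ell(y)$, which is what makes the sign alternation starting from $(-1)^{\ell(x)}$ and $(-1)^{\ell(xs)}$ line up degree-by-degree; this is true and is stated elsewhere in the paper, but you might say it explicitly.
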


\begin{proof}
  Let $x < xs$ and $M = M(B(y),x,xs)$ be as in the previous
  proof.  We need to check that $M$ satisfies HR. We saw in the
  previous proof that $M_{0,\infty}$ is generated in degrees $\le
  0$. Now:
  \begin{enumerate}
  \item $HR(y)$ implies that $M$ is polarised with opposite signs;
  \item $hL(y)_s$ implies that multiplication by $(az,z)$ on
    $M_{0,\infty}$ satisfies hard Lefschetz for $a > 1$.
  \end{enumerate}
The result now follows from Corollary \ref{cor:lim} (with $I = (1,\infty)$).
\end{proof}

The following is straightforward (``constant case''):

\begin{claim}[Proposition \ref{prop:constant}] \label{c4} If $ys < y,
  HR(\le y) \Rightarrow HR(y)_{s}$.
\end{claim}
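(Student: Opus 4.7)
Since $HR(y)$ is already contained in the hypothesis $HR(\le y)$, the content of the claim is the ``in the $s$ direction'' condition: for every $x \in W$ with $x < xs$, the polarised $\PM^1$-sheaf $M := M(B(y), x, xs)$ satisfies HR. The plan is to show that under the hypothesis $ys < y$ the sheaf $M$ is polarised constant in the sense of \S\ref{opp}; the conclusion will then follow immediately from Lemma \ref{lem:const}.

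The key algebraic input is the decomposition of graded $R$-bimodules
\[
B(y)\, B(s) \;\cong\; B(y)[1] \oplus B(y)[-1],
\]
which holds whenever $ys < y$; this is the algebraic shadow of $P_s$-equivariance of the IC sheaf on $Z_y$, and is a standard consequence of Soergel's conjecture for elements $\le y$ (available via $HR(\le y)$). Combined with the identification $(B(y)B(s))_x = B(y)_{x,xs}[1]$ of Proposition \ref{prop:BB(s)}, one obtains a splitting $B(y)_{x,xs} \cong B(y)_x \oplus B(y)_x[-2]$, and pushing this through the canonical embedding $B(y)_{x,xs} \hookrightarrow B(y)_x \oplus B(y)_{xs}$ yields a canonical isomorphism of graded left $R$-modules $B(y)_x \simeq B(y)_{xs}$. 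After applying $A \otimes_R(-)$, this exhibits the underlying $\PM^1$-sheaf of $M$ as the constant sheaf associated to $N := A \otimes_R B(y)_x[1]$, which is generated in degrees $\le -2$ since $ys < y$ forces $\ell(y) \ge 1$.

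For the polarisation, Proposition \ref{prop:indint} rewrites the local form on $(B(y)B(s))_x$ as $\tfrac{1}{x\alpha_s}\bigl(\langle-,-\rangle^x_{B(y)} + \langle-,-\rangle^{xs}_{B(y)}\bigr)$ on $B(y)_{x,xs}$. Combining this with the two descriptions of $B(y)_{x,xs}$ (via the bimodule splitting and via the embedding into $B(y)_x \oplus B(y)_{xs}$), together with the sign $x\alpha_s = -xs\alpha_s$, one deduces that under the canonical isomorphism $B(y)_x \simeq B(y)_{xs}$ above the forms $\langle-,-\rangle^x_{B(y)}$ and $\langle-,-\rangle^{xs}_{B(y)}$ correspond up to a sign. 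Translated through the normalisation in \S\ref{sec:SP1}, this is precisely the ``opposite signs'' condition in the definition of a polarised constant sheaf; the remaining condition — that $N$ satisfies HR — is $HR(y)$. Lemma \ref{lem:const} then delivers HR for $M$. The main obstacle is the sign bookkeeping at the end: concretely reconciling the two descriptions of $B(y)_{x,xs}$ with the normalisation used in \S\ref{sec:SP1} to equip $M$ with a polarisation. Once this is done, the remaining steps are formal.
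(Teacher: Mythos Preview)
Your overall strategy is the paper's: show that $M(B(y),x,xs)$ is polarised constant and invoke Lemma~\ref{lem:const}. Your argument that the underlying unpolarised $\PM^1$-sheaf is constant is fine: once $B(y)_x$ and $B(y)_{xs}$ have equal graded rank (which the splitting $B(y)B(s)\cong B(y)[1]\oplus B(y)[-1]$ together with Proposition~\ref{prop:BB(s)} does yield), Proposition~\ref{isP1} rules out any skyscraper summands.

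The gap is in the polarisation step, and it is not merely bookkeeping. Proposition~\ref{prop:indint} applied to $B(y)B(s)$ together with $x\alpha_s=-xs\alpha_s$ gives the identity $\langle-,-\rangle^x_{B(y)B(s)}=-\langle-,-\rangle^{xs}_{B(y)B(s)}$ on the common module $B(y)_{x,xs}[1]$, but this says nothing about how $\langle-,-\rangle^x_{B(y)}$ and $\langle-,-\rangle^{xs}_{B(y)}$ themselves compare. To descend from $B(y)B(s)$ to $B(y)$ you would need the splitting $B(y)B(s)\cong B(y)[1]\oplus B(y)[-1]$ to be orthogonal for the induced form, and it is not: the two summands are shifts of the same indecomposable, so Lemma~\ref{lem:orthog} does not apply, and a direct check already for $y=s$ shows $\langle c_s c_{\id}, c_{\id}c_s\rangle_{B(s)B(s)}=1\ne 0$. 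Your ``two descriptions of $B(y)_{x,xs}$'' do not combine to give the sign relation; your stated ingredients cannot close the argument.

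The paper works instead with $B(ys)B(s)$. One has canonically $(B(ys)B(s))_x=B(ys)_{x,xs}[1]=(B(ys)B(s))_{xs}$, and Proposition~\ref{prop:indint} then gives $\langle-,-\rangle^x_{B(ys)B(s)}=-\langle-,-\rangle^{xs}_{B(ys)B(s)}$ on the nose, so $M(B(ys)B(s),x,xs)$ is polarised constant (local HR for $B(ys)B(s)$ coming from Lemma~\ref{lem:pp} and the hypothesis $HR(\le y)$). The decisive difference is that $B(y)$ sits in $B(ys)B(s)$ as an \emph{orthogonal} summand by Lemma~\ref{lem:orthog}, since $B(ys)B(s)$ is perverse and the decomposition $B(ys)B(s)=B(y)\oplus E$ is isotypic; the polarised-constant property therefore passes to $M(B(y),x,xs)$. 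Replacing $B(y)B(s)$ by $B(ys)B(s)$ is the missing idea.
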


From these claims we deduce:

\begin{claim} \label{step1}
  If $ys > y$ then $HR(<ys) + hL(y)_s \Rightarrow HR(ys)$.
\end{claim}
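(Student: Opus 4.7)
The plan is to reduce Claim \ref{step1} directly to Claim \ref{c3} by producing the missing hypothesis $hL(\le ys)$ from the given $HR(<ys)$ via Claim \ref{c1}.

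First, I would observe that the hypothesis $HR(<ys)$ includes $HR(y)$ (since $y < ys$ by assumption $ys > y$), so two of the three inputs needed to invoke Claim \ref{c3} — namely $HR(y)$ and $hL(y)_s$ — are immediately available. The only remaining input to produce is $hL(\le ys)$.

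Next, I would establish $hL(\le ys)$ by applying Claim \ref{c1} uniformly. For every $z \le ys$, Claim \ref{c1} states that $HR(<z) \Rightarrow hL(z)$. Since $z \le ys$ implies $\{x : x < z\} \subset \{x : x \le ys\}$, and in fact $\{x : x < z\} \subset \{x : x < ys\}$ whenever $z < ys$ and also when $z = ys$, the hypothesis $HR(<ys)$ supplies $HR(<z)$ in all cases (taking $z = ys$ gives $HR(<ys)$ verbatim; for $z < ys$ it follows from the obvious inclusion of the lower Bruhat intervals). Hence Claim \ref{c1} yields $hL(z)$ for all $z \le ys$, i.e.\ $hL(\le ys)$.

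Finally, I would feed the three statements $HR(y)$, $hL(y)_s$, and $hL(\le ys)$ into Claim \ref{c3} (whose hypothesis $ys > y$ is precisely what we are assuming) to conclude $HR(ys)$, completing the proof of Claim \ref{step1}. There is no real obstacle here: the entire content is the bookkeeping observation that $HR(<ys)$, via Claim \ref{c1}, is strong enough to produce $hL$ all the way up to and including $ys$, after which Claim \ref{c3} finishes the job. The genuinely hard steps — the weak Lefschetz arguments of Claims \ref{c1} and \ref{c2}, and the limit argument of Claim \ref{c3} — have already been carried out.
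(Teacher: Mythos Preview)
Your proposal is correct and follows essentially the same approach as the paper: use Claim~\ref{c1} applied to every $z \le ys$ to upgrade $HR(<ys)$ to $hL(\le ys)$, extract $HR(y)$ from $HR(<ys)$, and then invoke Claim~\ref{c3}. Your write-up just spells out more carefully why $HR(<ys)$ implies $HR(<z)$ for each $z \le ys$, which the paper leaves implicit.
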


\begin{proof}
\begin{gather*}
HR(<ys) + hL(y)_s  \stackrel{\text{Claim \ref{c1}}}{\Longrightarrow}
HR(<ys) + hL(\le ys) + hL(y)_s\\
 \Rightarrow HR(y) + hL(\le ys) + hL(y)_s 
\stackrel{\text{Claim \ref{c3}}}{\Rightarrow}
HR(ys). \qedhere
\end{gather*}
\end{proof}

We also have:

\begin{claim} \label{step2}
  $HR(y) + HR(<y)_s \Rightarrow HR(y)_s$.
\end{claim}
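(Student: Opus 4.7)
The plan is to split into cases according to whether $ys < y$ or $ys > y$, and then invoke the relevant claim from the list already established.

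In the case $ys < y$, I would apply Claim \ref{c4} directly. Observe that by unravelling the definition $HR(<y)_s$ already contains $HR(<y)$ as one of its two requirements; combined with the hypothesis $HR(y)$ this gives $HR(\le y)$, which is precisely the input Claim \ref{c4} requires to produce $HR(y)_s$ when $ys<y$.

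In the case $ys > y$, the plan is to chain Claims \ref{c2} and \ref{c5}. The hypotheses $HR(<y)_s + HR(y)$ are exactly those needed by Claim \ref{c2} (which requires $ys > y$) to produce $hL(y)_s$. Then $hL(y)_s + HR(y)$ are exactly the premises of Claim \ref{c5}, which delivers $HR(y)_s$. Schematically:
\begin{gather*}
HR(<y)_s + HR(y)
\;\stackrel{\text{Claim \ref{c2}}}{\Longrightarrow}\;
hL(y)_s + HR(y)
\;\stackrel{\text{Claim \ref{c5}}}{\Longrightarrow}\;
HR(y)_s.
\end{gather*}

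No substantive obstacle appears; this is purely a bookkeeping step that packages the two branches of the induction (depending on whether $s$ lowers the length of $y$ or not) into the uniform statement of Claim \ref{step2}. The only thing to be careful about is the implicit convention that $HR(\cdot)_s$ refines $HR(\cdot)$, which guarantees that the case $ys<y$ truly reduces to Claim \ref{c4} without any further input.
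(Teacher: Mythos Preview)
Your proposal is correct and follows exactly the same route as the paper's proof: split into the cases $ys<y$ and $ys>y$, invoking Claim~\ref{c4} in the first case (using that $HR(<y)_s$ contains $HR(<y)$) and chaining Claims~\ref{c2} and~\ref{c5} in the second. The only difference is that you have spelled out a bit more detail than the paper does.
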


\begin{proof}
  If $ys < y$ then this follows from Claim \ref{c4} (remember that
  $HR(<y)_s$ includes $HR(<y)$ by definition). So we can assume
  $ys > y$. Now $hL(y)_s$ holds by Claim \ref{c2} and then we are done by
  Claim \ref{c5}.
\end{proof}

Now we can give the proof of Theorem \ref{thm:main} (assuming the
above statements):

\begin{proof}[Proof of Theorem \ref{thm:main}]
Let $X$ denote  an ideal in the Bruhat order and for all $x \in X$ assume $HR(x)$
and $HR(x)_s$ for all $s \in S$. If $X \ne W$ then we can
  choose $y' \in W \setminus X$ of minimal length and $s \in S$ with $y := y's < y'$.
Now $y \in X$ and Claim \ref{step1}
tells us that $HR(ys)$ holds and then Claim \ref{step2} tells us that
$HR(ys)_t$ holds for all $t \in S$. Hence we can add $ys$ to our set $X$.

One may check directly that $HR(\id)$ and $HR(\id)_s$ hold for all $s
\in S$. The above induction tells us that $HR(x)$ and $HR(x)_t$ hold for
all $x \in W$ and $t \in S$. The theorem now follows.
\end{proof}

\subsection{Easy cases} In this section we make some easy
observations which are used in the proof.

\begin{lem} \label{lem:pp}
  Suppose that $B$ is positively polarised and that $HR(y)$ holds for
  all indecomposable summands $B(y)$ of $B$. Then $B$ satisfies local HR.
\end{lem}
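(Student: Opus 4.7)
The plan is to reduce the claim to $HR(y)$ on each indecomposable summand of $B$ via the isotypic decomposition. First I would invoke Lemma \ref{lem:orthog}: the canonical decomposition $B = \bigoplus_y V(y) \otimes_\RM B(y)$ is orthogonal for $\langle -,-\rangle_B$, and the restriction to $V(y)\otimes B(y)$ is the tensor product of the $\delta_y$-definite form on $V(y)$ (with $\delta_y = (-1)^{(\ell(z)-\ell(y))/2}$, by the definition of positive polarisation) with the intersection form on $B(y)$. Since $V(y)$ sits in degree zero, passing to the stalk at $x$ and then extending scalars to $A$ produces an orthogonal decomposition of $(A \otimes_R B_x,\ A\otimes_R \langle-,-\rangle_B^x)$ in which each summand $V(y)\otimes_\RM(A\otimes_R B(y)_x)$ is isometric, up to the overall sign $\delta_y$, to $\dim V(y)$ copies of $(A\otimes_R B(y)_x,\ A\otimes_R\langle-,-\rangle_{B(y)}^x)$.

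The second step would be to check HR on each orthogonal summand. Hard Lefschetz splits across orthogonal direct sums and so is inherited directly from $HR(y)$. For the HR signature, the hypothesis $HR(y)$ says each $A\otimes_R B(y)_x$ satisfies HR with standard signs; applying Theorem \ref{thm:em} together with Corollary \ref{cor:HRamb} to the minimal-degree generator $c_{x,y}$ one reads off that the sign of the Lefschetz form on its lowest primitive subspace is $\epsilon_0 = (-1)^{\ell(x)}$, \emph{independently of $y$}. Tensoring with the $\delta_y$-definite degree-zero form on $V(y)$ rescales the whole form by $\delta_y$, so each summand satisfies HR with minimum non-zero degree $-\ell(y)$ and sign $\delta_y\epsilon_0$.

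The main step is gluing these summand-wise HR statements into HR for the whole direct sum $A\otimes_R B_x$. Since the decomposition is orthogonal the primitive subspaces split accordingly, so what needs to be checked is that the varying signs $\delta_y\epsilon_0$, attached to the varying minimum degrees $-\ell(y)$, assemble into a single global sign $\epsilon$ in the sense of \S\ref{def:hr}. The one substantive identity is that the Lefschetz form on the $y$-part of $P^{-i}(A\otimes_R B_x)$ has sign
\[
\delta_y\epsilon_0\cdot(-1)^{(-i+\ell(y))/2} \;=\; (-1)^{\ell(x) + (\ell(z)-\ell(y))/2 + (-i+\ell(y))/2} \;=\; (-1)^{\ell(x)}(-1)^{(\ell(z)-i)/2},
\]
which is \emph{independent of $y$}. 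Writing $\min$ for the minimal non-zero degree of $A\otimes_R B_x$, this matches the target form $\epsilon\cdot(-1)^{(-i-\min)/2}$ with $\epsilon = (-1)^{\ell(x)+(\ell(z)+\min)/2}$, and HR for $A\otimes_R B_x$ follows. The whole point of the factor $(-1)^{(\ell(z)-\ell(y))/2}$ in the definition of positive polarisation is precisely to make the $y$-dependence cancel in this identity; this cancellation is the only thing in the argument that is not formal.
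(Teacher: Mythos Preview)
Your proof is correct and follows essentially the same route as the paper's: both use the orthogonal isotypic decomposition of $B$, pass to the stalk at $x$, and verify that the sign of the Lefschetz form on a primitive element of degree $d$ in the $V(y)\otimes B(y)$ summand is $(-1)^{\ell(x)+(\ell(z)+d)/2}$, hence independent of $y$. One small redundancy: your appeal to Theorem~\ref{thm:em} and Corollary~\ref{cor:HRamb} to extract $\epsilon_0=(-1)^{\ell(x)}$ is unnecessary, since this is precisely the content of ``standard signs'' already built into the hypothesis $HR(y)$.
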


\begin{proof}
Consider the canonical decomposition $B = \bigoplus V(y)
  \otimes_{\RM} B(y)$ of \S \ref{sec:pp}. Let
  $z$ be maximal such that $V(z) \ne 0$. Fix $x \in W$. We want to show
  that $(A \otimes_R B_x, A \otimes_R \langle -, - \rangle_B^x)$
  satisfies HR. Our  decomposition induces a
  decomposition $A \otimes_R B_x = \bigoplus A \otimes_R
  (V(y) \otimes_{\RM} B(y))_x$. Now let $p \in A \otimes_R
  (V(y) \otimes_{\RM} B(y))_x$ be a
  primitive element in degree $d = -\ell(y) + 2d'$.  Let $c = z^{-d}\langle p, p \rangle^x
  \in \RM$. Then, by $HR(y)$ and the
  definition of positively polarised (see \S \ref{sec:pp}) we see that
\[
0 < (-1)^{(\ell(z)-\ell(y))/2} (-1)^{d' + \ell(x)}  c = (-1)^{(\ell(z)
  + d)/2 + \ell(x)}c.
\]
Hence the sign of $c$ depends only on $\ell(z)$, $\ell(x)$ and $d$, and not on
$y$, and hence $B$ satisfies local HR.\end{proof}

The proof of the following analogue of the previous lemma for
$\PM^1$-sheaves is similar, and is left to the reader.

\begin{lem} \label{lem:ppP1}
  Suppose that $B$ is positively polarised and that $HR(y)_s$ holds for
  all indecomposable summands $B(y)$ of $B$. Then $B$ satisfies local
  HR in the $s$ direction.
\end{lem}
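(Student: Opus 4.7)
The plan is to mimic the proof of Lemma \ref{lem:pp} at the level of $\PM^1$-sheaves. First, condition (1) in the definition of ``satisfies local HR in the $s$ direction'' (namely that $B$ satisfies local HR) is exactly the conclusion of Lemma \ref{lem:pp}, so nothing new needs to be proved there. It therefore remains to verify condition (2): for each $x \in W$ with $x < xs$, the polarised $\PM^1$-sheaf $M := M(B,x,xs)$ satisfies HR.

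The construction $B \mapsto M(B,x,xs)$ is additive (the operations $(-)_x$, $(-)_{xs}$, $(-)_{x,xs}$ all commute with direct sums, and pushouts preserve direct sums), so the canonical isotypic decomposition $B = \bigoplus_{y \in W} V(y) \otimes_{\RM} B(y)$ induces a decomposition
\[
M = \bigoplus_{y \in W} V(y) \otimes_{\RM} M(B(y),x,xs).
\]
By Lemma \ref{lem:orthog} the isotypic decomposition of $B$ is orthogonal for the polarisation, hence so are the induced local forms on $B_x$ and on $B_{xs}$. The polarisations of $M_0$ and $M_\infty$ are obtained from these local forms by applying $\sigma$ and rescaling by $1/\sigma(x(\alpha_s))$ (see \S\ref{sec:SP1}), so the displayed decomposition of $M$ is orthogonal as a polarised $\PM^1$-sheaf. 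A variant of Lemma \ref{lem:HRsummand} for $\PM^1$-sheaves --- proved by the same argument, since $\gamma$-primitive subspaces respect orthogonal decompositions and signatures of definite forms add --- reduces the problem to showing that each $V(y) \otimes_{\RM} M(B(y),x,xs)$ satisfies HR, and moreover that the signs for different $y$ fit together consistently.

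Each individual piece satisfies HR by $HR(y)_s$, so the only real content is the sign bookkeeping. Let $z \in W$ be maximal with $V(z) \neq 0$. By the definition of positively polarised, the restriction of the form on $B$ to $V(y)$ is $(-1)^{(\ell(z) - \ell(y))/2}$ times a positive definite form. Given a $\gamma$-primitive element $p$ of degree $d$ in $M(B(y),x,xs)_{0,\infty}$ and a vector $v \in V(y)$, we have
\[
\gamma^{-d}\langle v \otimes p, v \otimes p\rangle = \langle v, v\rangle \cdot \gamma^{-d}\langle p, p\rangle.
\]
From $HR(y)_s$ and the ``standard signs'' part of $HR(y)$, the sign of $\gamma^{-d}\langle p,p\rangle$ is determined (after the same manipulation as in the proof of Lemma \ref{lem:pp}) by $\ell(x)$, $\ell(y)$ and $d$; combining with the $(-1)^{(\ell(z) - \ell(y))/2}$ coming from $V(y)$, the $\ell(y)$ dependence cancels and the resulting sign depends only on $\ell(x)$, $\ell(z)$ and $d$. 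Hence all summands contribute to the Lefschetz form on a fixed homogeneous component of $M_{0,\infty}$ with the same sign, and $M$ satisfies HR.

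The only subtle step is the sign-cancellation computation, but this is identical to the one already carried out for ordinary local HR in Lemma \ref{lem:pp}; the $\PM^1$ structure enters only through the replacement of multiplication by $z^{-d}$ with multiplication by $\gamma^{-d}$, which does not affect the parity analysis. Everything else is a formal consequence of additivity of the construction $B \mapsto M(B,x,xs)$ and orthogonality of the isotypic decomposition.
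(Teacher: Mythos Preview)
Your proposal is correct and follows exactly the approach the paper intends: the paper's own ``proof'' is the single sentence that the argument is similar to that of Lemma~\ref{lem:pp} and is left to the reader, and you have written out precisely that analogue. The one point you pass over lightly is pinning down the sign $\epsilon$ for each $M(B(y),x,xs)$; this is not literally part of the statement $HR(y)_s$, but it follows because $Z^2_\amp$ is connected (so $\epsilon$ is independent of $\gamma$) and for $a/b \gg 0$ the limit lemma (Lemma~\ref{lem:lim}) forces $\epsilon$ to agree with the sign on $M_0$, which is governed by the standard signs in $HR(y)$.
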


\begin{prop} \label{prop:constant}
Suppose that $ys < y$ with $y \in W$ and $s \in S$. If $HR(\le y)$ holds
then for all $x < xs$ the polarised $\PM^1$-sheaf
$M(B(y),x,xs)$ is polarised constant, and hence satisfies HR.
\end{prop}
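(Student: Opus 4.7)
The main ingredient is the Soergel bimodule identity
\[
B(y)B(s) \;\cong\; B(y)[1] \oplus B(y)[-1],
\]
valid whenever $ys < y$ (an algebraic consequence of Soergel's conjecture, as in \cite{EW}). Applying Proposition \ref{prop:BB(s)} converts this to an isomorphism of left $R$-modules
\[
B(y)_{x,xs}[1] \;\cong\; B(y)_x[1] \oplus B(y)_x[-1].
\]
In particular $B(y)_x$ and $B(y)_{xs}$ have equal graded rank, and Proposition \ref{Bxxs} then forces $B(y)_{x,xs}$ to be a direct sum of shifts of $R(x,xs)$ alone, with no $R(x)$ summands. Translating through the construction of \S\ref{sec:SP1}, this says precisely that the $\PM^1$-sheaf $M := M(B(y),x,xs)$ has no skyscraper summand; both $\rho_0$ and $\rho_\infty$ are projective covers onto $M_{\CM^*}$, and $M$ decomposes as a direct sum of constant $\PM^1$-sheaves by Lemma \ref{lem:split primitive}.

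The remaining task is to match the two polarisations. By Proposition \ref{prop:indint} combined with the isometry of Lemma \ref{lem:isom}, the induced form on the global sections $M_{0,\infty} = A \otimes_R (B(y)B(s))_x$ is
\[
\tfrac{1}{\sigma(x\alpha_s)} \sigma\bigl(\langle -,-\rangle^x_{B(y)} + \langle -,-\rangle^{xs}_{B(y)}\bigr);
\]
on the other hand this form admits an orthogonal decomposition coming from the isotypic splitting $B(y)B(s) = B(y)[1] \oplus B(y)[-1]$ (orthogonality being Lemma \ref{lem:orthog}). The two summands are realised by the explicit dot maps $\mu : R[-1] \to B(s)$ and $m : B(s) \to R[1]$ tensored on the left with $B(y)$, whose composition is right multiplication by $\alpha_s$. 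Unwinding these identifications should yield a canonical isomorphism $M_0 \cong M_\infty$ (of free $A$-modules, intertwining $\rho_0$ and $\rho_\infty$) under which the polarisations at $0$ and $\infty$ become exact negatives of each other. Together with the hypothesis $HR(y)$ (which ensures that the common underlying form satisfies HR on $A \otimes_R B(y)_x$), this exhibits $M$ as polarised constant in the sense of \S\ref{opp}, and Lemma \ref{lem:const} then delivers HR for $M$.

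The main obstacle is the sign bookkeeping in this middle step. The factor $\sigma(x\alpha_s)$ is positive because $x < xs$; the antisymmetry $xs(\alpha_s) = -x(\alpha_s)$ governs the form on $M_\infty$; and the sign pattern $(-1)^{\ell(xs)} = -(-1)^{\ell(x)}$ dictated by $HR(y)$ with standard signs must all conspire to produce precisely the minus sign demanded by the definition of polarised constant in \S\ref{opp}. Making the identification $M_0 \cong M_\infty$ truly canonical, rather than merely up to a global scalar, is the delicate point, and this is what forces a direct calculation using the dot maps rather than a purely abstract appeal to the structure theorem of \S\ref{sec:structure}.
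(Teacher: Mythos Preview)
Your argument that $M(B(y),x,xs)$ has no skyscraper summands is fine: from $B(y)B(s)\cong B(y)[1]\oplus B(y)[-1]$ and Proposition~\ref{prop:BB(s)} one gets $\grk B(y)_{x,xs}=(1+v^2)\cdot\grk B(y)_x$, and comparing with Proposition~\ref{Bxxs} forces all summands to be shifts of $R(x,xs)$.

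The gap is in the polarisation step, and it is not just bookkeeping. Two points:

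\emph{First}, Lemma~\ref{lem:orthog} does not apply to the splitting $B(y)B(s)=B(y)[1]\oplus B(y)[-1]$: that lemma is about the isotypic decomposition of a \emph{perverse} bimodule, and this bimodule is not perverse. In fact the two summands are \emph{not} orthogonal for the induced form (the pairing between them is what makes the form non-degenerate, since the self-pairing of $B(y)[1]$ lands in negative-degree endomorphisms of $B(y)$, which vanish).

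\emph{Second}, and more seriously, ``polarised constant'' demands an isometry $M_0\simto M_\infty$ carrying $\langle-,-\rangle^0$ to $-\langle-,-\rangle^\infty$. Your information so far only gives that $M$ is polarised with \emph{opposite signs} (via $HR(y)$ with standard signs). This is strictly weaker: two definite forms of opposite sign on isomorphic spaces need not be negatives of each other under any given identification, and the dot maps you propose produce maps $B(y)\leftrightarrows B(y)B(s)$, not an identification $B(y)_x\cong B(y)_{xs}$.

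The paper avoids this by working one step lower: it considers $B':=B(ys)B(s)$ (with $ys<y$, so $B'$ is perverse and positively polarised). Here Proposition~\ref{prop:BB(s)} gives a \emph{canonical} equality
\[
(B')_x \;=\; B(ys)_{x,xs}[1] \;=\; (B')_{xs},
\]
and Proposition~\ref{prop:indint} immediately yields $\langle-,-\rangle_{B'}^x=-\langle-,-\rangle_{B'}^{xs}$. Thus $M(B',x,xs)$ is polarised constant on the nose. The hypothesis $HR(\le y)$ (not just $HR(y)$) together with Lemma~\ref{lem:pp} supplies the HR condition on the stalks. Finally one uses that $B(y)$ is an \emph{orthogonal} summand of the perverse bimodule $B'$ (now Lemma~\ref{lem:orthog} legitimately applies), and an orthogonal summand of a polarised constant sheaf is again polarised constant. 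This last passage to a summand is what replaces your attempted direct identification.
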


\begin{proof} Our first step is to prove that the $\PM^1$-sheaf
    $M := M(B(ys)B(s),x,xs)$ is polarised constant. Let us check that that the
    stalks are generated in degrees $\le -2$. Because of the shift
    involved in the definition of $M$,
    this is equivalent to checking that $B(ys)B(s)_x$ and $B(ys)B(s)_{xs}$ are
    generated in degrees $\le -1$. However $B(ys)B(s)$ is perverse
    (see Remark \ref{rem:bbs}),
    and it follows from \cite[Theorem 5.3]{Soe3} and the solution of Soergel's
    conjecture that the stalks of
    any $B(z)$ with $z \ne \id$ are generated in degrees $\le
    -1$. Thus the claim follows from the fact that $B(\id)$ is not
    a summand of $B(ys)B(s)_x$ (all summands $B(z)$ satisfy $zs < z$).

Note that $B(ys)B(s)$ is positively polarized by
  Remark \ref{rem:bbs}. By Lemma \ref{lem:pp}
  and our assumption $HR(\le y)$, we conclude that $B(ys)B(s)$
  satisfies local HR. By Proposition \ref{prop:BB(s)} we have
\[
 (B(ys)B(s))_x = B(ys)_{x,xs}[1] = (B(ys) B(s))_{xs}.
\]
By Proposition \ref{prop:indint} we see that under the above
identifications we have
\[
\langle -, -\rangle_{B(ys)B(s)}^x = - \langle -, -\rangle_{B(ys)B(s)}^{xs}. 
\]
It follows that the $\PM^1$-sheaf $M$ is polarised constant.

Now we have a canonical and orthogonal
decomposition (see Lemma \ref{lem:orthog})
\[
B(ys)B(s) = B(y) \oplus E
\]
where $E$ is a polarised Soergel bimodule with all indecomposable
summands isomorphic to $B(z)$ for $z < ys <y$. Now it is
  not difficult to see
that any orthogonal summand of a polarized constant sheaf is
polarized constant. In particular the summand $M(B(y),x,xs)$ of $M$ is polarised
constant. Hence $M(B(y),x,xs)$ satisfies HR by Lemma
\ref{lem:const}.
\end{proof}

\begin{prop} \label{prop:yy}
For $y \in W$, $(A \otimes_R B(y)_y, A \otimes_R \langle -, -
  \rangle_{B(y)}^y)$ satisfies HR with standard signs.\end{prop}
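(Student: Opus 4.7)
My strategy is to reduce the proposition to the positivity statement for equivariant multiplicities already established in Corollary \ref{cor:HRamb}. The key observation is that the top stalk $B(y)_y$ is particularly simple: it is a free $R$-module of rank one, generated in degree $-\ell(y)$ by the element $c_{y,y}$. This is a standard consequence of Soergel's hom formula together with Soergel's conjecture, which forces the graded rank of $B(y)_y$ to equal $v^{\ell(y)}$. Consequently $N := A \otimes_R B(y)_y$ is a free $A$-module of rank one, generated in degree $-\ell(y)$ by $1 \otimes c_{y,y}$.

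Because $N$ has rank one, hard Lefschetz and HR both reduce to a single numerical condition. For hard Lefschetz the only non-trivial case is $d = -\ell(y)$, where one must check that the single entry Gram matrix of the restriction of $\langle -, -\rangle_N$ to $N^{-\ell(y)}$ is invertible in $K$. For HR, the minimum non-zero degree is $\min = -\ell(y)$, the only primitive subspace is $P^{-\ell(y)} = N^{-\ell(y)}$, and only the case $d = \min$, $i = 0$ must be verified; this amounts to asking that the Lefschetz form $z^{\ell(y)} \langle -, -\rangle_N$ on the one-dimensional space $P^{-\ell(y)}$ is definite with the correct sign.

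Both conditions follow from one inequality. Applying Theorem \ref{thm:em} yields
\[
\langle c_{y,y}, c_{y,y} \rangle^y_{B(y)} = \gamma\, e_{y,y}, \qquad \gamma \in \RM_{>0}.
\]
Since $e_{y,y}$ is homogeneous of degree $-2\ell(y)$, its specialisation has the form $\sigma(e_{y,y}) = c\, z^{-\ell(y)}$ for some $c \in \RM$, and Corollary \ref{cor:HRamb} (applied with $x = y$) gives $(-1)^{\ell(y)} c > 0$. This single inequality simultaneously (i) confirms non-degeneracy over $K$, giving hard Lefschetz; (ii) shows that the Lefschetz form on the one-dimensional primitive subspace is $(-1)^{\ell(y)}$-definite, so HR holds with $\varepsilon = (-1)^{\ell(y)}$; and (iii) gives the standard sign condition $(-1)^{\ell(y)} \sigma(\langle c_{y,y}, c_{y,y}\rangle^y_{B(y)}) > 0$, taking $c = c_{y,y}$ as the minimal degree element of $B(y)_y$.

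There is no substantive obstacle: the proposition is essentially a direct transcription of the rank-one structure of the top stalk combined with the positivity of equivariant multiplicities. The only point requiring mild care is the justification that $B(y)_y$ has graded rank one concentrated in degree $-\ell(y)$, but this is standard Kazhdan--Lusztig combinatorics and is already implicit in the discussion preceding Theorem \ref{thm:em}.
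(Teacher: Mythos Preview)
Your proof is correct and follows essentially the same approach as the paper: both reduce to the rank-one structure of $B(y)_y$ and the positivity of $e_{y,y}$, with the paper invoking Proposition~\ref{prop:equi}(2) directly while you cite its corollary (Corollary~\ref{cor:HRamb}). Two minor slips: under the paper's conventions the graded rank of $B(y)_y$ is $v^{-\ell(y)}$, not $v^{\ell(y)}$; and the rank-one statement for the top stalk follows from Soergel's character formula \cite[Theorem~5.3]{Soe3} alone, without appeal to Soergel's conjecture.
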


\begin{proof}
  By Soergel's character formula \cite[Theorem 5.3]{Soe3},
$B(y)_y$ is free of graded rank
  $v^{-\ell(y)}$. Thus $(A \otimes_R B(y)_y, A \otimes_R \langle -, -
  \rangle_{B(y)}^y)$ satisfies hard Lefschetz, as this is automatic
  for $A$-modules of rank 1. Moreover, in the notation of \S\ref{sec:em},
  $c_{y,y} \in B(y)_y$ is a generator and
\[
\langle c_{y,y}, c_{y,y} \rangle_{B(y)}^y = \gamma e_{y,y} = \gamma
(-1)^{\ell(y)} \prod_{ t \in L_T(y)} \frac{1}{\a_t} \qquad \text{for
  some $\g \in \RM_{>0}$.}
\]
by Theorem \ref{thm:em} and Proposition \ref{prop:equi}(2). Applying
$\sigma$ (and using that $\sigma(\alpha) > 0$ for $\alpha \in \Phi^+$)
yields the result.
\end{proof}

\begin{remark}
  More generally, the above proof works whenever $B(y)_x$ is free of
  rank 1 (the ``rationally smooth case'').
\end{remark}

\subsection{Non-deformed case}

\begin{prop} \label{prop:factor1}
  Suppose that $\un{y} = s_1 \dots s_m$ is a reduced expression and $\l \in \hg^*$ is such
  that
  \[
  \langle s_{i+1} \dots s_m(\l), \alpha_{s_i}^\vee \rangle > 0
  \]
  for all $1 \le i \le m$. Then there exists a positively polarised Soergel bimodule $(B', \langle -,
  - \rangle_{B'})$ all of whose summands are isomorphic to $B(x)$ with
  $x < y$ and a map
  \[
  d : B(y) \to B'[1]
  \]
  such that
  \begin{equation}
    \label{adjoint rel}
d^* \circ d = B(y) \l- (y\l)  B(y)     
  \end{equation}
(as always, $B(y)$ is polarised with its intersection form.)
\end{prop}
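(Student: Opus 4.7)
My plan is to work with a reduced expression $\un{y} = s_1 \cdots s_m$ of $y$ and first establish the analogous factorisation at the level of the Bott-Samelson $B(\un{y}) = B(s_1) \cdots B(s_m)$. I iterate the polynomial sliding relation \eqref{slide}, moving $\lambda$ from the rightmost factor to the leftmost. At step $i$ the polynomial reaching $B(s_i)$ is $\lambda_i := s_{i+1} \cdots s_m(\lambda)$, and the coefficient produced is $\gamma_i := \langle \lambda_i, \alpha_{s_i}^\vee \rangle$, which is strictly positive by hypothesis. Writing $m_i : B(\un{y}) \to B(\un{y}_{\hat{i}})[1]$ for the ``contraction'' that applies $m : B(s_i) \to R[1]$ in the $i$-th factor, \eqref{eq:duals} and \eqref{eq:prodadjoint} give $m_i^* = \mu_i$, and the iteration yields
\[
B(\un{y}) \lambda - (y\lambda) B(\un{y}) = \sum_{i=1}^m \gamma_i\, m_i^* m_i.
\]
Setting $d_{\un{y}} := \bigoplus_i \sqrt{\gamma_i}\, m_i$ (valid because each $\gamma_i > 0$), I obtain $d_{\un{y}}^* d_{\un{y}} = B(\un{y})\lambda - (y\lambda) B(\un{y})$.

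Next I restrict to $B(y)$. Using the chosen embedding $\iota : B(y) \hookrightarrow B(\un{y})$ together with its adjoint $p := \iota^*$, satisfying $p \iota = \id_{B(y)}$ since the polarisation of $B(y)$ is by definition inherited from that of $B(\un{y})$, I set $\tilde d := d_{\un{y}} \circ \iota$. Because $\iota$ is a bimodule morphism, it commutes with polynomial multiplication, so
\[
\tilde d^* \tilde d = p\bigl(B(\un{y})\lambda - (y\lambda) B(\un{y})\bigr)\iota = B(y)\lambda - (y\lambda) B(y).
\]

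Finally I must replace the codomain $\bigoplus_i B(\un{y}_{\hat{i}})[1]$, which is not perverse, by a positively polarised perverse $B'$. By Soergel's conjecture and Lemma \ref{lem:orthog} there is an orthogonal decomposition $\bigoplus_i B(\un{y}_{\hat{i}}) = \bigoplus_{x,k} V(x,k) \otimes B(x)[k]$; the summands with $x = y$ vanish because $\ell(y_{\hat{i}}) < \ell(y)$ for each $i$, so every $x$ that appears satisfies $x < y$. I take $B' := \bigoplus_x V(x,-1) \otimes B(x)$, the perverse summand of the shifted sum $\bigoplus_i B(\un{y}_{\hat{i}})[1]$, equipped with the restricted invariant form; iterated application of Remark \ref{rem:bbs} together with Lemma \ref{lem:orthog} shows that $B'$ is positively polarised. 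Letting $\pi$ denote the orthogonal projection onto $B'[1]$, I set $d := \pi \circ \tilde d$. The main obstacle is the final verification $d^* d = \tilde d^* \tilde d$, i.e.\ that the components of $\tilde d$ landing in non-perverse summands $B(x)[k+1]$ with $k \neq -1$ contribute zero to $\tilde d^* \tilde d$; I expect this to follow from a degree and parity analysis based on Soergel's Hom formula combined with the explicit Bott-Samelson-level shape of $\tilde d$, or alternatively from an inductive construction on $\ell(y)$ using $B(y) \subset B(y')B(s)$ with $y = y's$ and $y' < y$, letting the induction hypothesis supply a perverse target stage by stage.
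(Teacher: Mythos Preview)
Your Bott--Samelson computation and the restriction to $B(y)$ are correct: you do obtain $\tilde d:B(y)\to\bigoplus_iB(\un{y}_{\hat i})[1]$ with $\tilde d^*\tilde d=B(y)\l-(y\l)B(y)$. The gap is entirely in the passage to a perverse, positively polarised target. Lemma~\ref{lem:orthog} applies only to \emph{perverse} bimodules, and the Bott--Samelsons $B(\un{y}_{\hat i})$ are not perverse whenever $\un{y}_{\hat i}$ is not reduced (already $B(s)B(s)=B(s)[1]\oplus B(s)[-1]$). Their decomposition into indecomposables is neither canonical nor orthogonal for the intersection form; indeed an orthogonal splitting cannot exist, since a non-degenerate form would then restrict non-degenerately to a summand $B(x)[k]$ with $k\neq 0$, forcing $B(x)[k]\cong\DM(B(x)[k])=B(x)[-k]$. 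Hence there is no ``orthogonal projection $\pi$ to the perverse part'', the adjoint of any chosen projection is not the inclusion, and $d^*d=\tilde d^*\tilde d$ does not follow from a degree count. Your invocation of Remark~\ref{rem:bbs} for positive polarisation likewise fails: that remark concerns $B(y')B(s)$ with $y's>y'$, not arbitrary (possibly non-reduced) Bott--Samelsons.

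Your proposed alternative---induction on $\ell(y)$---is exactly the paper's proof, and is where the real content lies. One writes $y=zs$ with $z<y$, applies the induction hypothesis to $z$ and $s\l$ to get $d':B(z)\to D[1]$ with $D$ perverse and positively polarised, and sets $d=\bigl(\sqrt{\langle\l,\a_s^\vee\rangle}\,B(z)m,\ d'B(s)\bigr):B(z)B(s)\to(B(z)\oplus DB(s))[1]$. The point of the induction is that the only non-perverse piece of this target is $D^{\downarrow}B(s)$ (where $D^\downarrow\subset D$ collects the summands $B(u)$ with $us<u$), and this has shifts only $\pm 1$. The component $f:B(y)\to D^{\downarrow}B(s)[1]$ then lands in $\tau_{\le -1}=D^{\downarrow}[2]$ while $f^*$ vanishes there, both consequences of $\Hom(B(y),B(u))=0=\Hom(B(u),B(y))$ for $u<y$; hence $f^*f=0$ and one may discard $D^{\downarrow}B(s)$. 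Positive polarisation of the remaining $B(z)\oplus D^{\uparrow}B(s)$ then follows from \cite[Proposition~6.12]{EW}. This step-by-step control of both the non-perverse error and the polarisation is precisely what your direct approach lacks, and it cannot be replaced by a one-shot degree argument on $\bigoplus_iB(\un{y}_{\hat i})$.
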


The proof follows the same lines as the proof of \cite[Theorem 6.21]{EW}. During
the proof we need the perverse filtration and the functors $\tau_{\le
  i}$ of \cite[\S 6.3]{EW}.

\begin{proof} We prove the proposition by induction on $m$. The
  statement makes sense for $m = 0$ (so $y = \id$). In this case we can take $B' = 0$.

Now assume $m \ge 1$. Let $z = ys_m$ and $s = s_m$ so $y = zs$. Then
we can apply induction with $\un{z} = s_1 \dots
  s_{m-1}$ and $s\l \in \hg^*$ to find a positively polarised bimodule
  $(D, \langle -, - \rangle_{D})$ and a map
\[
d' : B(z) \to D[1]
\]
such that
\[
(d')^* \circ d' = B(z) s(\lambda) - z(s(\lambda)) B(z)
= B(z) s(\lambda) - y(\lambda) B(z) .\]

Now consider the map
\[
d = \left ( \begin{matrix} \sqrt{\langle \l, \alpha_{s}^\vee
      \rangle} B(z)m \\
    d'B(s) \end{matrix} \right ) : B(z)B(s) \to (B(z) \oplus DB(s))[1].
\]
(The target is polarised with respect to the intersection form on
$B(z)$ and the induced form on $DB(s)$.) We have
$m^* = \mu$ (see \eqref{eq:duals}) and hence the adjoint of $d$ is (see
\eqref{eq:prodadjoint})
\[
d^* = \left ( \sqrt{\langle \l, \alpha_{s}^\vee
      \rangle} B(z) \mu \quad (d')^* B(s) \right ) \]
and hence
\begin{align*}
d^* \circ d &= \langle \l, \alpha_{s}^\vee
      \rangle B(z) (\mu \circ m) + ((d')^* \circ d')B(s) \\
& = \langle \l, \alpha_{s}^\vee
      \rangle B(z) (\mu \circ m) +  B(z)
      s(\lambda) B(s) - y(\lambda) B(z)B(s) \\
& = B(z)B(s) \l - y(\lambda) B(z)B(s)
\end{align*}
by the ``polynomial sliding'' relation \eqref{slide}.

In particular $d$ satisfies the relation \eqref{adjoint rel}. We now
need to show that we can replace $B(z) \oplus DB(s)$ by a perverse
summand whilst keeping the relation \eqref{adjoint rel}.

Let us choose a decomposition $D = \bigoplus B(u)^{\oplus m_u}$ with $m_u
\in \ZM_{\ge 0}$ and define
\[
D^{\uparrow} = \bigoplus_{us > u} B(u)^{\oplus m_u} \quad\text{and}
\quad  D^{\downarrow} = \bigoplus_{us < u} B(u)^{\oplus m_u}.
\]
Then we have a canonical, orthogonal decomposition (see Lemma \ref{lem:orthog})
\[
D = D^{\uparrow} \oplus D^{\downarrow}.
\]
The bimodule $D^\uparrow B(s)$ is perverse. We have (canonically and orthogonally)
\begin{equation} \label{eq:decomp}
B(z)B(s) = B(y) \oplus E
\end{equation}
for some perverse Soergel bimodule $E$ (see Remark
\ref{rem:bbs}). Moreover the restriction of the
  intersection form on $B(z)B(s)$ yields the intersection form on
  $B(y)$, up to a positive scalar multiple. By rescaling the inclusion
  $B(y) \into B(z)B(s)$ if necessary we can assume that this
  scalar multiple is 1.

We have a (non-canonical and non-orthogonal) decomposition
\[
D^{\downarrow} B(s) = D^{\downarrow}[1] \oplus D^{\downarrow}[-1].
\]
Consider the maps induced by $d$ and $d^*$ on the (canonical) summands $B(y)
\subset B(z)B(s)$ and $D^{\downarrow}B(s) \subset B(z) \oplus DB(s)$:
\[
B(y) \stackrel{f}\to D^{\downarrow} B(s) [1] \stackrel{f^*}\to B(y)[2].
\]
All summands of $D^{\downarrow}$ are isomorphic to $B_x$ with
$x < y$. Hence $f$ lands in
\[
\tau_{\le -1} (D^{\downarrow} B(s) [1]) = D^{\downarrow}[2].
\]
Similarly, $f^*$ is zero on $\tau_{\le -1} (D^{\downarrow}
B(s) [1])$. In particular:
\begin{equation} \label{eq:comp}
f^* \circ f = 0.
\end{equation}

Let us write the matrix of $d$ with respect to these
decompositions as
\[
d = \left ( \begin{matrix} a & b \\ c & d \\ f & g \end{matrix} \right
) : B(y) \oplus E \to (B(z)\oplus D^\uparrow B(s) \oplus
D^{\downarrow} B(s))[1].
\]
Then
\[
d^* = \left (\begin{matrix} a^* & c ^* & f^* \\ b^* & d^* &
    g^* \end{matrix} \right).
\]
The computation of $d^* \circ d$ above and \eqref{eq:comp} imply that
\begin{equation}
  \label{eq:ac}
  a^* \circ a + c^* \circ c  = 
  a^* \circ a + c^* \circ c + f^* \circ f = B(y) \lambda
  - y(\lambda) B(y).
\end{equation}

Now define $d_{sub}$ to be the composition
\[
B(y) \to B(z)B(s) = B(y) \oplus E  \stackrel{d}{\to} (B(z)\oplus D^\uparrow B(s) \oplus
D^{\downarrow} B(s))[1] \to (B(z)\oplus D^\uparrow B(s))[1]
\]
where the first (resp. last map) is the inclusion (resp. projection)
with respect to the above decompositions. By the
orthogonality of these decompositions the adjoints of the first (resp. last) map is the
projection (resp. inclusion). By \cite[Proposition 6.12]{EW} the bimodule
$D^{\uparrow}B(s)$ is positively polarised, and $B(z)$ is clearly
positively polarised.

Finally, by \eqref{eq:ac}:
\[
d_{sub}^* \circ d_{sub} = a^*\circ a + c^*\circ c = B(y) \lambda - y(\lambda) B(y).
\]
Now we are done: we can take $d = d_{sub}$ and $B' = B(z) \oplus D^\uparrow B(s)$ which
is perverse and positively polarised. (It is easy to check that the
signs on the two summands match up.)
\end{proof}

\begin{prop} \label{wLnd}
  $HR(<y) \Rightarrow
  hL(y)$. 
\end{prop}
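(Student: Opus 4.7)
The plan is to apply the weak Lefschetz substitute (Proposition \ref{prop:wL}) to the factorization furnished by Proposition \ref{prop:factor1}, with the deformation parameter $\lambda$ chosen to be $\rho$. Fix $x \in W$; we have to check that $(A \otimes_R B(y)_x, A \otimes_R \langle -, -\rangle^x_{B(y)})$ satisfies hard Lefschetz. If $x \not\le y$ then $B(y)_x = 0$ and there is nothing to prove, and if $x = y$ then Proposition \ref{prop:yy} already gives HR (and hence hard Lefschetz) on the localization. So the real work is the case $x < y$, which is what I now address.

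Fix a reduced expression $\un{y} = s_1 \dots s_m$. The positivity hypothesis $\langle s_{i+1}\cdots s_m(\rho), \alpha_{s_i}^\vee\rangle > 0$ required in Proposition \ref{prop:factor1} follows from \eqref{eq:xs>x} combined with the reducedness of $\un{y}$, so that proposition supplies a positively polarised Soergel bimodule $B'$ whose indecomposable summands are all of the form $B(z)$ with $z < y$, together with a map $d : B(y) \to B'[1]$ satisfying $d^* \circ d = B(y)\rho - (y\rho)B(y)$. Because $d$ is $R$-bilinear it restricts to a map $d_x : B(y)_x \to B'_x[1]$ (and similarly for $d^*$), and Lemma \ref{lem:orth} shows that $d_x$ and $d^*_x$ remain adjoint with respect to the local intersection forms. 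On $B(y)_x$, the relation $b\rho = x(\rho)b$ lets us evaluate $d^*_x\circ d_x$ as multiplication by $x\rho - y\rho \in R$. After applying $\sigma$ this becomes multiplication by $\langle x\rho - y\rho,\rho^\vee\rangle z \in A$, which is a strictly positive multiple of $z$ by Lemma \ref{lem:rho} (since $x < y$), and in particular nonzero.

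To close the loop, I will invoke the inductive hypothesis $HR(<y)$: it asserts $HR(z)$ for each indecomposable summand $B(z)$ of $B'$, and since $B'$ is positively polarised, Lemma \ref{lem:pp} promotes this to local HR for $B'$, so that $A \otimes_R B'_x$ satisfies HR. All hypotheses of Proposition \ref{prop:wL} are then met ($N = A\otimes_R B(y)_x$, $N' = A\otimes_R B'_x$, adjoint maps, and $d'\circ d = \beta \in A$ nonzero), and the conclusion is that $A\otimes_R B(y)_x$ satisfies hard Lefschetz. Combining the three cases above yields $hL(y)$.

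The two substantive points, neither truly an obstacle, are (i) checking carefully that the global adjointness $\langle d(b),b'\rangle_{B'} = \langle b, d^*(b')\rangle_{B(y)}$ descends to adjointness of $d_x,d_x^*$ for the local forms (this is where the orthogonality statement of Lemma \ref{lem:orth}, coupled with the fact that morphisms of bimodules preserve stalks, is used), and (ii) ensuring that the specialisation $\sigma(x\rho - y\rho)$ is nonzero — which is exactly the content of Lemma \ref{lem:rho} and the motivation for the dominance assumption on $\rho^\vee$ (and, by symmetry, on $\rho$). Everything else is a routine assembly of statements already in place.
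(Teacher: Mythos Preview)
Your proof is correct and follows essentially the same approach as the paper: apply Proposition \ref{prop:factor1} with $\lambda = \rho$, pass to the stalk at $x$, specialise via $\sigma$, and invoke Proposition \ref{prop:wL} using Lemma \ref{lem:pp} for the HR input on $B'$ and Lemma \ref{lem:rho} for the nonvanishing of $\beta$. You have added some detail the paper leaves implicit (the verification of the positivity hypothesis in Proposition \ref{prop:factor1} via \eqref{eq:xs>x}, and the remark that local adjointness follows from Lemma \ref{lem:orth}), but the architecture is identical.
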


\begin{proof} The fact that hard Lefschetz is true for $A \otimes_{R} B(y)_y$ follows from
  Proposition \ref{prop:yy}.

It remains to show hard Lefschetz for $x
  < y$. Let us apply the above proposition with $\l =
  \rho$. Taking the stalk at $x$ and applying $A \otimes_R (- )$ we see that
  we have a map
\[
A \otimes_R B(y)_x \stackrel{d}{\to} A \otimes_R B'_x[1]
\]
of free $A$-modules equipped with forms $A \otimes
\langle -, - \rangle_{B(y)}^x$ and $A \otimes \langle -, -
\rangle_{B'}^x$ which are symmetric and non-degenerate over $K$ and such that
$d^* \circ d$ is equal to left multiplication by $ \sigma( x(\l) -
  y(\l))$.

By Lemma \ref{lem:rho}, $ \sigma( x(\l) -
  y(\l)) > 0$ and in particular is non-zero. 
Moreover, as $B'$ is positively polarised, Lemma \ref{lem:pp}
ensures that $A \otimes_R B_x'$ satisfies HR. Now we can apply
Proposition \ref{prop:wL} to conclude that $A \otimes_R B(y)_x$
satisfies hard Lefschetz. The proposition follows.
\end{proof}

\subsection{Deformed case}

\begin{prop} \label{prop:def}
  Suppose that $\un{y} = s_1 \dots s_m$ is a reduced expression and $s \in S$ is such that
  $ys > y$. Let $\l \in \hg^*$ be such that
\[
\langle \l, \alpha_s^\vee \rangle > 0
\]
and
  \[
  \langle s_{i+1} \dots s_ms(\l), \alpha_{s_i}^\vee \rangle > 0
  \]
  for all $1 \le i \le m$.

Then for any $0 \le a < 1$ 
 there exists a positively polarised bimodule $(B', \langle -,
  - \rangle_{B'})$ all of whose summands are isomorphic to
    $B(x)$ with $x <
    y$ and a map
  \[
  d : B(y)B(s) \to (B(y) \oplus B'B(s))[1]
  \]
  such that
  \begin{equation}
    \label{adjoint rel2}
d^* \circ d = 
B(y)B(s) \l - B(y) (a s(\l)) B(s) - (1-a)ys(\l) B(y)B(s)
  \end{equation}
($B(y)$
  is polarised with its intersection form).
\end{prop}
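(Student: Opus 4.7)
The plan is to follow the construction in the proof of Proposition \ref{prop:factor1}, with two modifications: I will apply it with the linear form $s(\lambda) \in \hg^*$ in place of $\lambda$, and rescale one of the components in order to produce the extra parameter $a$. Note that the second hypothesis $\langle s_{i+1}\cdots s_m s(\lambda), \alpha_{s_i}^\vee\rangle > 0$ is precisely what is needed to invoke Proposition \ref{prop:factor1} for the reduced expression $\un{y}$ with the element $s(\lambda) \in \hg^*$.

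First, I would apply Proposition \ref{prop:factor1} to $\un{y}$ and $s(\lambda)$ to obtain a positively polarised bimodule $(B', \langle -, -\rangle_{B'})$, all of whose indecomposable summands are isomorphic to $B(x)$ with $x < y$, together with a map
$$d_1 : B(y) \to B'[1] \quad \text{such that} \quad d_1^* \circ d_1 = B(y)s(\lambda) - ys(\lambda)\, B(y).$$
I take this same $B'$ for the statement of Proposition \ref{prop:def}, and define
$$d = \begin{pmatrix} \sqrt{\langle \lambda, \alpha_s^\vee\rangle}\cdot B(y)m \\ \sqrt{1-a}\cdot d_1 B(s) \end{pmatrix} : B(y)B(s) \to (B(y) \oplus B'B(s))[1].$$
The two square roots are well-defined real numbers because $\langle \lambda, \alpha_s^\vee\rangle > 0$ by assumption and $1-a > 0$ since $a < 1$.

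Next I would compute $d^* \circ d$. Using $m^* = \mu$ from \eqref{eq:duals}, the compatibility \eqref{eq:prodadjoint} of adjoints with horizontal composition, and the polynomial sliding relation \eqref{slide}, the computation unfolds as
\begin{align*}
d^* \circ d &= \langle \lambda, \alpha_s^\vee\rangle\cdot B(y)(\mu \circ m) + (1-a)\,(d_1^* \circ d_1)B(s) \\
&= \bigl[B(y)B(s)\lambda - B(y)s(\lambda)B(s)\bigr] + (1-a)\bigl[B(y)s(\lambda)B(s) - ys(\lambda)B(y)B(s)\bigr] \\
&= B(y)B(s)\lambda - B(y)(as(\lambda))B(s) - (1-a)ys(\lambda)\,B(y)B(s),
\end{align*}
which is exactly the desired relation \eqref{adjoint rel2}.

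Once Proposition \ref{prop:factor1} is available, the argument is essentially routine; the main conceptual point is recognising that a convex combination parameter $a \in [0,1)$ can be absorbed by rescaling the $d_1 B(s)$ component by $\sqrt{1-a}$, while the ``free'' term $\langle \lambda, \alpha_s^\vee\rangle B(y)(\mu \circ m)$ produced by polynomial sliding accounts for the difference between right multiplication by $\lambda$ and left multiplication by $s(\lambda)$ on $B(y)B(s)$. In contrast to the proof of Proposition \ref{prop:factor1}, no further reduction to a perverse summand is required: the positivity and support conditions on $B'$ are inherited directly from Proposition \ref{prop:factor1}, and the ambient target $B(y) \oplus B'B(s)$ is not required to itself be perverse.
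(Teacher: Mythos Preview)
Your proof is correct and follows essentially the same approach as the paper. The only cosmetic difference is that the paper applies Proposition~\ref{prop:factor1} directly with the element $(1-a)s(\lambda)$ (which satisfies the same positivity hypotheses since $1-a>0$), obtaining $d'$ with $(d')^*\circ d' = B(y)(1-a)s(\lambda) - (1-a)ys(\lambda)B(y)$, and then sets $d = \begin{pmatrix}\sqrt{\langle\lambda,\alpha_s^\vee\rangle}\,B(y)m \\ d'B(s)\end{pmatrix}$; you instead apply Proposition~\ref{prop:factor1} with $s(\lambda)$ and absorb the factor $(1-a)$ via the scaling $\sqrt{1-a}\cdot d_1$, which amounts to the same thing.
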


\begin{proof}
  We want to find $d$ such that
\begin{gather} \label{eq:want}
d^* \circ d = 
B(y)B(s) \l - B(y) (a s(\l)) B(s) - (1-a)ys(\l) B(y)B(s)
\nonumber \\ = B(y) (\mu \circ m) \langle \l, \alpha_s^\vee \rangle +
B(y)(1-a)s(\l)B(s)- (1-a) ys(\lambda) B(y) B(s)
\end{gather}
(we have used \eqref{slide}). 
Applying Proposition \ref{prop:factor1} with $y = s_1 \dots s_m$ and
$(1-a)s(\lambda) \in \hg^*$ gives us a positively polarised 
bimodule $B'$, all of whose summands are isomorphic to
    $B(x)$ with $x <
    y$, and a map
\[
d' : B(y) \to B'[1]
\]
such that
\[
(d')^* \circ d' = 
B(y)(1-a)s(\l) - (1-a) ys(\lambda) B(y).
\]
Now if we set
\[
d := \left ( \begin{matrix} \sqrt{\langle \l, \alpha_s^\vee \rangle}
    B(y) m \\
    d' B(s) \end{matrix} \right )
: B(y)B(s) \to (B(y) \oplus B'B(s))[1]
\]
then
\[
d^* = \left ( \begin{matrix} \sqrt{\langle \l, \alpha_s^\vee
      \rangle} B(y) \mu & (d')^* B(s) \end{matrix} \right )
\]
and
\begin{align*}
d^* \circ d & = \langle \l, \alpha_s^\vee \rangle B(y) (\mu
\circ m) + ((d')^* \circ d') B(s)  \\
& = \langle \l, \alpha_s^\vee \rangle B(y) (\mu
\circ m) + B(y)(1-a)(s\l) B(s) - (1-a)(ys\l)B(y)B(s)
\end{align*}
as required.
\end{proof}

\begin{prop} \label{wLd}
If $ys > y$, $HR(<y)_{s}  + HR(y) \Rightarrow hL(y)_{s}$.
\end{prop}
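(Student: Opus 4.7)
The plan is to adapt the weak-Lefschetz argument of Proposition \ref{wLnd} to the $\PM^1$-sheaf setting, replacing Proposition \ref{prop:factor1} by the deformed factorisation of Proposition \ref{prop:def} and Proposition \ref{prop:wL} by its $\PM^1$-sheaf analogue, Proposition \ref{prop:wLP1}. The first half of $hL(y)_s$, namely local hard Lefschetz for $B(y)$, is immediate from Proposition \ref{wLnd}, since $HR(<y)_s$ contains $HR(<y)$. For the $\PM^1$-sheaf half, fix $x \in W$ with $x < xs$ and set $M := M(B(y),x,xs)$; if $xs \not\le y$ then $B(y)_{xs} = 0$ and $M$ is a skyscraper, so its hard Lefschetz reduces to the first half. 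We may therefore assume $x < xs \le y < ys$. By Lemma \ref{lem:saturation} it suffices to produce, for every $c \in (1,\infty)$, some $\gamma \in Z^2_{\amp}$ of ratio $c$ satisfying hard Lefschetz on $M$.

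For $\lambda \in \hg^*$ in the dominant chamber and $a \in [0,1)$, Proposition \ref{prop:def} yields a positively polarised $B'$ (with summands $B(z)$, $z < y$) and a map $d : B(y)B(s) \to (B(y) \oplus B'B(s))[1]$ with the stated adjoint relation. Taking stalks at $x$ and applying $A \otimes_R (-)$, Lemma \ref{lem:indmult} together with Propositions \ref{prop:BB(s)} and \ref{prop:indint} show that $d$ descends to an $A$-linear map $\overline{d}_x : M_{0,\infty} \to M'_{0,\infty}[1]$, where $M' = N \oplus M(B',x,xs)$ with $N$ the skyscraper-at-$0$ $\PM^1$-sheaf induced by $(A \otimes_R B(y)_x,\,\sigma\langle -,-\rangle_{B(y)}^x)$ (suitably rescaled), and $\overline{d}_x^{*}\overline{d}_x$ equals multiplication by $\gamma_{\lambda,a} \in Z^2$ whose two components are, up to $\sigma$,
\[
x(\lambda) - a\,xs(\lambda) - (1-a)\,ys(\lambda) \qquad\text{and}\qquad (1-a)\bigl(xs(\lambda) - ys(\lambda)\bigr).
\]
For $\lambda = \rho$, Lemma \ref{lem:rho} forces $\gamma_{\rho,a} \in Z^2_{\amp}$, and as $a$ varies over $[0,1)$ the ratio sweeps out $[u,\infty)$ for some $u > 1$. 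For the missing ratios in $(1,u)$, I move $\lambda$ in the dominant chamber toward the wall $\langle -,\alpha_s^\vee\rangle = 0$ (for instance along $\lambda = \rho - t(\langle \rho,\alpha_s^\vee\rangle/2)\alpha_s$ with $t \to 1^-$): in that limit $s\lambda \to \lambda$ drives both components of $\gamma_{\lambda,0}$ to coincide, and an intermediate-value argument fills $(1,u)$.

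Both summands of $M'$ satisfy HR at every $\gamma \in Z^2_{\amp}$: the skyscraper $N$ because $HR(y)$ gives HR for the $A$-module $A \otimes_R B(y)_x$ (and a skyscraper-at-$0$ automatically inherits HR at every ample $\gamma$), and $M(B',x,xs)$ because $HR(<y)_s$ combined with Lemma \ref{lem:ppP1} provides local HR in the $s$-direction for $B'$. Proposition \ref{prop:wLP1} then yields hard Lefschetz for each $\gamma_{\lambda,a}$ on $M$, and Lemma \ref{lem:saturation} finishes the proof.

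The main obstacle I anticipate is the careful bookkeeping required to cast $\overline{d}_x$ as a genuine morphism of polarised $\PM^1$-sheaves into the asymmetric target $N \oplus M(B',x,xs)$ (the $B(y)$ summand of the target of $d$ is not of the form $(-)B(s)$, which forces the skyscraper packaging), together with the parametric verification that the allowed $(\lambda,a)$ achieve every ratio in $(1,\infty)$. The use of $HR(y)$ is essential both for the opposite signs of the polarisation on $M$ and for HR of the skyscraper summand $N$ of $M'$.
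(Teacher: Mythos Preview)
Your proposal is correct and follows essentially the same route as the paper: apply Proposition \ref{prop:def}, package the stalk-at-$x$ data as a map of polarised $\PM^1$-sheaves from $M(B(y),x,xs)$ into a direct sum of a skyscraper (handled by $HR(y)$) and $M(B',x,xs)$ (handled by $HR(<y)_s$ via Lemma \ref{lem:ppP1}), invoke Proposition \ref{prop:wLP1}, and then vary $(\lambda,a)$ to cover all ratios in $(1,\infty)$ before applying Lemma \ref{lem:saturation}. Your anticipated obstacle is exactly the step the paper works out carefully, and your parametric argument for the ratio (fix $\lambda=\rho$, vary $a$; then fix $a=0$, push $\lambda$ to the $s$-wall) is the same as the paper's. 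Two small remarks: the paper also checks that the HR signs on the skyscraper and on $M(B',x,xs)$ match so that their direct sum satisfies HR, which you should not forget; and your closing comment that $HR(y)$ is needed for the ``opposite signs'' of $M$ is a misattribution --- opposite signs plays no role in this proposition (it enters later, in Claim \ref{c5}), and $HR(y)$ is used here solely for HR of the skyscraper summand.
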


\begin{proof} Let $\l \in \hg^*$ and
\[
d : B(y) B(s) \to ( B(y) \oplus B'B(s))[1]
\]
be as in the statement of the previous proposition (for some fixed $0
\le a < 1$).
  Fix $x < xs$ and let us take the stalk of $x$ (we abuse
  notation and continue to denote these maps by the same symbols):
\begin{align*}
d : (B(y)B(s))_x &\to (B(y)_x\oplus B'B(s)_x)[1] \\
d^* : B(y)_x\oplus B'B(s)_x &\to (B(y)B(s))_x[1].
\end{align*}

We claim that we can obtain $A \otimes_R d$ and $A \otimes_R d^*$ as the global sections
of a pair of adjoint maps
\begin{equation} \label{eq:maps}
\tilde{d} : M \to N[1]  \quad \text{and} \quad \tilde{d}^* : N \to M[1]
\end{equation}
of polarised $\PM^1$-sheaves. Let  $B'$ and $d'$ be as in the proof of
the previous proposition.  Consider the following polarised $\PM^1$-sheaves:
\begin{enumerate}
\item $M := M(B(y),x,xs)$, polarised as in \S \ref{sec:SP1}.
\item $N'$ := the skyscraper at 0 with stalk $A \otimes_R B(y)_x$
  (i.e. $N'_0 = A \otimes_R B(y)_x$, $N_{\CM^*} = 0$, $N'_\infty = 0$ and
  polarisation $\sigma( \langle -, -\rangle^x_{B(y)})$ on $N'_0$). 
\item $N'' := M(B',x,xs)$, polarised as in \S \ref{sec:SP1}.
\end{enumerate}

We have a natural map $\tilde{d}_1 : M \to N'[1]$ given by $(b_0,
b_\infty) \mapsto \sqrt{\langle \l, \alpha_s^\vee \rangle}(b_0, 0)$. The adjoint $\tilde{d}_1^*$ is given by
$(b_0, 0) \mapsto \sqrt{\langle \l, \alpha_s^\vee
  \rangle}(\sigma(x(\a_s))b_0, 0)$. Under the identification of
\eqref{eq:globsec} one checks that on global sections the maps $\tilde{d}_1$ and
$\tilde{d}_1^*$ agree with the maps $A \otimes_R (B(y)B(s))_x \to A \otimes_R B(y)_x[1]$ and
$A \otimes_R B(y)_x \to A \otimes_R  (B(y)B(s))_x[1]$ induced by $\sqrt{\langle \l,
  \alpha_s^\vee \rangle}B(y)m$ and $\sqrt{\langle \l, \alpha_s^\vee
  \rangle}B(y)\mu$.

The map $d' : B(y) \to B'[1]$ induces a map $\tilde{d}_2: M \to N''[1]$
of polarised $\PM^1$-sheaves. We denote its adjoint by $\tilde{d}_2^*$. Under
the identification \eqref{eq:globsec},  the map $\tilde{d}_2$ agrees on
global sections with the map $A \otimes_R (B(y)B(s))_x \to A \otimes_R (B'B(s))_x[1]$
induced by $d'B(s)$. By Lemma \ref{lem:isom}, $\tilde{d^*_2}$ agrees
on global sections with the map $A \otimes_R (B'B(s))_x \to A
\otimes_R (B(y)B(s))_x[1]$ induced by $(d')^*B(s)$.
Hence if we set $N := N' \oplus N''$ and $\tilde{d} := \tilde{d_1} +
\tilde{d_2}$ we have constructed our desired maps in \eqref{eq:maps}.

By the previous proposition 
\[
d^* \circ d = 
B(y)B(s) \l - B(y) (a s(\l)) B(s) - (1-a)ys(\l) B(y)B(s)
\] 
and hence, under the injection $B(y)B(s)_x
\into B(y)_x \oplus B(y)_{xs}$, Lemma \ref{lem:indmult} implies that
$d^* \circ d$ agrees with multiplication by
\[
(x(\l) - a (xs(\l)) - (1-a)(ys(\l)), xs(\l) - a (xs(\l)) - (1-a)(ys(\l))).
\]
Hence for all $b \in M_{0,\infty}$ we have the relation
  \[
  (d^* \circ d)(b) = \g \cdot b
  \]
  where $\g = (\l_0, \l_\infty) \in Z^2$ is given by
  \begin{gather} \label{eq:l0li}
    \l_0 = \sigma((x - ys)(\l) - a(xs - ys)(\l)), \\
    \l_\infty = \sigma((xs-ys)(\l) - a(xs - ys)(\l)) =
    \sigma((1-a)(xs-ys)(\l)). \nonumber
  \end{gather}
Using Lemma \ref{lem:rho}, one may check that
  $\g \in Z^2_\amp$ for all dominant regular $\l$ and $0 \le a <1$.

Now $B'$ is positively polarised and  by our assumption
$HR(<y)_s$,  $B'$
satisfies HR in the $s$ direction,  by Lemma \ref{lem:ppP1}. Hence $N''$ satisfies HR. Also
$B(y)$ satisfies local HR by assumption, and so $N'$
satisfies HR.
Hence $N$ satisfies HR (one checks easily that the signs
  on $N'$ and $N''$ match). We deduce from Proposition
\ref{prop:wLP1} that $M(B(y),x,xs)$ satisfies hard Lefschetz for all
pairs $(\l_0, \l_\infty)$ above.

We will see in the lemma below that we can vary $\l$ and $a$ so that
$\l_0/\l_\infty$ takes on 
all values in $(1,\infty)$. Hence $hL(y)_s$ holds, by Lemma
\ref{lem:saturation}.
\end{proof}



\begin{lem} For varying dominant regular $\l \in \hg^*$ and $0 \le a < 1$,
  $\l_0/\l_\infty$ (see \eqref{eq:l0li}) takes on all values in $(1,\infty)$.
\end{lem}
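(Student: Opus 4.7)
The plan is to vary $a$ and $\lambda$ in turn. Set $p := \sigma((x-ys)(\lambda))$ and $q := \sigma((xs-ys)(\lambda))$, both positive multiples of $z$ for $\lambda$ dominant regular (as established when verifying $\gamma \in Z^2_\amp$ earlier in this proof). Using $(x-xs)(\lambda) = \langle \lambda, \alpha_s^\vee \rangle\, x\alpha_s$ and $x\alpha_s \in \Phi_+$ (since $x < xs$), one has
\[
p - q = \sigma((x-xs)(\lambda)) = \langle \lambda, \alpha_s^\vee \rangle\, \sigma(x\alpha_s) > 0.
\]
A rearrangement of \eqref{eq:l0li} gives
\[
\frac{\lambda_0}{\lambda_\infty} = 1 + \frac{1}{1-a} \cdot \frac{p - q}{q},
\]
and since $1/(1-a)$ sweeps out $[1,\infty)$ as $a$ varies over $[0,1)$, the ratio covers exactly $[p/q, \infty)$ for each fixed $\lambda$. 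The lemma is therefore equivalent to the statement that $p(\lambda)/q(\lambda)$ can be made arbitrarily close to $1$ from above by varying $\lambda$ over dominant regular weights.

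The strategy is to let $\lambda$ approach the open $s$-wall $\{\mu : \langle \mu, \alpha_s^\vee \rangle = 0\}$ from within the dominant regular cone, keeping $\langle \lambda, \alpha_t^\vee \rangle$ bounded below for $t \neq s$. The numerator $p - q$ tends to $0$, so it suffices to exhibit a limit point $\lambda_\infty$ on the open $s$-wall with $q(\lambda_\infty) > 0$. Since $s\lambda_\infty = \lambda_\infty$, we have $q(\lambda_\infty) = \sigma((x-y)(\lambda_\infty))$. An extension of Lemma \ref{lem:rho}---whose proof goes through verbatim with $\rho$ replaced by any weight in the open dominant chamber, as it only uses positivity of the relevant coroot pairings---shows that $\sigma((x-y)(\lambda))$ is non-negative on the closed dominant chamber.

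The main obstacle is ruling out the degenerate possibility that $\sigma((x-y)(\cdot))$ vanishes identically on the $s$-wall. In that case, being a linear functional vanishing on the hyperplane $\langle \cdot, \alpha_s^\vee \rangle = 0$, it would equal $c \langle \cdot, \alpha_s^\vee \rangle z$ globally for some $c \in \RM$. But then positivity of $\sigma((x-y)(\lambda))$ on the dominant regular cone (by the extended Lemma \ref{lem:rho}) together with $\langle \lambda, \alpha_s^\vee \rangle > 0$ forces $c > 0$, while simultaneously
\[
q(\lambda) = \sigma((x-y)(s\lambda)) = c \langle s\lambda, \alpha_s^\vee \rangle z = -c \langle \lambda, \alpha_s^\vee \rangle z
\]
(using $s\alpha_s^\vee = -\alpha_s^\vee$), and positivity of $q$ forces $c < 0$, a contradiction. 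Hence the degenerate case cannot occur: $\sigma((x-y)(\cdot))$ is not identically zero on the $s$-wall, so some $\lambda_\infty$ in its open part (strictly dominant on $\alpha_t^\vee$ for $t \neq s$) yields $q(\lambda_\infty) > 0$, and nearby dominant regular $\lambda$ then realise $p(\lambda)/q(\lambda)$ as close to $1$ as desired.
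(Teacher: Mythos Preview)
Your proof is correct and follows the same two-step strategy as the paper: push $a\to 1$ to obtain arbitrarily large ratios, and let $\lambda$ approach the $s$-wall to obtain ratios arbitrarily close to~$1$. Your treatment is in fact more careful than the paper's at the second step: the paper simply asserts $\lambda_0/\lambda_\infty \to 1$ as $\lambda$ approaches the $s$-wall with $a=0$, without addressing the possibility that $C=\sigma((xs-ys)(\lambda))$ might tend to zero along the way (in which case the ratio would \emph{not} tend to~$1$, since $C$ and $\langle\lambda,\alpha_s^\vee\rangle\sigma(x\alpha_s)$ would then be proportional). Your contradiction argument---playing the sign of $\sigma((x-y)(\lambda))$ against that of $q(\lambda)=\sigma((x-y)(s\lambda))$---rules this degenerate case out cleanly.

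One small point worth making explicit: your appeal to the extended Lemma~\ref{lem:rho} for the strict positivity of $\sigma((x-y)(\lambda))$ on the open dominant cone uses $x<y$, which you have not stated. It does hold in the ambient context: the already-established fact $\gamma\in Z^2_\amp$ forces $xs<ys$, and together with $x<xs$ and $y<ys$ the standard lifting property of the Bruhat order (if $v<w$ with $vs<v$ and $ws<w$ then $vs<ws$) yields $x<y$. A one-line remark to this effect would close the gap.
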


\begin{proof}By continuity and the intermediate value theorem it is
  enough to show that by varying $\l$ and $a$ we can get values which
  are both arbitrarily large and arbitrarily close to 1.
  We have
  \[
  \l_0 - \l_\infty = \sigma(x(\l) - xs(\l)) = \langle \l,
  \alpha_s^\vee \rangle \sigma(x(\alpha_s)).
  \]
  and hence
  \[
  \frac{\l_0}{\l_\infty} =   \frac{\l_\infty + (\l_0-\l_\infty)}{\l_\infty} = \frac{(1-a)C + \langle \l, \alpha_s^\vee
    \rangle \sigma(x(\alpha_s))}{(1-a)C}
  \]
  where $C = \sigma((xs-ys)(\l))$. Hence if we choose $\l = \rho$ then
  $\l_0/\l_\infty \to \infty$ as $a \to 1$.

  On the other hand, if we take $a = 0$ and let $\l$ approach the
  $s$-wall (so that $\langle \l, \alpha_s^\vee \rangle \to 0$) then we
  see that $\l_0 / \l_\infty \to 1$. The result now follows.
\end{proof}

\subsection{Soergel's conjecture} \label{sec:sc} In this section we discuss how these
arguments can be adapted to deduce Soergel's conjecture. Unfortunately the most
difficult parts of the proof take the same road as \cite{EW}, so this
cannot be considered a new proof. In fact, in the author's opinion the current paper is
strictly more complicated than \cite{EW}. For this reason we only give a sketch.

In the following (as in \cite{EW}) we fix $x \in W$ and $s \in S$ with
$xs > x$ and assume Soergel's conjecture for all $y < xs$.

\begin{prop}
  If $B := B(x)B(s)$ satisfies local hard Lefschetz then Soergel's
  conjecture holds for $B(xs)$.
\end{prop}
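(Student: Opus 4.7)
The plan is to adapt the closing argument from the proof of Soergel's conjecture in \cite[\S 6.6]{EW}, substituting local hard Lefschetz on $B = B(x)B(s)$ for the global hard Lefschetz used there. By the inductive hypothesis, Soergel's conjecture holds for all $y < xs$, so $B(x)$ is perverse and $B$ admits a Krull--Schmidt decomposition $B = B(xs) \oplus B'$ in which each indecomposable summand of $B'$ is a shift of some $B(y)$ with $y \le xs$. Soergel's conjecture for $xs$ is equivalent to $B'$ being perverse; by self-duality of $B$, any failure would produce a symmetric pair $B(y)[k] \oplus B(y)[-k]$ of summands with $k > 0$.

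The strategy is to argue by contradiction: assume such an offending pair exists, choose $y \le xs$ maximal admitting an offending pair, and then $k > 0$ maximal for which $B(y)[k]$ is a summand of $B$. Grouping the Krull--Schmidt decomposition by isomorphism class of the underlying indecomposable yields a canonical decomposition $B = \bigoplus V(y) \otimes_\RM B(y)$ generalising \eqref{eq:can}; by the same argument as Lemma \ref{lem:orthog} (using Soergel's hom formula) this decomposition is orthogonal for the intersection form. On the component $V(y) \otimes_\RM B(y)$ the local intersection form at $y$ factors as $\langle -, -\rangle^y_{B(y)}$ tensored with a non-degenerate symmetric form on $V(y)$, and by Theorem \ref{thm:em} together with Corollary \ref{cor:HRamb} the specialisation $\sigma(\langle c_{y,y}, c_{y,y}\rangle^y_{B(y)})$ has definite sign $(-1)^{\ell(y)}$ (up to a positive scalar).

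The next step is to invoke local hard Lefschetz on $N := A \otimes_R B_y$ with form $A \otimes_R \langle -,-\rangle^y_B$: criterion (4) of \S \ref{def:hl} forces the Gram matrix of $(n,n') \mapsto \langle n, z^{-d} n' \rangle$ on $\deg_{\le d} N$ to be non-degenerate for every $d \le 0$. Setting $d = -\ell(y) - k$, maximality of $y$ and $k$ ensures, using the inductively known graded ranks of the stalks $B(z)_y$ for $z < xs$, that contributions from summands other than the offending pair lie in strictly higher degrees. Thus $\deg_{\le d} N$ is generated precisely by the low-degree piece of $B(y)[-k]$. Computing this Gram matrix with the sign data of the previous paragraph reproduces the Hodge-theoretic obstruction of \cite[\S 6.6]{EW}: the palindromic form on $V(y)$ demanded by self-duality is incompatible with the definite sign at the top coming from Theorem \ref{thm:em}, so the Gram matrix drops rank, contradicting local hard Lefschetz.

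The main obstacle is the precise book-keeping in the last step: verifying that the low-degree piece of $N$ really isolates the $B(y)[-k]$ contribution, and that the resulting Gram matrix vanishes in the way predicted by Hodge-theoretic rigidity. Both are essentially the content of \cite[\S 6.6]{EW} translated into the local language developed in \S\ref{sec:pol}--\S\ref{sec:em}; no new ideas are required beyond those already in the present paper.
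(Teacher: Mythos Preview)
Your approach is genuinely different from the paper's, and as written it has a real gap.

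The paper does not argue by contradiction via an offending summand. Instead it invokes Soergel's reformulation \cite[Lemma~7.1(3)]{Soe3}: under the inductive hypothesis, Soergel's conjecture for $B(xs)$ is equivalent to the map $(B_y^!)^{\ell(y)} \to \RM \otimes_R (B_y \cdot p_y)$ being injective for every $y < xs$. After specialising along $\sigma : R \to A$ and rewriting in terms of $H := (A\otimes_R B_y)/(z \cdot A\otimes_R B_y^!)$, this becomes the single requirement that $z^{\ell(y)+2} : H^{-\ell(y)-2} \to H^{\ell(y)+2}$ be an isomorphism, which is immediate from hard Lefschetz on $H$. No decomposition of $B$, no sign analysis, and no appeal to \cite[\S 6.6]{EW} is needed.

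Your proposal has two concrete problems. First, the claim that for $d = -\ell(y)-k$ the module $\deg_{\le d} N$ is generated by the contribution of $B(y)[k]$ is false: for any $z > y$ with $B(z)$ a (perverse) summand of $B$, the stalk $B(z)_y$ begins in degree $-\ell(z) < -\ell(y)$, and in particular $B(xs)_y$ contributes from degree $-\ell(xs)$. So $\deg_{\le d} N$ is typically much larger than you assert. What you would actually need is that the element $c$ coming from $B(y)[k]$ lies in the radical of the form restricted to $\deg_{\le d} N$, i.e.\ that $c$ pairs trivially with every other contribution in that degree range. Second, your invocation of ``sign data'' and a ``Hodge-theoretic obstruction'' is misplaced: the restriction of the intersection form to the summand $B(y)[k]$ is identically zero (it corresponds to an element of $\Hom^{-2k}(B(y),B(y)) = 0$), so there is no sign to compare. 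The contradiction, if it comes, is pure degeneracy, not a signature mismatch. Establishing that degeneracy requires controlling the cross-pairings of $c$ with the stalks of the other summands --- including $B(xs)$, for which you cannot yet use Soergel's conjecture to kill $\Hom^{-k}(B(y),B(xs))$. You have not addressed this, and it is where the work lies if you want to push this route through.
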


\begin{remark}
This proposition seems to have first been observed by Soergel and
Fiebig a number of years ago. We will see in the proof that the
proposition is not if and only if. It is not clear to the
author how much stronger local hard Lefschetz is. (There is also the
question of the choice of specialisation parameter $\rho^\vee$.)
\end{remark}

\begin{proof} Fix $y < xs$ and consider the inclusion
\[
i_y : B_y^! \into B_y.
\]
Then $B_y^!$ is generated in degrees $\ge \ell(y)$. 
Soergel shows that the image is contained in $B_y \cdot p_y$ for some
(explicit) product of $\ell(y)$ roots $p_y$. Moreover, by \cite[Lemma 7.1(3)]{Soe3},
Soergel's conjecture for $B(xs)$ (under the assumption of Soergel's
conjecture for all $B(y)$ with $y < xs$) is 
equivalent to the above inclusion inducing an inclusion (then
necessarily an isomorphism in degree $\ell(y)$)
\[
(B_y^!)^{\ell(y)} \into \RM \otimes_R (B_y  \cdot p_y)
\]
for all $y < xs$.

If we specialise via $\sigma : R \to A = \RM[z]$ we see that Soergel's
conjecture is equivalent to the natural map inducing an inclusion
\[
(A \otimes_R B_y^!)^{\ell(y)} \into \RM \otimes_R (z^{\ell(y)}
\otimes_A  B_y).
\]
This is the case if and only if
\[
(1 \otimes B_y^!) \cap (z^{\ell(y)+1} \otimes B_y^{-\ell(y)-2}) = 0.
\]
In other words if we set $H := (A \otimes_R B_y) / (z \otimes_R
B_y^!)$ we want
\[
(\ker (z \cdot) \cap H^{\ell(y)} ) \cap (z^{\ell(y)+1} H^{-\ell(y)-2}) = 0
\]
or in other words that multiplication by $z^{\ell(y) + 2}$ should give an isomorphism
$H^{-\ell(y)-2} \to H^{\ell(y) + 2}$. This is clearly the case if $z$
satisfies hard Lefschetz on $H$, which is the case if $B$ satisfies
local hard Lefschetz (essentially by definition, see Lemma \ref{lem:zhl}).
\end{proof}

It seems likely that one could adapt the proof over the last few pages
to prove local hard Lefschetz for $B(x)B(s)$ assuming only
  statements (Soergel's conjecture, local hard Lefschetz and HR,
  \dots) for elements $y \le x$.
One could then use the above proposition to deduce
Soergel's conjecture and continue the induction. However the key
ideas would still be those of \cite{EW} and
this paper is already complicated
enough!

\section{Some calculations in $\mathfrak{sl}_4$} \label{ap}

The goal of this section is to give a few examples
of local intersection forms and see the connection to the Jantzen
filtration.

Let $\mathfrak{g} =  \mathfrak{sl}_4(\RM)$, $\bg \subset \mathfrak{g}$ the Borel subalgebra of
upper-triangular matrices, $\hg \subset \bg$ the Cartan subalgebra of
diagonal matrices. Denote by $\alpha_s, \alpha_t, \alpha_u$ the simple
roots in $\hg^*$ and $s, t, u$ the corresponding simple reflections in
the Weyl group $W$. (Our normalisation is such that $su = us$.) Let
$\alpha_s^\vee, \alpha_t^\vee, \alpha_u^\vee \in \hg$ denote the simple coroots.

Using the realisation $\hg$ we can define the category of Soergel
bimodules for $W$ and the theory of this paper applies. We
work over $\RM$ so that we can discuss signatures.

\subsection{The strategy}

Recall the definition of the local intersection form. We start with a
polarised Soergel bimodule $(B, \langle-,-\rangle_B)$. Then $\langle
-, -\rangle_B$ induces an $R$-valued symmetric
form on the costalk $B_y^!$ by restriction. The inclusion
\[
B_y^! \into B_y
\]
is an isomorphism over $Q$ and realises $B_y^!$ and $B_y$ as dual
lattices in $Q \otimes_R B_y$. The $R$-valued form on $B_y^!$ then
induces a $Q$-valued form on $B_y$, which is the local intersection
form.

We use the following lemma to calculate the local intersection form:

\begin{lem}
  Suppose that $e_1, \dots, e_m$ denotes a graded $R$-basis for
  $B_y^!$, and let $e_1^*, \dots, e_m^*$ denote the dual basis of $B_y$. If $M := ( \langle e_i, e_j \rangle_B)_{1 \le i, j \le m }$
  denotes the Gram matrix of $\langle -, -\rangle_B$ on $B_y^!$ in the
  basis $e_1, \dots, e_m$ then
  the Gram matrix of $\langle -, -\rangle_B$ on $B_y$ in the basis
  $e_1^*, \dots, e_m^*$ is given by $M^{-1}$.
\end{lem}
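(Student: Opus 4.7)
The plan is to unwind the lemma via a direct linear algebra computation inside $Q \otimes_R B_y$. The key input, established in the preceding proposition, is that the local intersection form $\langle -, -\rangle_B^y$ realises $B_y^!$ and $B_y$ as mutually dual lattices in $Q \otimes_R B_y$. Consequently, the dual basis $e_1^*, \ldots, e_m^*$ is characterised by $\langle e_i, e_j^* \rangle_B^y = \delta_{ij}$, and the content of the lemma becomes a purely formal identity relating the two Gram matrices.

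First I would observe that, since $B_y^!$ is a lattice, $e_1, \ldots, e_m$ is also a $Q$-basis of $Q \otimes_R B_y$. I would then expand each $e_j^*$ in this basis as $e_j^* = \sum_k a_{kj} e_k$ and collect the coefficients into a matrix $A = (a_{kj})$ with entries in $Q$. The defining relations $\langle e_i, e_j^* \rangle_B^y = \delta_{ij}$ and the fact that $\langle e_i, e_k \rangle_B^y = M_{ik}$ (because the local form extends the form on $B_y^!$) together give $MA = I$, hence $A = M^{-1}$. Note that $M$ is invertible over $Q$ precisely by non-degeneracy of the local form.

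Finally I would compute the desired Gram matrix as
\[
\langle e_i^*, e_j^* \rangle_B^y = \sum_{k,l} a_{ki}\, a_{lj}\, M_{kl} = (A^T M A)_{ij} = \bigl((M^{-1})^T M M^{-1}\bigr)_{ij},
\]
which collapses to $M^{-1}$ upon invoking the symmetry of $M$ (and hence of $M^{-1}$).

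I do not anticipate any real obstacle here: the lemma is a standard fact about dual bases for a non-degenerate symmetric form, and becomes a one-line calculation once the dual-lattice description is in hand. The only point worth emphasising is that, although the $e_j^*$ lie in $B_y$ and the $e_j$ in $B_y^!$, all of the comparison takes place in the common ambient $Q$-module $Q \otimes_R B_y$, where the passage from $M$ to $A = M^{-1}$ (a priori with entries only in $Q$) is unambiguous and entirely harmless.
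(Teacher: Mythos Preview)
Your proof is correct. The paper does not actually supply a proof of this lemma --- it is stated without argument and used immediately --- so there is nothing to compare against; your direct linear-algebra computation in $Q \otimes_R B_y$, using that $B_y^!$ and $B_y$ are dual lattices for $\langle -,-\rangle_B^y$ and that the restriction of $\langle -,-\rangle_B$ to $B_y^!$ coincides with the local form, is exactly the expected standard argument.
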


Hence one needs to calculate a basis of $B_y^!$ and then
compute the restriction of $\langle -, -\rangle_B$ to it. Finding a basis for $B_y^!$ is a linear algebra problem (one knows the
graded rank from a calculation in the Hecke algebra). However this can
be tricky in practice.

Below we will only consider the case $y = \id$, in which case
$B_{\id}^!$ can be calculated easily using Soergel calculus
\cite{EWSC}, as we will see. (Actually the restriction $y = \id$ is not necessary, but
we don't go into that here.) In the following we will use the
notation of \cite[\S 2 and \S 6]{EWSC} concerning expressions and 
light leaves morphisms. We will denote subexpressions by the
corresponding \emph{01-sequence} (see \cite[\S 2.4]{EWSC}).
See \cite[\S 2.10]{HW} for a sample
calculation of local intersection forms
using light leaves morphisms and Soergel calculus.

\subsection{The first singular Schubert variety} We calculate the local intersection form for $B = B(tsut)$ at $y = \id$.

In this case $B(t)B(s)B(u)B(t)$ is
indecomposable (as follows from a calculation in the
Hecke algebra) and hence $B = B(t)B(s)B(u)B(t)$.
There are two subexpressions of $\un{x} = tsut$ for $y = \id$: $\un{e}=
0000$
and $\un{f} = 1001$ of defects 4 and 2 respectively. The corresponding light
leaf maps (with colour coding ${\color{red}s}, t
, {\color{blue}u}$) are as follows:
\[
l_{\un{e}} = \begin{array}{c}\begin{tikzpicture}
\draw[black] (0,0) -- (0,1);
\node[black] at (0,1) {$\bullet$};
\draw[red] (0.5,0) -- (0.5,1);
\node[red] at (0.5,1) {$\bullet$};
\draw[blue] (1,0) -- (1,1);
\node[blue] at (1,1) {$\bullet$};
\draw[black] (1.5,0) -- (1.5,1);
\node[black] at (1.5,1) {$\bullet$};
\end{tikzpicture}\end{array}
\qquad
l_{\un{f}} = \begin{array}{c}\begin{tikzpicture}
\draw[black] (0,0) -- (0,1) -- (0.75,1.5) -- (1.5,1) -- (1.5,0);
\draw[red] (0.5,0) -- (0.5,0.75);
\node[red] at (0.5,0.75) {$\bullet$};
\draw[blue] (1,0) -- (1,0.75);
\node[blue] at (1,0.75) {$\bullet$};
\end{tikzpicture}\end{array}
 \]
Hence $\{ l_{\un{e}}, l_{\un{f}} \}$ give a left (or right) $R$-basis for
$\Hom^{\bullet}(R, B) = B_{\id}^!$. Pairing the light 
leaf maps gives the matrix of the restriction of the intersection form
on $B_{\id}^!$:
\[
\left ( \begin{matrix} \alpha_t^2\alpha_s\alpha_u& \alpha_s\alpha_u\alpha_t \\
\alpha_s\alpha_u\alpha_t & -\alpha_t\alpha_0 
\end{matrix} \right)
\]
(where $\alpha_0 = \alpha_s + \alpha_t + \alpha_u$).
The determinant of this matrix is $\det = -\a_t^2\a_s\a_u(\a_s + \a_t)(\a_t + \a_u)$.
Inverting this matrix gives the matrix of the ($Q$-valued) form on
$B_{\id}$:
\[
E = \frac{1}{\textrm{det}}
\left ( \begin{matrix} -\a_t\a_0 & -\a_s\a_t\a_u \\
    -\a_s\a_t\a_u & \a_t^2\a_s\a_u\end{matrix} \right )
=
\left ( \begin{matrix}  \frac{\a_0}{\a_s\a_t\a_u(\a_s + \a_t)(\a_t  +\a_u)} &
  \frac{1}{\a_t(\a_s + \a_t)(\a_t  +\a_u) }\\ \frac{1}{\a_t(\a_s + \a_t)(\a_t  +\a_u) } &\frac{-1}{(\a_s + \a_t)(\a_t  +\a_u) }
 \end{matrix} \right )
\]
The determinants of the leading principal minors are:
\begin{gather*}
E_{1,1} = \frac{\a_0}{\a_s\a_t\a_u(\a_s + \a_t)(\a_t  +\a_u)} \qquad 
\det E = 1/\det
\end{gather*}
We conclude that for any regular dominant coweight $\rho^\vee \in \hg$ the
leading principal minors are $> 0$ and $<0$ respectively. Hence the
Hodge-Riemann relations are satisfied. Also $E_{1,1}$
agrees with the equivariant multiplicity at $y = \id$ in the Schubert
variety indexed by $x = tsut$ (see Theorem \ref{thm:em}).

\subsection{The second singular Schubert variety}

Let $\un{x} = sutsu$. Then
\[
BS := B(s)B(u)B(t)B(s)B(u) \cong B(sutsu) \oplus B(su)[1] \oplus B(su)[-1]
\]
One possible choice of idempotent projector to $B = B(sutsu)$ 
in $\End(BS)$ is the following morphism (with colour coding ${\color{red}s}, t
, {\color{blue}u}$ as above):
\[
e = 
\begin{array}{c}
\begin{tikzpicture}
\draw[red] (0,0) -- (0,1.5);
\draw[blue] (0.5,0) -- (0.5,1.5);
\draw[black] (1,0) -- (1,1.5);
\draw[red] (1.5,0) -- (1.5,1.5);
\draw[blue] (2,0) -- (2,1.5);
\end{tikzpicture} \end{array} + 
\begin{array}{c}
\begin{tikzpicture}
\draw[red] (0,0) -- (0,0.3) -- (1,0.6)--(1,0.9)--(0,1.2)--(0,1.5);
\draw[red] (1.5,0) -- (1.5,0.3) -- (1,0.6)--(1,0.9)--(1.5,1.2)--(1.5,1.5);
\draw[blue] (0.5,0) -- (0.5,1.5);
\node[black] at (1,0.3) {$\bullet$};
\node[black] at (1,1.2) {$\bullet$};
\draw[black] (1,0) -- (1,0.3);
\draw[black] (1,1.2) -- (1,1.5);
\draw[blue] (2,0) -- (2,1.5);
\end{tikzpicture} \end{array} 
\]
There are four subexpressions of $\un{x}$ for $x = \id$: $00000$,
$10010$, $01001$ and $11011$ of defects $5, 3, 3$ and $1$
respectively. The light leaf morphisms corresponding to the
subexpressions $10010$ and $11011$ give zero when composed with the
idempotent $e$, and the light leaf morphisms $l_5$ and $l_3$
corresponding to $00000$ and $01001$ give a basis for $B_\id^!$ after
composition with $e$. 
The matrix of the intersection form on $B_\id^!$ is given by
\[
\left ( \begin{matrix} \a_s\a_t\a_u^2(\a_s+\a_t) &
    \a_s\a_t\a_u(\a_s+\a_t) \\ \a_s\a_t\a_u(\a_s+\a_t) &
    -a_s\a_u(\a_s+2\a_t+\a_u) \end{matrix} \right )
\]
with determinant $-\a_s^2\a_t\a_u^2(\a_s+\a_t)(\a_t+\a_u)\a_0$.
Inverting this matrix gives the intersection form on $B_\id$:
\[
E = \left ( \begin{matrix} \frac{\a_s+2\a_t+\a_u}{\a_s\a_t\a_u(\a_s+\a_t)(\a_t+\a_u)\a_0}&
    \frac{1}{\a_s\a_u(\a_t+\a_u)\a_0} \\  \frac{1}{\a_s\a_u(\a_t+\a_u)\a_0}&
    \frac{-1}{\a_s(\a_t+\a_u)\a_0} \end{matrix} \right )
\]
Again we see that $E_{1,1}$ agrees with the equivariant multiplicity and
that the two leading principal minors have signatures $1$ and $0$ under
any specialisation determined by a regular dominant coweight. Hence
the Hodge-Riemann bilinear relations are satisfied.

This example has an interesting feature not seen in the previous
case. Because the numerator of $E_{1,1}$ is not a product of roots
there exist regular $\g^\vee \in \hg$ such that the evaluation of
$E_{1,1}$ at $\g^\vee$ gives zero (i.e. $\langle \a_s+2\a_t+\a_u,
\g^\vee \rangle = 0$). For such $\g^\vee$
local hard Lefschetz fails. (All that matters in this example is
$E_{1,1}$. Even if you didn't follow the
above calculation, one can calculate $E_{1,1}$ easily using the
nil Hecke ring and Theorem \ref{thm:em}.)

As explained in the
introduction, via the work of Soergel and K\"ubel this example implies that the Jantzen
filtration behaves differently for a choice of deformation direction
corresponding to $\g^\vee$. We analyse this directly. (Actually,
the example we will consider corresponds to the local intersection
form of $B$ at $y = su$, not $y = \id$ (this gives a weight space of
more manageable dimension). However the behaviour is very
similar to the above.) 

\subsection{Examples of the Jantzen filtration} 
We keep the notation above, except that now we work over $\CM$. Let
$\rho = \frac{1}{2}(3\alpha_s + 4 \alpha_t + 3 \alpha_s)$ denote
the half sum of the positive roots and let $x \cdot \lambda :=
x(\lambda + \rho) - \rho$ denote the dot action of $W$ on
$\hg^*$. We work in $\OC_0$, the principal block of category 
$\OC$ (i.e. all modules are $\mathfrak{g}$-finitely generated,
$\bg$-integrable, $\hg$-semisimple and have the same central character
as the trivial representation). We denote the Verma and simple module
of highest weight $x \cdot 0$ by $\Delta(x)$ and $L(x)$.

Motivated by the previous section we consider the Verma module
$\Delta(su)$ of highest weight $su \cdot 0 =-\alpha_s - \alpha_u$ and its weight space at
\[
\l = sutsu \cdot 0 = - 3\alpha_s - 3\alpha_t - 3\alpha_u.
\]
The dimension of the $\l$-weight space is the number of Kostant
partitions of $-(\alpha_s + \alpha_u) - \l = 2\alpha_s + 3\alpha_t +
2\alpha_u$ which is 13.

Let $\Delta := U(\mathfrak{g}) \otimes_{U(\mathfrak{b})}
S(\mathfrak{h})$ denote the universal Verma
module (of highest weight $univ$). Computing the Shapovalov form on
the weight space $univ - \nu$ with $\nu = su \cdot 0 - \l$ gives a $13
\times 13$ matrix of polynomials in $S(\hg)$. One can compute this
matrix of polynomials via computer (I used magma).  Specialising via a highest
weight $\mu : S(\hg) \to \CM$ gives the Shapovalov form on the weight
space $\mu - \nu$.

If we choose to deform via $\rho$ we get Jantzen filtration layers of
dimensions:
\[
7, 3, 2,1.
\]
Now choose $\gamma \in \hg^*$ such that $\gamma$ does not vanish
on any coroot but $\gamma(\alpha_s^\vee + 2 \alpha_t^\vee +
\alpha_u^\vee) = 0$. 
With this choice of deformation direction the Jantzen filtration layers have dimensions:
\[
7,2,4,0.
\]

By Kazhdan-Lusztig theory we have (in the Grothendieck group of $\OC_0$):
\begin{align*}
\Delta(su) = L(su) &+ L(stu) + L(tsu) + L(sut) + L(uts) + \\ & +  L(suts) +L(stut) + L(stsu) + 
L(tuts) + 2 L(stuts) + \dots
\end{align*}
(where $\dots$ consists of terms $L(x)$ with $x > sutus$). Taking the
dimension of the $\l$-weight space we get (again by Kazhdan-Lusztig theory):
\[
13 = 7 + 0 + 0 + 2 + 0 + 1 + 1 + 0 + 0 + 2.
\]

By Kazhdan-Lusztig theory one expects the following Jantzen filtration
layers
\[
\Delta(su) = \begin{matrix} L(su) & 0  \\  L(stu) \oplus L(tsu)
  \oplus L(sut) \oplus
  L(uts) \oplus L(stuts) & 1 
  \\ L(suts) \oplus L(stut) \oplus L(stsu) \oplus L(tuts) \oplus
  \dots & 2 \\
  L(stuts) \oplus \dots& 3 \\ \vdots & \vdots\end{matrix}
\]
where we have omitted any terms $L(x)$ with $x > sutus$.

Taking dimensions of weight spaces gives:
\[
13 = \begin{matrix} 7  \\  0 + 0 + 2 + 0 + 1 
  \\ 1 + 1 + 0 + 0\\
 1 \end{matrix}
\]
This matches the above calculation of filtration layers $7,3,2,1$.

One sees what happens when the Jantzen filtration
degenerates. The two subquotients isomorphic to $L(stuts)$ which
generically occur in degrees $1$ and $3$ ``slide together'' into
degree 2 so that
$\gr_2$ is no longer semi-simple.

\newpage
\section{List of notation} \label{not}

The most important cast members, in order of appearance:

\vspace{0.2cm}
\begin{tabular}[c]{ll}
$[m]$ & the shift of grading functor, \S\ref{grad} \\
$\deg_{\le i}$ & a term in the degree filtration, \S\ref{sec:deg} \\
$(W,S)$ & the fixed Coxeter system, \S\ref{sec:WS} \\
$\ell,\le$  & the length function and Bruhat order, \S\ref{sec:WS} \\
$\un{x},\un{u}$  & an expression, a subexpression, \S\ref{sec:WS}\\
$\hg$ & the reflection faithful representation of $\hg$, \S\ref{h} \\
$\rho, \rho^\vee$ & fixed dominant regular elements of $\hg^*$ and $\hg$,
\S\ref{h} \\
$R, Q$ & the regular functions on $\hg$ and its localisation at
$\Phi$, \S\ref{sec:pos} \\
$\partial_s$ & a divided difference operator, \ref{sec:pos} \\
$A, K$ & the rings $\RM[z]$ and $\RM[z^{\pm 1}]$, \S\ref{sec:pos} \\
$\sigma$ & the homomorphism $R \to A$ determined by $\rho^\vee$,
\S\ref{sec:pos} \\
$P^{-d}$ & primitive subspaces, \S\ref{def:hr}\\
$e_{x,y}$ & an equivariant multiplicity, \S\ref{sec:nilhecke} \\
$M, M_0, M_\infty$ & a $\PM^1$-sheaf and its stalks, \S\ref{sec:P1}\\
$Z, Z^2$ & the structure algebra and its degree two elements, \S\ref{sec:P1} \\
$Z^2_{\amp}$ & the ample cone in $Z^2$, \S\ref{sec:P1}. \\
$B(\un{w}), B(y)$ & a Bott-Samelson
(resp. indecomposable) Soergel bimodule, \S\ref{sbim} \\
$c_\id, c_s$ & elements in Soergel bimodules $B(s)$, \S\ref{sbim} \\
$m, \mu$ & ``dot'' maps $B(s) \to R[1]$ and $R \to B(s)[1]$,
\S\ref{sbim} \\
$B_x, B_x^!$ & the stalk and costalk of a Soergel bimodule,
\S\ref{sec:stalkcostalk} \\
$i_x$ & the inclusion $B_x^! \to B_x$, \S\ref{sec:stalkcostalk} \\
$\Gr_x$ & the (twisted) graph of $x$ inside $\hg \times \hg$,
\S\ref{sec:stalkcostalk}\\
$M(B,x,xs)$ & the $\PM^1$-sheaf associated to $B$ and $x < xs$,
\S\ref{sec:SP1}\\
\end{tabular}

\def\cprime{$'$}

\end{document}